\newcommand{\map}[1]{\xrightarrow{#1}}
\newcommand{\mil}{\varprojlim}
\newcommand{\iso}{\cong}
\newcommand{\define}{\stackrel{\mathrm{def}}{=}}
\newcommand{\Gal}{\mathrm{Gal}}
\newcommand{\Hom}{\mathrm{Hom}}
\newcommand{\Aut}{\mathrm{Aut}}
\newcommand{\End}{\mathrm{End}}
\newcommand{\Spec}{\mathrm{Spec}}
\newcommand{\Spf}{\mathrm{Spf}}
\newcommand{\Q}{\mathbb Q}
\newcommand{\Z}{\mathbb Z}
\newcommand{\C}{\mathbb C}
\newcommand{\F}{\mathbb F}
\newcommand{\co}{\mathcal O}
\newcommand{\alg}{\mathrm{alg}}
\newcommand{\ord}{\mathrm{ord}}
\newcommand{\length}{\mathrm{length}}
\newcommand{\Art}{\mathbf{Art}}
\newcommand{\ProArt}{ \mathbf{ProArt}  }
\newcommand{\vertical}{\mathrm{ver} }
\newcommand{\univ}{\mathrm{univ}}
\newcommand{\ZZ}{ \mathbb{Z}^\circ  }
\newcommand{\QQ}{ \mathbb{Q}^\circ }
\newcommand{\OO}{ \mathcal{O}^\circ }
\newcommand{\cmorder}{\mathcal{R}}
\begin{document}

\author{Benjamin Howard}
\title{Deforming  endomorphisms of supersingular Barsotti-Tate groups}

\email{howardbe@bc.edu}
\address{Department of Mathematics, Boston College, Chestnut Hill, MA, 02467}

\date{November 10, 2009}

\begin{abstract}
The formal deformation space of a supersingular $p$-Barsotti-Tate group over 
$\mathbb{F}_p^\alg$ of dimension two equipped with an action of $\mathbb{Z}_{p^2}$ 
is known to be isomorphic to the formal spectrum of a power series ring in two variables over the 
Witt ring of $\mathbb{F}_p^\mathrm{alg}$.  If one chooses an extra  $\mathbb{Z}_{p^2}$-linear endomorphism of the $p$-divisible
 group then the locus in the formal deformation space formed by those deformations for which the extra endomorphism lifts is a 
 closed formal subscheme of codimension two.  We give a complete description of the irreducible components of this formal 
 subscheme, compute the multiplicities of these components, and compute the intersection numbers of the components with a 
 distinguished closed formal subscheme of codimension one.  These calculations, which extend the Gross-Keating theory of quasi-
 canonical lifts,  are used in the companion article \emph{Intersection theory on Shimura surfaces II} to compute global intersection 
 numbers of special cycles on the integral model of a Shimura surface.
\end{abstract}

\maketitle

\theoremstyle{plain}
\newtheorem{Thm}{Theorem}[subsection]
\newtheorem{Prop}[Thm]{Proposition}
\newtheorem{Lem}[Thm]{Lemma}
\newtheorem{Cor}[Thm]{Corollary}
\newtheorem{Conj}[Thm]{Conjecture}
\newtheorem{BigTheorem}{Theorem}

\theoremstyle{definition}
\newtheorem{Def}[Thm]{Definition}
\newtheorem{Hyp}[Thm]{Hypothesis}

\theoremstyle{remark}
\newtheorem{Rem}[Thm]{Remark}
\newtheorem{Ques}[Thm]{Question}

\renewcommand{\labelenumi}{(\alph{enumi})}
\renewcommand{\theBigTheorem}{\Alph{BigTheorem}}


\section{Introduction}


Let $\mathfrak{g}_0$ be a connected $p$-Barsotti-Tate group over $\F=\F_p^\alg$ of height two and dimension one; that is, 
$\mathfrak{g}_0$ is the $p$-power torsion of  a supersingular elliptic curve over $\F$.   Fix   a quadratic field extension 
$E_0/\Q_p$.  As the endomorphism ring of $\mathfrak{g}_0$ is the maximal order in a quaternion division algebra over $\Q_p$, 
we may fix an action of $\co_{E_0}$ on $\mathfrak{g}_0$.  Let $\mathfrak{M}_0$ be the formal scheme over the Witt ring 
$$\ZZ_p=W(\F)$$ which represents the functor of deformations of $\mathfrak{g}_0$ to complete local Noetherian $\ZZ_p$-algebras
 with residue field $\F$.   Suppose now that we fix some $\gamma_0\in\co_{E_0}$ not contained in $\Z_p$ and ask for the locus in 
 $\mathfrak{M}_0$ of deformations for which the endomorphism $\gamma_0\in\mathrm{End}(\mathfrak{g}_0)$ lifts.  This locus, a 
 closed formal subscheme of $\mathfrak{M}_0$ denoted  $\mathfrak{Y}_0$,  depends only on the integer $c_0$ defined by 
$$
\Z_p[\gamma_0]=\Z_p+p^{c_0}\co_{E_0}
$$
 and not on $\gamma_0$ itself.   One knows that $\mathfrak{M}_0\iso \Spf(\ZZ_p[[x]])$, and the Gross-Keating theory of
  quasi-canonical lifts recalled in \S \ref{ss:quasi-canonical} gives a complete understanding of the closed  formal 
  subscheme $\mathfrak{Y}_0$.    For example, one knows that  $\mathfrak{Y}_0$ has $c_0+1$ irreducible components 
  which we label as $\mathfrak{C}(0),\ldots,\mathfrak{C}(c_0)$.  The component $\mathfrak{C}(s)$ is isomorphic to
   $\Spf(W_s)$ where $W_s$ is the ring of integers in a finite abelian  extension of the fraction field of 
$$
W_0 =  \begin{cases}
\ZZ_p & \mathrm{if\ } E_0/\Q_p\mathrm{\ is\ unramified} \\
\co_{E_0}\otimes_{\Z_p}\ZZ_p & \mathrm{if\ }  E_0/\Q_p\mathrm{\ is\ ramified}
\end{cases}
$$
 with Galois group isomorphic to $\co_{E_0}^\times / (\Z_p+p^s\co_{E_0})^\times$.  The restriction of the universal 
 deformation of $\mathfrak{g}_0$ to the component $\mathfrak{C}(s)$, a deformation of $\mathfrak{g}_0$ to $W_s$, 
 is the \emph{quasi-canonical lift of level $s$}, and has endomorphism ring $\Z_p+p^s\co_{E_0}$.

This general  type of problem, in which one begins with the formal deformation space of a $p$-Barsotti-Tate group  
and studies the locus in the deformation space obtained by imposing additional endomorphisms, is central to 
Kudla's program  \cite{kudla97,kudla04b,KRY} to relate intersection multiplicities of so-called \emph{special cycles} 
on Shimura varieties to Fourier coefficients of modular forms.  In such generality it is very difficult to determine the structure of this 
locus (for example, to determine its irreducible components and their multiplicities).  
In this article we extend the  one dimensional theory of 
Gross and Keating to the case of two dimensional  $p$-Barsotti-Tate groups.   More specifically, we will study those deformation 
spaces which arise by formally completing a Hilbert-Blumenthal surface (or Shimura surface) at a closed point in characteristic $p$.
  The determination of these deformation spaces is used in the companion article \cite{howardC} to relate the intersection 
  multiplicities of special cycles on a Shimura surface to Fourier coefficients of a Hilbert modular form.

 Let $\Z_{p^2}$ be the integer ring in the unramified quadratic extension of $\Q_p$ and define 
 $$
 \mathfrak{g}=\mathfrak{g}_0\otimes\Z_{p^2},
 $$ a $p$-Barsotti-Tate group of height four and dimension two which is equipped with an action of  
 $$
 \co_E = \co_{E_0}\otimes_{\Z_p}\Z_{p^2}.
 $$  
 Let $\mathfrak{M}$ be the formal $\ZZ_p$-scheme classifying deformations of $\mathfrak{g}$, with its $\Z_{p^2}$-action, to complete
  local Noetherian $\ZZ_p$-algebras with residue field $\F$.  Let $\mathfrak{Y}\map{}\mathfrak{M}$ be the closed formal subscheme 
  classifying those deformations for which the endomorphism $\gamma = \gamma_0\otimes 1\in \co_E$  lifts (equivalently, for which 
  the action of $\Z_{p^2}[\gamma]\iso \Z_{p^2}+p^{c_0}\co_E$ lifts).    One knows  that $\mathfrak{M}\iso \Spf(\ZZ_p[[x_1,x_2]])$, and 
  the problem is to determine the structure of $\mathfrak{Y}$.    To get started, one may note that there is cartesian diagram
$$
\xymatrix{
{\mathfrak{Y}_0} \ar[r] \ar[d]  &  {\mathfrak{Y}} \ar[d] \\
{\mathfrak{M}_0} \ar[r]  & {\mathfrak{M}} 
}
$$
in which all arrows are closed immersions and the horizontal arrows represent the functor which takes a deformation 
$\mathfrak{G}_0$ of $\mathfrak{g}_0$ to the deformation $\mathfrak{G}_0\otimes\Z_{p^2}$ of $\mathfrak{g}$.  Using the closed 
immersion $\mathfrak{Y}_0\map{}\mathfrak{Y}$ above, we view $\mathfrak{C}(s)$ also as a closed formal subscheme of 
$\mathfrak{Y}$.  There is a natural action of $\Aut_{\Z_{p^2}}(\mathfrak{g})$ on the deformation space $\mathfrak{M}$, and this action 
restricts to an action of $\co_E^\times$ on $\mathfrak{M}$ and on $\mathfrak{Y}$.   In particular $\co_E^\times$ acts on the set of all 
closed formal subschemes of $\mathfrak{Y}$, and for each $\xi\in\co_E^\times$ we set
$$
\mathfrak{C}(s,\xi) = \xi* \mathfrak{C}(s).
$$

Our first theorem describes the irreducible components of $\mathfrak{Y}$ when $E_0/\Q_p$ is unramified.  The horizontal 
components arise by taking $\co_E^\times$-orbits of components  of $\mathfrak{Y}_0$.  The main difference between the dimension 
two case under consideration and the dimension one case considered by Gross and Keating is the presence of two vertical 
components.  Define  a subgroup of $\co_E^\times$ 
$$ H_s=\co_{E_0}^\times  \cdot (\Z_{p^2} + p^s\co_E)^\times.$$

\begin{BigTheorem}
Assume that $E_0/\Q_p$ is unramified.  
\begin{enumerate}
\item
Each closed formal subscheme $\mathfrak{C}(s,\xi)$ constructed above is an irreducible component of $\mathfrak{Y}$, and every 
horizontal irreducible component of $\mathfrak{Y}$ has this form for a unique $0\le s\le c_0$ and a unique $\xi\in \co_E^\times/H_s$.  
\item
If $c_0>0$ then $\mathfrak{Y}$ has two vertical irreducible components; if $c_0=0$  there are no such components.
\end{enumerate}
\end{BigTheorem}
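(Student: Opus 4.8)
The plan is to treat parts (a) and (b) separately, splitting (a) into an easy inclusion and its converse. The backbone of everything is an explicit model for the deformation problem obtained from Grothendieck--Messing (or Zink display) theory: fix a Dieudonn\'e module of $\mathfrak{g}_0$, tensor it up over $\Z_{p^2}$ to get one for $\mathfrak{g}$, write out the universal deformation over $R=\ZZ_p[[x_1,x_2]]$ together with its Hodge filtration, and identify $\mathfrak{Y}=\Spf(R/I)$ as the locus on which $\gamma$ --- equivalently the order $\cmorder=\Z_{p^2}+p^{c_0}\co_E$ --- preserves the filtration; the ideal $I$ is then given by explicit power series, which is what makes the multiplicity and special-fiber computations below possible. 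I also use that $\mathfrak{M}$ is regular of dimension three with $\mathfrak{Y}$ of codimension two, the Gross--Keating description of $\mathfrak{Y}_0$ recalled in \S\ref{ss:quasi-canonical}, and the fact that, since $E_0/\Q_p$ is unramified, $W_0=\ZZ_p$ and $\co_E\iso\Z_{p^2}\times\Z_{p^2}$, under which $\gamma$ corresponds to $(\gamma_0,\bar\gamma_0)$ with $\gamma_0\neq\bar\gamma_0$ (bar denoting the nontrivial automorphism of $E_0$). For the easy inclusion in (a): each $\mathfrak{C}(s)\iso\Spf W_s$ is an integral, flat, one-dimensional closed formal subscheme of $\mathfrak{Y}_0$, hence of $\mathfrak{Y}$ by the cartesian square, and since $\co_E^\times$ preserves $\mathfrak{Y}$ every translate $\mathfrak{C}(s,\xi)$ is again such a subscheme; as $\mathfrak{Y}$ has codimension two in the regular threefold $\mathfrak{M}$, any integral one-dimensional closed formal subscheme of $\mathfrak{Y}$ is necessarily an irreducible component of it, so each $\mathfrak{C}(s,\xi)$ is a component, and it is horizontal since it dominates $\Spf\ZZ_p$.

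For the converse, let $\mathfrak{C}\subseteq\mathfrak{Y}$ be a horizontal irreducible component. After normalization it is $\Spf\co_L$ for a finite extension $L$ of the fraction field of $\ZZ_p$, and the universal $p$-divisible group restricts to a deformation $\mathfrak{G}/\co_L$ of $\mathfrak{g}$ carrying a $\cmorder$-action. Because $\cmorder\otimes\Q=E\iso\Q_{p^2}\times\Q_{p^2}$, the rational Tate module $V_p\mathfrak{G}$ is (one checks) free of rank one over $E$, and the two idempotents of $E$ split $\mathfrak{G}$ over $L$, up to isogeny, into two height-two, dimension-one $p$-divisible groups with $\Z_{p^2}$-action. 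The crucial step is to show that, after replacing $\mathfrak{C}$ by $\xi*\mathfrak{C}$ for a suitable $\xi\in\co_E^\times$, one has $\mathfrak{G}\iso\mathfrak{G}_0\otimes\Z_{p^2}$ for a deformation $\mathfrak{G}_0$ of $\mathfrak{g}_0$; I would prove this by a lattice-chain analysis, comparing $T_p\mathfrak{G}$ with its $\cmorder$-action and its reduction $\mathfrak{g}=\mathfrak{g}_0\otimes\Z_{p^2}$ against the analogous data attached to $\mathfrak{g}_0$, and using that $\co_E^\times$ acts transitively on the relevant set of $\cmorder$-stable lattices. Granting it, $\xi*\mathfrak{C}$ lies in the image of the closed immersion $\mathfrak{M}_0\hookrightarrow\mathfrak{M}$, hence is an irreducible component of $\mathfrak{Y}_0$, so equals $\mathfrak{C}(s)$ for a unique $s$ by Gross--Keating, and $\mathfrak{C}=\mathfrak{C}(s,\xi^{-1})$. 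Here $s$ is intrinsic to $\mathfrak{C}$ --- it is recovered from the $\co_E$-endomorphism ring $\Z_{p^2}+p^s\co_E$ of the restricted $p$-divisible group --- and the uniqueness of $\xi$ modulo $H_s$ is the assertion $\mathrm{Stab}_{\co_E^\times}\mathfrak{C}(s)=H_s$. The inclusion $H_s\subseteq\mathrm{Stab}$ is clear, since $\co_{E_0}^\times$ acts on $\mathfrak{Y}_0$ preserving levels and so fixes the unique level-$s$ component $\mathfrak{C}(s)$, while $(\Z_{p^2}+p^s\co_E)^\times$ consists of automorphisms of the quasi-canonical lift tensored with $\Z_{p^2}$; the reverse inclusion I would obtain from the transitivity statement above, or by matching $|\co_E^\times/H_s|$ against an independent count.

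For (b), a vertical component of $\mathfrak{Y}$ is precisely a one-dimensional irreducible component of the special fiber $\mathfrak{Y}\times_{\ZZ_p}\F=\Spf(\F[[x_1,x_2]]/\bar I)$. When $c_0=0$ the whole of $\co_E$ is being imposed, $\mathfrak{G}$ splits as a product of two rigid canonical lifts, $\mathfrak{Y}$ is finite flat over $\ZZ_p$, and its special fiber is zero-dimensional, so there is no vertical component. When $c_0\geq1$ one has $\gamma_0\equiv\bar\gamma_0\pmod p$ because $\gamma_0\in\Z_p+p^{c_0}\co_{E_0}$, the reduction $\bar I$ of the Grothendieck--Messing equations degenerates, and $\mathfrak{Y}\times_{\ZZ_p}\F$ acquires a one-dimensional part; reading its irreducible components off the explicit generators of $\bar I$ in suitable coordinates, one finds exactly two, the ``degenerate eigenline'' loci attached to the two factors of $\co_E\iso\Z_{p^2}\times\Z_{p^2}$. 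The multiplicities along these two components, while not part of the present statement, also drop out of the same computation.

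The main obstacle is the crucial step in the converse half of (a): proving that, up to the $\co_E^\times$-action, every horizontal component of $\mathfrak{Y}$ is obtained from $\mathfrak{M}_0$ by the tensor construction --- equivalently, that the quasi-canonical lifts of $\mathfrak{g}_0$, tensored with $\Z_{p^2}$ and translated by $\co_E^\times$, exhaust the $p$-divisible groups over $p$-adic fields carrying the prescribed $\cmorder$-action and reducing to $\mathfrak{g}$. This is exactly where the two-dimensional problem must be reduced to the one-dimensional Gross--Keating theory, and it requires the lattice-chain analysis rather than a formal argument; by contrast, the stabilizer computation and the counting of vertical components are comparatively mechanical once the explicit equations for $I$ are available.
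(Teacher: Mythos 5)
Your skeleton (translate components of $\mathfrak{Y}_0$ by $\co_E^\times$, show every horizontal component is such a translate, compute the stabilizer, treat the vertical part via the special fibre) does match the shape of the paper's argument, but the load-bearing step of part (a) is missing, and the substitute you propose would not carry the weight. Transitivity of $\co_E^\times$ on $\cmorder$-stable lattices in the rank-one $E$-space $V_p\mathfrak{G}$ only produces a $\Z_{p^2}[\gamma]$-linear identification of Tate modules; what is needed is an isomorphism of \emph{framed deformations} over $\co_L$, i.e.\ an isomorphism of $p$-divisible groups compatible with the rigidifications $\rho,\rho'$ and realized by an element of $\co_E^\times$ acting through the fixed $j$ on $\mathfrak{g}$, and it is not formal that a lattice-level comparison upgrades to this (nor even that $\mathfrak{G}$ is isomorphic, rather than merely isogenous, to something of the form $\mathfrak{G}_0\otimes\Z_{p^2}$). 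The paper deliberately does not argue locally here: it proves that $\co_E^\times$-orbits in $\mathfrak{Y}(R)$ are classified by the geometric CM-order together with the reflex type (Proposition \ref{Prop:CM orbits}), and the proof is global -- the identification with the Hilbert--Blumenthal surface, the twisting operation $\otimes_{\cmorder}I_\xi$ of Lemma \ref{Lem:pic swindle}, classical CM theory over $\C$, N\'eron models, and the kernel of $\mathrm{Pic}(\cmorder)\to\mathrm{Pic}(\co_{\mathcal{E}_0,\mathcal{F}})$ -- followed by the separate rigidity statement (Proposition \ref{Prop:first reflex}, itself a nontrivial Dieudonn\'e-lattice argument via Lemmas \ref{Lem:flip action} and \ref{Lem:degenerate deformation}) that in the unramified case the CM-order determines the reflex type. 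Your proposal acknowledges this is ``the main obstacle'' and sketches only the statement to be proved; likewise the reverse inclusion $\mathrm{Stab}(\mathfrak{C}(s))\subseteq H_s$ is settled in the paper by the reciprocity-map/Galois argument on quasi-canonical lifts, not by an unspecified ``independent count.''

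There are secondary gaps of the same nature: you take as input that $\mathfrak{M}$ is a regular threefold with $\mathfrak{Y}$ of pure codimension two, so that any integral one-dimensional closed formal subscheme is automatically a component and horizontal components are spectra of DVRs. In the paper this is not free -- it is Corollary \ref{Cor:horizontal reduced}, resting on the global Lemma \ref{Lem:etale completion} (Serre--Tate theory at an auxiliary split prime), and the author explicitly remarks that such facts seem hard to get purely from $p$-divisible groups. For part (b), ``reading the components off explicit generators of $\bar I$'' presupposes usable equations for $\mathfrak{Y}$; producing the needed information is exactly the content of \S\ref{s:vertical} -- the Hasse--Witt containment of Proposition \ref{Prop:hasse-witt} (which again uses the global Lemma \ref{Lem:mostly ss}) to limit the candidate vertical loci to $(p,x_1)$ and $(p,x_2)$, plus the window-theoretic recursion (the series $f$ and $g$, Proposition \ref{Prop:small locus}) to decide that these loci lie in $\mathfrak{Y}$ exactly when $c_0>0$. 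The first-order Grothendieck--Messing condition alone does not yield this. So the proposal is a correct outline whose two decisive steps -- the orbit classification behind Proposition \ref{Prop:unr orbits} and the identification of the vertical locus behind Proposition \ref{Prop:two components} -- remain unproved, and the purely local route you indicate for the former is precisely the one the paper avoids.
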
 

\begin{proof}
The first claim is Proposition \ref{Prop:unr orbits},  the second is Proposition \ref{Prop:two components}.
\end{proof}

If $E_0/\Q_p$ is ramified then the situation is similar, but slightly more complicated.  Let $E=E_0\otimes_{\Q_p}\Q_{p^2}$, the fraction 
field of $\co_{E}$.    As $E/\Q_p$ is biquadratic there is a unique quadratic subfield $E_0'\subset E$ which is neither $E_0$ nor 
$\Q_{p^2}$, and we may fix an action of $\co_{E'_0}$ on $\mathfrak{g}_0$.  As $\co_E\iso \co_{E_0'}\otimes_{\Z_p} \Z_{p^2}$ 
canonically this choice determines a new action of $\co_E$ on $\mathfrak{g}\iso \mathfrak{g}_0\otimes\Z_{p^2}$ which is conjugate 
by some $w\in\Aut_{\Z_{p^2}}(\mathfrak{g})$ to the action used in the definition of the deformation space $\mathfrak{Y}$.  Let 
$\mathfrak{Y}_0'$ be the locus in $\mathfrak{M}_0$ of deformations for which the new action of $\Z_p+p^{c_0}\co_{E_0'}$ lifts, so that 
the irreducible components of $\mathfrak{Y}_0'$ are indexed, again by the theory of quasi-canonical lifts, as 
$\mathfrak{C}'(0),\ldots,\mathfrak{C}'(c_0)$.  Viewing each $\mathfrak{C}'(s)$ as a closed formal subscheme of $\mathfrak{M}$, we 
then set
$$
\mathfrak{C}'(s,\xi) = (\xi\circ w) * \mathfrak{C}'(s)
$$
for every $\xi\in\co_E^\times$ and $0\le s\le c_0$.  One can show that  $\mathfrak{C}'(s,\xi)$ is contained in $\mathfrak{Y}$ (although 
$\mathfrak{C}'(s)$ itself is not).   Define $$H_s'=\co_{E_0'}^\times\cdot (\Z_{p^2} + p^s\co_E)^\times.$$

\begin{BigTheorem}
Assume that $E_0/\Q_p$ is ramified.  
\begin{enumerate}
\item
Each closed formal subscheme $\mathfrak{C}(s,\xi)$ and $\mathfrak{C}'(s,\xi)$ constructed above is an irreducible component of 
$\mathfrak{Y}$.   Every horizontal irreducible component of $\mathfrak{Y}$ is either equal to  $\mathfrak{C}(s,\xi)$  for a unique 
$0\le s\le c_0$ and a unique $\xi\in \co_E^\times/H_s$, or is equal to $\mathfrak{C}'(s,\xi)$ for a unique $0\le s\le c_0$ and a unique 
$\xi\in\co_E^\times/H_s'$.
\item
If $c_0>0$ then $\mathfrak{Y}$ has two vertical irreducible components; if $c_0=0$ there are no such components.
\end{enumerate}
\end{BigTheorem}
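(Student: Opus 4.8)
The plan is to follow the proof of Theorem A, inserting the second family $\mathfrak{C}'(s,\xi)$ to account for the fact that $E/\Q_p$ is now genuinely biquadratic; part (1) will be a proposition analogous to Proposition \ref{Prop:unr orbits} but treating the $E_0$- and $E_0'$-orbits symmetrically, and part (2) will follow from essentially the argument of Proposition \ref{Prop:two components}. Throughout one uses that $\mathfrak{M}\iso\Spf(\ZZ_p[[x_1,x_2]])$ is regular of dimension three, that $\mathfrak{Y}\hookrightarrow\mathfrak{M}$ is of codimension two in $\mathfrak{M}$ and hence equidimensional of dimension one, and that $\co_E^\times$ acts on $\mathfrak{M}$ through $\Aut_{\Z_{p^2}}(\mathfrak{g})$ preserving $\mathfrak{Y}$ and so permuting its irreducible components. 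To see that each $\mathfrak{C}(s,\xi)$ and $\mathfrak{C}'(s,\xi)$ is a component it suffices, by this equivariance, to treat $\mathfrak{C}(s)$ and $w*\mathfrak{C}'(s)$. By the theory of quasi-canonical lifts recalled in \S\ref{ss:quasi-canonical}, each of these is the formal spectrum of a complete discrete valuation ring ($W_s$, resp. $W_s'$) finite flat over $\ZZ_p$, hence is irreducible and one-dimensional; each lies in $\mathfrak{Y}$ (indeed $\mathfrak{C}(s)\subseteq\mathfrak{Y}_0\subseteq\mathfrak{Y}$, while $w*\mathfrak{C}'(s)\subseteq\mathfrak{Y}$ as flagged in the introduction, since conjugation by $w$ carries the defining condition of $\mathfrak{Y}_0'$ to that of $\mathfrak{Y}$); and a one-dimensional irreducible closed formal subscheme of the one-dimensional $\mathfrak{Y}$ is automatically an irreducible component.

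For the converse, let $\mathfrak{Z}$ be a horizontal irreducible component of $\mathfrak{Y}$. After normalization $\mathfrak{Z}=\Spf(W)$ for a complete discrete valuation ring $W$ finite flat over $\ZZ_p$ carrying a deformation $\mathfrak{G}$ of $\mathfrak{g}$, with its $\Z_{p^2}$-action, for which $\gamma$ lifts. Over $W[1/p]$ the group $\mathfrak{G}$ acquires an action of $\Z_{p^2}[\gamma]=\Z_{p^2}+p^{c_0}\co_E$, hence, $E$ being its own centralizer, is a $p$-divisible group with complex multiplication by $E$; its endomorphism order inside $E$ is $\cmorder_s=\Z_{p^2}+p^s\co_E$ for a unique $0\le s\le c_0$, and it carries a CM type $\Phi$ of signature $(1,1)$ relative to $\Z_{p^2}$. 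The crucial observation is that of the three quadratic subfields $E_0$, $E_0'$, $\Q_{p^2}$ of $E$, the signature condition forbids $\Phi$ from being induced from $\Q_{p^2}$; so $\Phi$ is induced either from $E_0$, in which case $\mathfrak{G}\iso\mathfrak{G}_0\otimes\Z_{p^2}$ for a complex-multiplication-by-$E_0$ deformation $\mathfrak{G}_0$ of $\mathfrak{g}_0$, or from $E_0'$, in which case $\mathfrak{G}$ is the $w$-conjugate of such an object built from $E_0'$. Invoking uniqueness of $p$-divisible models over $W$ together with the rigidity of the $\co_{E_0}$-action on $\mathfrak{g}_0$ up to $\co_{E_0}^\times$-conjugacy, one finds that $\mathfrak{Z}$ lies in the $\co_E^\times$-orbit of some $\mathfrak{C}(s)$ in the first case and of some $w*\mathfrak{C}'(s)$ in the second, so that $\mathfrak{Z}=\mathfrak{C}(s,\xi)$ or $\mathfrak{Z}=\mathfrak{C}'(s,\xi)$; the two families are disjoint since their generic fibers have complex multiplication by the distinct fields $E_0$ and $E_0'$. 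The uniqueness of $\xi$ modulo $H_s$ (resp. $H_s'$) is then the computation of the stabilizer of $\mathfrak{C}(s)$ (resp. $w*\mathfrak{C}'(s)$) in $\co_E^\times$: an element $\xi$ fixes $\mathfrak{C}(s)$ exactly when it preserves the level-$s$ quasi-canonical lift together with its complex-multiplication structure, and since that lift has endomorphism ring $\cmorder_s=\Z_{p^2}+p^s\co_E$ and descends through the rigid $\co_{E_0}$-structure on $\mathfrak{g}_0$, this occurs precisely when $\xi\in\co_{E_0}^\times\cdot(\Z_{p^2}+p^s\co_E)^\times=H_s$, and symmetrically with $E_0'$ and $H_s'$ for $\mathfrak{C}'(s)$.

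For part (2) the argument is essentially that of Proposition \ref{Prop:two components} and is insensitive to whether $E_0/\Q_p$ is ramified. If $c_0=0$ then $\Z_{p^2}[\gamma]=\co_E$, so $\mathfrak{Y}$ is the locus on which the full $\co_E$-action lifts; this deformation functor is pro-represented by the ring of integers in a finite extension of $\mathrm{Frac}(\ZZ_p)$, hence $\mathfrak{Y}$ is flat over $\ZZ_p$ and has no vertical component. If $c_0>0$ one passes to $\mathfrak{Y}\otimes_{\ZZ_p}\F\subseteq\Spf(\F[[x_1,x_2]])$ and analyzes it by Dieudonn\'e theory: the requirement that $\gamma$ lift over an $\F$-algebra cuts out, inside the two-dimensional special fiber of $\mathfrak{M}$, a locus whose reduction is a union of exactly two formal curves, by the same computation with the Frobenius and Verschiebung on the Dieudonn\'e module of $\mathfrak{g}$ that produces the two vertical components in the unramified case; the biquadratic structure is invisible modulo $p$.

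I expect the main obstacle to be the converse half of part (1), and in particular the precise role of $w$. Concretely one must: (i) classify which $\Z_{p^2}$-orders of $E$ actually occur as endomorphism rings of signature-$(1,1)$ complex-multiplication lifts of $\mathfrak{g}$ extending $\Z_{p^2}+p^{c_0}\co_E$, and show each such lift genuinely descends through $E_0$ or through $E_0'$; (ii) establish the containment $(\xi\circ w)*\mathfrak{C}'(s)\subseteq\mathfrak{Y}$, which is delicate because $\mathfrak{C}'(s)$ itself is \emph{not} contained in $\mathfrak{Y}$, so that the containment is entirely a consequence of how conjugation by $w$ moves the $\co_E$-action and matches the order $\Z_{p^2}+p^{c_0}\co_{E_0'}$ acting through the new structure with $\Z_{p^2}[\gamma]$ acting through the one defining $\mathfrak{Y}$; and (iii) carry out the stabilizer computation cleanly in the biquadratic setting, where $H_s$ and $H_s'$ are genuinely different subgroups of $\co_E^\times$ and one must check that the $E_0$- and $E_0'$-quasi-canonical lifts never coincide.
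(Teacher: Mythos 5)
There are genuine gaps, and the most serious one is in the converse half of part (1). Your argument passes from ``the generic fiber of a horizontal component has CM by $E$ with order $\Z_{p^2}+p^s\co_E$ and CM type induced from $E_0$ or $E_0'$'' to ``the component lies in the $\co_E^\times$-orbit of $\mathfrak{C}(s)$ or of $w*\mathfrak{C}'(s)$'' by invoking ``uniqueness of $p$-divisible models'' and ``rigidity of the $\co_{E_0}$-action.''  That step is precisely the hard point: one needs that two deformations in $\mathfrak{Y}(R)$ with the same geometric CM-order and the same reflex type lie in the same $\co_E^\times$-orbit (Proposition \ref{Prop:CM orbits}), and the paper's proof of this is genuinely global --- it identifies $\mathfrak{Y}$ with a completed local ring of the moduli space $\mathcal{Y}$ of polarized RM abelian surfaces, and then uses the classical theory of complex multiplication over $\C$, N\'eron models, and the kernel of $\mathrm{Pic}(\cmorder)\map{}\mathrm{Pic}(\co_{\mathcal{E}_0,\mathcal{F}})$ (Lemma \ref{Lem:pic swindle}).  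No purely local substitute is offered in your write-up.  Related to this, your claim that $\mathfrak{Y}$ is ``of codimension two, hence equidimensional of dimension one'' is unjustified (codimension two is a conclusion of the theorem, not an input); the paper instead gets one-dimensionality of the horizontal part from Corollary \ref{Cor:horizontal reduced} (i.e.\ $R_\mathfrak{Y}[1/p]$ is a product of fields, again proved via the Hilbert--Blumenthal surface), and bounds the vertical part by Proposition \ref{Prop:hasse-witt}.  Finally, your stabilizer criterion ``$\xi$ fixes $\mathfrak{C}(s)$ exactly when it preserves the level-$s$ quasi-canonical lift'' is not correct as stated: the factor $\co_{E_0}^\times$ of $H_s$ acts through $\mathrm{rec}$ by permuting the $[M_s:M_0]$ quasi-canonical lifts inside the same component, and the paper's computation needs this Galois action plus the fact that an element fixing the surjection up to Galois must lift, hence lie in $(\Z_{p^2}+p^s\co_E)^\times$ modulo $\co_{E_0}^\times$.

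Part (2) also has a gap.  For $c_0=0$ you assert that the locus where the full $\co_E$-action lifts is pro-represented by the ring of integers in a finite extension of $\QQ_p$; in the ramified case this is false --- that locus is exactly $\mathfrak{Y}$, which already has the two horizontal components $\mathfrak{C}(0)$ and $\mathfrak{C}'(0)$ (the analogue of Lemma \ref{Lem:degenerate deformation} is special to $E_0/\Q_p$ unramified).  So flatness over $\ZZ_p$ does not follow from your argument.  For $c_0>0$ you assert that the mod-$p$ fiber of $\mathfrak{Y}$ is ``a union of exactly two formal curves by the same computation as in the unramified case,'' but the paper's proof of Proposition \ref{Prop:two components} has two distinct inputs: first, Proposition \ref{Prop:hasse-witt} (which rests on Lemma \ref{Lem:mostly ss} and hence again on the global moduli space) shows any vertical component lies in the Hasse--Witt locus $(p,x_1x_2)$, so the only candidates are $\mathfrak{C}_1^\vertical$ and $\mathfrak{C}_2^\vertical$; second, the explicit window computation of \S \ref{ss:hasse-witt} shows the lifted quasi-endomorphism of $\mathfrak{G}_i^\vertical$ is integral if and only if $c_0>0$.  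Your sketch supplies neither the step limiting the candidates nor the integrality criterion, and the latter is also what rescues the $c_0=0$ case.  (Your item (ii), the containment $(\xi\circ w)*\mathfrak{C}'(s)\subset\mathfrak{Y}$, is likewise left as an ``obstacle'' rather than proved; the paper handles it through the isomorphism of functors $w:\mathfrak{Y}'\map{}\mathfrak{Y}$ and Lemma \ref{Lem:nonstandard reflex orbits}.)
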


\begin{proof}
The first claim is Proposition \ref{Prop:ram orbits},  the second is Proposition \ref{Prop:two components}.
\end{proof}

Having determined all irreducible components of $\mathfrak{Y}$, we next turn to the problem of determining the multiplicity 
$\mathrm{mult}_\mathfrak{Y}(\mathfrak{C})$ (in the sense of Definition \ref{Def:components}) of each such component 
$\mathfrak{C}$.   For the horizontal components this will be done using global techniques, i.e.~ by identifying $\mathfrak{Y}$ with the 
formal completion at a point of a Hilbert-Blumenthal surface.  For the vertical components the argument uses global techniques (the 
determination of the \emph{Hasse-Witt locus} of $\mathfrak{M}$ in the sense of \S \ref{ss:hilbert-blumenthal}) and a heavy dose of 
explicit calculations using Zink's theory of \emph{windows} for $p$-divisible groups.

\begin{BigTheorem}
Every horizontal irreducible component of $\mathfrak{Y}$ appears with multiplicity one.  If $c_0>0$ and $p$ is odd then each of the 
two vertical irreducible components (called $\mathfrak{C}_1^\vertical$ and $\mathfrak{C}_2^\vertical$)  of $\mathfrak{Y}$ appears 
with multiplicity 
$$
\mathrm{mult}_\mathfrak{Y}(\mathfrak{C}_i^\vertical)  =  2p^{c_0-1}+ 4p^{c_0-2}+ 6p^{c_0-3}+8p^{c_0-4}+\cdots+(2c_0)p^0.
$$
\end{BigTheorem}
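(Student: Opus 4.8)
The plan is to handle the three assertions separately, since each requires a different input. For the horizontal components, the key observation is that every such component has the form $\mathfrak{C}(s,\xi)$ (or $\mathfrak{C}'(s,\xi)$), which is a $\co_E^\times$-translate of a quasi-canonical lift $\mathfrak{C}(s)$ coming from the one-dimensional theory. Since $\co_E^\times$ acts by automorphisms of $\mathfrak{Y}$, the multiplicity is constant on a $\co_E^\times$-orbit, so it suffices to compute $\mathrm{mult}_\mathfrak{Y}(\mathfrak{C}(s))$ for a single $s$. For this I would use the global identification of $\mathfrak{Y}$ (or rather $\mathfrak{M}$) with the formal completion at a supersingular point of the integral model of a Hilbert-Blumenthal surface, as in \S\ref{ss:hilbert-blumenthal}: on the global model the analogue of $\mathfrak{Y}$ is a special cycle whose horizontal part is, étale-locally, a disjoint union of smooth (in fact regular, one-dimensional) formal schemes — the images of CM points — and a smooth scheme enters any ambient regular scheme with multiplicity one. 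The cartesian square $\mathfrak{C}(s) = \mathfrak{C}(s,1) \subset \mathfrak{Y}_0 \subset \mathfrak{Y}$ from the introduction, together with the fact that $W_s$ is a discrete valuation ring (so $\mathfrak{C}(s) \iso \Spf(W_s)$ is regular of dimension one and flat over $\ZZ_p$), shows that $\mathfrak{C}(s)$ meets the generic fiber of $\mathfrak{M}$ in a reduced point; hence its multiplicity in $\mathfrak{Y}$, which is a length at the generic point of $\mathfrak{C}(s)$, equals the length of $\co_{\mathfrak{Y}}$ localized there, and the regularity/flatness forces this length to be $1$.

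For the vertical components the argument is necessarily more hands-on, and this is where the real content lies. A vertical component $\mathfrak{C}_i^\vertical$ is a closed formal subscheme supported in the special fiber, i.e.\ with $p$ nilpotent on it; by Theorem~B (and the cited Proposition~\ref{Prop:two components}) there are exactly two of them when $c_0 > 0$. The multiplicity $\mathrm{mult}_\mathfrak{Y}(\mathfrak{C}_i^\vertical)$ is the length of the local ring of $\mathfrak{Y}$ at the generic point $\eta_i$ of $\mathfrak{C}_i^\vertical$. The plan is: (i) identify the underlying reduced subscheme of each vertical component using the Hasse-Witt locus computation from \S\ref{ss:hilbert-blumenthal} — the vertical components of $\mathfrak{Y}$ should coincide, as reduced subschemes, with (components of) the intersection of $\mathfrak{Y}$ with the Hasse-Witt locus of $\mathfrak{M}$, which by the results recalled there is a divisor whose reduced structure is explicitly known; (ii) pass to the complete local ring at $\eta_i$, which is a field extension of $\F((t))$ or similar, and compute the length of $\co_{\mathfrak{Y}, \eta_i}$ there; (iii) carry out the length computation using Zink's theory of windows (displays) for $p$-divisible groups with $\Z_{p^2}$-action. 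Concretely, the universal deformation $\mathfrak{G}$ over $\ZZ_p[[x_1,x_2]]$ is described by an explicit display, the condition that $\gamma$ lifts becomes an explicit system of equations in $x_1, x_2$ (this is exactly what cuts out $\mathfrak{Y}$), and localizing at $\eta_i$ and computing the length of the resulting Artinian local ring is a finite — though intricate — calculation. The recursive/telescoping shape of the answer, $2p^{c_0-1} + 4p^{c_0-2} + \cdots + 2c_0$, strongly suggests that the computation proceeds by induction on $c_0$: writing $\gamma$ in terms of a uniformizer and peeling off one power of $p$ at a time should relate the length for $c_0$ to the length for $c_0 - 1$ plus a correction term of size $\sim 2c_0 \cdot p^{0}$ (or, reading the sum the other way, $2p^{c_0-1}$), reflecting the filtration $\Z_{p^2} + p^{c_0}\co_E \subset \Z_{p^2} + p^{c_0-1}\co_E \subset \cdots \subset \co_E$.

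The hypothesis that $p$ is odd enters in the window/display calculations: for $p = 2$ the theory of displays requires modifications (nilpotent divided powers, the distinction between displays and $2$-divisible groups is more delicate), and the explicit equations cutting out $\mathfrak{Y}$ acquire extra terms; excluding $p = 2$ lets one use the clean form of Zink's equivalence and the clean form of the deformation equations. I would state the window computation as a sequence of lemmas: first the explicit display of the universal deformation $\mathfrak{G}/\ZZ_p[[x_1,x_2]]$ with its $\co_E$-action structure; then the explicit equations for the locus where $\gamma_0 \otimes 1$ lifts; then the identification of the vertical components and their generic points with the Hasse-Witt data; and finally the length computation, done by the induction on $c_0$ sketched above.

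The step I expect to be the main obstacle is (iii): producing the explicit display of the universal deformation in coordinates adapted to the $\co_E$-action and extracting from it a clean enough system of equations for $\mathfrak{Y}$ that the localization at the vertical generic point is tractable. In particular, one must be careful that the two vertical components are genuinely symmetric (they are interchanged by an automorphism, e.g.\ the conjugation $w$ appearing before Theorem~B, or by a Frobenius-type symmetry), so that the single formula applies to both $\mathfrak{C}_1^\vertical$ and $\mathfrak{C}_2^\vertical$; verifying this symmetry at the level of displays, and then checking that the inductive peeling-off of powers of $p$ really does produce the arithmetic-progression-times-geometric-progression sum claimed, is the delicate heart of the proof. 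The global input (the Hasse-Witt locus of $\mathfrak{M}$) is used precisely to pin down \emph{which} divisor the vertical part of $\mathfrak{Y}$ is, so that the local computation knows where to localize; without it one would have to identify the vertical components purely locally, which seems much harder.
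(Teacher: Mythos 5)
Your treatment of the horizontal components contains an incorrect inference. The quantity $\mathrm{mult}_\mathfrak{Y}(\mathfrak{C})=\length(R_{\mathfrak{Y},\mathfrak{p}})$ is a length in the local ring of $\mathfrak{Y}$ at the generic point of the component, and neither the regularity and $\ZZ_p$-flatness of $\mathfrak{C}(s)\iso\Spf(W_s)$ nor the principle that ``a smooth scheme enters an ambient regular scheme with multiplicity one'' controls it: the subscheme $\Spf(\ZZ_p[[x_1,x_2]]/(x_1^2))$ of $\mathfrak{M}$ has a single component, regular and flat over $\ZZ_p$, yet has multiplicity two. What is actually needed is that $\mathfrak{Y}$ is \emph{generically reduced along its horizontal components}, i.e.\ that $R_\mathfrak{Y}[1/p]$ is a finite product of fields; this is Lemma \ref{Lem:etale completion}, and it is exactly where the global model earns its keep: one transports $R_\mathfrak{Y}$ to the completed local ring of the cycle $\mathcal{Y}$ on the Hilbert--Blumenthal surface and proves that $\mathcal{Y}_{/\Q}$ is a disjoint union of spectra of number fields (via Serre--Tate theory at an auxiliary prime $\ell$ where the CM order splits, or equivalently characteristic-zero CM rigidity), whence Corollary \ref{Cor:horizontal reduced}. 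Your phrase ``the horizontal part is \'etale-locally a disjoint union of smooth formal schemes'' is precisely this unproved statement, so the missing idea is the reducedness of the cycle in characteristic zero, not any property of the individual components; once it is in place the $\co_E^\times$-orbit reduction is unnecessary, since every horizontal localization is already a field.

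For the vertical multiplicities your outline (Hasse--Witt locus to locate the components, as in Propositions \ref{Prop:hasse-witt} and \ref{Prop:two components}, then a window computation at the generic point) agrees with the paper in broad strokes, but the step you defer is the entire content, and the induction on $c_0$ you propose is unsubstantiated and is not how the computation can be organized: one cannot directly write down generators of the ideal $J$ of $\mathfrak{Y}$ near $(p,x_2)$ and ``compute the length.'' The paper instead lifts $p^{c_0}\eta$ (with $\co_{E_0}=\Z_p[\eta]$, resp.\ a trace-free uniformizer in the ramified case) over the thickened frames (\ref{thick-frame}), proves in Proposition \ref{Prop:small locus} that the lifting locus modulo $(p^{2c_0+1},x_2^{p^{c_0}})$ is cut out by two explicit elements $\alpha_{c_0},\beta_{c_0}$, extracts just enough of their shape modulo $p$ (the expansions (\ref{structure I})--(\ref{structure III})) to run the eliminations of Lemmas \ref{Lem:unr length} and \ref{Lem:is thick}, and then still needs a separate approximation argument in Proposition \ref{Prop:unr components} to upgrade the congruence of ideals to the equality $J\mathcal{A}=(\alpha_{c_0},\beta_{c_0})$ before the length formula applies. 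None of these steps, in particular the passage from lifting over bounded Artinian thickenings to an identity of ideals at the generic point, appears in your plan; and the hypothesis $p>2$ is used not because display theory fails at $p=2$ but through the elementary estimate $\sum_{0\le i<\ell}2p^i<p^\ell$ driving the elimination (and, in the ramified case, the existence of a trace-free uniformizer). So the vertical half is a plausible program rather than a proof, and the horizontal half as written rests on a false principle.
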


\begin{proof}
If $E_0/\Q_p$ is unramified this is Proposition \ref{Prop:unr components}, if $E_0/\Q_p$ is ramified this is Proposition 
\ref{Prop:ram components}.
\end{proof}

The final problem is to determine the relative position of $\mathfrak{M}_0$ and $\mathfrak{Y}$ inside of $\mathfrak{M}$, or, more 
precisely, the intersection number of $\mathfrak{M}_0$ with each irreducible component of $\mathfrak{Y}$.     Let us say that an 
irreducible component $\mathfrak{C}$ of $\mathfrak{Y}$ is \emph{proper} if it is not contained in $\mathfrak{M}_0$ (and hence  meets 
$\mathfrak{M}_0$ properly).   We wish to compute the intersection multiplicity $I_\mathfrak{M}(\mathfrak{C},\mathfrak{M}_0)$ of 
$\mathfrak{C}$ and $\mathfrak{M}_0$ in the sense of Definition \ref{Def:components}.  Our notion of intersection does not take 
multiplicities into account, so really it is the value 
$\mathrm{mult}_\mathfrak{Y}(\mathfrak{C})\cdot I_\mathfrak{M}(\mathfrak{C},\mathfrak{M}_0)$ in which we are interested.  We have 
done this for every proper component of $\mathfrak{Y}$, under the hypothesis $p>2$.  Summing the results over all proper 
components yields the following results on the \emph{total proper intersection} of $\mathfrak{Y}$ and $\mathfrak{M}_0$.

\begin{BigTheorem}\label{ThmD}
Assume that $p$ is odd.  If $E_0/\Q_p$ is unramified then 
$$
\sum_{\mathfrak{C} \mathrm{\ proper} } \mathrm{mult}_{\mathfrak{Y}}(\mathfrak{C}) \cdot I_{\mathfrak{M}}(\mathfrak{C},\mathfrak{M}_0) 
=  c_0  \cdot \left(  \frac{p^{c_0+1}-1}{p-1}  +  \frac{p^{c_0}-1}{p-1}  \right).
$$
If $E_0/\Q_p$ is ramified then
$$
\sum_{\mathfrak{C} \mathrm{\ proper} } \mathrm{mult}_{\mathfrak{Y}}(\mathfrak{C}) \cdot I_{\mathfrak{M}}(\mathfrak{C},\mathfrak{M}_0) 
=  (2 c_0 +1) \frac{p^{c_0+1}-1}{p-1} .
$$
In both cases the sum is over all proper irreducible components of $\mathfrak{Y}$.
\end{BigTheorem}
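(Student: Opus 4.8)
The plan is to read Theorem~\ref{ThmD} off of information obtained component by component. By Theorems A and B every irreducible component of $\mathfrak{Y}$ is one of the $\mathfrak{C}(s,\xi)$, one of the $\mathfrak{C}'(s,\xi)$ (present only when $E_0/\Q_p$ is ramified), or one of the two vertical components $\mathfrak{C}_1^\vertical,\mathfrak{C}_2^\vertical$ (present only when $c_0>0$). The cartesian square of the introduction identifies $\mathfrak{Y}\cap\mathfrak{M}_0$ with $\mathfrak{Y}_0$, whose irreducible components are precisely $\mathfrak{C}(0),\dots,\mathfrak{C}(c_0)$, all flat over $\ZZ_p$. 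Since every component of $\mathfrak{Y}$ is one-dimensional, a component contained in $\mathfrak{M}_0$ must be a component of $\mathfrak{Y}_0$; hence the improper components of $\mathfrak{Y}$ are exactly $\mathfrak{C}(0),\dots,\mathfrak{C}(c_0)$ and every other component is proper. Combined with $\mathrm{mult}_{\mathfrak{Y}}=1$ on every horizontal component and $\mathrm{mult}_{\mathfrak{Y}}(\mathfrak{C}_i^\vertical)=2p^{c_0-1}+4p^{c_0-2}+\cdots+(2c_0)p^0$ from Theorem C, the sum in the statement becomes
\[
\sum_{s,\ \xi\neq 1} I_{\mathfrak{M}}\big(\mathfrak{C}(s,\xi),\mathfrak{M}_0\big)
\;+\;\sum_{s,\ \xi} I_{\mathfrak{M}}\big(\mathfrak{C}'(s,\xi),\mathfrak{M}_0\big)
\;+\;\sum_{i=1}^{2}\mathrm{mult}_{\mathfrak{Y}}(\mathfrak{C}_i^\vertical)\cdot I_{\mathfrak{M}}\big(\mathfrak{C}_i^\vertical,\mathfrak{M}_0\big),
\]
the middle sum being vacuous when $E_0/\Q_p$ is unramified, so it remains to evaluate the individual intersection numbers and to carry out the sums.

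For the horizontal terms I would exploit $\co_E^\times$-equivariance. Since $\mathfrak{C}(s,\xi)=\xi* \mathfrak{C}(s)$ and $\co_E^\times$ acts on $\mathfrak{M}$ by formal automorphisms, $I_{\mathfrak{M}}(\mathfrak{C}(s,\xi),\mathfrak{M}_0)=I_{\mathfrak{M}}(\mathfrak{C}(s),\xi^{-1}*\mathfrak{M}_0)$; as $\mathfrak{C}(s)\iso\Spf(W_s)$ is the formal spectrum of a discrete valuation ring and $\mathfrak{M}_0$ is a relative divisor in $\mathfrak{M}$, this equals the length over $W_s$ of the pullback of a local equation of $\xi^{-1}*\mathfrak{M}_0$ against the universal deformation over $\mathfrak{C}(s)$. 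This is exactly the kind of number computed by the Gross--Keating analysis of quasi-canonical lifts (now carried out on the two-dimensional $\mathfrak{M}$), or equivalently computed from the global Hilbert--Blumenthal model in which $\mathfrak{M}_0$ becomes a special divisor. The structural input I would establish alongside the value is that it depends only on $s$ and on the \emph{conductor} $t(\xi)$, the largest $t$ with $\xi\in H_t$. Grouping the sum over $\xi\in\co_E^\times/H_s$ by conductor, the number of $\xi$ of each conductor is an index among the unit groups $H_0\supseteq H_1\supseteq\cdots$, which from $\co_E\iso\co_{E_0}\otimes_{\Z_p}\Z_{p^2}$ is a power of $p$ times at most one factor of $p\mp 1$. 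Substituting these counts and the per-conductor intersection numbers into the double sum over $0\le t<s\le c_0$ and summing collapses, by elementary manipulation of geometric series, to the expressions $\tfrac{p^{c_0+1}-1}{p-1}$ and $\tfrac{p^{c_0}-1}{p-1}$ of the statement. In the ramified case the same bookkeeping is run a second time for the family $\mathfrak{C}'(s,\xi)=(\xi\circ w)* \mathfrak{C}'(s)$, organized by the conductor relative to $H_s'$; because $w$ carries $\mathfrak{C}'(s)\subseteq\mathfrak{M}_0$ off of $\mathfrak{M}_0$, every level-$s$ translate is proper and the conductor ranges over all of $\{0,1,2,\dots\}$ rather than only $\{0,\dots,s-1\}$, which produces the additional $\mathfrak{C}'$-contribution visible in the ramified formula.

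For the vertical term I would use the window/display description of the universal deformation already in play in the proof of Theorem C: $\mathfrak{C}_i^\vertical$ is a formal curve over $\F$ inside $\mathfrak{M}\iso\Spf(\ZZ_p[[x_1,x_2]])$, $\mathfrak{M}_0$ is the zero locus of an explicit power series, and $I_{\mathfrak{M}}(\mathfrak{C}_i^\vertical,\mathfrak{M}_0)$ is the length of the Artinian quotient obtained by restricting that series to a parametrization of $\mathfrak{C}_i^\vertical$ --- a direct computation into which the hypothesis $p>2$ enters just as it does in Theorem C, and which returns the same value for $i=1,2$. Multiplying by the two equal multiplicities from Theorem C yields the vertical part of the sum.

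The main obstacle is the determination of the individual intersection numbers: the per-conductor horizontal numbers require either genuinely global input (realizing $\mathfrak{Y}$ as the completion of a Hilbert--Blumenthal surface at a supersingular point, with $\mathfrak{M}_0$ a special divisor there) or the quasi-canonical-lift calculus pushed to two dimensions, and the vertical number requires a hands-on window computation; with those in hand, Theorem~\ref{ThmD} is precisely the summation and the unit-group index count described above. A secondary point is keeping the ramified-case bookkeeping honest --- separating the families $\{\mathfrak{C}(s,\xi)\}$ and $\{\mathfrak{C}'(s,\xi)\}$, avoiding double counting, and confirming that $\mathfrak{C}(0),\dots,\mathfrak{C}(c_0)$ are the only improper components and are omitted from both sums.
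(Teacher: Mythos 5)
Your overall skeleton matches the paper's: the sum does decompose into the proper standard translates $\mathfrak{C}(s,\xi)$ with $\xi\neq 1$ in $\co_E^\times/H_s$, all the $\mathfrak{C}'(s,\xi)$ in the ramified case, and the two vertical components; the improper components are exactly $\mathfrak{C}(0),\dots,\mathfrak{C}(c_0)$; the multiplicities come from Corollary \ref{Cor:horizontal reduced} and Propositions \ref{Prop:unr components}, \ref{Prop:ram components}; and the horizontal bookkeeping is indeed organized by the conductor $t$ of $\xi$ together with the indices $|H_t/H_s|$. But the entire content of Theorem \ref{ThmD} beyond Theorems A--C is precisely the individual intersection numbers $I_{\mathfrak{M}}(\mathfrak{C},\mathfrak{M}_0)$, and your proposal only gestures at these ("the kind of number computed by the Gross--Keating analysis\dots or equivalently computed from the global Hilbert--Blumenthal model"), explicitly deferring them as "the main obstacle." In the paper these are obtained not from a divisor computation on $\mathfrak{M}$ but from Keating's determination of the endomorphism rings of reductions of quasi-canonical lifts: Proposition \ref{Prop:unr intersection} gives $I_{\mathfrak{M}}(\mathfrak{C}(s,\xi),\mathfrak{M}_0)=1+\tfrac{(p+1)(p^t-1)}{p-1}$ (depending only on the conductor $t$), Proposition \ref{Prop:ram intersection I} gives $1+p^t+\tfrac{(p+1)(p^t-1)}{p-1}$ in the ramified standard case, and Proposition \ref{Prop:ram intersection II} gives the value $1$ for every nonstandard component (this is where $p>2$ enters on the horizontal side). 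Without these inputs the summation cannot be carried out, so the proposal has a genuine gap rather than an alternative proof.

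Two of your sketched routes to those inputs are also wrong in detail. First, the ramified nonstandard family: it is not the case that "the conductor ranges over all of $\{0,1,2,\dots\}$" and that this produces the extra term; rather each $\mathfrak{C}'(s,\xi)$, for every $\xi\in\co_E^\times/H_s'$, meets $\mathfrak{M}_0$ with intersection number exactly $1$, and there are $|\co_E^\times/H_s'|=p^s$ of them at level $s$, so the extra contribution is $\sum_{s=0}^{c_0}p^s=\tfrac{p^{c_0+1}-1}{p-1}$; a conductor-weighted count over an unbounded range would not reproduce this. Second, the vertical term: no window computation and no hypothesis $p>2$ is needed for $I_{\mathfrak{M}}(\mathfrak{C}_i^{\vertical},\mathfrak{M}_0)$, since $\mathfrak{C}_1^{\vertical}$ is cut out by $(p,x_2)$ and $\mathfrak{M}_0$ by $(x_1-x_2)$ in $R_{\mathfrak{M}}\iso\ZZ_p[[x_1,x_2]]$, so the intersection is $\Spf(\F)$ and the number is $1$ by transversality (Proposition \ref{Prop:two components}); the hypothesis $p>2$ and the hard window calculation enter only through the multiplicity $\mathrm{mult}_{\mathfrak{Y}}(\mathfrak{C}_i^{\vertical})$, which you correctly import from Theorem C.
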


\begin{proof}
If $E_0/\Q_p$ is unramified the contribution of all proper horizontal irreducible components is computed in Corollary 
\ref{Cor:horizontal reduced} (for the multiplicities) and Corollary \ref{Cor:unr proper horizontal} (for the intersection numbers), while 
the contribution of the vertical components is computed in  Proposition \ref{Prop:two components} (for the intersection numbers) and 
Proposition \ref{Prop:unr components} (for the multiplicities).  

If $E_0/\Q_p$ is ramified the contribution of all proper horizontal irreducible components is computed in Corollary 
\ref{Cor:horizontal reduced} (for the multiplicities) and Corollary \ref{Cor:ram proper horizontal} (for the intersection numbers), while 
the contribution of the vertical components is computed in Proposition \ref{Prop:two components} (for the intersection numbers) and 
Proposition \ref{Prop:ram components} (for the multiplicities).  
\end{proof}

It is in fact the calculation of the total proper intersection found in Theorem \ref{ThmD} which motivated this project.  This calculation of 
local intersection multiplicities is one part of a larger global calculation which relates the intersection multiplicities of special cycles on 
the integral model of a Shimura surface  to the Fourier coefficients of a Hilbert modular form of weight $3/2$.  This global application 
of the local calculations carried out herein is the content of  \cite{howardC}, which contains a higher-dimensional version of the 
Kudla-Rapoport-Yang \cite{KRY} intersection theory on  integral models of  Shimura curves.

\subsection{Acknowledgements}

This research was supported in part by NSF grant DMS-0556174, and by a Sloan Foundation Research Fellowship.  Portions of this 
work were carried out during a visit to the Mathematisches Institut der Universit\"at Bonn, and the author thanks the Institut and its 
members for their  hospitality.   The author especially thanks  Michael  Rapoport for  helpful conversations during that visit.

\subsection{Notation}

The following notation will be used throughout the article.  Let $\F=\F_p^\alg$ be an algebraic closure of the field of $p$ elements and 
let $\ZZ_p=W(\F)$ denote the ring of Witt vectors of $\F$.   Equivalently, $\ZZ_p$ is the completion of the strict Henselization of $\Z_p$ 
with respect to the unique $\Z_p$-algebra homomorphism $\Z_p\map{}\F$.    Let $\QQ_p$ be the fraction field of $\ZZ_p$ and let 
$\Q_{p^2}$ be the unique quadratic extension of $\Q_p$ contained in $\QQ_p$.  Denote by $\Z_{p^2}$ the ring of integers of 
$\Q_{p^2}$, and let $\sigma$ be the nontrivial Galois automorphism of $\Q_{p^2}/\Q_p$.

  When $R$ is a local ring we denote by $\mathfrak{m}_R$ the maximal ideal of $R$.  Denote by $\ProArt$  the category of  complete 
  (meaning complete and separated) local Noetherian  $\ZZ_p$-algebras $R$ equipped with an isomorphism  
  $R/\mathfrak{m}_R\map{}\F$.  Morphisms in $\ProArt$ are local  $\ZZ_p$-algebra homomorphisms. Let $\Art$ be the full 
  subcategory of $\ProArt$ whose objects are Artinian local $\ZZ_p$-algebras.    Any set-valued functor $\mathcal{F}$ on $\Art$ 
  extends naturally to a functor on $\ProArt$ by
$$
\mathcal{F}(R)\define \mil\mathcal{F}(R/\mathfrak{m}_R^k).
$$
We say that $\mathcal{F}$ is \emph{pro-representable} if there is an isomorphism of functors
$$
\mathcal{F} ( -) \iso \Hom_{\ProArt} (R_\mathcal{F}, - )
$$
for some object $R_\mathcal{F}$ of $\ProArt$.  When this is the case we often confuse $\mathcal{F}$ with the formal $\ZZ_p$-scheme 
$\Spf(R_\mathcal{F})$.


\section{Preliminaries}
\label{s:prelims}


In \S \ref{ss:functors} we define formal schemes over $\Spf(\ZZ_p)$ which represent certain deformation functors on $\Art$. In \S 
\ref{ss:hilbert-blumenthal} we relate these form schemes to the completed local rings of Hilbert-Blumenthal  surfaces and derive some 
consequences.


\subsection{Deformation functors}
\label{ss:functors}


  Let $E_0$ be a quadratic field extension of $\Q_p$ with ring of integers $\co_{E_0}$ and fix a $\Z_p$-algebra homomorphism
  \begin{equation}\label{psi fix}
  \psi:\co_{E_0}\map{}\F.
  \end{equation}
  Let $\mathfrak{g}_0$ be a connected  $p$-Barsotti-Tate group of dimension one and height two over $\F$.  Up to isomorphism there 
  is a unique such $\mathfrak{g}_0$ \cite[p.~93]{demazure}, and $\mathfrak{g}_0$ is isomorphic to the $p$-Barsotti-Tate group of any 
  supersingular elliptic curve over $\F$.  The $\Z_p$-algebra $\End(\mathfrak{g}_0)$ is the maximal order in the unique (up to 
  isomorphism) quaternion division algebra over $\Q_p$.   We may choose an embedding 
  $$
  j_0: \co_{E_0}\map{}\End(\mathfrak{g}_0)
  $$ 
  in such a way that the action of $\co_{E_0}$ on $\mathrm{Lie}(\mathfrak{g}_0)$ is through the homomorphism $\psi$.  Such an 
  embedding is unique up to $\Aut(\mathfrak{g}_0)$-conjugacy  by Corollary \ref{Cor:normalized action} below.  Pick any 
  $\gamma_0\in \co_{E_0}$ not contained in $\Z_p$, define $c_0$ by 
  $$
  \Z_p[\gamma_0]=\Z_p+p^{c_0}\co_{E_0},
  $$
   and view $\gamma_0$ as an endomorphism of $\mathfrak{g}_0$.    For an object $R$ of $\ProArt$,  a \emph{deformation} of 
   $\mathfrak{g}_0$ to $R$ is a pair $  (\mathfrak{G}_0,\rho_0)$ in which   $\mathfrak{G}_0$ is a $p$-Barsotti-Tate group over $R$ and  
   $$
   \rho_0 : \mathfrak{g}_0\map{} \mathfrak{G}_{0/\F}
   $$ 
   is an isomorphism.    We denote by $\mathfrak{M}_0$ the functor on $\Art$ which assigns to an object $R$ the set 
   $\mathfrak{M}_0(R)$ of isomorphism classes of  deformations of $\mathfrak{g}_0$ to $R$.  Let $\mathfrak{Y}_0$ be the subfunctor 
   of $\mathfrak{M}_0$ which assigns to $R$ the set  $\mathfrak{Y}_0(R)$  of isomorphism classes of deformations  $
   (\mathfrak{G}_0,\rho_0)$ of $\mathfrak{g}_0$ to $R$ for which the endomorphism $\gamma_0\in \End(\mathfrak{g}_0)$ lifts  to an 
   endomorphism of $\mathfrak{G}_0$ (uniquely, by the rigidity theorem  \cite[Theorem 2.1]{waterhouse72}). 
  
   Given a $p$-Barsotti-Tate group $\mathfrak{G}_0$ over an object $R$ of $\ProArt$  we define $\mathfrak{G}_0\otimes\Z_{p^2}$ to 
   be the $p$-Barsotti-Tate group representing the functor on $\Art$ defined by 
   $$
   S\mapsto \mathfrak{G}_0(S)\otimes_{\Z_p}\Z_{p^2}.
   $$  
   The representability of this functor is seen by choosing a $\Z_p$-basis of $\Z_{p^2}$, defining 
   $$
   \mathfrak{G}_0\otimes\Z_{p^2}=\mathfrak{G}_0\times\mathfrak{G}_0,
   $$ 
   and letting $\Z_{p^2}$ act on the right hand side through the   ring homomorphism $\Z_{p^2}\map{}M_2(\Z_p)$ induced by our 
   choice of basis.   The rule $\mathfrak{G}_0\mapsto\mathfrak{G}_0\otimes\Z_{p^2}$ defines a functor from the category of $p$-
   Barsotti-Tate groups over $R$ to the category of $p$-Barsotti-Tate groups over $R$ with $\Z_{p^2}$-action.    In particular we define 
   a $p$-Barsotti-Tate group 
    $$
 \mathfrak{g}=\mathfrak{g}_0\otimes\Z_{p^2}
 $$ 
   of height four and dimension two over $\F$.  Set 
   $$
   E=E_0\otimes_{\Q_p} \Q_{p^2},
   $$ let  
   $$\co_E=\co_{E_0}\otimes_{\Z_p}\Z_{p^2}$$ be the maximal order in $E$, and set $\gamma=\gamma_0\otimes 1\in\co_E$.  The 
   action of $\co_{E_0}$ on $\mathfrak{g}_0$ fixed above determines an action 
$$
j: \co_E\map{}\End(\mathfrak{g}),
$$ 
and in particular we may view $\gamma$ as an endomorphism of $\mathfrak{g}$. If $R$ is an object of $\ProArt$,  a 
\emph{deformation} of $\mathfrak{g}$ to $R$ is a pair  $(\mathfrak{G},\rho)$ in which   $\mathfrak{G}$ is a $p$-Barsotti-Tate group 
over $R$ equipped with an action $\Z_{p^2}\map{}\End(\mathfrak{G})$ and  $\rho:\mathfrak{g} \map{}\mathfrak{G}_{/\F}$ is a 
$\Z_{p^2}$-linear isomorphism.  Let $\mathfrak{M}$ be the  functor  on $\Art$ which assigns to each object $R$ the set of 
isomorphism classes of deformations of $\mathfrak{g}$  to $R$, and let $\mathfrak{Y}$ be the subfunctor of deformations of 
$\mathfrak{g}$ for which the endomorphism $\gamma$ lifts.  
   There is a  cartesian diagram of functors 
\begin{equation} \label{functor diagram}
\xymatrix{
{\mathfrak{Y}_0} \ar[r]^{\otimes \Z_{p^2}} \ar[d]  &  {\mathfrak{Y}} \ar[d] \\
{\mathfrak{M}_0} \ar[r]_{\otimes \Z_{p^2}}  & {\mathfrak{M}} 
}
\end{equation}
in which the horizontal arrows are defined by
   $$
(\mathfrak{G}_0,\rho_0)\mapsto   (\mathfrak{G}_0,\rho_0)\otimes\Z_{p^2}\define
 (\mathfrak{G}_0 \otimes\Z_{p^2},\rho_0\otimes\Z_{p^2}),
   $$
 where $\rho_0\otimes\Z_{p^2}$ is the isomorphism
   $$
   \mathfrak{g}=\mathfrak{g}_0\otimes\Z_{p^2}\map{\rho_0\otimes\mathrm{id} } 
   \mathfrak{G}_{0/\F}\otimes\Z_{p^2}\iso \mathfrak{G}_{/\F}.
   $$
 All  functors in (\ref{functor diagram}) are pro-representable by objects of $\ProArt$  and all arrows  correspond to surjections 
 between the pro-representing objects (this follows from \cite[Proposition 2.9]{rapoport96}).   Expressed differently, the diagram 
 (\ref{functor diagram}) can be identified with a cartesian diagram of formal schemes over $\ZZ_p$
\begin{equation*} 
\xymatrix{
{\Spf(R_{\mathfrak{Y}_0})  } \ar[r]^{\otimes \Z_{p^2}} \ar[d]  &  { \Spf( R_{\mathfrak{Y} } )} \ar[d] \\
{   \Spf( R_{\mathfrak{M}_0 }  )    } \ar[r]_{\otimes \Z_{p^2}}  & {  \Spf(  R_{\mathfrak{M}} )  }
}
\end{equation*}
in which all arrows are closed immersions.

 For any $\xi\in\co_E^\times$, viewed as an element of $\Aut(\mathfrak{g})$, and any $(\mathfrak{G},\rho )  \in \mathfrak{Y}(R)$ we 
 define a new deformation
$$
\xi* (\mathfrak{G},\rho) \define (\mathfrak{G},\rho\circ \xi^{-1}  )  \in \mathfrak{Y}(R).
$$
This defines an action of $\co_E^\times$ on the functor $\mathfrak{Y}$.  Using the isomorphism
$$
 \mathfrak{Y}(  -  ) \iso \Hom_{\ProArt}( R_{\mathfrak{Y}},  - )
$$
we find an action of $\co_E^\times$ on $\Hom_{\ProArt}( R_{\mathfrak{Y}},  -  )$ of the form $\xi*f=f\circ \xi^{-1}$ for some 
homomorphism $$\co_E^\times\map{}\Aut_{\ProArt}(R_{\mathfrak{Y}}).$$  The action of $\co_{E_0}^\times$ preserves the subfunctor 
$\mathfrak{Y}_0$ and similarly determines a homomorphism 
\begin{equation}\label{E_0 action}
\co_{E_0}^\times\map{}\Aut_{\ProArt}(R_{\mathfrak{Y}_0}).
\end{equation}

\begin{Def}\label{Def:components}
A \emph{component} of $\mathfrak{Y}$ is a closed formal subscheme  $\mathfrak{C}\map{}\mathfrak{Y}$ of the form 
$$
\mathfrak{C}=\Spf(R_{\mathfrak{Y} }/\mathfrak{p})\map{}\Spf(R_{\mathfrak{Y} })
$$ 
for some minimal prime ideal $\mathfrak{p}\subset R_{\mathfrak{Y}}$.    The component $\mathfrak{C}$ is \emph{horizontal} if 
$p\not\in\mathfrak{p}$, and is \emph{vertical} if $p\in\mathfrak{p}$. 
The \emph{multiplicity} of a component $\mathfrak{C}$ is  the length of the local ring at $\mathfrak{p}$
$$
\mathrm{mult}_{\mathfrak{Y}}(\mathfrak{C}) =\length_{R_{\mathfrak{Y},\mathfrak{p}} }( R_{\mathfrak{Y},\mathfrak{p}} )
$$
and the \emph{intersection number} is defined as the (possibly infinite) length
$$
I_{\mathfrak{M}}(\mathfrak{C},\mathfrak{M}_0) =
 \length_{R_{\mathfrak{M} }}( R_{\mathfrak{Y} }/\mathfrak{p}\otimes_{R_\mathfrak{M} }  R_{\mathfrak{M}_0} ).
$$
The component  $\mathfrak{C}$ is \emph{improper} if the closed immersion $\mathfrak{C}\map{}\mathfrak{Y}$ factors as 
$\mathfrak{C} \map{} \mathfrak{Y}_0\map{} \mathfrak{Y}$ and is \emph{proper} otherwise.  Intuitively, the phrase  ``$\mathfrak{C}$ is 
proper" should be interpreted as shorthand for ``$\mathfrak{C}$ meets $\mathfrak{M}_0$ properly in $\mathfrak{M}$." 
 \end{Def}

 Given any ring homomorphism $\phi:\co_{E_0}\map{}R$ we denote by $\overline{\phi}: \co_{E_0}\map{}R$ the homomorphism 
 obtained by precomposing $\phi$ with the nontrivial Galois automorphism of $E_0/\Q_p$.

 \begin{Lem}\label{Lem:normalized lie}
Let $\Delta_0$ be the maximal order in a ramified quaternion algebra over $\Q_p$ with uniformizing parameter $\Pi$.  Two 
embeddings $i_1,i_2: \co_{E_0}\map{}\Delta_0$ are $\Delta_0^\times$-conjugate if and only if they reduce to the same 
$\Z_p$-algebra homomorphism $\co_{E_0}\map{}\Delta_0/\Pi\Delta_0\iso\F_{p^2}$.
\end{Lem}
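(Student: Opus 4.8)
The plan is to reduce the statement to a question about the local rings $\co_{E_0}$ and $\Delta_0$, using the fact that the conjugation action of $\Delta_0^\times$ on the set of embeddings $\co_{E_0}\hookrightarrow\Delta_0$ is closely controlled by the theory of local orders in quaternion algebras. One direction is immediate: if $i_1 = d\, i_2\, d^{-1}$ for some $d\in\Delta_0^\times$, then since conjugation by $d$ induces the identity on $\Delta_0/\Pi\Delta_0 \iso \F_{p^2}$ (the residue field is commutative, so the inner automorphism is trivial on it), the reductions of $i_1$ and $i_2$ agree. So the content is the converse.

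For the converse, suppose $i_1$ and $i_2$ have the same reduction $\bar\imath: \co_{E_0}\to \F_{p^2}$. First I would treat $\Delta_0$ as a left module over $\co_{E_0}$ via $i_1$ and separately via $i_2$; in each case $\Delta_0$ becomes a free $\co_{E_0}$-module of rank one (this is a standard fact: a maximal order in the quaternion division algebra contains, and is a rank-one module over, the ring of integers of any quadratic subfield, at least after checking ranks and using that $\co_{E_0}$ is a maximal — hence hereditary — order, so torsion-free finitely generated modules are free). Choosing generators, I obtain an element $u\in\Delta_0^\times$ with $u\, i_2(a) = i_1(a)\, u$ for all $a\in\co_{E_0}$, i.e. $i_1 = u\, i_2\, u^{-1}$ — but only after knowing $u$ is a unit. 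The element $u$ a priori lies in $\Delta_0$ and is nonzero; it is a unit if and only if $u\notin\Pi\Delta_0$, equivalently if and only if its image $\bar u$ in $\F_{p^2}$ is nonzero.

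The main obstacle, then, is arranging that the intertwiner $u$ can be chosen to be a unit, and this is exactly where the hypothesis on reductions enters. The reduction-mod-$\Pi$ of the intertwining relation gives $\bar u\cdot \bar\imath(a) = \bar\imath(a)\cdot \bar u$ in $\F_{p^2}$, which is automatic and hence gives no obstruction — but it also gives no information forcing $\bar u\ne 0$. I would instead argue as follows: the set of all $u\in\Delta_0$ satisfying $u\, i_2(a) = i_1(a)\, u$ for all $a$ is a left $\co_{E_0}$-submodule (via $i_1$) of $\Delta_0$, and I claim it is free of rank one over $\co_{E_0}$. Granting this, pick a generator $u_0$; if $u_0\in\Pi\Delta_0$ we could write $u_0 = \Pi v$ and, using that $\Pi$ normalizes $\co_E^\times$-ish structure appropriately (conjugation by $\Pi$ acts on the subfield $i_1(\co_{E_0})$ as the Galois automorphism, this being the defining feature of a ramified quaternion algebra), deduce that $v$ also intertwines $i_1$ with $\overline{i_2}$ rather than $i_2$ — and iterating, comparing with the hypothesis that $i_1,i_2$ (not $i_1,\overline{i_2}$) have equal reduction, forces a contradiction unless the generator can be taken outside $\Pi\Delta_0$. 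Concretely: if \emph{every} intertwiner lay in $\Pi\Delta_0$, then dividing by $\Pi$ would exhibit an intertwiner between $i_1$ and $\overline{i_2}$, and repeating would make $\Delta_0$ itself too small; so some intertwiner is a unit, and that unit conjugates $i_2$ to $i_1$. The bookkeeping about which of $i_2$ versus $\overline{i_2}$ one is intertwining with — and checking that "same reduction" rules out the $\overline{i_2}$ case — is the delicate point, and it is where the equality of residual homomorphisms is used in an essential (not merely formal) way.
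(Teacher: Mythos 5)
Your first direction is fine: conjugation by a unit preserves $\Pi\Delta_0$ and induces the identity on the commutative quotient $\Delta_0/\Pi\Delta_0$, so conjugate embeddings have equal reductions. The converse, however, has real gaps. First, the claim that $\Delta_0$ is free of rank one over $\co_{E_0}$ is false ($\Delta_0$ has $\Z_p$-rank $4$, hence $\co_{E_0}$-rank $2$), and even corrected it would not produce an intertwiner, let alone a unit one; you rightly abandon this. In the replacement argument, the intertwiner lattice $L=\{u\in\Delta_0: u\,i_2(a)=i_1(a)\,u\}$ is indeed free of rank one over $\co_{E_0}$ via $i_1$, but this needs Skolem--Noether (to see $L\otimes\Q_p=i_1(E_0)b_0$ is nonzero of dimension $2$), which you do not invoke. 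The key step is then asserted via a false statement: conjugation by a fixed uniformizer $\Pi$ does \emph{not} act on the subfield $i_1(\co_{E_0})$ as its Galois automorphism -- $\Pi$ need not normalize $i_1(E_0)$ at all; what is true is only that conjugation by $\Pi$ induces the Frobenius on $\Delta_0/\Pi\Delta_0$. Repairing your step at the residue level (if the generator is $u_0=\Pi v$, reduce $v\,i_2(a)=\Pi^{-1}i_1(a)\Pi\,v$ modulo $\Pi$ to get $\bar v\,\bar\imath(a)=\bar\imath(a)^p\,\bar v$) still requires knowing $\bar v\neq 0$, i.e.\ $u_0\notin p\Delta_0$, which must come from the generator property (if $u_0\in p\Delta_0$ then $p^{-1}u_0\in L$ contradicts generation) or a valuation argument -- and your ``iterate until $\Delta_0$ is too small'' is not a proof: after two divisions by $\Pi$ the residue comparison returns to the original one, so nothing iterates.

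More seriously, the strategy cannot handle the case $E_0/\Q_p$ ramified, which the paper needs (Corollary \ref{Cor:normalized action} is used for ramified $E_0$ as well). There the residue map $\co_{E_0}\map{}\F_{p^2}$ is unique, so the hypothesis is vacuous and the lemma asserts that \emph{all} embeddings are $\Delta_0^\times$-conjugate; but $\bar\imath$ then lands in $\F_p$, the Frobenius acts trivially on its image, and your mod-$\Pi$ bookkeeping yields no obstruction and no conclusion. A uniform and short argument runs as follows: by Skolem--Noether $i_1=b\,i_2\,b^{-1}$ for some $b$ in the quaternion algebra $B$, and the full set of conjugators is $b\,i_2(E_0)^\times$. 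Since $\Delta_0$ is the valuation ring of $w=\ord_p\circ\mathrm{Nrd}$ with $\Delta_0^\times=\{w=0\}$ and $w(\Pi)=1$, while $w(i_2(E_0)^\times)$ equals $\Z$ (ramified $E_0$) or $2\Z$ (unramified $E_0$), one may rescale $b$ by $i_2(E_0)^\times$ so that $w(b)\in\{0,1\}$, with $w(b)=0$ always attainable in the ramified case. If $w(b)=0$ then $b\in\Delta_0^\times$ and the embeddings are conjugate by a unit; if $w(b)=1$ then conjugation by $b$ induces the Frobenius $\sigma$ on $\Delta_0/\Pi\Delta_0$, so $\bar\imath_1=\sigma\circ\bar\imath_2\neq\bar\imath_2$ (here $E_0$ is unramified, so $\bar\imath_2$ is surjective onto $\F_{p^2}$), contradicting the hypothesis of equal reductions. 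This settles both cases and also recovers your easy direction.
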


\begin{proof}
Easy exercise.
\end{proof}

\begin{Cor}\label{Cor:normalized action}
Let $\mathfrak{G}$ and $\mathfrak{G}'$ be $p$-Barsotti-Tate groups of dimension one and height two over $\F$, each equipped with 
an action of $\co_{E_0}$.  Then $\mathfrak{G}$ and $\mathfrak{G}'$ are $\co_{E_0}$-linearly isomorphic if and only if their Lie 
algebras are isomorphic as $\co_{E_0}$-modules.
\end{Cor}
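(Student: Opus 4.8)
The plan is to reduce to Lemma \ref{Lem:normalized lie} by transporting the data of $(\mathfrak{G},\text{action})$ and $(\mathfrak{G}',\text{action})$ onto the single group $\mathfrak{g}_0$, and then to match the two invariants that appear there — the $\Aut$-conjugacy class of an embedding $\co_{E_0}\map{}\End(\mathfrak{g}_0)$, and the reduction of that embedding modulo the radical — with the two sides of the Corollary. The implication ``$\co_{E_0}$-linearly isomorphic $\Rightarrow$ isomorphic Lie algebras'' is immediate, so the content is a chain of equivalences establishing the converse.

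First I would check that any dimension one, height two $p$-Barsotti-Tate group over $\F$ admitting an action of $\co_{E_0}$ is connected: otherwise it is isogenous to $\Q_p/\Z_p\times\mu_{p^\infty}$, so its endomorphism algebra is $\Q_p\times\Q_p$, into which the field $E_0$ admits no ring embedding. Hence by the uniqueness of $\mathfrak{g}_0$ \cite{demazure} we may fix isomorphisms of $p$-Barsotti-Tate groups $\mathfrak{g}_0\map{}\mathfrak{G}$ and $\mathfrak{g}_0\map{}\mathfrak{G}'$; they carry the given actions to embeddings $i,i':\co_{E_0}\map{}\End(\mathfrak{g}_0)=\Delta_0$, where $\Delta_0$ is the maximal order in the quaternion division algebra over $\Q_p$, ramified with uniformizer $\Pi$ and $\Delta_0/\Pi\Delta_0\iso\F_{p^2}$. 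Since $\Delta_0^\times=\Aut(\mathfrak{g}_0)$, the groups $\mathfrak{G}$ and $\mathfrak{G}'$ are $\co_{E_0}$-linearly isomorphic if and only if $i$ and $i'$ are $\Delta_0^\times$-conjugate; likewise $\mathrm{Lie}(\mathfrak{G})$ and $\mathrm{Lie}(\mathfrak{G}')$ are isomorphic as $\co_{E_0}$-modules if and only if the two resulting $\co_{E_0}$-actions on $\mathrm{Lie}(\mathfrak{g}_0)$ agree.

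The key point is then that the functorial ring homomorphism $\mathrm{Lie}\colon \End(\mathfrak{g}_0)\map{}\End_\F(\mathrm{Lie}(\mathfrak{g}_0))\iso\F$ factors as $\Delta_0\twoheadrightarrow\Delta_0/\Pi\Delta_0\iso\F_{p^2}\hookrightarrow\F$. Indeed its kernel is a nonzero two-sided ideal of $\Delta_0$ (nonzero because the target is commutative while $\Delta_0$ is not), hence a power of $\Pi\Delta_0$, and it must be $\Pi\Delta_0$ itself since $\Delta_0/\Pi^n\Delta_0$ is noncommutative for $n\ge 2$. Therefore $\co_{E_0}$ acts on the one-dimensional $\F$-vector space $\mathrm{Lie}(\mathfrak{g}_0)$ through the composite $\co_{E_0}\map{i}\Delta_0\twoheadrightarrow\F_{p^2}\hookrightarrow\F$; since the action is $\F$-linear, two such one-dimensional module structures are isomorphic exactly when these composites $\co_{E_0}\to\F$ coincide, equivalently (as $\F_{p^2}\hookrightarrow\F$ is injective) exactly when $i$ and $i'$ reduce to the same $\Z_p$-algebra homomorphism $\co_{E_0}\to\Delta_0/\Pi\Delta_0\iso\F_{p^2}$. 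By Lemma \ref{Lem:normalized lie} this last condition is equivalent to $\Delta_0^\times$-conjugacy of $i$ and $i'$, and combining the equivalences of this and the previous paragraph completes the proof.

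I expect the only real subtlety — beyond invoking Lemma \ref{Lem:normalized lie} — to be the step identifying the $\co_{E_0}$-module $\mathrm{Lie}(\mathfrak{g}_0)$, where one must remember that the $\co_{E_0}$-action is automatically $\F$-linear (so the relevant objects are $\co_{E_0}\otimes_{\Z_p}\F$-modules) in order that comparing one-dimensional module structures really detects the homomorphism $\co_{E_0}\to\F$ and not merely its underlying additive data; everything else is formal.
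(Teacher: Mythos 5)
Your proof is correct and follows essentially the same route as the paper's: identify both groups with $\mathfrak{g}_0$ via the uniqueness statement from \cite{demazure}, convert the two actions into embeddings $\co_{E_0}\map{}\Delta_0$, observe that the Lie action factors through $\Delta_0/\Pi\Delta_0\hookrightarrow\F$, and invoke Lemma \ref{Lem:normalized lie}. The extra steps you supply (connectedness, the identification of the kernel of $\Delta_0\map{}\End_\F(\mathrm{Lie}(\mathfrak{g}_0))$, and the remark that ``isomorphic as $\co_{E_0}$-modules'' must be read $\F$-linearly, i.e.\ as $\co_{E_0}\otimes_{\Z_p}\F$-modules) are just justifications of assertions the paper makes implicitly, and your reading of the module condition is indeed the intended one.
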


\begin{proof}
By \cite[p.~93]{demazure} $\mathfrak{G}$ and $\mathfrak{G}'$ are isomorphic as $p$-Barsotti-Tate groups, and so we may assume 
that $\mathfrak{G}'=\mathfrak{G}$.  The two actions of $\co_{E_0}$ on $\mathfrak{G}$ are determined by homomorphisms 
$\phi_1,\phi_2:\co_{E_0}\map{}\Delta_0$ where $\Delta_0=\End(\mathfrak{G})$ is the maximal order in a ramified quaternion algebra 
over $\Q_p$.  The action of $\Delta_0$ on $\mathrm{Lie}(\mathfrak{G})$ is through some embedding $\Delta_0/\Pi\Delta_0\map{}\F$, 
and the claim reduces to Lemma \ref{Lem:normalized lie}.
\end{proof}


\subsection{Hilbert-Blumenthal surfaces}
\label{ss:hilbert-blumenthal}


It will be useful to identify $R_\mathfrak{M}$  with a  completed  local ring of a Hilbert-Blumenthal surface.  Choose number fields 
$\mathcal{E}_0$, $\mathcal{F}$, and $\mathcal{E}$ such that the following properties hold:
\begin{enumerate}
\item 
$\mathcal{F}$ is real quadratic, $\mathcal{E}_0$ is imaginary quadratic, and $\mathcal{E}\iso \mathcal{E}_0\otimes_\Q\mathcal{F}$,
\item
there are isomorphisms (which we now fix)
$$
\co_\mathcal{F}\otimes_\Z\Z_p\iso \Z_{p^2}  \qquad
\co_{\mathcal{E}_0} \otimes_\Z\Z_p \iso \co_{E_0} 
$$
\end{enumerate}
Set $\co_{\mathcal{E}_0,\mathcal{F}}=\co_{\mathcal{E}_0} \otimes_\Z\co_{\mathcal{F}}$ and 
let   ${\cmorder}\subset\co_{\mathcal{E}_0,\mathcal{F} }$ be the $\co_{\mathcal{F}}$-order defined by
$$
{\cmorder}\otimes_{\Z}\Z_\ell  =  \begin{cases}
\Z_{p^2}[\gamma] &\mathrm{if\ }\ell=p \\
\co_{\mathcal{E}_0,\mathcal{F}}\otimes_\Z\Z_\ell & \mathrm{if\ }\ell\not=p
\end{cases}
$$
for every rational prime $\ell$.   Choose an elliptic curve $\mathfrak{a}_0$ over $\F$ and an action 
$\co_{\mathcal{E}_0}\map{}\End(\mathfrak{a}_0)$ in such a way that the  $p$-Barsotti-Tate group of $\mathfrak{a}_0$ is $\co_{E_0}$-
linearly isomorphic to $\mathfrak{g}_0$ (this is possible by Corollary \ref{Cor:normalized action}).  The abelian surface 
$\mathfrak{a}=\mathfrak{a}_0\otimes\co_{\mathcal{F}}$ then carries an action of $\co_{\mathcal{E}_0,\mathcal{F}}$  and the 
$p$-Barsotti-Tate group of $\mathfrak {a}$ is $\co_{E}$-linearly isomorphic to  $\mathfrak{g}$.    

Fix a Zariski open neighborhood $U\map{}\Spec(\Z)$ of $p$.  

\begin{Def}
An \emph{abelian surface with real multiplication} (RM) over a $U$-scheme $S$ is an abelian scheme $ A\map{}S$  with an action 
$\co_\mathcal{F}\map{}\End( A)$ satisfying the \emph{Rapoport condition}:  every $s\in S$ has an  open affine neighborhood   over 
which  the coherent sheaf $\mathrm{Lie}( A)$ is a free $\co_\mathcal{F}\otimes_\Z\co_S$-module of rank one.
\end{Def}

\begin{Def}
Let $\mathcal{D}_{\mathcal{F}}$ be the different of $\mathcal{F}/\Q$.
Suppose $S$ is a $U$-scheme and $ A\map{}S$ is an RM abelian surface.  A \emph{Deligne-Pappas}  polarization of $ A$ is an 
$\co_\mathcal{F}$-linear polarization $\lambda: A\map{} A^\vee$ whose kernel is $ A[\mathcal{D}_\mathcal{F}]$.
\end{Def}

\begin{Def}
A \emph{polarized RM abelian surface} over a $U$-scheme $S$ is a pair $( A,\lambda)$ in which $ A$ is an RM abelian 
surface over $S$, and $\lambda: A\map{} A^\vee$ is a Deligne-Pappas polarization.
\end{Def}

\begin{Rem}
See \cite{vollaard} for an extensive discussion of  Deligne-Pappas polarizations and the Rapoport condition.  What we have called a 
Deligne-Pappas polarization is equivalent to what Vollaard calls a $\mathcal{D}_{\mathcal{F}}^{-1}$-polarization.  
\end{Rem}

Let $\mathcal{M}$ be the Deligne-Mumford stack over $U$ classifying polarized RM abelian surfaces over $U$-schemes.  Let 
$\mathcal{Y}$ be the Deligne-Mumford stack classifying triples $( A,\lambda,i)$ where $( A,\lambda)$ is a polarized RM abelian 
surface over a $U$-scheme and $i:{\cmorder}\map{}\End( A)$ is an action of ${\cmorder}$ which extends the action of 
$\co_{\mathcal{F}}$ on $ A$.  The stack $\mathcal{M}$ is smooth of relative dimension two over $U$.  
See \cite{deligne-pappas, rapoport78,vollaard}.   After shrinking $U$ and adding rigidifying  \'etale level structure at primes not in
 $U$ to these moduli problems the resulting stacks are schemes.  From now on we assume that such \'etale level structure has been 
 imposed, but make no explicit mention of it.

Suppose $ A_0$ is any elliptic curve over a scheme $S$ and set $A=A_0\otimes \co_\mathcal{F}$.  Fix a basis $\{x,y\}$ of 
$\co_\mathcal{F}$ as a $\Z$-module and let $\{x^\vee,y^\vee\}$ be the dual basis (relative to the trace form) of the inverse different 
$\mathcal{D}^{-1}_\mathcal{F}$.  These bases determine  two homomorphisms $i,i^\vee : \co_{\mathcal{F}} \map{}M_2(\Z)$  which 
are interchanged by transposition in $M_2(\Z)$.   Thus if we identify  $ A\iso  A_0\times A_0$ in such a way that the action of  
$\co_{\mathcal{F}}$  on the right hand side is through $i$,  the induced action of $\co_\mathcal{F}$ on 
$A^\vee \iso  A_0^\vee \times A_0^\vee$ is through $i^\vee$.  In other words there is an $\co_\mathcal{F}$-linear isomorphism 
$A^\vee\iso  A_0^\vee\otimes\mathcal{D}_F^{-1}$.  Moreover, if $\lambda_0: A_0\map{} A_0^\vee$ is the unique principal 
polarization of $ A_0$ then as in \cite[\S 3.1]{howardA} the isogeny
$$
 A \iso  A_0\otimes\co_\mathcal{F} \map{\lambda_0\otimes\iota}  A_0^\vee\otimes\mathcal{D}_\mathcal{F}^{-1} \iso  A^\vee
$$
is a polarization with  kernel $ A[\mathcal{D}_{\mathcal{F}}]$, and  does not depend on the choice of basis $\{x,y\}$.   Here we have 
used  $\iota$ to denote the inclusion $\co_\mathcal{F}\map{}\mathcal{D}_\mathcal{F}^{-1}$.     The above construction equips every 
abelian surface of the form $ A_0\otimes\co_\mathcal{F}$  with a canonical Deligne-Pappas polarization.  In particular the abelian 
surface $\mathfrak{a}=\mathfrak{a}_0\otimes\co_\mathcal{F}$ defined above has a Deligne-Pappas polarization 
$\lambda:\mathfrak{a}\map{}\mathfrak{a}^\vee$, and, as 
$\mathrm{Lie}(\mathfrak{a})\iso \mathrm{Lie}(\mathfrak{a}_0)\otimes_{\Z}\co_{\mathcal{F}}$, the abelian surface $\mathfrak{a}$ 
satisfies the Rapoport condition.  Thus the pair $(\mathfrak{a},\lambda)$ determines an $\F$-valued point of $\mathcal{M}$.  Our 
chosen action of $\co_{\mathcal{E}_0}$ on $\mathfrak{a}_0$ determines an action 
$i:\co_{\mathcal{E}_0,\mathcal{F}}\map{}\End_{\co_{\mathcal{F}}}(\mathfrak{a})$, and  the triple $(\mathfrak{a},\lambda,i)$ defines a 
point of $\mathcal{Y}(\F)$.

For an object $R$ of $\Art$  let $\mathfrak{M}^\mathfrak{a}(R)$ denote the set of isomorphism classes of deformations $( A,\rho)$ of 
$\mathfrak{a}$, with its $\co_{\mathcal{F}}$-action, to $R$.  Thus $ A$ is an abelian surface over $R$ equipped with an action of 
$\co_{\mathcal{F}}$, and $\rho:\mathfrak{a}\map{} A_{/\F}$ is an $\co_{\mathcal{F}}$-linear isomorphism.    The deformation $ A$ 
automatically satisfies the Rapoport condition, and  by the corollary to \cite[Theorem 3]{vollaard}  the Deligne-Pappas polarization of 
$\mathfrak{a}$ lifts uniquely to $ A$.  Similarly let $\mathfrak{Y}^\mathfrak{a}(R)$ denote the set of isomorphism classes of 
deformations $( A,  \rho)$ of $\mathfrak{a}$ for which the action  ${\cmorder}\map{}\End( A)$ lifts (necessarily uniquely, by 
\cite[Corollary 6.2]{mumford65}) to   an action of ${\cmorder}$ on $ A$.  For any abelian scheme $ A$ over a base scheme $S$ let 
$ A_{p^\infty}$ be the $p$-Barsotti-Tate group of $ A$. If we let $x\in \mathcal{M}(\F)$ be the geometric point corresponding to the 
polarized RM abelian surface $(\mathfrak{a},\lambda)$ then it follows from the discussion above and the Serre-Tate theorem  that 
there are isomorphisms of functors on $\Art$
$$
\Hom_{\ProArt}(  \co_{\mathcal{M},x}^\circ   , - )\iso  \mathfrak{M}^\mathfrak{a}(-)  \iso \mathfrak{M}(-)
$$
in which $\co_{\mathcal{M},x}^\circ$ is the completion of the strictly Henselian local ring of $\mathcal{M}$ at $x$, and  the second 
arrow is defined by passage to $p$-Barsotti-Tate groups $( A,\rho)\mapsto ( A_{p^\infty} , \rho )$.    Similarly, if we let 
$y\in \mathcal{Y}(\F)$ be the point corresponding to $(\mathfrak{a},\lambda)$ with its above $\co_{\mathcal{E}_0,\mathcal{F}}$-action 
then  there are isomorphisms
$$
\Hom_{\ProArt}(  \co_{\mathcal{Y},y}^\circ   , - )\iso  \mathfrak{Y}^\mathfrak{a}(-)  \iso \mathfrak{Y}(-).
$$
In particular there is a commutative diagram in $\ProArt$
$$
\xymatrix{
{  \OO_{\mathcal{Y},y} } \ar[r]\ar[d] &     { R_\mathfrak{Y}  }  \ar[d]\\
 {  \OO_{\mathcal{M},x} }  \ar[r] &   {  R_\mathfrak{M} }
}
$$
in which the horizontal arrows are isomorphisms.

By definition of $\mathfrak{M}$, for an object $R$ of $\ProArt$ an element of 
$$
\mathfrak{M}(R) = \mil\mathfrak{M}(R/\mathfrak{m}_R^k)
$$
is a compatible family $(\mathfrak{G}^{(k)},\rho^{(k)})$ of deformations of $\mathfrak{g}$ to $R/\mathfrak{m}_R^k$.  One would like to 
know that such a family comes from a single deformation $(\mathfrak{G},\rho)$ of $\mathfrak{g}$ to $R$. This is true in great 
generality (see \cite[Lemma 2.4.4]{dejong95}), but in this particular case one can use the bijection
$$
\Hom_{\ProArt}(\co_{\mathcal{M},x}^\circ, R)\iso \mathfrak{M}(R)
$$
to see that the $p$-divisible group of the pullback of the universal Hilbert-Blumenthal moduli via
$$
\Spec(R)\map{}\Spec(\co_{\mathcal{M},x}^\circ)\map{}\mathcal{M}
$$
gives the desired deformation of $\mathfrak{g}$ to $R$.  Similarly any element of $\mathfrak{Y}(R)$, $\mathfrak{M}^\mathfrak{a}(R)$, 
or $\mathfrak{Y}^\mathfrak{a}(R)$, \emph{a priori} defined as a compatible family of deformations  to Artinian quotients of $R$, in fact 
determines  a  deformation to  $R$ (necessarily unique  by \cite[Corollary 8.4.6]{FGA}).

We will exploit the isomorphism $\co_{\mathcal{Y},y}^\circ \iso R_\mathfrak{Y}$ to deduce properties about the deformation space 
$\mathfrak{Y}$ which seem difficult to obtain by working purely in the context of $p$-Barsotti-Tate groups.  The following lemma is a 
good example of this.

\begin{Lem}\label{Lem:etale completion}
The $\QQ_p$-algebra $R_{\mathfrak{Y}}[1/p]$ is a finite product of field extensions  of finite degree.
\end{Lem}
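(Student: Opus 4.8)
The plan is to use the isomorphism $R_{\mathfrak Y}\iso\co^\circ_{\mathcal{Y},y}$ established above to move the question to the Hilbert--Blumenthal surface $\mathcal{Y}$, where inverting $p$ discards the vertical part of $\mathcal{Y}$ near $y$ and leaves only the complex multiplication points, which form a zero-dimensional reduced $\Q$-scheme.  Since $R_{\mathfrak Y}$ is a complete Noetherian local $\ZZ_p$-algebra, $R_{\mathfrak Y}[1/p]$ is Noetherian, and I would reduce the statement to proving it is (a) zero-dimensional and (b) reduced.  Indeed, (a) makes $R_{\mathfrak Y}[1/p]$ a finite product of Artinian local $\QQ_p$-algebras; by (b) each factor equals its residue field; and each such residue field is finite over $\QQ_p$ because, for a prime $\mathfrak q\subset R_{\mathfrak Y}$ with $p\notin\mathfrak q$, property (a) forces $R_{\mathfrak Y}/\mathfrak q$ to be a one-dimensional complete local Noetherian domain with $p\ne0$ and residue field $\F$, hence --- by Cohen's structure theorem --- module-finite over a complete discrete valuation ring of mixed characteristic with residue field $\F$, that is, over a finite extension of $\ZZ_p=W(\F)$, so that $(R_{\mathfrak Y}/\mathfrak q)[1/p]$ is finite over $\QQ_p$.

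To prove (a) and (b) the essential input is that the generic fibre $\mathcal{Y}_\Q=\mathcal{Y}\times_{\Spec\Z}\Spec\Q$ is \'etale over $\Q$, equivalently zero-dimensional and reduced.  This is the infinitesimal rigidity of complex multiplication in characteristic zero: a polarized RM abelian surface carrying an action of the order $\cmorder$ --- whose fraction field $\mathcal{E}$ is a quartic CM field --- admits no nonzero first-order deformation respecting all of this structure, since such a deformation would force the $\mathcal{E}$-action to deform in the isogeny category, impossible for a CM abelian variety by a CM field of degree equal to twice the dimension; one can check this directly or invoke the standard theory of CM points on Hilbert--Blumenthal surfaces.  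Granting it: because $y$ lies over $p$, every prime of the \emph{Zariski} local ring $\co_{\mathcal{Y},y}$ not containing $p$ is the generic point of an irreducible closed subset of $\mathcal{Y}$ through $y$ that dominates $\Spec\Z$, hence lies in $\mathcal{Y}_\Q$; as $\mathcal{Y}_\Q$ is zero-dimensional such subsets have dimension one and are pairwise incomparable, and as $\mathcal{Y}_\Q$ is reduced the corresponding localizations of $\co_{\mathcal{Y},y}[1/p]$ are reduced, so $\co_{\mathcal{Y},y}[1/p]$ is itself zero-dimensional and reduced.  Finally $\mathcal{Y}$ is of finite type over $U$, hence excellent, so the canonical map $\co_{\mathcal{Y},y}\map{}\co^\circ_{\mathcal{Y},y}=R_{\mathfrak Y}$ (strict henselization followed by completion) is faithfully flat with geometrically regular fibres and preserves dimensions of quotients; inverting $p$, the ring $R_{\mathfrak Y}[1/p]$ is flat over the zero-dimensional reduced ring $\co_{\mathcal{Y},y}[1/p]$ with geometrically reduced fibres, and each of its minimal primes restricts to a prime of coheight $\le1$ on $\co_{\mathcal{Y},y}$, so (as $p$ lies in the maximal ideal) $R_{\mathfrak Y}[1/p]$ is reduced and zero-dimensional, which finishes the argument.

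The step I expect to be the real obstacle is the assertion that $\mathcal{Y}_\Q$ is zero-dimensional and reduced --- the rigidity of the complex multiplication points.  Everything after that is bookkeeping: one need only propagate reducedness and zero-dimensionality through the strict henselization and the completion, and this is exactly what the excellence of $\co_{\mathcal{Y},y}$ provides.
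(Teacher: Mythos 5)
Your argument is correct, but it reaches the conclusion by a genuinely different route than the paper. You take as the key input the rigidity of CM points in characteristic zero (no first-order deformations of a triple $(A,\lambda,i)$ over a characteristic-zero field, because the Hodge filtration is an $\mathcal{E}\otimes k$-direct summand of $H^1_{dR}$ and so lifts uniquely), giving that $\mathcal{Y}_{/\Q}$ is unramified over $\Q$, hence zero-dimensional and reduced; the paper instead proves the stronger statement that $\mathcal{Y}_{/\Q}$ is a disjoint union of spectra of number fields by choosing an auxiliary prime $\ell$ with $\cmorder\otimes_\Z\Z_\ell\iso\Z_\ell^4$ and invoking Serre--Tate theory of ordinary abelian varieties, as in \cite[Lemma 3.1.3]{howardA}. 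The second halves also diverge: you transfer zero-dimensionality and reducedness from $\co_{\mathcal{Y},y}[1/p]$ to $R_{\mathfrak{Y}}[1/p]$ via excellence (flatness with geometrically regular formal fibres) and then get finiteness of the residue fields from the fact that each $R_{\mathfrak{Y}}/\mathfrak{q}$ is a one-dimensional complete local domain, hence module-finite over $\ZZ_p$ since $p$ is a system of parameters; the paper avoids this machinery entirely by passing to the torsion-free quotient $S=R/I$ of an affine neighborhood, observing that $S$ is finite flat over $\ZZ_p$ and hence already a product of complete local rings, so that $R_{\mathfrak{Y}}[1/p]\iso\widehat{R}_y\otimes_{\ZZ_p}\QQ_p$ is literally a direct factor of the product of fields $R\otimes_{\ZZ_p}\QQ_p$, which delivers reducedness, zero-dimensionality, and finite degree over $\QQ_p$ in one stroke. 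Your route buys independence from the auxiliary prime and from the global integral structure, at the cost of the formal-fibre bookkeeping: note that the zero-dimensionality ascent really does need the observation you compress into ``preserves dimensions of quotients,'' namely that any prime of $R_{\mathfrak{Y}}$ not containing $p$ contracts to a prime $\mathfrak{q}\subset\co_{\mathcal{Y},y}$ with $\dim(\co_{\mathcal{Y},y}/\mathfrak{q})=1$, whence $R_{\mathfrak{Y}}/\mathfrak{q}R_{\mathfrak{Y}}$ is again one-dimensional and its primes avoiding $p$ are minimal; also the rigidity statement must be checked at points with arbitrary characteristic-zero residue fields, which the Hodge-filtration argument does give. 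With those details spelled out, your proof is complete.
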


\begin{proof}
Exactly as in \cite[Lemma 3.1.3]{howardA} one can choose a prime $\ell\in U$ for which $\cmorder\otimes_\Z\Z_\ell\iso \Z_\ell^4$, and 
use the Serre-Tate deformation theory of ordinary abelian varieties to prove that $\mathcal{Y}\times_{U} \Spec( W(\F_\ell^\alg))$ is 
isomorphic to a disjoint union of copies of $\Spec(W(\F_\ell^\alg))$.  It follows that $\mathcal{Y}_{/\Q}$ is a disjoint union of spectra of 
number fields.

Let $\Spec(R)\map{}\mathcal{Y}_{/\ZZ_p}$ be an open affine neighborhood of the point $y\in \mathcal{Y}_{/\ZZ_p}(\F)$  
corresponding to the triple $(\mathfrak{a},\lambda,i)$.  By the previous paragraph   $R\otimes_{\ZZ_p}\QQ_p$   is  a finite product of 
field extensions of $\QQ_p$ of finite degree.     Let $R_y$ be the local ring of $R$ at $y$ and let $\widehat{R}_y$ be the completion of 
$R_y$ with respect to the topology induced by its maximal ideal.    We let  $I\subset R$ be the ideal of $\ZZ_p$-torsion elements and 
set $S=R/I$.  The  local ring $S$ is then free of finite rank as a $\ZZ_p$-module, and so admits a decomposition 
$$
S\iso \prod_{\mathfrak{m}} S_\mathfrak{m}
$$
as a product of complete and separated local rings, where $\mathfrak{m}$ runs over the finitely many maximal ideals of $S$.  There 
are two possibilities to consider: either the $\ZZ_p$-algebra map $R\map{}\F$ determined by $y$ factors through $S$, or it does not.   
If $R\map{}\F$ does not factor through $S$ then the local ring  $R_y$ contains an invertible $\ZZ_p$-torsion element, and hence 
$\widehat{R}_y$ is  $\ZZ_p$-torsion.  If $R\map{}\F$ does factor through $S$ then  there is a unique factor $S_\mathfrak{m}$ in the 
above decomposition  for which the composition  $R\map{}S_\mathfrak{m}$ extends to a (necessarily surjective) homomorphism of 
local rings $R_y\map{}S_\mathfrak{m}$ with $\ZZ_p$-torsion kernel $I\otimes_R R_y$.  As $S_\mathfrak{m}$ is complete, this map  
extends uniquely to a homomorphism $\widehat{R}_y\map{}S_\mathfrak{m}$, still with $\ZZ_p$-torsion kernel.  We deduce that 
$$
\widehat{R}_y\otimes_{\ZZ_p}\QQ_p \iso S_\mathfrak{m}\otimes_{\ZZ_p}\QQ_p
$$
is a  direct factor of the product of fields
$$
R\otimes_{\ZZ_p}\QQ_p\iso S\otimes_{\ZZ_p}\QQ_p.
$$
In either case we see that $\widehat{R}_y\otimes_{\ZZ_p}\QQ_p$ is a finite product of  field extensions of  $\QQ_p$ of finite degree.   
Using  $\widehat{R}_y  \iso \co_{Y,y}^\circ \iso R_\mathfrak{Y}$ completes the proof.
\end{proof}

\begin{Cor}\label{Cor:horizontal reduced}
Let $\mathfrak{p}$ be a  prime ideal of $R_\mathfrak{Y}$ with $p\not\in\mathfrak{p}$.  Then the local ring 
$R_{\mathfrak{Y},\mathfrak{p}}$ is a field extension of $\QQ_p$ of finite degree. In particular, every horizontal component of 
$\mathfrak{Y}$ has multiplicity one.
\end{Cor}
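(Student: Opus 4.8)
The plan is to deduce this corollary directly from Lemma \ref{Lem:etale completion} together with standard commutative algebra. First I would recall that by Lemma \ref{Lem:etale completion} the ring $A \define R_{\mathfrak{Y}}[1/p]$ is a finite product $\prod_i K_i$ of finite extensions of $\QQ_p$; in particular $A$ is a finite-dimensional $\QQ_p$-algebra, hence Artinian, hence Noetherian of Krull dimension zero. The localization $R_{\mathfrak{Y},\mathfrak{p}}$ at a prime $\mathfrak{p}$ with $p \notin \mathfrak{p}$ factors through the localization of $A$: since $p$ is a unit in $R_{\mathfrak{Y},\mathfrak{p}}$, the map $R_{\mathfrak{Y}} \to R_{\mathfrak{Y},\mathfrak{p}}$ extends to $A \to R_{\mathfrak{Y},\mathfrak{p}}$, and in fact $R_{\mathfrak{Y},\mathfrak{p}} = A_{\mathfrak{p}A}$ localized at the prime of $A$ corresponding to $\mathfrak{p}$ (the primes of $R_{\mathfrak{Y}}$ not containing $p$ are in bijection with the primes of $A$).

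Next I would identify $R_{\mathfrak{Y},\mathfrak{p}}$ with a localization of the Artinian ring $A = \prod_i K_i$. Localizing a finite product of fields at one of its primes picks out a single factor: the primes of $\prod_i K_i$ are exactly the kernels of the projections $\prod_i K_i \to K_j$, and the localization at such a prime is canonically isomorphic to $K_j$. Hence $R_{\mathfrak{Y},\mathfrak{p}} \iso K_j$ for some $j$, which is a finite extension of $\QQ_p$, proving the first assertion.

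For the final sentence, recall that a horizontal component of $\mathfrak{Y}$ corresponds, by Definition \ref{Def:components}, to a minimal prime $\mathfrak{p} \subset R_{\mathfrak{Y}}$ with $p \notin \mathfrak{p}$, and its multiplicity is $\length_{R_{\mathfrak{Y},\mathfrak{p}}}(R_{\mathfrak{Y},\mathfrak{p}})$. Since we have just shown $R_{\mathfrak{Y},\mathfrak{p}}$ is a field, its length as a module over itself is $1$, so $\mathrm{mult}_{\mathfrak{Y}}(\mathfrak{C}) = 1$.

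I do not expect a serious obstacle here: the content is entirely in Lemma \ref{Lem:etale completion}, which has already been established, and what remains is the elementary observation that localizing the Artinian ring $R_{\mathfrak{Y}}[1/p]$ at a prime yields a field. The only point requiring a moment's care is the bijection between primes of $R_{\mathfrak{Y}}$ avoiding $p$ and primes of $R_{\mathfrak{Y}}[1/p]$, together with compatibility of the two localizations — but this is the standard behavior of localization and presents no difficulty.
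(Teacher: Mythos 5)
Your argument is correct and is exactly the route the paper intends: the paper simply declares the corollary ``immediate from Lemma \ref{Lem:etale completion}'', and your write-up supplies the standard details (primes of $R_{\mathfrak{Y}}$ avoiding $p$ correspond to primes of $R_{\mathfrak{Y}}[1/p]$, and localizing the finite product of fields at such a prime picks out one field factor, whence multiplicity one). Nothing further is needed.
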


\begin{proof}
This is immediate from Lemma \ref{Lem:etale completion}.
\end{proof}

 For each $\xi\in\co_{E}^\times$ let  $I_\xi \subset\co_{\mathcal{E}_0,\mathcal{F}}$ be the proper fractional ${\cmorder}$-ideal defined 
 by
$$
I_\xi \otimes_\Z\Z_\ell =  \begin{cases}
\xi \cdot ( {\cmorder}\otimes_\Z\Z_p )  &\mathrm{if\ } \ell=p \\
{\cmorder}\otimes_{\Z}\Z_\ell &\mathrm{if\ }\ell\not=p
\end{cases}
$$
for all primes $\ell$.  Let  $( A,\rho) \in  \mathfrak{Y}^\mathfrak{a}(R)$ be a deformation of $\mathfrak{a}$ to some object $R$ of $\Art$ 
for which  the action of ${\cmorder}$ lifts.   Denoting by $ A\mapsto A^\sim$ the reduction from $R$ to $\F$, there are canonical 
isomorphisms
\begin{equation}\label{twisted reduction}
(A\otimes_{{\cmorder}} I_\xi)^\sim  \iso \mathfrak{a}\otimes_{{\cmorder}} I_\xi \iso \mathfrak{a}
\end{equation}
in which the first isomorphism is  
$$
( A\otimes_{{\cmorder}} I_\xi )^\sim \iso
  A^\sim \otimes_{{\cmorder}} I_\xi \map{  \rho^{-1}\otimes \mathrm{id}  } \mathfrak{a} \otimes_{{\cmorder}} I_\xi 
$$
and the second is given on points by  
$$
\mathfrak{a}(S)\otimes_{{\cmorder}}I_\xi \map{} \mathfrak{a}(S) \qquad P\otimes \alpha\mapsto \alpha\cdot P
$$
for any $\F$-scheme $S$ (the latter makes sense because the action of ${\cmorder}$ on $\mathfrak{a}$ extends to an action of 
$\co_{\mathcal{E}_0,\mathcal{F}}$).   Expressed differently, we are making use of the canonical isomorphism
$$
\mathfrak{a}\otimes_{{\cmorder}}I_\xi\iso \mathfrak{a}\otimes_{\co_{\mathcal{E}_0,\mathcal{F}  }} (I_\xi\otimes_{{\cmorder}} 
\co_{\mathcal{E}_0,\mathcal{F}}) \iso \mathfrak{a}\otimes_{\co_{\mathcal{E}_0,\mathcal{F}  }} \co_{\mathcal{E}_0,\mathcal{F}} \iso 
\mathfrak{a}
$$
arising from $I_\xi\co_{\mathcal{E}_0,\mathcal{F}} = \co_{\mathcal{E}_0,\mathcal{F}}$.   Denote by $\rho\otimes_{{\cmorder}} I_\xi$  
the inverse of the composition (\ref{twisted reduction})  and  note that while $ A\otimes_{{\cmorder}}I_\xi$ depends only on the class of 
$I_\xi$ in $\mathrm{Pic}({\cmorder})$ the isomorphism $\rho \otimes_{{\cmorder}}I_\xi$ depends on the fractional ideal $I_\xi$ itself.   
We obtain a new deformation of $\mathfrak{a}$, with its ${\cmorder}$-action, to $R$
$$
( A,\rho) \otimes_{{\cmorder}}I_\xi  \define ( A\otimes_{\cmorder} I_\xi , \rho\otimes_{\cmorder} I_\xi ) \in    \mathfrak{Y}^\mathfrak{a}(R) 
$$
and the operation $\otimes_{{\cmorder}} I_\xi$ defines an automorphism of  the functor   $\mathfrak{Y}^\mathfrak{a}$.

\begin{Lem}\label{Lem:pic swindle}
For any  $\xi\in\co_{E}^\times$  the diagram
$$
\xymatrix{
{\mathfrak{Y}^\mathfrak{a} }  \ar[r]\ar[d]_{ \otimes_{{\cmorder}}I_\xi }  &   {\mathfrak{Y} }  \ar[d]^{\xi*}  \\ 
{ \mathfrak{Y}^\mathfrak{a} } \ar[r] &   {\mathfrak{Y} }
}
$$
commutes (the action on the right is that defined in \S \ref{ss:functors}).
\end{Lem}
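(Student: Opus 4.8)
The plan is to check the square commutes by following an arbitrary deformation $(A,\rho)\in\mathfrak{Y}^\mathfrak{a}(R)$, for $R$ an object of $\Art$, around both ways and producing an explicit isomorphism between the two resulting deformations of $\mathfrak g$. Going right then down, $(A,\rho)$ becomes $(A_{p^\infty},\rho_{p^\infty})$ and then $\xi*(A_{p^\infty},\rho_{p^\infty})=(A_{p^\infty},\,\rho_{p^\infty}\circ j(\xi)^{-1})$, where $j\colon\co_E\to\End(\mathfrak g)$ is the fixed action. Going down then right, it becomes $(A\otimes_\cmorder I_\xi,\,\rho\otimes_\cmorder I_\xi)$ and then its $p$-divisible group. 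The first thing I would record is the standard compatibility of the Serre tensor construction with passage to $p$-divisible groups: there is a canonical isomorphism of $\Z_{p^2}$-module $p$-divisible groups over $R$
$$
(A\otimes_\cmorder I_\xi)_{p^\infty}\iso A_{p^\infty}\otimes_{\cmorder\otimes\Z_p}(I_\xi\otimes_\Z\Z_p),
$$
compatible with reduction to $\F$, and by definition $I_\xi\otimes_\Z\Z_p=\xi\cdot(\cmorder\otimes_\Z\Z_p)$.

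The crux is that $\xi\cdot(\cmorder\otimes_\Z\Z_p)$ is a free $\cmorder\otimes_\Z\Z_p$-module of rank one with distinguished generator $\xi$, and lies inside $\co_E$ because $\cmorder\otimes_\Z\Z_p=\Z_{p^2}+p^{c_0}\co_E$. Applying $A_{p^\infty}\otimes_{\cmorder\otimes\Z_p}(-)$ to the module isomorphism $\cmorder\otimes\Z_p\map{\cdot\xi}I_\xi\otimes\Z_p$ produces an isomorphism of $\Z_{p^2}$-module $p$-divisible groups over $R$
$$
\Phi\colon A_{p^\infty}\otimes_{\cmorder\otimes\Z_p}(I_\xi\otimes\Z_p)\map{\sim}A_{p^\infty},\qquad g\otimes\xi a\mapsto a\cdot g\quad(a\in\cmorder\otimes\Z_p),
$$
with inverse $g\mapsto g\otimes\xi$. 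I would take $\Phi$, composed with the identification of the previous display, as the candidate isomorphism between the counterclockwise and clockwise images of $(A,\rho)$. Since $\Phi$ is already an isomorphism of $p$-divisible groups respecting the $\Z_{p^2}$-action, the only remaining point is to check that on special fibres it carries the rigidification $(\rho\otimes_\cmorder I_\xi)_{p^\infty}$ to $\rho_{p^\infty}\circ j(\xi)^{-1}$.

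For that I would unwind $\rho\otimes_\cmorder I_\xi$, which by definition is the inverse of the composition (\ref{twisted reduction}). On $p$-divisible groups the arrow $\rho^{-1}\otimes\mathrm{id}$ becomes $\rho_{p^\infty}^{-1}\otimes\mathrm{id}$, while the arrow $P\otimes\alpha\mapsto\alpha P$ from $\mathfrak a\otimes_\cmorder I_\xi$ to $\mathfrak a$ becomes $h\otimes\beta\mapsto j(\beta)h$ from $\mathfrak g\otimes_{\cmorder\otimes\Z_p}(I_\xi\otimes\Z_p)$ to $\mathfrak g$; here one uses that, under the fixed identification $\mathfrak a_{p^\infty}\iso\mathfrak g$ implicit in the isomorphism $\mathfrak Y^\mathfrak a\iso\mathfrak Y$, the $\co_{\mathcal E_0,\mathcal F}$-action on $\mathfrak a$ corresponds to $j$ (it corresponds to an $\co_E$-action extending $j|_{\cmorder\otimes\Z_p}$, and since $E=\Q_{p^2}(\gamma)$ there is exactly one such extension). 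Taking $\beta=\xi a$ with $a\in\cmorder\otimes\Z_p$ and using that $\rho_{p^\infty}$ is $\cmorder\otimes\Z_p$-linear, one computes that $(\rho\otimes_\cmorder I_\xi)_{p^\infty}=\nu^{-1}$, where $\nu(\bar g\otimes\xi a)=j(\xi)\cdot\rho_{p^\infty}^{-1}(a\cdot\bar g)$ for $\bar g$ in the special fibre; composing the desired identity $\Phi_{/\F}\circ\nu^{-1}=\rho_{p^\infty}\circ j(\xi)^{-1}$ with $\nu$ then reduces it to the immediate equality $\Phi_{/\F}(\bar g\otimes\xi a)=a\cdot\bar g=\rho_{p^\infty}\big(j(\xi)^{-1}(\nu(\bar g\otimes\xi a))\big)$. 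That is the entire calculation; I expect the only place where care is needed is matching variances — so that the $j(\xi)^{-1}$ in the definition of $\xi*$ is exactly cancelled by the $j(\xi)$ coming out of the twisted-reduction isomorphism together with the ``division by $\xi$'' built into $\Phi$ — and correctly identifying the $\co_E$-action that reappears after passing to $p$-divisible groups. Finally, since $\mathfrak Y^\mathfrak a$ and $\mathfrak Y$ are functors on $\Art$ and every map involved is functorial in $R$, checking the square for objects of $\Art$ suffices.
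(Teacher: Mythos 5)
Your proof is correct and is essentially the paper's argument: both identify $(A\otimes_{\cmorder}I_\xi)_{p^\infty}$ with $A_{p^\infty}\otimes_{\Z_{p^2}[\gamma]}\xi\Z_{p^2}[\gamma]$ and use the tautological isomorphism with $A_{p^\infty}$ coming from the generator $\xi$ (the paper writes it as $P\mapsto P\otimes\xi$, you use its inverse), then check on the special fibre that unwinding $\rho\otimes_{\cmorder}I_\xi$ produces exactly the factor $j(\xi)$ cancelling the $\xi^{-1}$ in the definition of $\xi*$. Your write-up just makes explicit the reduction computation and the $\co_E$-equivariance of the identification $\mathfrak{a}_{p^\infty}\iso\mathfrak{g}$, which the paper records as a commutative diagram with bottom arrow $\xi$.
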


\begin{proof}
Fix an object $R$ of $\Art$ and a deformation $( A,\rho) \in \mathfrak{Y}^\mathfrak{a}(R)$.   
The $p$-Barsotti-Tate group of $ A\otimes_{{\cmorder}} I_\xi$ represents the functor on $R$-schemes
$$
S \mapsto
 A_{p^\infty}(S)\otimes_{\Z_{p^2}[\gamma]} \xi \Z_{p^2}[\gamma],
 $$ 
and so there is an isomorphism of $p$-Barsotti-Tate groups
\begin{equation}\label{twisting deformation match}
 A_{p^\infty} \map{} ( A\otimes_{{\cmorder}}I_\xi)_{p^\infty}
\end{equation}
which on $S$-points is given  by $P\mapsto P\otimes\xi$.  Reducing this isomorphism modulo $\mathfrak{m}_R$ one finds the 
commutative diagram
$$
\xymatrix{
{  A^\sim_{p^\infty}    } \ar[r]    &  {    ( A \otimes_{{\cmorder}} I_\xi  )^\sim_{p^\infty}  }   \\
{  \mathfrak{a}_{p^\infty} } \ar[r]_{\xi}  \ar[u]^{\rho}  & {   \mathfrak{a}_{p^\infty} } \ar[u]_{\rho\otimes_{{\cmorder}} I_\xi}.
}
$$
In other words (\ref{twisting deformation match})  defines an isomorphism of  deformations 
$$
( A_{p^\infty} ,\rho\circ\xi^{-1} ) \iso 
( ( A\otimes_{{\cmorder}}I_\xi)_{p^\infty},  \rho\otimes_{{\cmorder}} I_\xi  )
$$
of $\mathfrak{a}_{p^\infty}\iso \mathfrak{g}$.
 \end{proof}

 Suppose $R$ is the ring of integers in a finite extension of $\QQ_p$ and fix some $(\mathfrak{G}, \rho)\in\mathfrak{Y}(R)$.  Let 
 $\mathrm{Ta}_p(\mathfrak{G})$ be the $p$-adic Tate module of $\mathfrak{G}$.  Note that the action of $\Z_{p^2}$ and  the 
 endomorphism $\gamma \in \co_E$ make $\mathrm{Ta}_p(\mathfrak{G})$ into a $\Z_{p^2}[\gamma]$-module, and that 
 $\mathrm{Ta}_p(\mathfrak{G})\otimes_{\Z_p}\Q_p$ is free of rank one over $E$.

\begin{Def}\label{Def:invariants}
Define the \emph{geometric CM-order} of $(\mathfrak{G} ,\rho)$ by
 $$
 \co(\mathfrak{G}) = \{ x\in E\mid x \cdot \mathrm{Ta}_p(\mathfrak{G})\subset \mathrm{Ta}_p(\mathfrak{G})\} 
 $$
and define the \emph{reflex type} of $(\mathfrak{G}, \rho)$ to be the isomorphism class of   $\mathrm{Lie}(\mathfrak{G}) $ as a module 
over  $\Z_{p^2}[\gamma]\otimes_{\Z_p}R$. 
\end{Def}

\begin{Rem}\label{Rem:reflex pairs}
The action of $\Z_p^2\otimes_{\Z_p} R\iso R\times R$ on $\mathrm{Lie}(\mathfrak{G})$ induces a decomposition 
$$
\mathrm{Lie}(\mathfrak{G}) \iso \Lambda_1\oplus \Lambda_2
$$
in such a way that  each $\Lambda_i$ a free $R$-module of rank one, and so that $\Z_{p^2}$ acts through the embedding 
$\Z_{p^2}\map{}\ZZ_p \map{} R$ on $\Lambda_1$ and through the conjugate embedding on $\Lambda_2$.  The action of  
$\Z_p[\gamma_0]$  preserves this decomposition and determines a pair of embeddings $(\phi_1,\phi_2)$ of  $\Z_p[\gamma_0]$ 
into $\End_R(\Lambda_i)\iso R$.  This pair of embeddings completely determines the reflex type of the deformation 
$(\mathfrak{G},\rho)\in\mathfrak{Y}(R)$.
\end{Rem}

The following proposition will be crucial in the determination of the horizontal components of $\mathfrak{Y}$ carried out in 
\S \ref{s:horizontal}.

\begin{Prop}\label{Prop:CM orbits}
Let $R$ be the integer ring of a finite extension of $\QQ_p$.  Two deformations 
$$
(\mathfrak{G}, \rho), (\mathfrak{G}' ,\rho') \in\mathfrak{Y}(R)
$$
lie in the same $\co_E^\times$-orbit if and only if they have the same geometric CM-order and the same reflex type.
\end{Prop}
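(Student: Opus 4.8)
The plan is to prove the two implications separately; the forward one is essentially formal and the converse carries the content. \emph{Forward implication:} suppose $(\mathfrak{G}',\rho')=\xi*(\mathfrak{G},\rho)=(\mathfrak{G},\rho\circ\xi^{-1})$ for some $\xi\in\co_E^\times$. Then the underlying $\Z_{p^2}$-$p$-divisible groups, together with their (unique) lifts of $\gamma$, are literally the same object; only the rigidification has changed. Hence $\mathrm{Ta}_p(\mathfrak{G}')=\mathrm{Ta}_p(\mathfrak{G})$ as $\Z_{p^2}[\gamma]$-modules, so $\co(\mathfrak{G}')=\co(\mathfrak{G})$, and $\mathrm{Lie}(\mathfrak{G}')=\mathrm{Lie}(\mathfrak{G})$ as $\Z_{p^2}[\gamma]\otimes_{\Z_p}R$-modules, so the reflex types agree. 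Thus both invariants depend only on the $\co_E^\times$-orbit.

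\emph{Converse.} Put $L=\mathrm{Frac}(R)$ and assume $(\mathfrak{G},\rho)$ and $(\mathfrak{G}',\rho')$ have the same geometric CM-order $\co'$ and the same reflex type. From $\Z_{p^2}[\gamma]=\Z_{p^2}+p^{c_0}\co_E\subseteq\co'\subseteq\co_E$ together with $\Z_{p^2}$-flatness one gets $\co'=\Z_{p^2}+p^s\co_E$ for some $0\le s\le c_0$; this is the order of conductor $p^s$ in the quadratic \'etale $\Q_{p^2}$-algebra $E$, hence Gorenstein, and it is semilocal. The first goal is an isomorphism $\mathrm{Ta}_p(\mathfrak{G})\iso\mathrm{Ta}_p(\mathfrak{G}')$ of $\co'[\Gal(\bar L/L)]$-modules. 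As $\co'$-modules, $\mathrm{Ta}_p(\mathfrak{G})$ and $\mathrm{Ta}_p(\mathfrak{G}')$ are proper fractional $\co'$-ideals in free rank-one $E$-modules; being proper over the Gorenstein order $\co'$ they are invertible, and $\mathrm{Pic}(\co')=0$ since $\co'$ is semilocal, so each is free of rank one over $\co'$. Choosing $\co'$-generators identifies each Tate module with $\co'$ and the $\Gal(\bar L/L)$-action (which commutes with $\co(\mathfrak{G})=\co'$) with characters $\chi,\chi'\colon\Gal(\bar L/L)\to\co'^\times$. These characters are crystalline, being generic fibres of $p$-divisible groups over $R$, and their Hodge--Tate data are precisely the reflex type. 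Since the residue field $\F$ of $R$ is algebraically closed, $\Gal(\bar L/L)$ has no nontrivial unramified quotient, so a crystalline character is determined by its Hodge--Tate data; therefore $\chi=\chi'$ and $\mathrm{Ta}_p(\mathfrak{G})\iso\mathrm{Ta}_p(\mathfrak{G}')$ as $\co'[\Gal(\bar L/L)]$-modules.

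Now invoke Tate's theorem that $\mathfrak{G}\mapsto\mathrm{Ta}_p(\mathfrak{G})$, with its Galois action, is fully faithful on $p$-divisible groups over the complete discrete valuation ring $R$ (mixed characteristic, perfect residue field). Hence the isomorphism of Tate modules is induced by an isomorphism $f\colon\mathfrak{G}\map{\sim}\mathfrak{G}'$ over $R$, and $f$ is $\co'$-linear, in particular $\Z_{p^2}$-linear and commuting with the lifts of $\gamma$ to $\mathfrak{G}$ and $\mathfrak{G}'$. Reducing $f$ modulo $\mathfrak{m}_R$ and transporting along the rigidifications yields $\eta=\rho'^{-1}\circ f_{/\F}\circ\rho\in\Aut_{\Z_{p^2}}(\mathfrak{g})$. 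The compatibility of $f$ with the $\gamma$-lifts, together with the fact --- built into the definition of $\mathfrak{Y}$ --- that these lifts reduce along $\rho$ and $\rho'$ to the fixed endomorphism $\gamma$ of $\mathfrak{g}$, forces $\eta$ to commute with $\gamma\in\End(\mathfrak{g})$. But $\gamma=\gamma_0\otimes 1$, and the centralizer of $\gamma_0$ in $\End(\mathfrak{g}_0)$ is $\co_{E_0}$ because $\gamma_0\notin\Z_p$ generates the maximal subfield $E_0$; hence the centralizer of $\gamma$ in $\End_{\Z_{p^2}}(\mathfrak{g})=\End(\mathfrak{g}_0)\otimes_{\Z_p}\Z_{p^2}$ is $\co_E$, so $\eta\in\co_E^\times$. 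Unwinding the rigidifications, $f$ exhibits $(\mathfrak{G}',\rho')\iso\eta*(\mathfrak{G},\rho)$ (after possibly replacing $\eta$ by $\eta^{-1}$), so the two deformations lie in a common $\co_E^\times$-orbit.

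The one genuinely delicate step is the rigidity of the Galois character: over the strictly Henselian base $R$ the reflex type must pin down the character on the Tate module exactly, not merely up to an unramified twist. This is precisely where it matters that $R$ is the integer ring of a finite extension of $\QQ_p$ rather than of a $p$-adic field with finite residue field --- over the latter the statement would fail. The lattice-theoretic input (proper ideals over a Gorenstein semilocal order are free of rank one) is routine, and everything after Tate's full faithfulness is formal.
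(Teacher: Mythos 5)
Your argument is correct in outline, but it is a genuinely different proof from the one in the paper. The paper never works with the Tate module directly: it transports the problem to the Hilbert--Blumenthal surface via Lemma \ref{Lem:pic swindle}, lifts the two deformations to CM abelian surfaces, passes to $\C$ and invokes the classical global theory of complex multiplication (same endomorphism ring and classical reflex type force the surfaces to differ by a twist by an ideal class), descends the resulting isomorphism back to $R$ by N\'eron models, and then shows the relevant ideal lies in the kernel of $\mathrm{Pic}(\cmorder)\map{}\mathrm{Pic}(\co_{\mathcal{E}_0,\mathcal{F}})$, so that it is of the form $I_\xi$ with $\xi\in\co_E^\times$; the rigidification is adjusted at the end exactly as you do. You instead stay purely local: freeness of $\mathrm{Ta}_p(\mathfrak{G})$ over the Gorenstein semilocal order $\co'=\Z_{p^2}+p^s\co_E$, rigidity of crystalline characters over a base with algebraically closed residue field (the weight-zero ratio character is crystalline, hence unramified, hence trivial), and Tate's full faithfulness over a complete DVR with perfect residue field; the centralizer computation $(\End(\mathfrak{g}_0)\otimes_{\Z_p}\Z_{p^2})\cap E=\co_E$ then produces the required $\xi\in\co_E^\times$. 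What each route buys: yours avoids any choice of global CM data and N\'eron-model descent and makes transparent exactly where the hypothesis ``residue field $\F$ algebraically closed'' enters, but it leans on $p$-adic Hodge theory in the perfect-residue-field setting (crystallinity of Tate modules of $p$-divisible groups over $R$, stability of crystallinity under subobjects, and the weak-admissibility/slope argument for ``weight zero implies unramified''), inputs you should cite explicitly; the paper's route uses only classical CM theory and machinery it must set up anyway in \S \ref{ss:hilbert-blumenthal} for Lemma \ref{Lem:etale completion} and the multiplicity-one statement, at the cost of being less self-contained on the local side. One small point to make explicit in your write-up: the identification of the reflex type with the labeled Hodge--Tate data only uses the $E\otimes_{\Q_p}\mathrm{Frac}(R)$-module structure of $\mathrm{Lie}(\mathfrak{G})$, which is all your ratio-character computation needs.
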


\begin{proof}
One implication  is obvious: if there is a $\xi\in\co_{E}^\times$  for which 
$$
(\mathfrak{G}', \rho') \iso (\mathfrak{G}, \rho\circ\xi^{-1})
$$ 
then in particular there is a  $\Z_{p^2}[\gamma]$-linear isomorphism $\mathfrak{G}'\iso \mathfrak{G}$, which implies that the 
geometric CM-orders and  reflex types agree.
For the other implication we use Lemma \ref{Lem:pic swindle}.  Fix an embedding  $\mathrm{Frac}(R)\map{} \C$.  It suffices to prove 
that if we are given  two deformations 
$$
( A, \rho), ( A', \rho') \in\mathfrak{Y}^\mathfrak{a}(R)
$$
of $\mathfrak{a}$   for which $\co( A_{p^\infty})=\co( A'_{p^\infty})$ and 
$$
\mathrm{Lie}( A) \iso \mathrm{Lie}( A' )
$$
as $\Z_{p^2}[\gamma]\otimes_{\Z_p} R$-modules,  then there is a $\xi\in \co_{E}^\times$ such that
$$
( A', \rho') \iso ( A , \rho  )  \otimes_{\cmorder}  I_\xi   
$$
as deformations of $\mathfrak{a}$.  The pro-representability of $\mathfrak{Y}^\mathfrak{a}$ implies that the map 
$\mathfrak{Y}^\mathfrak{a}(R)\map{}\mathfrak{Y}^\mathfrak{a}(R')$ is injective whenever $R'$ is the ring of integers of a finite 
extension of the fraction field of $R$, thus it suffices to prove the existence of such a $\xi$ after enlarging $R$.  Therefore we are free 
to assume that
$$
\End_{{\cmorder}}( A) =\End_{{\cmorder}}( A_{/\mathrm{Frac}(R)} )  = \End_{{\cmorder} }( A_{/\C}).
$$
The theory of complex multiplication implies that  $\End_{{\cmorder}}( A)$   is an $\co_\mathcal{F}$-order in $\mathcal{E}$ which 
satisfies 
$$
\End_{{\cmorder}}( A) \otimes_{\Z}\Z_\ell \iso \End_{{\cmorder}\otimes_{\Z}\Z_\ell } (\mathrm{Ta}_\ell( A) )
$$
for every prime $\ell$.  Using 
$$
\mathrm{Ta}_\ell( A)\iso\mathrm{Ta}_\ell(\mathfrak{a}_0)\otimes_{\Z} \co_{\mathcal{F}}
\iso (\co_{\mathcal{E}_0}\otimes_\Z\Z_\ell ) \otimes_{\Z_\ell}  (\co_{\mathcal{F}}\otimes_\Z\Z_\ell)
$$
for $\ell\not=p$  we find that  
$$
\End_{{\cmorder}}( A)\otimes_{\Z}\Z_\ell = \begin{cases}
\co( A_{p^\infty} ) & \mathrm{if\ }\ell=p \\
\co_{\mathcal{E}_0 ,\mathcal{F}}\otimes_\Z\Z_\ell & \mathrm{if\ }\ell\not=p.
\end{cases}
$$
Applying the same reasoning to $ A'$ shows that $ A_{/\C}$ and $ A'_{/\C}$  have the same endomorphism ring.  As the CM abelian 
surfaces $A_{/\C}$ and $A'_{/\C}$ have the same reflex types (in the classical sense) and endomorphism rings,  the theory of 
complex multiplication implies that there is some $I\in\mathrm{Pic}(\End_{\cmorder}(A))$ such that
$$
 A_{/\C}' \iso  A_{/\C} \otimes_{\End_{\cmorder}(A)  } I.
$$
Replacing $I$ by any one of its preimages under $\mathrm{Pic}({\cmorder})\map{}\mathrm{Pic}(\End_{\cmorder}(A) )$ we find an 
$I\in\mathrm{Pic}({\cmorder})$ such that
$$
 A_{/\C}' \iso  A_{/\C} \otimes_{{\cmorder}} I,
$$
and after possibly further enlarging $R$ we may assume that
\begin{equation}\label{pic twist}
 A_{/\mathrm{Frac}(R)}' \iso  A_{/\mathrm{Frac}(R)} \otimes_{{\cmorder}} I .
\end{equation}
Applying the theory of N\'eron models over the discrete valuation ring $R$ we find that (\ref{pic twist})  extends uniquely to an 
isomorphism
$$
 A'  \iso  A \otimes_{ {\cmorder} } I
$$
of abelian schemes over $R$.  As the reductions of $ A'$ and $ A$ to $\F$ are  isomorphic to $\mathfrak{a}$, we deduce that there 
exists an isomorphism $\mathfrak{a}\iso\mathfrak{a}\otimes_{{\cmorder} } I$ of abelian varieties over $\F$.  Hence we obtain  
isomorphisms of $\co_{ \mathcal{E}_0 , \mathcal{F} }$-modules
\begin{eqnarray*}
\Hom_{\co_{\mathcal{E}_0,\mathcal{F}}}( \mathfrak{a} , \mathfrak{a} )
& \iso &
\Hom_{\co_{\mathcal{E}_0,\mathcal{F}}}( \mathfrak{a}, \mathfrak{a}  \otimes_{{\cmorder}} I ) \\
& \iso &
\Hom_{\co_{\mathcal{E}_0,\mathcal{F}}}( \mathfrak{a}, \mathfrak{a} )  \otimes_{{\cmorder}} I .
\end{eqnarray*}
Using
$$
\Hom_{\co_{\mathcal{E}_0,\mathcal{F}}}( \mathfrak{a} , \mathfrak{a} ) \iso 
\End_{\co_{\mathcal{E}_0}} (\mathfrak{a}_0)\otimes\co_{\mathcal{F}}\iso \co_{\mathcal{E}_0,\mathcal{F}}
$$
 we find 
$$
\co_{\mathcal{E}_0,\mathcal{F}}\iso \co_{\mathcal{E}_0,\mathcal{F}}\otimes_{{\cmorder}} I
$$
as $\co_{\mathcal{E}_0, \mathcal{F} }$-modules.  In other words $I$ lies in the kernel of 
$\mathrm{Pic}({\cmorder})\map{} \mathrm{Pic}(\co_{\mathcal{E}_0,\mathcal{F}} )$.   Every such $I$ has the form $I_\xi$ for some 
$\xi\in\co_{E}^\times$, and we have now found an isomorphism $ A'\map{}  A\otimes_{{\cmorder}} I_\xi$ of abelian schemes over 
$R$.  Reducing this isomorphism modulo $\mathfrak{m}_R$ yields a commutative diagram
$$
\xymatrix{
 { (A')^\sim }  \ar[r]   &   { ( A\otimes_{{\cmorder}}I_\xi )^\sim }  \\
 {\mathfrak{a} } \ar[r] \ar[u]^{\rho'}   &  { \mathfrak{a}  } \ar[u]_{  \rho\otimes_{{\cmorder}} I_\xi  }
 }
 $$
in which the bottom horizontal arrow is some  $\co_{\mathcal{E}_0,\mathcal{F}}$-linear automorphism of $\mathfrak{a}$.  Every such 
automorphism is given by the action of some $\zeta\in\co_{\mathcal{E}_0,\mathcal{F}}^\times$, and if we replace $\xi$ by 
$\xi\zeta^{-1}$ then the bottom horizontal arrow is the identity automorphism of $\mathfrak{a}$.   This shows that there is an 
isomorphism of deformations
 $$
 ( A',\rho') \iso ( A,\rho)\otimes_{{\cmorder}} I_{\xi},
 $$
completing the proof.
\end{proof}

\begin{Lem}\label{Lem:mostly ss}
All but finitely many triples  $(A,\lambda, i) \in \mathcal{Y}(\F)$  are supersingular.
\end{Lem}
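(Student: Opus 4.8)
The plan is to reduce the assertion to the finiteness of the \emph{ordinary} locus of $\mathcal{Y}(\F)$, and then to bound that locus by a canonical lifting argument resting on the description of $\mathcal{Y}_{/\Q}$ obtained in the proof of Lemma~\ref{Lem:etale completion}. To begin, any $(A,\lambda,i)\in\mathcal{Y}(\F)$ has complex multiplication by the quartic CM field $\mathcal{E}$: the action $i$ extends the nonzero $\co_\mathcal{F}$-action and $\cmorder\otimes_\Z\Q=\mathcal{E}$ is a field, so $i$ induces an embedding $\mathcal{E}\hookrightarrow\End^0(A)$, and hence (by Tate's theorem) a faithful action on the isocrystal $N$ of $A_{p^\infty}$ of the \'etale $\Q_p$-algebra $\mathcal{E}\otimes_\Q\Q_p\iso E_0\otimes_{\Q_p}\Q_{p^2}=E$, the last isomorphism coming from the identifications fixed in \S\ref{ss:hilbert-blumenthal}.

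Next I would pin down the possible Newton polygons of $A$. The elementary input is that a $p$-divisible group over $\F$ whose isocrystal carries a faithful action of a $p$-adic field of degree equal to its height is isoclinic: such a field embeds into some isoclinic summand $\End(N_\lambda)$ of $\End(N)=\prod_\lambda\End(N_\lambda)$, but $\End(N_\lambda)$ contains no subfield of degree exceeding the height $h_\lambda$ of $N_\lambda$, which forces a single slope. Decomposing $N=\bigoplus_{v\mid p}N_v$ according to the factors of $E$ and applying this to each $N_v$ — a faithful $E_v=\mathcal{E}_v$-module, necessarily of $E_v$-rank one since $\dim_{\Q_p}N=\mathrm{height}(A_{p^\infty})=4$ — one gets that each $N_v$ is isoclinic. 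If $E_0/\Q_p$ is ramified, then $E$ is a field of degree $4$, there is a single $v\mid p$ with $\dim_{\Q_p}N_v=4=\mathrm{height}(A_{p^\infty})$, and $A_{p^\infty}$ is isoclinic of slope $\dim A/\mathrm{height}=1/2$, so \emph{every} point of $\mathcal{Y}(\F)$ is supersingular and the lemma is immediate. If $E_0/\Q_p$ is unramified, then $E\iso\Q_{p^2}\times\Q_{p^2}$, there are two primes $v_1,v_2\mid p$, each $N_{v_i}$ is isoclinic of height two and slope $d_i/2$ with $d_i\in\{0,1,2\}$, and $d_1+d_2=\dim A=2$; hence either $\{d_1,d_2\}=\{0,2\}$, in which case $A$ is ordinary, or $d_1=d_2=1$, in which case $A$ is supersingular. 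In all cases a point of $\mathcal{Y}(\F)$ is ordinary or supersingular, so it remains only to show that the ordinary points form a finite set.

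For this I would use Serre--Tate canonical lifts. Given an ordinary $(A,\lambda,i)\in\mathcal{Y}(\F)$, let $\tilde A$ be its canonical lift over $\ZZ_p=W(\F)$. By the defining property of the canonical lift of an ordinary abelian variety, all of $\End(A)$ — in particular the $\cmorder$-action $i$ — lifts uniquely to $\tilde A$; lifting the Rapoport decomposition of $\mathrm{Lie}(A)$ via Nakayama shows $\mathrm{Lie}(\tilde A)$ is free of rank one over $\co_\mathcal{F}\otimes_\Z\ZZ_p$, so $\tilde A$ is an RM abelian surface; and the Deligne--Pappas polarization $\lambda$ lifts uniquely to $\tilde A$ by the corollary to \cite[Theorem~3]{vollaard} recalled in \S\ref{ss:hilbert-blumenthal}. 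Thus $(\tilde A,\tilde\lambda,\tilde i)\in\mathcal{Y}(\ZZ_p)$ reduces to $(A,\lambda,i)$, so distinct ordinary triples give distinct canonical lifts and we obtain an injection from the ordinary locus of $\mathcal{Y}(\F)$ into $\mathcal{Y}(\ZZ_p)$. Since $\mathcal{Y}$ is a separated scheme of finite type over $U$ (being finite over the separated $\mathcal{M}$), the map $\mathcal{Y}(\ZZ_p)\to\mathcal{Y}(\QQ_p)=\mathcal{Y}_{/\QQ_p}(\QQ_p)$ is injective; and by the proof of Lemma~\ref{Lem:etale completion} together with quasi-compactness, $\mathcal{Y}_{/\Q}$ is a \emph{finite} disjoint union of spectra of number fields, so $\mathcal{Y}_{/\QQ_p}$ is finite over $\QQ_p$ and $\mathcal{Y}_{/\QQ_p}(\QQ_p)$ is finite. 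Therefore the ordinary locus of $\mathcal{Y}(\F)$ is finite, which is what was needed.

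The step I expect to demand the most care is the Newton-polygon dichotomy of the second paragraph: everything there hinges on the precise decomposition of $p$ in $\mathcal{E}$ — equivalently on the shape of $E=E_0\otimes_{\Q_p}\Q_{p^2}$ — and on the isoclinicity statement for $p$-divisible groups equipped with a full-degree CM action. Once those are in place the unramified case is the trivial arithmetic of $\{d_1,d_2\}$ with $d_1+d_2=2$, the ramified case collapses completely, and the residual finiteness claim is standard deformation theory combined with what was already proved about $\mathcal{Y}_{/\Q}$.
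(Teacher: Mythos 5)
Your proof is correct, and its second half (Serre--Tate canonical lifts of the ordinary points injecting into $\mathcal{Y}(\ZZ_p)$, with finiteness of $\mathcal{Y}(\ZZ_p)$ extracted from the proof of Lemma \ref{Lem:etale completion}) is exactly the paper's argument. Where you diverge is the ordinary/supersingular dichotomy: you use the full quartic CM algebra $E=E_0\otimes_{\Q_p}\Q_{p^2}$ acting on the isocrystal and an isoclinicity lemma for full-degree actions, whereas the paper only uses the action of $\Z_{p^2}\iso\co_\mathcal{F}\otimes_\Z\Z_p$: among the three possible slope sequences $\{0,0,1,1\}$, $\{0,\tfrac12,\tfrac12,1\}$, $\{\tfrac12,\tfrac12,\tfrac12,\tfrac12\}$ of an abelian surface, the middle one is excluded because its slope-$0$ piece has height one, and $\End^0$ of a height-one isocrystal is $\Q_p$, which contains no copy of $\Q_{p^2}$. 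The paper's route is lighter and independent of the extra endomorphism $\gamma$; yours buys more (in the ramified case it shows \emph{every} point of $\mathcal{Y}(\F)$ is supersingular), at the cost of a heavier input. Two small points to tighten if you keep your version: a field of degree equal to the height embeds unitally into \emph{every} isoclinic factor of $\End(N)$ (not ``some''), and the degree bound in each factor is what forces a single slope; and in the unramified case the claim that each $N_{v_i}$ has height two is not just the dimension count $4=2+2$ --- you should note that Frobenius is $\sigma$-semilinear and so interchanges the two $\Q_{p^2}$-eigenspaces inside each $N_{v_i}$ (equivalently, that $N$ is free of rank one over $E\otimes_{\Q_p}\QQ_p$), which rules out an uneven split. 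Also, faithfulness of $\mathcal{E}$ on the isocrystal needs only the injectivity of $\End^0(A)\map{}\End^0(A_{p^\infty})$, not Tate's theorem. None of these affects the validity of the argument.
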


\begin{proof}
The only possible slope sequences for the $p$-Barsotti-Tate group of an abelian surface over $\F$ are $\{0,\frac{1}{2},1\}$,  
$\{0,0,1,1\}$,  or $\{\frac{1}{2},\frac{1}{2}\}$.  A $p$-Barsotti-Tate group with slopes $\{0,\frac{1}{2},1\}$ cannot admit an action of 
$\Z_{p^2}$, thus every point of $\mathcal{Y}(\F)$ is either ordinary or supersingular.  If $\mathcal{Y}^\ord(\F)\subset \mathcal{Y}(\F)$ is 
the subset of ordinary points then taking Serre-Tate canonical lifts   gives an injection 
$\mathcal{Y}^\ord(\F)\map{}\mathcal{Y}(\ZZ_p)$.   The proof of Lemma \ref{Lem:etale completion} implies that $\mathcal{Y}(\ZZ_p)$ is 
finite, completing the proof.
\end{proof}

Suppose $R$ is an object of $\ProArt$ with $pR=0$ and that  $\mathfrak{G}$ is a $p$-Barsotti-Tate group of dimension $k$ over $R$.   
The \emph{Hasse-Witt invariant} of $\mathfrak{G}$ is the homomorphism of free rank one $R$-modules
$$
\wedge^k\mathrm{Ver}:\wedge^k\mathrm{Lie}(\mathfrak{G}^{(p)}) \map{}\wedge^k\mathrm{Lie}(\mathfrak{G})
$$ 
where $\mathrm{Ver}:\mathfrak{G}^{(p)}\map{}\mathfrak{G}$ is the usual Verschiebung morphism.   Consider in particular the 
universal deformation $(\mathfrak{g}^\univ,\rho^\univ)\in \mathfrak{M}(R_\mathfrak{M})$ and its reduction $(\mathfrak{G},\rho)$ to 
$R=R_{\mathfrak{M}}\otimes _{\ZZ_p}\F$.   By choosing an isomorphism $ R\iso \wedge^2\mathrm{Lie}(\mathfrak{G})$ we may 
identify the kernel of the Hasse-Witt invariant  with an ideal $I\subset R$.  The kernel of 
$$
R_{\mathfrak{M}}\map{} R\map{}R/I
$$
is denoted $I_{\mathrm{HW}}$, and the closed formal subscheme 
$$
\mathfrak{M}_{\mathrm{HW}} \define \Spf(R_{\mathfrak{M}} / I_{\mathrm{HW}} ) \map{}\mathfrak{M}
$$
is the \emph{Hasse-Witt locus} of $\mathfrak{M}$.

\begin{Prop}\label{Prop:hasse-witt}
Suppose  $\mathfrak{C}$ is a vertical component of $\mathfrak{Y}$ in the sense of Definition \ref{Def:components}.  The closed 
immersion $\mathfrak{C}\map{}\mathfrak{M}$ factors as 
$$
\mathfrak{C}\map{}\mathfrak{M}_{\mathrm{HW}} \map{}\mathfrak{M}.
$$
\end{Prop}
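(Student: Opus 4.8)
The plan is to rephrase the claimed factorization as the vanishing of the Hasse-Witt invariant along $\mathfrak{C}$, to reduce this to a computation at the generic point of $\mathfrak{C}$, and then to use the identification $R_\mathfrak{Y}\iso\OO_{\mathcal{Y},y}$ of \S \ref{ss:hilbert-blumenthal} together with Lemma \ref{Lem:mostly ss} to see that the $p$-divisible group living there is supersingular. Concretely, write $\mathfrak{C}=\Spf(R)$ with $R=R_\mathfrak{Y}/\mathfrak{p}$ for the relevant minimal prime $\mathfrak{p}$; since $\mathfrak{C}$ is vertical, $p\in\mathfrak{p}$, so $R$ is a complete local domain containing $\F$, with fraction field $K$. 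Because $\mathfrak{M}_{\mathrm{HW}}=\Spf(R_\mathfrak{M}/I_{\mathrm{HW}})$, the factorization $\mathfrak{C}\to\mathfrak{M}_{\mathrm{HW}}\to\mathfrak{M}$ exists if and only if $I_{\mathrm{HW}}$ maps to $0$ under $R_\mathfrak{M}\to R_\mathfrak{Y}\to R$; and since $R\hookrightarrow K$, it is enough to prove that the image of $I_{\mathrm{HW}}$ in $K$ vanishes.

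Here I would invoke base change: the formation of Verschiebung, of $\mathrm{Lie}$, and of Frobenius twists all commute with base change, so (after trivializing the relevant invertible modules) the construction of the Hasse-Witt locus identifies the image of $I_{\mathrm{HW}}$ in $K$ with the Hasse-Witt invariant of the $p$-divisible group $\mathfrak{G}_\eta$ obtained by restricting the universal deformation to the generic point $\eta=\Spec K$ of $\mathfrak{C}$. A supersingular $p$-divisible group of positive dimension over a field has topologically nilpotent Verschiebung, hence vanishing Hasse-Witt invariant, so it suffices to show $\mathfrak{G}_\eta$ is supersingular. For this I pass to $\mathcal{Y}$: under $R_\mathfrak{Y}\iso\OO_{\mathcal{Y},y}$ the group $\mathfrak{G}_\eta$ is the $p$-divisible group of the pullback of the universal RM abelian surface along $\eta\to\Spec(\OO_{\mathcal{Y},y}/p)\to\mathcal{Y}_{/\F}$. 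Since $\OO_{\mathcal{Y},y}/p$ is a completion of a local ring of $\mathcal{Y}_{/\F}$ and completion is flat (hence satisfies going-down), the image of $\eta$ in $\mathcal{Y}_{/\F}$ is a generization of the closed point $y$; and $y$ is supersingular because $\mathfrak{a}_{p^\infty}\iso\mathfrak{g}=\mathfrak{g}_0\otimes\Z_{p^2}$ has all Newton slopes equal to $\frac{1}{2}$. Finally, Lemma \ref{Lem:mostly ss} shows the ordinary locus $\mathcal{Y}^\ord_{/\F}$ --- open in the finite-type $\F$-scheme $\mathcal{Y}_{/\F}$ --- has only finitely many $\F$-points, hence is $0$-dimensional, hence is a finite set of closed points; so if a generization $z$ of $y$ were ordinary then $\overline{\{z\}}\cap\mathcal{Y}^\ord_{/\F}$ would be a dense open subset of the irreducible $\overline{\{z\}}$ contained in a $0$-dimensional scheme, forcing $\overline{\{z\}}=\{z\}$ and $z=y$, contradicting that $y$ is supersingular. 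Hence the image of $\eta$, and therefore $\mathfrak{G}_\eta$, is supersingular, which finishes the proof.

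I expect the one place requiring care to be the bookkeeping rather than any geometry: one must check that ``the image of $I_{\mathrm{HW}}$ in $K$ computes the Hasse-Witt invariant of $\mathfrak{G}_\eta$'' is precisely the base-change compatibility of the construction preceding the Proposition, and that the natural morphism $\Spec\OO_{\mathcal{Y},y}\to\mathcal{Y}$ really does land among the generizations of $y$. All of the genuine input --- that a formal neighborhood of $y$ in $\mathcal{Y}_{/\F}$ contains no ordinary point --- is supplied by Lemma \ref{Lem:mostly ss}, and no Dieudonn\'e-module or window computation is needed for this particular proposition.
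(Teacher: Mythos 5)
Your proposal is correct, and it rests on the same two pillars as the paper's proof: the identification $R_\mathfrak{Y}\iso\OO_{\mathcal{Y},y}$ together with Lemma \ref{Lem:mostly ss}, and the fact that $R_\mathfrak{Y}/\mathfrak{p}$ is a domain. The mechanism in the middle differs, though. The paper stays on an affine neighborhood $V$ of $y$ in $\mathcal{Y}_{/\F}$: after shrinking $V$ so that every $\F$-point is supersingular, the function $\beta\in\Gamma(V)$ giving $\wedge^2\mathrm{Ver}$ vanishes at every closed point, hence is nilpotent (Jacobson radical equals nilradical for finite-type $\F$-algebras), hence dies in the domain $R_\mathfrak{Y}/\mathfrak{p}$; no openness of the ordinary locus and no discussion of non-closed points is needed. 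You instead localize at the generic point $\eta$ of the component and argue topologically: the image of $\eta$ is a generization of $y$ and the ordinary locus, being open with finitely many $\F$-points, is a finite set of closed points, so that point cannot be ordinary, whence $\wedge^2\mathrm{Ver}$ vanishes over $K$ and hence over $R$. This works, and your bookkeeping worry (compatibility of $\mathrm{Lie}$, the Frobenius twist, and $\mathrm{Ver}$ with base change) is indeed the only thing to check there; two small remarks. First, the appeal to flatness/going-down is unnecessary: every point of $\Spec$ of the (completed, strictly henselized) local ring maps to a generization of $y$ simply because the map factors through $\Spec$ of the local ring of an \'etale neighborhood and continuous maps preserve specialization. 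Second, you do not need the full supersingular/ordinary dichotomy at $\eta$: over a field $\wedge^2\mathrm{Ver}$ is a map of one-dimensional spaces, so it is either zero or invertible, and invertibility would force the fiber at the image point to be ordinary; ``not ordinary'' already gives the vanishing you need. What your route buys is avoiding the nilradical argument; what it costs is the (standard, but unstated in the paper) openness of the ordinary locus and a little point-set care at non-closed points.
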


\begin{proof}
Fix a vertical component $\mathfrak{C}=R_{\mathfrak{Y}}/\mathfrak{p}$ and let $(\mathfrak{G},\rho)$ be the pullback to 
$R_{\mathfrak{Y}}/\mathfrak{p}$ of the universal deformation of $\mathfrak{g}$.  We must show that the Hasse-Witt invariant of 
$\mathfrak{G}$ is $0$.  Let $y\in \mathcal{Y}(\F)$ be the point corresponding to the polarized RM abelian surface 
$(\mathfrak{a},\lambda)$ with the action of $\co_{\mathcal{E}_0,\mathcal{F} }$ induced by our fixed action of $\co_{\mathcal{E}_0}$ 
on $\mathfrak{a}_0$.  Let $V\map{}\mathcal{Y}_{/\F}$ be an open affine neighborhood of $y$ and let $A$ be the  restriction to $V$ of 
the universal RM abelian surface over $\mathcal{Y}_{/\F}$.    After shrinking $V$ we may assume that $\mathrm{Lie}( A)$ is free of 
rank two over $\Gamma(V)$, and (by Lemma \ref{Lem:mostly ss}) that the fiber of $ A$ at every point of $V(\F)$ is supersingular.

 After choosing generators for the exterior squares of the Lie algebras of $ A$ and $ A^{(p)}$ the Verschiebung 
$$
\wedge^2\mathrm{Ver}:\wedge^2\mathrm{Lie}( A^{(p)}) \map{}\wedge^2\mathrm{Lie}( A)
$$
is given by multiplication by some $\beta \in \Gamma(V)$.  At any point $z\in V(\F)$ the supersingularity of the fiber $ A_z$ implies 
that 
$$
\mathrm{Ver}:\mathrm{Lie}( A_z^{(p)} )   \map{}  \mathrm{Lie} ( A_z)
$$
is not invertible, and thus $\beta$ vanishes at every $z\in V(\F)$ (in general, an abelian variety over $\F$ is ordinary if and only if the 
Verschiebung map induces an isomorphism of Lie algebras).    As  $\Gamma(V)$ is of finite type over $\F$ its Jacobson radical is 
equal to its nilradical by \cite[Theorem 5.5]{matsumura}, and we deduce that  $\beta^n=0$ for some positive integer $n$.  If we let 
$\co_{V,y}^\circ$ denote the completion of the   local ring of $V$ at $y$ then $\co_{V,y}^\circ\iso R_{\mathfrak{Y}}\otimes_{\ZZ_p} \F$,  
the $p$-Barsotti-Tate group $\mathfrak{G}$ is the base change of the $p$-Barsotti-Tate group $ A_{p^\infty}$ via
$$
\Gamma(V)\map{}\co_{V,y}^\circ\map{} R_\mathfrak{Y}/\mathfrak{p},
$$
and the Hasse-Witt invariant of $\mathfrak{G}$ is (for appropriate choice of generators of the exterior squares of the Lie algebras of 
$\mathfrak{G}$ and $\mathfrak{G}^{(p)}$) multiplication by the image of  $\beta$ in $R_{\mathfrak{Y}}/\mathfrak{p}$.    As the image of 
$\beta$ is $0$, we are done.
\end{proof}


\section{Vertical components}
\label{s:vertical}


In this section we will determine all vertical components of $\mathfrak{Y}$.  The final answer is simple: if $c_0=0$ then there are no 
vertical components, while if $c_0>0$ there are exactly two, each meeting $\mathfrak{M}_0$ transversely (in the sense that 
the intersection  number of Definition \ref{Def:components} is equal to $1$).  More difficult is the problem of determining 
the multiplicities of these 
components.  We will accomplish this (at least for $p>2$, a hypothesis which seems essential to the method)  by making heavy use of 
Zink's theory of \emph{windows} \cite{zink01}, an alternative to the theory of displays \cite{messing07,zink02} which is more 
amenable to explicit calculation.  The calculations are inspired by a method used by Kudla-Rapoport for studying  special cycles on 
unitary Shimura varieties \cite{kudla08}.


\subsection{Dieudonn\'e theory}
\label{ss:dieudonne}


  Let $W_0$ denote the completion of the strict henselization of $\co_{E_0}$ with respect to the homomorphism 
  $\psi:\co_{E_0}\map{}\F$ fixed in (\ref{psi fix}).  More concretely, 
$$
W_0\iso \begin{cases}
\ZZ_p & \mathrm{if\ } E_0/\Q_p\mathrm{\ is\ unramified} \\
\co_{E_0}\otimes_{\Z_p}\ZZ_p & \mathrm{if\ }  E_0/\Q_p\mathrm{\ is\ ramified}.
\end{cases}
$$
In either case $W_0$ is integrally closed in its fraction field $M_0$,  and we are given an embedding $\Psi:\co_{E_0}\map{}W_0$ 
which lifts $\psi$.    For any ring homomorphism $\phi:\co_{E_0}\map{}R$ we denote by $\overline{\phi}$ the map obtained by 
precomposing $\phi$ with the nontrivial Galois automorphism of $E_0/\Q_p$.   If  $R$ is an object of $\ProArt$ we let $W(R)$ be the 
ring of Witt vectors of $R$ and let $I_R\subset W(R)$ be the kernel of the natural surjection $W(R)\map{}R$.  For any $r\in R$ denote 
by $[r]\in W(R)$ the Teichmuller lift.  The ring $W(R)$ is equipped with a ring endomorphism $r\mapsto {}^F r$ and an endomorphism 
of  the underlying additive group $r\mapsto{} {}^V r$.

 We will use Zink's theory of \emph{displays} as in \cite{zink02}; a quick summary of the basics can be found in \cite{goren}.     A 
 display over an object $R$ of $\ProArt$ consists of a quadruple $\mathbf{D}=(P,Q,F,V^{-1})$ in which $P$ is finitely generated free 
 $W(R)$-module, $Q\subset P$ is a $W(R)$-submodule, and $F:P\map{}P$  and $V^{-1}:Q\map{}P$ are ${}^F$-linear operators.  
 There are addition properties which the quadruple $(P,Q,F,V^{-1})$ is to satisfy;   see \cite[Definitions 1 and 2]{zink02}.   If $(D,F,V)$ 
 is a Dieudonn\'e module over $\F$  with the property that $V$ is topologically nilpotent then  the quadruple $(D,VD,F,V^{-1})$ is a 
 display, and this construction establishes   an equivalence of categories between displays over $\F$ and Dieudonn\'e modules over 
 $\F$ on  which $V$ is topologically nilpotent.  By \cite[Theorem 9]{zink02} for any object $R$ of $\Art$ there is a (covariant) 
 equivalence of categories between displays over $R$ and $p$-Barsotti-Tate groups over $R$ with connected special fiber.   By the 
 comments after  \cite[Theorem 4]{zink02}  the Lie algebra of a connected $p$-Barsotti-Tate group is isomorphic to the Lie algebra of 
 its display $\mathbf{D}=(P,Q,F,V^{-1})$,  defined as the free $R$-module 
 $$
 \mathrm{Lie}(\mathbf{D})\define P/Q.
 $$
 The \emph{height} of $\mathbf{D}$ is $\mathrm{rank}_{W(R)}( P )$ and the \emph{dimension} of $\mathbf{D}$ is 
 $\mathrm{rank}_R(P/Q)$.  The Lie algebra of a Dieudonn\'e module $(D,F,V)$ is defined to be the Lie algebra of its associated 
 display $\mathrm{Lie}(D)=D/VD$.

Consider the \emph{standard supersingular Dieudonn\'e  module}  $(D_0,F,V)$ over $\F$ whose  underlying $\ZZ_p$-module is free 
on two generators $\{e_0,f_0\}$ with the operators $F$ and $V$ defined by
$$
Fe_0=f_0\qquad F f_0=pe_0
$$
and
$$
Ve_0 =f_0 \qquad V f_0 = pe_0.
$$
 If $E_0/\Q_p$ is unramified then $\Psi$ takes values in $W_0=\ZZ_p$, and  the \emph{normalized action} of $ \co_{E_0}$ on $D_0$ 
 is  defined by
$$
r * e_0=\Psi(r) e_0\qquad r*f_0=\overline{\Psi}(r) f_0
$$
for all $r \in\co_{E_0}$.  The \emph{anti-normalized}  action of $\co_{E_0}$ on $D_0$  is defined by 
$$
r* e_0=\overline{\Psi}(r) e_0\qquad r*f_0 =\Psi (r) f_0.
$$
The  action of $\co_{E_0}$ on $\mathrm{Lie}(D_0)$ induced by the normalized action is through $\psi$, and the action of $\co_{E_0}$ 
induced by the anti-normalized action is through $\overline{\psi} $.   If instead  $E_0/\Q_p$ is ramified let $\varpi_{E_0} \in\co_{E_0}$ 
be a root of an Eisenstein polynomial in $\Z_p[x]$, so that $\co_{E_0}=\Z_p\oplus \Z_p\varpi_{E_0}$.  Using the surjectivity of the 
trace $\Z_{p^2}\map{}\Z_p$ and norm $\Z_{p^2}^\times\map{}\Z_p^\times$ this Eisenstein polynomial has the form
$$
x^2-(a+a^\sigma)x+( a a^\sigma  -  b b^\sigma p)
$$
for some $a\in p\Z_{p^2}$ and $b\in\Z_{p^2}^\times$.  Having fixed such $a$ and $b$ we now let $\varpi_{E_0}$ act on $D_0$ by
$$
\varpi_{E_0}* e_0=a e_0+ b^\sigma f_0 \qquad \varpi_{E_0}*f _0= pb e_0 + a^\sigma f_0.
$$
This determines an action of $\co_{E_0}$ on $D_0$ which we call the \emph{normalized} action of $\co_{E_0}$.  The \emph{anti-
normalized} action is obtained by precomposing the normalized action with the nontrivial Galois automorphism of $\co_{E_0}$.  
Under either action $\co_{E_0}$ acts on $\mathrm{Lie}(D_0)$ through $\psi=\overline{\psi}$ (the unique $\Z_p$-algebra 
homomorphism $\co_{E_0}\map{}\F$.)

\begin{Lem}\label{Lem:standard display}
The covariant Dieudonn\'e module of the $p$-Barsotti-Tate group $\mathfrak{g}_0$ fixed in \S \ref{ss:functors} is $\co_{E_0}$-linearly 
isomorphic to the standard supersingular Dieudonn\'e module $D_0$ with its normalized $\co_{E_0}$-action.
\end{Lem}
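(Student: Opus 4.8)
The plan is to deduce the lemma from Corollary \ref{Cor:normalized action} together with the equivalence of categories recalled in \S\ref{ss:dieudonne}. First I would check that the normalized $\co_{E_0}$-action on $D_0$ really does consist of endomorphisms of the Dieudonn\'e module $(D_0,F,V)$, i.e.\ that it commutes with $F$ and $V$ --- and, in the ramified case, that the displayed formulas define an action at all. When $E_0/\Q_p$ is unramified this comes down to the identity ${}^F\Psi(r)=\overline{\Psi}(r)$, which holds because the Witt-vector Frobenius of $\ZZ_p=W(\F)$ restricts on $\Z_{p^2}=W(\F_{p^2})$ to the nontrivial $\Z_p$-automorphism, and that automorphism corresponds under $\Psi$ to the nontrivial automorphism of $\co_{E_0}$; plugging this into $Fe_0=f_0$, $Ff_0=pe_0$ and the analogous relations for $V$ shows that $F$ and $V$ commute with the action. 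When $E_0/\Q_p$ is ramified I would substitute the formulas for $\varpi_{E_0}*e_0$ and $\varpi_{E_0}*f_0$ into $F$ and $V$, using ${}^Fa=a^\sigma$ and ${}^Fb=b^\sigma$, and separately check that $\varpi_{E_0}*$ is killed by its Eisenstein polynomial $x^2-(a+a^\sigma)x+(aa^\sigma-bb^\sigma p)$, so that the formulas extend to a ring homomorphism $\co_{E_0}\to\End(D_0)$. As recorded in \S\ref{ss:dieudonne}, the normalized action then makes $\co_{E_0}$ act on $\mathrm{Lie}(D_0)=D_0/VD_0$ through $\psi$.

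Granting this, let $\mathfrak{h}_0$ be the connected $p$-Barsotti-Tate group over $\F$ corresponding under the equivalences of \S\ref{ss:dieudonne} to $(D_0,F,V)$ equipped with its normalized $\co_{E_0}$-action; this is legitimate because $V^2=p$ is topologically nilpotent, and $\mathfrak{h}_0$ has height $\mathrm{rank}_{\ZZ_p}D_0=2$ and dimension $\mathrm{rank}_\F(D_0/VD_0)=1$. Both $\mathfrak{g}_0$ (with the embedding $j_0$ fixed in \S\ref{ss:functors}) and $\mathfrak{h}_0$ are therefore $p$-Barsotti-Tate groups over $\F$ of dimension one and height two carrying an action of $\co_{E_0}$, and in each case $\co_{E_0}$ acts on the Lie algebra through $\psi$, so that $\mathrm{Lie}(\mathfrak{g}_0)\iso\mathrm{Lie}(\mathfrak{h}_0)$ as $\co_{E_0}$-modules. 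Corollary \ref{Cor:normalized action} then furnishes an $\co_{E_0}$-linear isomorphism $\mathfrak{g}_0\iso\mathfrak{h}_0$, and applying the covariant Dieudonn\'e functor gives an $\co_{E_0}$-linear isomorphism between the covariant Dieudonn\'e module of $\mathfrak{g}_0$ and $D_0$ with its normalized action, as claimed.

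The main obstacle is purely computational: everything apart from the opening verification is formal, and that verification --- well-definedness and $F,V$-compatibility of the normalized action --- is the only place where one must actually compute, the ramified case being where the precise shape of the Eisenstein polynomial (with $a\in p\Z_{p^2}$, $b\in\Z_{p^2}^\times$ and constant term $aa^\sigma-bb^\sigma p$) gets used. If one prefers to bypass Corollary \ref{Cor:normalized action}, one can instead fix any isomorphism of Dieudonn\'e modules between the covariant Dieudonn\'e module of $\mathfrak{g}_0$ and $D_0$, transport $j_0$ to an embedding $\co_{E_0}\to\End(D_0)$, and observe that $\End(D_0)\iso\End(\mathfrak{g}_0)$ is the maximal order $\Delta_0$ in the quaternion division algebra over $\Q_p$ whose action on the line $\mathrm{Lie}(D_0)$ factors through $\Delta_0/\Pi\Delta_0\iso\F_{p^2}$, exactly as in the proof of Corollary \ref{Cor:normalized action}; since the transported $j_0$ and the normalized action both induce the $\psi$-action on $\mathrm{Lie}(D_0)$ they reduce to the same homomorphism $\co_{E_0}\to\Delta_0/\Pi\Delta_0$, and Lemma \ref{Lem:normalized lie} conjugates one into the other.
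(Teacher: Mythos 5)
Your proposal is correct and follows the paper's own argument: the paper proves this lemma simply by observing that the normalized action of $\co_{E_0}$ on $\mathrm{Lie}(D_0)$ is through $\psi$ and invoking Corollary \ref{Cor:normalized action}. Your extra verifications (that the normalized action commutes with $F$ and $V$, and the alternative route via Lemma \ref{Lem:normalized lie}) are sound but are details the paper leaves implicit rather than a different method.
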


\begin{proof}
As the action of $\co_{E_0}$ on the Lie algebra of $D_0$ is through $\psi$, this is a consequence of Corollary 
\ref{Cor:normalized action}.
\end{proof}

\begin{Lem}\label{Lem:flip action}
Suppose $\mathfrak{G}_0'$ and $\mathfrak{G}_0''$ are $p$-Barsotti-Tate groups of height two and dimension one over $\F$, each 
equipped with an $\co_{E_0}$-action.  Suppose further that there is an $\co_{E_0}$-linear isogeny  
$f:\mathfrak{G}_0'\map{} \mathfrak{G}_0''$ of degree $p^k$. Let $\phi:\co_{E_0}\map{}\F$ be the homomorphism giving the action of
 $\co_{E_0}$ on $\mathrm{Lie}(\mathfrak{G}_0'')$.
\begin{enumerate}
\item
If $k$ is even then $\co_{E_0}$ acts on $\mathrm{Lie}(\mathfrak{G}_0')$ through $\phi$.
\item
If $k$ is odd then $\co_{E_0}$ acts on $\mathrm{Lie}(\mathfrak{G}_0')$ through $\overline{\phi}$.
\end{enumerate}
\end{Lem}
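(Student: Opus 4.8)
The plan is to pass to covariant Dieudonn\'e modules and reduce the statement to a short computation with $\ZZ_p$-lattices. Write $D'=D(\mathfrak{G}_0')$ and $D''=D(\mathfrak{G}_0'')$ for the covariant Dieudonn\'e modules; these are free $\ZZ_p$-modules of rank two carrying operators $F,V$ and an action of $\co_{E_0}$, and by the discussion in \S\ref{ss:dieudonne} there are $\co_{E_0}$-linear isomorphisms $\mathrm{Lie}(\mathfrak{G}_0')\iso D'/VD'$ and $\mathrm{Lie}(\mathfrak{G}_0'')\iso D''/VD''$. The isogeny $f$ induces a map $D(f):D'\map{}D''$ that is $\co_{E_0}$-linear and commutes with $F$ and $V$ (by functoriality), is injective, and has cokernel of length $\ord_p(\deg f)=k$ over $\ZZ_p$; these are standard properties of the covariant Dieudonn\'e functor, consistent with the case $f=[p]$, for which $D([p])$ is multiplication by $p$ with cokernel $D''/pD''$ of length $2$. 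Setting $M=D(f)(D')\subseteq D''$, we get an $\co_{E_0}$-linear isomorphism $\mathrm{Lie}(\mathfrak{G}_0')\iso M/VM$, where $M$ is an $\co_{E_0}$-, $F$- and $V$-stable $\ZZ_p$-submodule of $D''$ with $\length_{\ZZ_p}(D''/M)=k$. The lemma thus reduces to a classification of such submodules $M$ together with the computation of $M/VM$.

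If $E_0/\Q_p$ is ramified there is nothing to prove: $\co_{E_0}$ has residue field $\F_p$, so there is a unique $\Z_p$-algebra homomorphism $\co_{E_0}\map{}\F$, and $\phi=\overline{\phi}$ equals this homomorphism irrespective of $k$. Assume from now on that $E_0/\Q_p$ is unramified, so $\phi$ is one of the two embeddings $\psi,\overline{\psi}:\co_{E_0}\map{}\F$. Replacing the $\co_{E_0}$-action on both $\mathfrak{G}_0'$ and $\mathfrak{G}_0''$ by its Galois conjugate if necessary — which exchanges the roles of $\phi$ and $\overline{\phi}$ and leaves the assertion unchanged — we may assume $\phi=\psi$. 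Then $\mathrm{Lie}(\mathfrak{G}_0'')$ has $\co_{E_0}$ acting through $\psi$, so by Corollary \ref{Cor:normalized action} and Lemma \ref{Lem:standard display} we may identify $D''=D_0$, the standard supersingular Dieudonn\'e module with its normalized $\co_{E_0}$-action: $D_0=\ZZ_p e_0\oplus\ZZ_p f_0$, $F=V$, $Fe_0=f_0$, $Ff_0=pe_0$, $r*e_0=\Psi(r)e_0$, $r*f_0=\overline{\Psi}(r)f_0$.

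It remains to classify the $\co_{E_0}$-, $F$- and $V$-stable $\ZZ_p$-submodules $M\subseteq D_0$ and compute $M/VM$. Since $E_0/\Q_p$ is unramified, $\co_{E_0}\otimes_{\Z_p}\ZZ_p\iso\ZZ_p\times\ZZ_p$ via $(\Psi,\overline{\Psi})$, with $\ZZ_p e_0$ and $\ZZ_p f_0$ the two isotypic summands; hence every $\co_{E_0}$-stable $\ZZ_p$-submodule has the form $M=p^a\ZZ_p e_0\oplus p^b\ZZ_p f_0$. Stability under $F=V$ forces $a\ge b$ (apply $F$ to $p^ae_0$) and $b+1\ge a$ (apply $F$ to $p^bf_0$), so either $M=p^bD_0$ (colength $2b$) or $M=p^bN$ with $N=p\ZZ_p e_0\oplus\ZZ_p f_0$ (colength $2b+1$). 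From $VD_0=\ZZ_p f_0\oplus p\ZZ_p e_0$ one finds $D_0/VD_0\iso\F$ spanned by the image of $e_0$, on which $\co_{E_0}$ acts through $\Psi\bmod p=\psi$; and $VN=pD_0$, so $N/VN\iso\F$ spanned by the image of $f_0$, on which $\co_{E_0}$ acts through $\overline{\Psi}\bmod p=\overline{\psi}$. As $M/VM\iso D_0/VD_0$ when $M=p^bD_0$ and $M/VM\iso N/VN$ when $M=p^bN$, we conclude that $\co_{E_0}$ acts on $\mathrm{Lie}(\mathfrak{G}_0')\iso M/VM$ through $\psi=\phi$ when $k=\length_{\ZZ_p}(D_0/M)$ is even and through $\overline{\psi}=\overline{\phi}$ when $k$ is odd. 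The one step needing care is the bookkeeping with covariant Dieudonn\'e conventions in the first paragraph — the direction of $D(f)$ and the identity $\mathrm{Lie}=D/VD$; after that the argument is the elementary lattice computation just sketched.
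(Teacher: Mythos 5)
Your proof is correct and takes essentially the same route as the paper's: dismiss the ramified case via the uniqueness of the $\Z_p$-algebra map $\co_{E_0}\map{}\F$, normalize to $\phi=\psi$, use Corollary \ref{Cor:normalized action} (and Lemma \ref{Lem:standard display}) to identify $D''$ with the standard supersingular Dieudonn\'e module with its normalized $\co_{E_0}$-action, and then classify the $\co_{E_0}$-, $F$- and $V$-stable sublattices to read off the $\co_{E_0}$-action on $D'/VD'$. Your lattice bookkeeping is in fact the cleaner version: odd colength forces $M=p^b(p\ZZ_p e_0\oplus\ZZ_p f_0)$, whereas the paper's displayed form $p^a\ZZ_p e_0+p^{a+\epsilon}\ZZ_p f_0$ has the exponents on $e_0,f_0$ interchanged (a harmless slip), and both computations yield the same eigencharacters $\psi$ and $\overline{\psi}$.
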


\begin{proof}
If $E_0/\Q_p$ is ramified then there is a  unique  $\Z_p$-algebra homomorphism $\co_{E_0}\map{} \F$, and so there is nothing to 
prove.  Thus we assume that $E_0/\Q_p$ is unramified and let $D_0'$ and $D_0''$ be the covariant Dieudonn\'e modules of 
$\mathfrak{G}_0'$ and $\mathfrak{G}_0''$.   For simplicity we assume $\phi=\psi$ and fix, using Corollary \ref{Cor:normalized action}, 
an $\co_{E_0}$-linear isomorphism from $D_0''$ to the standard supersingular Dieudonn\'e module $D_0$ with its normalized action 
of $\co_{E_0}$.   The isogeny $f$ then identifies $D_0'$ with an $\co_{E_0}$-stable sub-Dieudonn\'e module $D_0'\subset D_0$ with 
$D_0/D_0'$ of length $k$ as a $\ZZ_p$-module.  Every such submodule has the form 
$$
D_0'= p^a\ZZ_p e_0 + p^{a+\epsilon} \ZZ_p f_0
$$
with $a\ge 0$, $\epsilon\in\{0,1\}$, and $k=2a+\epsilon$.  If $k$ is even then $\epsilon=0$ and 
$\mathrm{Lie}(\mathfrak{G}_0')=D_0'/VD_0'$ is generated by the image of $p^ae_0$.  Hence $\co_{E_0}$ acts on 
$\mathrm{Lie}(\mathfrak{G}_0')$ through $\psi$.    If $k$ is odd then $\epsilon=1$ and $\mathrm{Lie}(\mathfrak{G}_0')=D_0'/VD_0'$ is 
generated by the image of $p^{a+1}f_0$.  Hence $\co_{E_0}$ acts on $\mathrm{Lie}(\mathfrak{G}_0')$ through $\overline{\psi}$.  
\end{proof}

Let $\mathbf{d}_0=(P_0,Q_0,F,V^{-1})$ be the display associated associated to $(D_0,F,V)$, the 
\emph{standard supersingular display}.    Thus $P_0$ is the free $\ZZ_p$-module on $\{e_0,f_0\}$, $Q_0\subset P_0$ is free on 
the generators $\{pe_0,f_0\}$, and the operators $F$ and $V^{-1}$ are completely determined by the relations
$$
Fe_0=f_0\qquad V^{-1} f_0=e_0.
$$
As in \cite[(9)]{zink02} we encode this information in the \emph{displaying matrix} 
$\left(\begin{matrix} & 1 \\ 1  \end{matrix}\right)$ of $\mathbf{d}_0$.   Define a display $\mathbf{d}_0^\univ=(P_0,Q_0,F,V^{-1})$ over 
$R=\ZZ_p[[x_0]]$ as follows.  Let $P_0$ be the free $W(R)$ module on two generators $\{ e_0^\univ , f_0^\univ\}$,  let  
$Q_0\subset P_0$ be the $W(R)$-submodue 
$$
Q_0=I_R e_0^\univ+W(R) f_0^\univ,
$$  
and take the displaying matrix of $\mathbf{d}^\univ$ with respect to this basis to be
$\left(\begin{matrix}
[ x_0 ] & 1\\
1
\end{matrix}\right).$ Comparing with the displaying matrix of the standard supersingular display we see that the reduction of 
$\mathbf{d}_0^\univ$ to $\F$ is isomorphic to $\mathbf{d}_0$.

  For any  display $\mathbf{D}_0=(P_0,Q_0,F,V^{-1})$ over an object  $R$ of $\ProArt$ define a new display 
  $\mathbf{D}_0\otimes \Z_{p^2}= (P,Q, F, V^{-1})$ over $R$ in the obvious way;  thus
 $$
 P=P_0\otimes_{\Z_p}\Z_{p^2}\qquad Q=Q_0\otimes_{\Z_p}\Z_{p^2}
 $$  
and   $F$ and $V^{-1}$ are the  $\Z_{p^2}$-linear extensions  of the corresponding operators of $\mathbf{D}_0$.  In particular we 
define
$$
\mathbf{d}=\mathbf{d}_0\otimes\Z_{p^2}.
$$
Using the homomorphism $\ZZ_p\map{}W(\ZZ_p)$ of  \cite[\S 17.6]{hazewinkel}   we obtain a canonical $\Z_p$-algebra 
homomorphism $$\ZZ_{p}\map{}W(R)$$ for any $\ZZ_p$-algebra (and in particular any object of $\ProArt$) $R$, and hence a 
canonical  homomorphism $\Z_{p^2} \map{}W(R)$.  Let  $\epsilon_1$ and $\epsilon_2$ be the usual idempotents in
$$
  W(R)  \otimes_{\Z_p}  \Z_{p^2}\iso W(R)\times W(R)
$$
 indexed so that 
 $$ 
 (1 \otimes \alpha)\epsilon_1 =(\alpha \otimes 1) \epsilon_1
 \qquad
 (1 \otimes \alpha)\epsilon_2 =(\alpha^\sigma \otimes 1) \epsilon_2
 $$ 
 for all $\alpha \in\Z_{p^2}$.    Recalling that the underlying $\ZZ_p$-module of $\mathbf{d}_0$ has basis $\{e_0,f_0\}$, define a basis 
 $\{e_1,e_2,f_1,f_2\}$ of the underlying $\ZZ_p$-module of $\mathbf{d}$ by 
 $$
 e_i=\epsilon_i e_0\qquad f_i=\epsilon_i f_0.
 $$
 With respect to this basis $\mathbf{d}$ has displaying matrix
 \begin{equation}\label{ss display III}
\left( \begin{matrix}
 & & & 1 \\
 & & 1 \\ 
 & 1 \\
 1
\end{matrix} \right).
\end{equation}
The normalized action of $\co_{E_0}$  on  $\mathbf{d}_0$ determines an action of $\co_E$ on  $\mathbf{d}$.    If  $(D ,F,V)$ denotes 
the  Dieudonn\'e module associated to  $\mathbf{d}$ then  $D$ is the free  $\ZZ_p$-module on the generators $\{e_1,e_2,f_1,f_2\}$,  
the operators $F$ and $V$ both satisfy
$$
e_1\mapsto f_2 \qquad e_2\mapsto f_1 \qquad f_1\mapsto pe_2 \qquad f_2\mapsto pe_1,
$$
and the action of $\Z_{p^2}$ on $D$ is given by
\begin{equation}\label{unr display action}
\alpha* e_1 = \alpha e_1  \qquad \alpha* e_2 = \alpha^\sigma  e_2 \qquad \alpha* f_1=\alpha f_1  \qquad 
 \alpha* f_2=\alpha^\sigma f_2.
\end{equation}

Now abbreviate $R=\ZZ_p[[x_1,x_2]]$ and define a display  $\mathbf{d}^\univ  = (P,Q,F,V^{-1})$  over $R$ as follows.  The underlying  
$W(R)$-module $P$ of $\mathbf{d}^\univ$ is free on four generators  $\{ e^\univ_1, e^\univ_2 , f^\univ_1 , f^\univ_2\}$.  The  $W(R)$-
submodule   $Q\subset P$ is defined by
$$
Q=I_{R} e^\univ_1+I_{R} e^\univ_2+W(R) f^\univ_1+W(R) f^\univ_2, 
$$
and the displaying matrix of $\mathbf{d}^\univ$ with respect to this basis is
\begin{equation}\label{ss display II}
\left(\begin{matrix}
 & [x_1] & &\ 1\  \\
 [ x_2] & & 1 \\
  & 1 \\
 1
\end{matrix}\right).
\end{equation}
We endow $\mathbf{d}^\univ$ with an action of $\Z_{p^2}$ using the formulas (\ref{unr display action}), replacing $e_i$ with 
$e_i^\univ$ and $f_i$ with $f_i^\univ$.  One can check that setting $x_2=x_1=x_0$ in (\ref{ss display II}) recovers the displaying 
matrix of $\mathbf{d}_0^\univ\otimes\Z_{p^2}$, and hence
  $\mathbf{d}_0^\univ\otimes\Z_{p^2}$ is $\Z_{p^2}$-linearly  isomorphic to  the base change of $\mathbf{d}^\univ$ through the 
  isomorphism
$$
 \ZZ_p[[x_1,x_2]]/(x_1-x_2) \map{x_i\mapsto x_0}  \ZZ_p[[x_0]].
$$
Comparing (\ref{ss display III}) with (\ref{ss display II}), the base change of $\mathbf{d}^\univ$ to the residue field of $\Z_p[[x_1,x_2]]$ 
is $\Z_{p^2}$-linearly isomorphic to $\mathbf{d}=\mathbf{d}_0\otimes\Z_{p^2}$,  and hence (by  Lemma \ref{Lem:standard display})  
is $\Z_{p^2}$-linearly  isomorphic to the  display of the $p$-Barsotti-Tate group $\mathfrak{g}\iso \mathfrak{g}_0\otimes\Z_{p^2}$.  
Thus we have a commutative diagram of functors on $\Art$
$$
\xymatrix{
{\Spf(\ZZ_p[[x_0]]) }  \ar[d]  \ar[r] & {\mathfrak{M}_0} \ar[d]^{\otimes\Z_{p^2}} \\
{\Spf(\ZZ_p[[x_1,x_2]])  } \ar[r]  &  \mathfrak{M}
}
$$
in which the top horizontal arrow sends a morphism $\ZZ_p[[x_0]]\map{} R$ in $\ProArt$ to the $p$-divisible group associated to the 
base change of $\mathbf{d}^\univ_{0}$ to $R$,  the bottom horizontal arrow sends a morphism $\ZZ_p[[x_1,x_2]]\map{} R$ to the $p$-
divisible group associated to the base change of  $\mathbf{d}^\univ$ to $R$, and the vertical arrow on the left is $x_i\mapsto x_0$.

\begin{Prop}\label{Prop:universal display I}
The horizontal arrows in the above diagram are isomorphisms.
\end{Prop}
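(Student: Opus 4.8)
The plan is to reduce the statement to a computation of tangent spaces. First recall that both $R_{\mathfrak{M}_0}$ and $R_{\mathfrak{M}}$ are (abstractly) power series rings over $\ZZ_p$: for $R_{\mathfrak{M}}$ this follows from the identification of $R_{\mathfrak{M}}$ with the completed local ring $\co_{\mathcal{M},x}^\circ$ of \S\ref{ss:hilbert-blumenthal} together with the smoothness of $\mathcal{M}$ over $U$ of relative dimension two, and $R_{\mathfrak{M}_0}\iso\ZZ_p[[x_0]]$ analogously. Next, by pro-representability and the Yoneda lemma the two horizontal arrows correspond to local $\ZZ_p$-algebra homomorphisms
$$
R_{\mathfrak{M}_0}\map{}\ZZ_p[[x_0]],\qquad R_{\mathfrak{M}}\map{}\ZZ_p[[x_1,x_2]],
$$
and each arrow is an isomorphism of functors precisely when the corresponding ring map is. Since in each case source and target are abstractly isomorphic power series rings of the same dimension, a ring map that is surjective on relative cotangent spaces $\mathfrak{m}/(\mathfrak{m}^2+p)$ is surjective (complete Nakayama, using that $p$ maps to $p$), hence an isomorphism (a surjection between regular local domains of equal Krull dimension has trivial kernel). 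Dualizing, it suffices to prove that the induced maps on relative tangent spaces
$$
\Spf(\ZZ_p[[x_0]])(\F[\epsilon])\map{}\mathfrak{M}_0(\F[\epsilon]),\qquad \Spf(\ZZ_p[[x_1,x_2]])(\F[\epsilon])\map{}\mathfrak{M}(\F[\epsilon])
$$
are isomorphisms, where $\F[\epsilon]=\F[t]/(t^2)$; and since in each case the two sides are $\F$-vector spaces of the same dimension (one, resp.\ two, by what was recalled above), it is enough to prove injectivity.

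This last point is a direct finite-dimensional computation with the displaying matrices, via the equivalence between displays and connected $p$-Barsotti--Tate groups over Artinian rings. For $\mathfrak{M}_0$ one base-changes $\mathbf{d}_0^\univ$ along $x_0\mapsto t$ and must check that the resulting display over $\F[\epsilon]$ is not isomorphic to $\mathbf{d}_0\otimes\F[\epsilon]$, i.e.\ that the entry $[t]$ of its displaying matrix cannot be removed by an automorphism reducing to the identity modulo $\epsilon$. Any such automorphism is close to $1$ over $W(\F[\epsilon])$, and the condition that it commute with $F$ and the ${}^F$-semilinear operator $V^{-1}$, together with the vanishing ${}^Ft=[t^p]=0$ in $W(\F[\epsilon])$ (which holds because $p$ is prime, hence $p\ge 2$), forces the $[t]$-entry to be rigid; so the tangent map for $\mathfrak{M}_0$ is a nonzero, hence bijective, map of one-dimensional spaces. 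For $\mathfrak{M}$ one argues identically with the displaying matrix (\ref{ss display II}): base-changing along $(x_1,x_2)\mapsto(at,bt)$ produces deformations of $\mathbf{d}$, and the $\Z_{p^2}$-action (\ref{unr display action}) separates the $x_1$- and $x_2$-directions, because $x_1$ perturbs the part of the displaying matrix on which $\Z_{p^2}$ acts through one embedding $\Z_{p^2}\map{}\F$, while $x_2$ perturbs the part on which $\Z_{p^2}$ acts through the conjugate embedding. Hence no nonzero $\F$-linear combination of the two directions is trivialized by a $\Z_{p^2}$-linear automorphism of $\mathbf{d}\otimes\F[\epsilon]$ lifting the identity, so the tangent map for $\mathfrak{M}$ is injective, hence bijective, and the proposition follows.

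There is a shortcut for the second arrow once the first is known: the commutative square of functors at the end of \S\ref{ss:dieudonne} identifies the composite $\Spf(\ZZ_p[[x_0]])\map{x_i\mapsto x_0}\Spf(\ZZ_p[[x_1,x_2]])\map{}\mathfrak{M}$ with the closed immersion $\mathfrak{M}_0\map{}\mathfrak{M}$, so the diagonal tangent direction already maps isomorphically onto the tangent space of $\mathfrak{M}_0$ inside that of $\mathfrak{M}$, and one then needs only a single further independent tangent direction, supplied by the $x_1$-direction via the isotypic argument above (or, when $p>2$, via the $\sigma$-symmetry $x_1\leftrightarrow x_2$, whose fixed line is the tangent space of $\mathfrak{M}_0$). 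I expect the only genuine work to be the computation in the previous paragraph: unwinding the deformation theory of displays concretely enough to see that the off-diagonal entries $[x_i]$ really obstruct triviality and that the $\Z_{p^2}$-isotypic splitting keeps the two parameters apart. All of this is linear algebra over $\F$, the sole arithmetic input being that Frobenius annihilates Teichm\"uller lifts of nilpotents in $W(\F[\epsilon])$ because $p\ge 2$.
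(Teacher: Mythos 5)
Your proposal is correct, and its skeleton is the same as the paper's: reduce to a statement about tangent spaces and then settle that statement with Zink's deformation theory of displays. The difference is in how each step is discharged. The paper simply cites a formal criterion ([dok, Theorem 1.5.3(3)]) to reduce to bijectivity on tangent spaces, and then cites Zink [zink02, \S 2.2] (resp.\ Goren and Goren--Oort for the $\Z_{p^2}$-equivariant case) for that bijectivity, i.e.\ for the universality of $\mathbf{d}_0^\univ$ and $\mathbf{d}^\univ$. You instead prove only \emph{injectivity} of the tangent map by hand and buy surjectivity from an a priori dimension count: for $\mathfrak{M}$ this comes from the identification $R_\mathfrak{M}\iso\OO_{\mathcal{M},x}$ and the smoothness of the Hilbert--Blumenthal stack in \S \ref{ss:hilbert-blumenthal}, which is a legitimate (global) input the paper has already set up; for $\mathfrak{M}_0$ your ``analogously'' is not literally available in the paper and should be replaced either by the classical fact $R_{\mathfrak{M}_0}\iso\ZZ_p[[x]]$ (the paper cites [ARGOS-7, Thm.~3.8] for exactly this in Proposition \ref{Prop:gross-keating}) or by bounding the tangent space directly via Zink's description of deformations as lifts of the Hodge filtration. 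Your injectivity computation is sound and completable in a few lines: writing a putative isomorphism of deformations as $1+N$ with $N$ having entries in $\mathfrak{a}=W(\epsilon\F[\epsilon])$, one uses that ${}^F$ kills all of $\mathfrak{a}$ (not just Teichm\"uller lifts) and that $\mathfrak{a}^2=0$ to get, from commutation with $F$, relations like $Nf=([t]-[t'])e$; the requirement $\alpha(Q)\subseteq Q$ then forces $[t]-[t']\in I_{\F[\epsilon]}$, i.e.\ $t=t'$, and the same argument applied to the two entries $[x_1]$, $[x_2]$ of (\ref{ss display II}) separates the two parameters (the $\Z_{p^2}$-isotypic splitting you invoke is a convenient bookkeeping device but is not even strictly needed there). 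Also note the small point that ``nonzero implies bijective'' for the one-dimensional case uses the $\F$-linearity of the tangent map, which holds because both functors are pro-representable and the map comes from a ring homomorphism. In short: same strategy, but you trade the paper's citations for a self-contained argument whose extra ingredients are the global smoothness statement and an explicit Witt-vector computation over $\F[\epsilon]$.
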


\begin{proof}
Consider the top  horizontal arrow.  By \cite[Theorem 1.5.3(3)]{dok} it suffices to prove that the induced map on tangent spaces
$$
\Hom_{\ProArt}(\ZZ_p[[x_0]], \F[\epsilon]) \map{} \mathfrak{M}_0(\F[\epsilon])
$$
is a bijection, and this is proved  by Zink  \cite[\S 2.2]{zink02} using his  deformation theory of displays.  The proof for the bottom 
horizontal arrow is similar; details can be found in \cite[\S 6.11]{goren} and \cite{goren00}.
\end{proof}

To paraphrase the proposition: we  may identify $R_{\mathfrak{M}}\iso \ZZ_p[[x_1,x_2]]$ in such a way that the closed immersion 
$\mathfrak{M}_0\map{} \mathfrak{M}$  is identified with the closed immersion
$$
\Spf(\ZZ_p[[x_0]]) \map{}\Spf(\ZZ_p[[x_1,x_2]])
$$
defined by $x_i\mapsto x_0$.


\subsection{The Hasse-Witt locus}
\label{ss:hasse-witt}


Using Proposition \ref{Prop:universal display I} we  identify $$R_\mathfrak{M}\iso \ZZ_p[[x_1,x_2]],$$
so that the ideal defining  the closed formal subscheme $\mathfrak{M}_0$ is generated by $x_1-x_2$. The display of the universal 
deformation of $\mathfrak{g}$ is given by the displaying matrix (\ref{ss display II}), and the Lie algebra of the universal deformation is 
the free $\ZZ_p[[x_1,x_2]]$-module on the generators $\{e_1^\univ, e_2^\univ\}$.  Using \cite[Example 23]{zink02}, after base change 
from $\ZZ_p[[x_1,x_2]]$ to $\F[[x_1,x_2]]$ the Verschiebung morphism on Lie algebras can be read off from the matrix 
(\ref{ss display II}), and is given by 
$$
e_1^\univ\mapsto x_2 e_2^\univ \qquad e_2^\univ\mapsto x_1 e_1^\univ.
$$
Thus the Hasse-Witt locus $\mathfrak{M}_{\mathrm{HW}}\map{}\mathfrak{M}$ (in the sense of \S \ref{ss:hilbert-blumenthal}) is seen to 
be the closed formal subscheme defined by the  ideal $I_{\mathrm{HW}}=(p,x_1x_2)$.    We find that $\mathfrak{M}_{\mathrm{HW}}$ 
has two irreducible components which we denote by 
\begin{eqnarray*}
\mathfrak{C}^\vertical_1&=&\Spf(R_{\mathfrak{M}}/(p,x_2))  \iso \Spf(\F[[x_1]]) \\
\mathfrak{C}^\vertical_2&=&\Spf(R_{\mathfrak{M}} / (p,x_1) ) \iso \Spf(\F[[x_2]]).
\end{eqnarray*}
Each of $\mathfrak{C}_i^\vertical$ meets $\mathfrak{M}_0$ transversely in the sense that 
\begin{equation}\label{transverse}
\mathfrak{C}_i^\vertical \times_{\mathfrak{M}}\mathfrak{M}_0\iso \Spf(\F).
\end{equation}  
According to Proposition \ref{Prop:hasse-witt} any vertical component of $\mathfrak{Y}$ is contained in the Hasse-Witt locus, and so 
must be one of $\mathfrak{C}_i^\vertical$.

We will examine the component $\mathfrak{C}_1^\vertical$.  Let $(\mathfrak{G}_1^\vertical,\rho_1^\vertical)\in \mathfrak{M}(\F[[x_1]])$ 
be the restriction to $\mathfrak{C}_1^\vertical$ of the universal deformation of $\mathfrak{g}$ over $\mathfrak{M}$.  We wish to 
determine which endomorphisms of $\mathfrak{g}$ lift to endomorphisms of $\mathfrak{G}_1^\vertical$.   Let 
$$\mathbf{D}_1^\vertical=(P_1^\vertical,Q_1^\vertical,F,V^{-1})$$ be the display over $\F[[x_1]]$ corresponding to 
$\mathfrak{G}_1^\vertical$.  According to \S \ref{ss:dieudonne} the $W(\F[[x_1]])$-module $P_1^\vertical$ is free on the generators 
$\{e_1,e_2,f_1,f_2\}$, the submodule $Q_1^\vertical$ is 
$$
I_{\F[[x_1]]} e_1 + I_{\F[[x_1]] }e_2 + W(\F[[x_1]]) f_1 + W(\F[[x_1]]) f_2,
$$
and the operators $F$ and $V^{-1}$  are determined by the displaying matrix over $W(\F[[x_1]])$ with respect to the basis 
$\{e_1,e_2,f_1,f_2\}$ 
\begin{equation*}
\left(\begin{matrix}
 & [x_1] & &\ 1\  \\
 0 & & 1 \\
  & 1 \\
 1
\end{matrix}\right).
\end{equation*}
Let $\mathrm{Fr}:\ZZ_p[[x_1]]\map{}\ZZ_p[[x_1]]$ be the unique ring homomorphism which is the Frobenius on $\ZZ_p$ and satisfies 
$x_1\mapsto x_1^p$.  The \emph{window} \cite{zink01} associated to $\mathbf{D}_1^\vertical$ with respect to the \emph{frame} 
$\ZZ_p[[x_1]]\map{}\F[[x_1]]$ is the triple $(M_1^\vertical,N_1^\vertical,\Phi)$ where $M_1^\vertical$ is the free $\ZZ_p[[x_1]]$-module 
on the generators  $\{e_1,e_2,f_1,f_2\}$, the submodule $N_1^\vertical$ is generated by $\{p e_1,pe_2,f_1,f_2\}$, and 
$\Phi :M_1^\vertical\map{}M_1^\vertical$ is the $\mathrm{Fr}$-linear map satisfying
$$
e_1\mapsto f_2 \qquad e_2\mapsto x_1 e_1+ f_1\qquad f_1\mapsto pe_2 \qquad f_2\mapsto pe_1.
$$
The action of $\Z_{p^2}$ on $(M_1^\vertical,N_1^\vertical,\Phi)$ is by the rule (\ref{unr display action}).  
The window of $\mathfrak{g}$ with respect to the frame $\ZZ_p\map{}\F$ is obtained as the base change $(M,N,\Phi)$ of the triple 
$(M_1^\vertical, N_1^\vertical,\Phi)$ via the  map $\ZZ_p[[x_1]] \map{} \ZZ_p$ defined by $x_1\mapsto 0$, and every 
$\Gamma \in \End_{\Z_{p^2}}(\mathfrak{g})$ is determined by its matrix with respect to the ordered basis $\{e_1,f_1,e_2,f_2\}$ of $M$.  
The condition that $\Gamma$ commutes with the $\Z_{p^2}$-action is equivalent to this matrix having the block diagonal form
$$
\Gamma=\left(\begin{matrix} Y  \\ & Z \end{matrix}\right),
$$
while the condition that $\Gamma$ commutes with $\Phi$ and preserves the submodule $N$ is equivalent to $Y$ and $Z$ having the 
form
$$
Y = \left( \begin{matrix} a& pb \\ c& d  \end{matrix}\right)
\qquad
Z = \left( \begin{matrix}   d^\sigma & pc^\sigma \\ b^\sigma & a^\sigma    \end{matrix}\right)
$$
with $a,b,c,d\in\Z_{p^2}$.  The subring $\End(\mathfrak{g}_0)\subset \End_{\Z_{p^2}}(\mathfrak{g})$ consists of those $\Gamma$ for 
which $d=a^\sigma$ and $c=b^\sigma$.

For each $k\ge 0$ set $$A[k]=\ZZ_p[[x_1]]/(x_1^{p^k}) \qquad R[k]=\F[[x_1]]/(x_1^{p^k})$$ and let $(M[k],N[k],\Phi)$ be the base 
change of $(M_1^\vertical , N_1^\vertical , \Phi)$ via $\ZZ_p[[x_1]]\map{}A[k]$, so that $(M[k],N[k],\Phi)$ is the window associated to 
$\mathfrak{G}^\vertical_{1/R[k]}$ with respect to the frame $A[k]\map{}R[k]$.    We will lift the endomorphism $\Gamma[0]=\Gamma$ of 
$(M[0],N[0],\Phi)=(M,N,\Phi)$ to a quasi-endomorphism $\Gamma_1^\vertical$ of $(M_1^\vertical,N_1^\vertical,\Phi)$ by successively 
lifting from $A[k]$ to $A[k+1]$.  With respect to the ordered basis $e_1,f_1,e_2,f_2$ any $\Z_{p^2}$-linear quasi-endomorphism 
$\Gamma[k]$ of $(M[k],N[k],\Phi)$ 
must have the form 
$$
\Gamma[k]=\left(\begin{matrix} Y[k]  \\ & Z[k] \end{matrix}\right),
$$
where $Y[k]$ and $Z[k]$ are $2\times 2$ matrices with entries in $A[k]\otimes_{\ZZ_p}\QQ_p$ which satisfy
\begin{equation}\label{prerecursion}
Y[k] \cdot \left(\begin{matrix} x_1 & p \\ 1 \end{matrix} \right) = \left(\begin{matrix}  x_1 & p \\ 1\end{matrix} \right)  \cdot \mathrm{Fr}(Z[k])
\qquad
Z[k] \cdot \left(\begin{matrix} & p \\ 1 \end{matrix} \right) = \left(\begin{matrix} & p \\ 1 \end{matrix} \right)  \cdot \mathrm{Fr}(Y[k])
\end{equation}
(these conditions are equivalent to $\Gamma[k]$ commuting with $\Phi$), and such a $\Gamma[k]$ is an endomorphism if and only if 
$Y[k]$ and $Z[k]$ have entries in $A[k]$ and each of the upper right entries is in $pA[k]$ (so that the submodule $N[k]\subset M[k]$ is 
preserved).  The crucial observation is the following: the ring homomorphism $\mathrm{Fr}:A[k+1]\map{}A[k+1]$ factors through the 
quotient map $A[k+1]\map{}A[k]$, so that there is a commutative diagram
$$
\xymatrix{
{A[k+1]} \ar[r]^{\mathrm{Fr}}  \ar[d] & {A[k+1]} \\
{A[k]} \ar[ur]_{\mathrm{Fr}}.
}
$$
Replacing $k$ by $k+1$ in (\ref{prerecursion}) and then solving for $Y[k+1]$ and $Z[k+1]$ we find that to successively lift 
$\Gamma[0]$ to  a quasi-endomorphism of each $(M[k],N[k],\Phi)$ is equivalent to solving the recursion relations
\begin{eqnarray}\label{recursion}
Y[k+1]  &=& \frac{1}{p} \left(\begin{matrix}  x_1& p \\ 1\end{matrix} \right)  \cdot
 \mathrm{Fr}(Z[k])  \cdot \left(\begin{matrix}  & p \\  1 & -x_1  \end{matrix} \right)   \\
Z[k+1]  &=& \frac{1}{p}\left(\begin{matrix} & p \\ 1 \end{matrix} \right)  \cdot
 \mathrm{Fr}(Y[k])  \cdot \left(\begin{matrix} & p \\ 1 \end{matrix} \right) \nonumber
\end{eqnarray}
 in $A[k]\otimes_{\ZZ_p}\QQ_p$.   By trial and error one can explicitly solve the recursion (\ref{recursion}). 
  If we define matrices $Y_1^\vertical$ and $Z_1^\vertical$ in $\ZZ_p[[x_1]]\otimes_{\ZZ_p}\QQ_p$ by 
\begin{eqnarray*}
Y_1^\vertical   &=&  \left(\begin{matrix}  a & pb   \\ c & d  \end{matrix} \right)  +  f(x_1)
 \left(\begin{matrix}  c & d-a  \\  0  & -c  \end{matrix} \right)   - g(x_1) \left(\begin{matrix} 0 & c\\ 0 & 0   \end{matrix}\right)  \\ 
Z_1^\vertical  &=&   \left(\begin{matrix} d^\sigma & pc^\sigma  \\   b^\sigma & a^\sigma \end{matrix} \right)  + 
 f(x_1^p)   \left(\begin{matrix} -c^\sigma &  0 \\   \frac{(d - a)^\sigma}{p}  & c^\sigma \end{matrix} \right) - g(x_1^p) 
 \left(\begin{matrix} 0& 0\\ \frac{c^\sigma}{p} & 0 \end{matrix}\right)\nonumber
\end{eqnarray*}
where
$$
f(x) = x^{p^0}+x^{p^2} + x^{p^4} +x^{p^6}+x^{p^8}+ \cdots
$$
and
\begin{eqnarray*}\lefteqn{
g(x) = x^{p^0} (x^{p^0})+x^{p^2}(2x^{p^0}+x^{p^2}) + x^{p^4}(2x^{p^0}+2x^{p^2}+x^{p^4}) } \\
& & + x^{p^6}(2x^{p^0}+2x^{p^2}+2x^{p^4} + x^{p^6}) + x^{p^8}(2x^{p^0}+2x^{p^2}+2x^{p^4} + 2x^{p^6} + x^{p^8}) + \cdots
\end{eqnarray*}
then $Y[k]$ and $Z[k]$ are the images of $Y_1^\vertical$ and $Z_1^\vertical$ under the quotient map 
$\ZZ_p[[x_1]]\otimes_{\ZZ_p}\QQ_p\map{} A[k]\otimes_{\ZZ_p}\QQ_p$.  From this it is clear that $\Gamma[0]=\Gamma$ lifts from an 
endomorphism of $\mathfrak{g}$ to a quasi-endomorphism of $\mathfrak{G}_1^\vertical$ given by
\begin{equation}\label{vertical quasi lifts}
\Gamma_1^\vertical = \left(\begin{matrix} Y_1^\vertical & \\ & Z_1^\vertical \end{matrix}\right)
\end{equation}
 and that this quasi-endomorphism is integral if and only if $a-d\in p\Z_{p^2}$ and $c\in p\Z_{p^2}$.

\begin{Prop}\label{Prop:two components}
Recall from \S \ref{ss:functors} that $c_0$ is defined by $\Z_p[\gamma_0]=\Z_p + p^{c_0}\co_{E_0}$.
\begin{enumerate}
\item
If $c_0=0$ then $\mathfrak{Y}$ has no vertical components.
\item
If $c_0>0$ then $\mathfrak{Y}$ has exactly two vertical components, $\mathfrak{C}_1^\vertical$ and $\mathfrak{C}_2^\vertical$, each 
satisfying $I_{\mathfrak{M}}(\mathfrak{C}_i^\vertical, \mathfrak{M}_0)=1$.
\end{enumerate}
\end{Prop}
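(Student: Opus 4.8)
The plan is to read off the statement from Proposition \ref{Prop:hasse-witt} together with the window calculation carried out immediately before the proposition, so that the proof is essentially a matter of interpretation. The first step is to show that a vertical component of $\mathfrak{Y}$ can only be $\mathfrak{C}_1^\vertical$ or $\mathfrak{C}_2^\vertical$. By Proposition \ref{Prop:hasse-witt} such a component factors through $\mathfrak{M}_{\mathrm{HW}}$, hence — being irreducible — through one of the $\mathfrak{C}_i^\vertical\iso\Spf(\F[[x_i]])$, so it is either an $\F$-point or all of $\mathfrak{C}_i^\vertical$. To exclude an isolated point I would note that $R_\mathfrak{Y}$ is not Artinian (it admits the one-dimensional ring $W_0$ as a quotient, via the quasi-canonical lift of level $0$ recalled in the introduction), and that a Noetherian local ring whose maximal ideal is a minimal prime is Artinian. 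Pushing this a little further one gets $\dim R_\mathfrak{Y}=1$: a minimal prime either avoids $p$, where Lemma \ref{Lem:etale completion} and Corollary \ref{Cor:horizontal reduced} make the quotient one-dimensional, or contains $p$, where Proposition \ref{Prop:hasse-witt} exhibits the quotient as a quotient of $\F[[x_1,x_2]]/(x_1x_2)$ and hence at most one-dimensional. Since a coheight-one prime of a one-dimensional ring is minimal, it then follows that $\mathfrak{C}_i^\vertical$ is a vertical component of $\mathfrak{Y}$ precisely when the closed immersion $\mathfrak{C}_i^\vertical\to\mathfrak{M}$ factors through $\mathfrak{Y}$, i.e.\ precisely when $\gamma$ lifts to an endomorphism of the restriction $\mathfrak{G}_i^\vertical$ of the universal deformation.

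For $i=1$ this is exactly what the window calculation preceding the proposition computes: $\gamma$ lifts over $\mathfrak{G}_1^\vertical$ if and only if, in the notation fixed there for the matrix of an endomorphism on the Dieudonn\'e module of $\mathfrak{g}$, the block $Y$ of the matrix of $\gamma$ has $a-d\in p\Z_{p^2}$ and $c\in p\Z_{p^2}$. The key step is then to compute this matrix for $\gamma=\gamma_0\otimes 1$ directly from the normalized action of $\co_{E_0}$ on $D_0$ described in \S\ref{ss:dieudonne}. Writing $\Z_p[\gamma_0]=\Z_p+p^{c_0}\co_{E_0}$, so that $\gamma_0=m+p^{c_0}\eta$ with $m\in\Z_p$ and $\eta$ generating $\co_{E_0}$ over $\Z_p$, one checks: in the unramified case $\gamma_0$ acts diagonally on $D_0$, giving $b=c=0$ and $a-d=p^{c_0}(\eta-\eta^\sigma)$ with $\eta-\eta^\sigma$ a unit, so the condition holds iff $c_0>0$; in the ramified case one may take $\eta$ to be a unit multiple of $\varpi_{E_0}$, and reading the entries off the action of $\varpi_{E_0}$ on $D_0$ gives $c\in p^{c_0}\Z_{p^2}^\times$ while $a-d\in p^{c_0+1}\Z_{p^2}$, so again the condition holds iff $c_0>0$. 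The component $\mathfrak{C}_2^\vertical$ is handled by an identical calculation: the displaying matrix (\ref{ss display II}) is symmetric under interchanging the indices $1$ and $2$ (the accompanying $\sigma$-twist of the action (\ref{unr display action}) is realized by the automorphism $\mathrm{id}_{\mathfrak{g}_0}\otimes\sigma$ of $\mathfrak{g}$ and so is immaterial for the deformation problem), and the same criterion results. Hence if $c_0=0$ neither $\mathfrak{C}_i^\vertical$ lies in $\mathfrak{Y}$ and $\mathfrak{Y}$ has no vertical component, while if $c_0>0$ both $\mathfrak{C}_1^\vertical$ and $\mathfrak{C}_2^\vertical$ lie in $\mathfrak{Y}$ and, by the first paragraph, are exactly its vertical components.

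Finally, the intersection numbers: by Definition \ref{Def:components}, $I_\mathfrak{M}(\mathfrak{C}_i^\vertical,\mathfrak{M}_0)$ is the length of the ring of $\mathfrak{C}_i^\vertical\times_\mathfrak{M}\mathfrak{M}_0$, and (\ref{transverse}) identifies this fibre product with $\Spf(\F)$, so the length is $1$; this completes the proof. I do not anticipate a serious obstacle, since the hard inputs — Proposition \ref{Prop:hasse-witt} and the explicit solution of the recursion (\ref{recursion}) lifting $\Gamma$ — are already in hand. The two points requiring care are the bookkeeping that matches both forms of the normalized $\co_{E_0}$-action to the integrality condition $a-d,c\in p\Z_{p^2}$ (verifying that both conditions degenerate exactly when $c_0>0$), and the dimension count that pins a vertical ``component'' down to one of the two explicit curves $\mathfrak{C}_i^\vertical$ rather than some embedded or lower-dimensional locus.
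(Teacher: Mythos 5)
Your proposal is correct and follows essentially the same route as the paper's proof: Proposition \ref{Prop:hasse-witt} reduces the vertical components to the two Hasse-Witt curves $\mathfrak{C}_i^\vertical$, the window calculation of \S \ref{ss:hasse-witt} gives the integrality criterion $a-d,\,c\in p\Z_{p^2}$ for lifting $\gamma$, and the intersection number is read off from (\ref{transverse}). The only differences are cosmetic: you check the criterion by explicit computation with the normalized action in the unramified and ramified cases, where the paper argues uniformly by identifying $\Z_p[\gamma_0]$ with the $\Z_p$-subalgebra of $M_2(\Z_{p^2})$ generated by $\left(\begin{smallmatrix} a & pb \\ b^\sigma & a^\sigma\end{smallmatrix}\right)$, and you spell out the dimension bookkeeping (no vertical component can be a point, and the primes $(p,x_i)$ are minimal in $R_\mathfrak{Y}$) that the paper leaves implicit.
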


\begin{proof}
Consider first the closed formal subscheme $\mathfrak{C}_1^\vertical=\Spf(\F[[x_1]])$ of $\mathfrak{M}$ defined by the ideal 
$(p,x_2)\subset R_\mathfrak{M}$.  Let $(\mathfrak{G}_1^\vertical,\rho_1^\vertical)$ be the restriction of the universal deformation of 
$\mathfrak{g}$  to $\mathfrak{C}_1$.  As above, the action of  the endomorphism $j(\gamma)\in\End_{\Z_{p^2}}(\mathfrak{g})$ is 
determined by its action on the window $(M,N,\Phi)$ associated to $\mathfrak{g}$, and is given by a matrix of the form
$$
\Gamma= \left(\begin{matrix}   a & pb \\ b^\sigma & a^\sigma \\ & & a & pb \\ & & b^\sigma & a^\sigma    \end{matrix}\right)
$$
with $a,b\in\Z_{p^2}$.  By what was said above, $\mathfrak{C}_1^\vertical$ is contained in $\mathfrak{Y}$ if and only if the unique lift 
of $j(\gamma)$ to a quasi-endomorphism of $\mathfrak{G}_1^\vertical$ is an endomorphism, which is equivalent to  $p$ dividing both 
$a- a^\sigma$ and $b$.  Using the fact that $\Z_p[\gamma_0]$ is isomorphic to the $\Z_p$-subalgebra of $M_2(\Z_{p^2})$ generated 
by $\left(\begin{matrix} a & pb \\b^\sigma & a^\sigma \end{matrix}\right)$ it is easy to see that 
$$
p\mid (a-a^\sigma)  \mathrm{\ and\ }p\mid b \iff \Z_p[\gamma_0]\subset \Z_p + p\co_{E_0}\iff c_0>0.
$$
The same argument with $\mathfrak{C}_1^\vertical$ replaced by $\mathfrak{C}_2^\vertical$ shows that each of the two components 
of the Hasse-Witt locus of $\mathfrak{M}$ is contained in $\mathfrak{Y}$ if and only if $c_0>0$.  The final claim concerning the 
intersection multiplicity is just a restatement of (\ref{transverse}).
\end{proof}


\subsection{Vertical multiplicities: $E_0$ unramified}
\label{ss:unramified multiplicity}


Assume that $E_0/\Q_p$ is unramified and fix some $\eta\in E_0$ such that $\co_{E_0}=\Z_p[\eta]$.  Thus
$$
\Z_p[\gamma_0] = \Z_p[p^{c_0}\eta] \qquad \Z_{p^2}[\gamma] = \Z_{p^2}[p^{c_0}\eta]
$$
and 
$$
U\define\Psi(\eta) - \overline{\Psi}(\eta) \in (\ZZ_p)^\times.
$$
Identify the window  of $\mathfrak{g}$ equipped with its  action induced by  $j: \co_E\map{}\End_{\Z_{p^2}}(\mathfrak{g})$ with the 
window $(M,N,\Phi)$ of  \S \ref{ss:hasse-witt} equipped with the normalized action in the sense of  \S \ref{ss:dieudonne}.   The action 
of $\eta$ on  $(M,N,\Psi)$ with respect to the  ordered basis $\{e_1,f_1,e_2,f_2\}$  is given by the matrix
$$
\Gamma= \left(\begin{matrix}   Y &  \\  & Z    \end{matrix}\right)
$$
where
$$
Y=  \left(\begin{matrix}   \Psi(\eta) &  \\  & \overline{\Psi}(\eta) \end{matrix} \right)
 \qquad Z = \left(\begin{matrix} \Psi(\eta) &  \\  & \overline{\Psi}(\eta)    \end{matrix}\right)
$$ 
and $\Psi,\overline{\Psi}:\co_{E_0}\map{}\ZZ_p$ are as in \S \ref{ss:dieudonne}.    As in \S \ref{ss:hasse-witt} we let 
$(\mathfrak{G}_1^\vertical,\rho_1^\vertical)\in \mathfrak{M}(\F[[x_1]])$ be the restriction of the universal $p$-Barsotti-Tate group over 
$\mathfrak{M}$ to the closed formal subscheme
$$
\mathfrak{C}_1^\vertical=\Spf(\F[[x_1]])
$$ 
and let $(M_1^\vertical,N_1^\vertical,\Phi)$ be the associated window over $\F[[x_1]]$ with respect to the  frame 
$\ZZ_p[[x_1]]\map{}\F[[x_1]]$.  We saw in \S \ref{ss:hasse-witt} (especially (\ref{vertical quasi lifts})) that  the endomorphism $\eta$ of 
$\mathfrak{g}$ lifts to the quasi-endomorphism of $\mathfrak{G}_1^\vertical$ whose action on $(M_1^\vertical,N_1^\vertical,\Phi)$ is 
given by the matrix
$$
\Gamma_1^\vertical= \left(\begin{matrix}  Y_1^\vertical \\ &Z_1^\vertical   \end{matrix}\right) 
$$
where
$$
Y_1^\vertical = \left(\begin{matrix} 
  \Psi(\eta) & -U f(x_1)   \\ 0 & \overline{\Psi}(\eta) \end{matrix} \right)  
  \qquad
 Z_1^\vertical = \left(\begin{matrix}  \Psi(\eta) & 0  \\  \frac{U}{p} f(x_1^p)  & \overline{\Psi}(\eta)    \end{matrix}\right) 
$$
and $f(x_1)=x_1^{p^0}+x_1^{p^2}+x_1^{p^4}+\cdots$.   Note in particular that $p\eta$ lifts to an endomorphism of 
$\mathfrak{G}_1^\vertical$.

We next attempt to lift the quasi-endomorphism $\Gamma_1^\vertical$ to a natural family of deformations of 
$\mathfrak{G}_1^\vertical$.  For every $k\ge 0$ set
\begin{equation}\label{thick-frame}
R[k]=\ZZ_p[[x_1,x_2]]/( p^{2k+1}, x_2^{p^k}) \qquad
A[k]=\ZZ_p[[x_1,x_2]]/(  x_2^{p^k}) .
\end{equation}
If we equip $A[k]$ with the unique continuous  ring homomorphism $\mathrm{Fr}:A[k]\map{}A[k]$ which satisfies $x_i\mapsto x_i^p$ 
and whose restriction to $\ZZ_p$ is the usual Frobenius   then $A[k]\map{}R[k]$ is a frame in Zink's sense. Denote by 
$\mathfrak{G}[k]$ the base change to $R[k]$ of the universal $p$-Barsotti-Tate group over $R_\mathfrak{M}\iso \ZZ_p[[x_1,x_2]]$, and 
note that  $\mathfrak{G}[0]$ is simply the $p$-Barsotti-Tate group  $\mathfrak{G}_1^\vertical$ over $\F[[x_1]]$.
 There is a canonical isomorphism $\mathfrak{G}[k+1]_{/R[k]}\iso \mathfrak{G}[k]$ and so  each $\mathfrak{G}[k]$ is naturally a 
 deformation of $\mathfrak{G}_1^\vertical$.     Starting from the display (\ref{ss display II}) of the universal deformation of 
 $\mathfrak{g}$ to $R_{\mathfrak{M}}$, base-changing from $R_\mathfrak{M}$ to $R[k]$, and applying Zink's equivalence of 
 categories between displays over $R[k]$ and windows over $R[k]$, we find that the window associated to $\mathfrak{G}[k]$ is the 
 triple $(M[k] , N[k],\Phi)$ in which $M[k]$ is the free $A[k]$-module on generators $\{e_1,e_2,f_1,f_2\}$,  the submodule 
 $N[k]\subset M[k]$ is generated by $\{p^{2k+1}e_1, p^{2k+1}e_2, f_1,f_2\}$, and $\Phi : M[k] \map{}M[k]$ is the $\mathrm{Fr}$-linear 
 map determined by
$$
e_1\mapsto x_2e_2+f_2\qquad e_2\mapsto x_1e_1+f_1\qquad f_1\mapsto p e_2\qquad f_2\mapsto p e_1.
$$
As always, the action of $\Z_{p^2}$ is by (\ref{unr display action}).  By the previous paragraph the quasi-endomorphism of 
$\mathfrak{G}[0]\iso \mathfrak{G}_1^\vertical$ induced by lifting $\eta$ from $\mathfrak{g}$ to $\mathfrak{G}_1^\vertical$ 
corresponds to the quasi-endomorphism of  the window $(M[0],N[0],\Phi)$  whose matrix with respect to the ordered basis 
$\{e_1,f_1,e_2,f_2\}$ is
$$
\Gamma[0]= \left(\begin{matrix} Y[0] & \\ & Z[0] \end{matrix}\right)
$$
where $Y[0], Z[0]\in M_2(A[0])\otimes_{\ZZ_p}\QQ_p$ are defined by 
\begin{equation}\label{initial conditions}
Y[0] = \left(\begin{matrix}  \Psi(\eta) & -U f(x_1) \\ 0 & \overline{\Psi}(\eta)  \end{matrix}\right) \qquad Z[0] 
= \left(\begin{matrix} \Psi(\eta) & 0 \\ \frac{U}{p} f(x_1^p) & \overline{\Psi}(\eta) \end{matrix}\right).
\end{equation}
By the argument used in \S \ref{ss:hasse-witt}, the quasi-endomorphism $\Gamma[0]$ of $(M[0],N[0],\Phi)$ lifts to the quasi-
endomorphism of $(M[k],N[k],\Phi)$ given by the matrix
$$
\Gamma[k]=\left(\begin{matrix} Y[k] & \\ & Z[k] \end{matrix}\right)
$$
where $Y[k], Z[k]\in M_2(A[k])\otimes_{\ZZ_p}\QQ_p$ satisfy the recursion relations
\begin{eqnarray}\label{recursion II}
Y [k+1] &=& \frac{1}{p} \left(\begin{matrix}  x_1& p \\ 1\end{matrix} \right)  \cdot \mathrm{Fr}(Z[k])  \cdot 
\left(\begin{matrix}  & p \\  1 & -x_1  \end{matrix} \right)   \\
Z [k+1]  &=& \frac{1}{p}\left(\begin{matrix} x_2 & p \\ 1 \end{matrix} \right)  \cdot \mathrm{Fr}(Y[k])  \cdot 
\left(\begin{matrix} & p \\ 1 & -x_2 \end{matrix} \right). \nonumber
\end{eqnarray}

The  matrices $Y[0]$ and $Z[0]$ admit obvious lifts to  $M_2(\ZZ_p[[x_1,x_2]])\otimes_{\ZZ_p} \QQ_p$, defined again by the equations 
(\ref{initial conditions}).  We then lift each $Y[k]$ and $Z[k]$ to a matrix in $M_2(\ZZ_p[[x_1,x_2]])\otimes_{\ZZ_p}\QQ_p$ in the unique 
way for which the relations (\ref{recursion II}) continue to hold.  For $k\ge 1$ set
\begin{equation}\label{init}
Y_k=p^k(Y[k]-Y[k-1])
\qquad
Z_k=p^k(Z[k]-Z[k-1])
\end{equation}
so that (by directly computing $Y[1]$ and $Z[1]$)
\begin{eqnarray*}
Y_1 &=& 0 \\
Z_1 &=& U\cdot \left(\begin{matrix} x_2f(x_1^p) & -px_2-x_2^2f(x_1^p) \\ & - x_2 f(x_1^p) \end{matrix} \right)
\end{eqnarray*}
and
\begin{eqnarray*}
Y_{k+1}  &=&  \left(\begin{matrix}  x_1& p \\ 1\end{matrix} \right)  \cdot \mathrm{Fr}(Z_k)  
\cdot \left(\begin{matrix}  & p \\  1 & -x_1  \end{matrix} \right)   \\
Z_{k+1}  &=&  \left(\begin{matrix} x_2 & p \\ 1 \end{matrix} \right)  \cdot \mathrm{Fr}(Y_k)
  \cdot \left(\begin{matrix} & p \\ 1 & -x_2 \end{matrix} \right).
\end{eqnarray*} 
For notational consistency we also define $Y_0=Y[0]$ and $Z_0=Z[0]$, so   that for $k\ge 0$
\begin{eqnarray*}
p^kY[k] &=&  p^k Y_0 + p^{k-1} Y_1 + p^{k-2}  Y_2 + \cdots + p Y_{k-1}+  Y_k \\
p^kZ[k]  &=&  p^k Z_0 + p^{k-1} Z_1 + p^{k-2}  Z_2 + \cdots + p Z_{k-1}+  Z_k.
\end{eqnarray*}
It is clear from the recursion relations and initial conditions that $Y_k$ and $Z_k$ have entries in $\ZZ_p[[x_1,x_2]]$ for $k\ge 1$, that 
$Y_k=0$ for $k$ odd, and that $Z_k=0$ for $k$ even.  Let $y_k$ and $z_k$ be the upper right entries of $Y_k$ and $Z_k$, 
respectively and  define (for $\ell\ge 0$)
$$
\epsilon_+(\ell) = \prod_{ \substack{  0 \le i <   \ell  \\ i\mathrm{\ even} } } x_2^{2p^i}
\qquad
\epsilon_-(\ell) = \prod_{ \substack{  0 \le i<   \ell  \\ i\mathrm{\ odd} } } x_2^{2p^i}.
$$
 From the fact that $Z_1$ is divisible by $x_2$ one deduces that $Z_\ell$ is divisible by $x_2^{p^{\ell-1}}$ for $\ell$ odd and positive, 
 and that $Y_\ell$ is divisible by $x_2^{p^{\ell-1}}$ for $\ell$ even and positive.   Furthermore one can easily compute the images of 
 $y_\ell$ and $z_\ell$  in $\F[[x_1,x_2]]$ by solving the above recursion relations modulo $p$.  One finds that
\begin{eqnarray}\label{structure I}
y_0 &=& u_0 \epsilon_-(0) \\ \nonumber
z_1 &=& u_1 \epsilon_+(1) + p x_2 g_1 \\ \nonumber
y_2 &=& u_2 \epsilon_-(2) + p x_2^p g_2 \\ \nonumber
z_3 &=& u_3 \epsilon_+(3) + p x_2^{p^2} g_3 \\\nonumber
y_4 &=& u_4 \epsilon_-(4) +p x_2^{p^3} g_4 \\\nonumber
z_5&=& u_5 \epsilon_+(5) + px_2^{p^4} g_5\\ \nonumber
& \vdots & 
\end{eqnarray}
for some $g_k\in \ZZ_p[[x_1,x_2]]$ and some $u_k\in\ZZ_p[[x_1]]$ with nonzero image in $\F[[x_1]]$.   As already noted
\begin{equation}\label{structure II}
\ell\mathrm{\ odd\ }\implies y_\ell=0\qquad \ell\mathrm{\ even\ }\implies z_\ell=0.
\end{equation}
Let $\alpha_k, \beta_k\in \ZZ_p[[x_1,x_2]]$ be the upper right entries of $p^kY[k]$ and $p^kZ[k]$, respectively, so that
\begin{eqnarray}\label{structure III}
\alpha_k &=&  p^ky_0  + p^{k-1} y_1 + p^{k-2}  y_2 + \cdots + p y_{k-1}+  y_k \\
\beta_k  &=&   p^k z_0+p^{k-1} z_1 + p^{k-2}  z_2 + \cdots + p z_{k-1}+  z_k . \nonumber
\end{eqnarray}

Suppose $k\ge 1$.  By what has been said above the endomorphism $p^k\eta$ of $\mathfrak{G}[0]$ lifts to a quasi-endomorphism  of 
$\mathfrak{G}[k]$ which we denote in the same way.  Our interest in the constants $\alpha_k,\beta_k\in \ZZ_p[[x_1,x_2]]$ comes from 
the following fundamental fact.

\begin{Prop}\label{Prop:small locus}
Suppose $k\ge 1$ and  let $R[k] \map{}S$ be the maximal quotient of $R[k]$ for which the quasi-endomorphism $p^k\eta$ of 
$\mathfrak{G}[k]_{/S}$  is an endomorphism.  Then  $$S=R[k]/(\alpha_k,\beta_k).$$
\end{Prop}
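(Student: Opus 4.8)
The plan is to re-express the condition that the quasi-endomorphism $p^k\eta$ of $\mathfrak{G}[k]_{/S}$ be an honest endomorphism as an explicit condition on the associated window, and then to read off the maximal admissible quotient. The first point is that, because $k\ge 1$, all entries of the matrices $p^kY[k]$ and $p^kZ[k]$ produced from the initial data (\ref{initial conditions}) by the recursion (\ref{recursion II}) lie in $\ZZ_p[[x_1,x_2]]$; this is exactly where the hypothesis $k\ge 1$ enters, since the lower-left entry $\tfrac{U}{p}f(x_1^p)$ of $Z[0]$ is not integral but becomes integral after multiplication by $p$, while the remaining correction terms $Y_j,Z_j$ with $j\ge 1$ are already integral by construction. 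Consequently the block-diagonal matrix $p^k\Gamma[k]=\mathrm{diag}(p^kY[k],p^kZ[k])$ defines, for every quotient $R[k]\to S$, an integral $\Z_{p^2}$-linear operator on the window of $\mathfrak{G}[k]_{/S}$; it commutes with $\Phi$ (this being precisely the content of the recursion), and it is the base change of the quasi-endomorphism of $\mathfrak{G}[k]$ realizing $p^k\eta$. By rigidity this operator is the only possible lift of $p^k\eta$, so everything comes down to deciding over which $S$ it preserves the Hodge filtration $N\subset M$ --- equivalently, defines an endomorphism of the $p$-Barsotti-Tate group via Zink's equivalence of categories.

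For this last step, write $\Gamma'=p^k\Gamma[k]$ and use the explicit generators $\{p^{2k+1}e_1,p^{2k+1}e_2,f_1,f_2\}$ of $N[k]$: one has $\Gamma'(p^{2k+1}e_i)\in p^{2k+1}M[k]\subseteq N[k]$ automatically, while $\Gamma'(f_i)$ lands in $N[k]$ if and only if its $e_i$-component does, and that component has coefficient equal to the upper-right entry of $p^kY[k]$, resp.\ of $p^kZ[k]$ --- that is, to $\alpha_k$, resp.\ to $\beta_k$, by (\ref{structure III}). Hence $\Gamma'$ preserves $N[k]$ after base change to a quotient $S$ of $R[k]$ exactly when $\alpha_k$ and $\beta_k$ vanish in $S$. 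Here one must keep in mind that the factor $p^{2k+1}$ occurring in the definition of $N[k]$ lives in the thickening $A[k]=\ZZ_p[[x_1,x_2]]/(x_2^{p^k})$ and not in $R[k]$, so that over $R[k]$ itself the condition reads literally $\alpha_k=\beta_k=0$ in $R[k]$, and over a quotient $R[k]/J$ it reads $\alpha_k,\beta_k\in J$.

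Combining these observations, the quotients of $R[k]$ over which $p^k\eta$ becomes an honest endomorphism of $\mathfrak{G}[k]$ are precisely the quotients of $R[k]/(\alpha_k,\beta_k)$; since this ring is itself one of them, it is the maximal one, which is the assertion $S=R[k]/(\alpha_k,\beta_k)$. The point I expect to need the most care is the frame bookkeeping in the previous paragraph: one must check that for an arbitrary quotient $R[k]\to S$ the window of $\mathfrak{G}[k]_{/S}$ is genuinely the base change of $(M[k],N[k],\Phi)$ along the evident frame with target $S$, so that the $N$-preservation criterion really does localize to the two scalar equations $\alpha_k=0$ and $\beta_k=0$ in $S$. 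This is a routine application of Zink's theory once one observes that the kernel of the relevant frame is generated by the topologically nilpotent element $p^{2k+1}$; beyond it nothing further needs checking, since $\Z_{p^2}$-linearity and commutation with $\Phi$ were built into $\Gamma'$ from the outset.
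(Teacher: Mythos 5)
Your identification of the obstruction is the same as in the paper's proof: for $k\ge 1$ the matrix $p^k\Gamma[k]$ has entries in $A[k]$, it is the unique lift of $p^k\eta$, and its failure to define an endomorphism is concentrated in the upper-right entries, since modulo the filtration it sends $f_1\mapsto \alpha_k e_1$ and $f_2\mapsto \beta_k e_2$. The genuine gap is the step you yourself flag and then dismiss: base change to an arbitrary quotient $R[k]\map{}S$. Your justification --- that ``the kernel of the relevant frame is generated by the topologically nilpotent element $p^{2k+1}$'' --- is false for a general quotient: the kernel of $A[k]\map{}S$ is $(p^{2k+1})+\widetilde{J}$, where $\widetilde{J}$ is the preimage of $\ker(R[k]\map{}S)$, and it is not routine that $A[k]\map{}S$ is a frame to which Zink's equivalence applies, nor that the window of $\mathfrak{G}[k]_{/S}$ is the naive base change of $(M[k],N[k],\Phi)$ with $N[k]$ enlarged by $\widetilde{J}M[k]$. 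The proof in the paper is engineered precisely to avoid this point: because base change for windows is awkward, it first converts $(M[k],N[k],\Phi)$ into the display $(P,Q,F,V^{-1})$ over $W(R[k])$ (via the canonical lift $A[k]\map{}W(R[k])$, $x_i\mapsto[x_i]$), and then base changes the display along $W(R[k])\map{}W(S)$, which works for every quotient $S$ with no further hypotheses. In that formulation the condition you want becomes the vanishing of the induced $S$-linear map $(Q/I_{R[k]}P)\otimes_{R[k]}S\map{}(P/Q)\otimes_{R[k]}S$, which is determined by $f_1\mapsto\alpha_k e_1$, $f_2\mapsto\beta_k e_2$ --- the same two scalar equations you isolate, but now legitimately available for all $S$ simultaneously, giving both directions of the argument.

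Once this substitution is made your argument is essentially the paper's. One further point to note in the converse direction: over $S=R[k]/(\alpha_k,\beta_k)$, after showing that $p^k\Gamma[k]$ preserves $Q_S$ one must still check that it commutes with $F$ and $V^{-1}$ on the nose; the paper deduces this from the fact that some $\Z_p$-multiple does (it is a quasi-endomorphism) together with the $\Z_p$-torsion-freeness of the free $W(S)$-module $P_S$. In your window formulation this would indeed be automatic from the torsion-freeness of $A[k]$, which is a small advantage of staying with windows --- but only after the frame/base-change issue above has been dealt with, and the only clean way to do that here is the display-theoretic route the paper takes.
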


\begin{proof}
 As base change for displays is easier to work with than base change for windows, we first use Zink's equivalence \cite{zink01} 
 between windows and displays to construct the display associated to  $(M[k] ,N[k] ,\Phi)$.   As in the introduction to \cite{zink01} there 
 is a continuous ring homomorphism $A[k]\map{} W(R[k])$ lifting the map $A[k] \map{}R[k] $.  This map takes $x_i\in A[k]$ to the 
 Teichmuller lift $[x_i]\in W(R[k])$.  The display $(P,Q,F,V^{-1})$ associated to $(M[k],N[k],\Phi)$ consists of the free $W(R[k])$-module
$$
P=M[k] \otimes_{A[k] } W(R[k])
$$
the submodule 
$$
Q= I_{R[k]}e_1+W(R[k] ) f_1+ I_{R[k]}e_2 + W(R[k])f_2 \subset P,
$$
 and two operators $F:P\map{}P$ and $V^{-1}:Q\map{}P$ whose definition does not concern us.  If we let $J=(p^{2k+1})$ denote the 
 kernel of $A[k]\map{}R[k]$ then there is a canonical isomorphisms of $R[k]$-modules
$$
P / I_{R[k]} P\iso M[k] / J M[k]
$$ 
which identifies  $Q/I_{R[k]}P$ with $N[k] /JM[k]$.  The quasi-endomorphism 
$p^k\eta$ of $(M[k] ,N[k],\Phi)$ is represented by the matrix $p^k\Gamma[k]$ described above, which has entries in $A[k]$.  In 
particular $p^k\Gamma[k]$ is an endomorphism of the $A[k]$-module $M[k]$ and induces an endomorphism of the $W(R[k])$-module 
$P$.  The induced $R[k]$-module map $Q/IP\map{}P/Q$ (which, speaking intuitively, measures the extent to which the quasi-
endomorphism $p^k\eta$ of $\mathfrak{G}[k]$ fails to be an endomorphism) is determined by
$$
f_1\mapsto \alpha_k e_1 \qquad f_2\mapsto \beta_k e_2.
$$

First suppose that $R[k] \map{}S$ is any quotient over which the quasi-endomorphism $p^k\eta$ of  $\mathfrak{G}[k]_{/S}$  is an 
endomorphism.  The base change of $(P,Q,F,V^{-1})$ has $$P_{S}=P\otimes_{W(R[k])} W(S)$$ as its underlying 
$W(S)$-module, with 
submodule $Q_{S}\subset P_{S}$ equal to the image of $Q$ under $P\map{}P_{S}$.  By hypothesis the endomorphism 
$p^k\Gamma[k]$ of $P_{S}$ preserves $Q_{S}$ and hence (letting $I_S$ denote the kernel of $W(S)\map{}S$) the induced map 
\begin{equation}\label{flag}
 (Q/IP)\otimes_{R[k]} S \iso  Q_S/I_S P_S\map{p^k\Gamma[k]}P_S/Q_S \iso (P/Q)\otimes_{R[k]} S
\end{equation}
 is trivial.  This implies that $\alpha_k$ and $\beta_k$ lie in the kernel of $R[k] \map{}S$.

Now set $S=R[k] /(\alpha_k,\beta_k)$ so that (\ref{flag}) is trivial.  It follows that $p^k\Gamma[k]$ preserves the submodule 
$Q_{S}\subset P_{S}$.  As we already know that $p^k\Gamma[k]$ is a quasi-endomorphism of $(P,Q,F,V^{-1})_{/S}$ some $\Z_p$-
multiple of $p^k\Gamma[k]$ commutes with both $F$ and $V^{-1}$.  But $W(S)$ is $\Z_p$-torsion-free and $P_S$ is free over 
$W(S)$;  thus $P_S$ has no $\Z_p$-torsion and we deduce that $p^k\Gamma[k]$ itself commutes with $F$ and $V^{-1}$.  In other 
words $p^k\Gamma[k]$ is an endomorphism of $(P,Q,F,V^{-1})_{/S}$.
\end{proof}

Explicitly computing $\alpha_k$ and $\beta_k$ by solving the recursion (\ref{recursion}) is prohibitively difficult; luckily everything we 
need to know about $\alpha_k$ and $\beta_k$ can be deduced from (\ref{structure I}), (\ref{structure II}), and (\ref{structure III}) without 
knowing the actual values of the $u_\ell$'s and $g_\ell$'s.  Let $\mathcal{A}$ denote the completed local ring of $\ZZ_p[[x_1,x_2]]$ at 
the prime ideal $(p,x_2)$, and note that the constants $u_\ell$ appearing in (\ref{structure I}) satisfy 
$$
u_\ell\in \mathcal{A}^\times
$$
as they have nonzero image in the residue field $\mathcal{A}/\mathfrak{m}_\mathcal{A}\iso \F((x_1))$.

\begin{Lem}\label{Lem:unr length}
Suppose that $p$ is odd and $k\ge 1$.  The $\mathcal{A}$-module  $Q_k=\mathcal{A}/(\alpha_k,\beta_k)$ is Artinian of length
$$
\length_{\mathcal{A}} (Q_k) = 2p^{k-1}+4p^{k-2}+6p^{k-3}+8p^{k-4}+\cdots+ (2k) p^0
$$
and is annihilated by  $ x_2^{  2+2p+2p^2+\cdots+2p^{k-1} }$.
\end{Lem}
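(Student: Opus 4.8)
The plan is to work inside the two‑dimensional regular local ring $\mathcal{A}$, which we realize as $\mathcal{A}\iso\mathcal{O}[[x_2]]$, where $\mathcal{O}$ is the completion of the localization of $\ZZ_p[[x_1]]$ at $(p)$ — a complete discrete valuation ring with uniformizer $p$ and residue field $\F((x_1))$. Since each $u_\ell$ has nonzero image in this residue field it is a unit of $\mathcal{A}$. First I would extract from (\ref{structure I})--(\ref{structure III}) and the divisibility statements preceding them the following bookkeeping. Grouping the summands of (\ref{structure III}) by parity one obtains the one‑step recursions $\alpha_k=p\,\alpha_{k-1}+\delta_k$ and $\beta_k=p\,\beta_{k-1}+\delta'_k$, in which exactly one of $\delta_k,\delta'_k$ is nonzero (namely $y_k$ when $k$ is even, $z_k$ when $k$ is odd) while the other vanishes; modulo $p$ the surviving generator is a unit times $x_2^{m_0(k)}$ and the new term is a unit times $x_2^{m_1(k)}$, where $m_0(k),m_1(k)$ are the exponents occurring in the $\epsilon_\pm$'s, and a short computation with geometric series gives $m_0(k)+m_1(k)=2(1+p+\cdots+p^{k-1})$ as well as $m_0(k)>m_1(k)$; finally the correction $px_2^{p^{\ell-1}}g_\ell$ is divisible by $x_2^{p^{k-1}}$. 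By Weierstrass preparation, whichever of $\alpha_k,\alpha_k/p$ (resp. $\beta_k,\beta_k/p$) is not divisible by $p$ equals a unit of $\mathcal A$ times a distinguished polynomial over $\mathcal{O}$ of degree $m_0(k)$ or $m_1(k)$; so all relevant quotients are finite free over $\mathcal{O}$ and $\mathcal{A}$‑length equals $\mathcal{O}$‑length.

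The length formula is then proved by induction on $k$. The base case $k=1$ is immediate: $\alpha_1=pu_0$ and $\beta_1=u_1x_2^2+px_2g_1$, so $(\alpha_1,\beta_1)=(p,x_2^2)$ and $Q_1\iso\F((x_1))[[x_2]]/(x_2^2)$ has length $2$ and is killed by $x_2^2$. For the inductive step assume $k$ is even (the odd case is identical after interchanging the roles of $\alpha$ and $\beta$). Since $\beta_k=p\beta_{k-1}\in(p)$ while $\alpha_k\notin(p)$, one checks $p\mathcal{A}\cap(\alpha_k,\beta_k)=p\,(\alpha_k,\beta_{k-1})$, which yields the short exact sequence
$$0\to \mathcal{A}/(\alpha_k,\beta_{k-1})\map{\cdot p} Q_k\to \mathcal{A}/(p,\alpha_k,\beta_k)\to 0$$
with $\mathcal{A}/(p,\alpha_k,\beta_k)=\mathcal{A}/(p,x_2^{m_0(k)})$ of length $m_0(k)$. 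It remains to show $\length_{\mathcal{A}}(\mathcal{A}/(\alpha_k,\beta_{k-1}))=\length(Q_{k-1})+m_1(k)$. Passing to $\Lambda'=\mathcal{A}/\beta_{k-1}$, which is free of rank $m_1(k)$ over $\mathcal{O}$, the inequality $m_0(k)>m_1(k)$ forces the image of $\alpha_k$ in $\Lambda'$ to lie in $p\Lambda'$, say $\alpha_k=p\theta$ in $\Lambda'$; a further short exact sequence $0\to\Lambda'/\theta\Lambda'\map{\cdot p}\Lambda'/p\theta\Lambda'\to\Lambda'/p\Lambda'\to 0$ reduces the claim to $\length(\mathcal{A}/(\theta,\beta_{k-1}))=\length(\mathcal{A}/(\alpha_{k-1},\beta_{k-1}))=\length(Q_{k-1})$. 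Here one uses $\alpha_k=p\alpha_{k-1}+y_k$ together with the congruence $y_k\equiv(\text{unit})\,x_2^{m_0(k)-m_1(k)}\beta_{k-1}\pmod p$ to see $\theta=\alpha_{k-1}+\rho$ in $\Lambda'$ with $\rho$ divisible by a high power of $x_2$; the inductive hypothesis — formulated so as to record not merely the length of $(\alpha_{k-1},\beta_{k-1})$ but enough about \emph{which} monomials it contains — then gives $\rho\in(\alpha_{k-1})\Lambda'$, so $(\theta)=(\alpha_{k-1})$ in $\Lambda'$ and the colengths agree. Assembling the three sequences gives $\length(Q_k)=m_0(k)+m_1(k)+\length(Q_{k-1})=2(1+p+\cdots+p^{k-1})+\length(Q_{k-1})$, and this recursion with base value $\length(Q_1)=2$ solves to $2p^{k-1}+4p^{k-2}+\cdots+(2k)p^0$.

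The annihilation statement is carried along in the same induction. A filtration estimate is too weak, so instead I would track the sharper assertion $x_2^{N(k)}\in(\alpha_k,\beta_k)$ with $N(k)=2(1+p+\cdots+p^{k-1})$ directly, using the identity $N(k)=pN(k-1)+2$: writing $x_2^{N(k-1)}=a\alpha_{k-1}+b\beta_{k-1}$ inductively and multiplying by $p$ expresses $px_2^{N(k-1)}$ modulo $(\alpha_k,\beta_k)$ in terms of the correction term $\delta_k$ (or $\delta'_k$), which is divisible by $x_2^{p^{k-1}}$, and combining this with the mod‑$p$ identity $(\alpha_k,\beta_k)+(p)=(p,x_2^{m_0(k)})$ closes up to give $x_2^{N(k)}\in(\alpha_k,\beta_k)$. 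I expect the genuinely delicate point to be precisely the interplay, in the inductive step, between the correction terms $px_2^{p^{\ell-1}}g_\ell$ and the ideal $(\alpha_{k-1},\beta_{k-1})$: one must propagate not just the length and annihilator of $Q_{k-1}$ but enough structural information about $(\alpha_{k-1},\beta_{k-1})$ (e.g.\ that it contains a suitable family of monomials in $p$ and $x_2$) to guarantee that the corrections are absorbed — and this is exactly where the precise exponents of (\ref{structure I}), rather than the mere shape of $\alpha_k$ and $\beta_k$, are essential.
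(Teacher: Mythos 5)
Your outer scaffolding is correct: the one-step recursions $\alpha_k=p\alpha_{k-1}+y_k$, $\beta_k=p\beta_{k-1}+z_k$, the identification $\mathcal{A}\iso\mathcal{O}[[x_2]]$, both short exact sequences, the values of $m_0(k),m_1(k)$ with $m_0(k)+m_1(k)=2(1+p+\cdots+p^{k-1})$, the base case $(\alpha_1,\beta_1)=(p,x_2^2)$, and the fact that the resulting recursion solves to the stated length are all fine. The gap is the one sentence carrying all the weight: the assertion that $\rho\in(\alpha_{k-1})\Lambda'$, hence $(\theta)=(\alpha_{k-1})$ in $\Lambda'$. Unwinding $p\rho\equiv y_k$ in $\Lambda'$ (say for $k$ even) gives $\rho\equiv x_2^{p^{k-1}}g_k-u_ku_{k-1}^{-1}x_2^{m_0(k)-m_1(k)}\bigl(\beta_{k-2}+x_2^{p^{k-2}}g_{k-1}\bigr)$ in $\Lambda'$, and nothing you have carried along decides whether this lies in $(\alpha_{k-1},\beta_{k-1})$: absorbing even the first term already requires $x_2^{2+2p+\cdots+2p^{k-2}}\in(\alpha_{k-1},\beta_{k-1})$, i.e.\ the annihilation half of the lemma one step down, while the second term drags in $\beta_{k-2}=p\beta_{k-3}$ and hence still earlier data — so the length induction and the annihilation induction are intertwined, and your third paragraph's ``closes up'' is not an argument. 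Moreover, even granting $\rho=c\alpha_{k-1}+d\beta_{k-1}$, what you actually need is that $c$ can be taken in $\mathfrak{m}_\mathcal{A}$: in the one-dimensional Cohen--Macaulay ring $\Lambda'$ one has $\length(\Lambda'/(1+c)\alpha_{k-1}\Lambda')=\length(\Lambda'/(1+c)\Lambda')+\length(\Lambda'/\alpha_{k-1}\Lambda')$, so if $1+c$ fails to be a unit the count is destroyed. Both points are exactly the ``strengthened inductive hypothesis'' you allude to but never formulate or propagate; already for $k=3$ one needs the annihilator of $Q_2$ just to place $\rho$ in $(\alpha_2,\beta_2)$, and the nonunit-coefficient condition is still unaddressed. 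That is where the entire content of the lemma lives, and the proposal defers it rather than proves it.

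For comparison, the paper never needs to know which monomials lie in the lower ideals: for fixed $k$ it rewrites $\alpha_k,\beta_k$ in the form (\ref{new structure}) and performs a staircase of explicit eliminations within that fixed pair, producing $(\alpha_k^{(j)},\beta_k^{(j)})$ with $\mathcal{A}/(\alpha_k^{(j)},\beta_k^{(j)})\iso x_2^{2+2p+\cdots+2p^{j-1}}Q_k$; each graded piece is $\mathcal{A}/(x_2^{2p^j},p^{k-j})$, of length $2p^j(k-j)$, and the annihilation statement falls out at the last step because one of $\alpha_k^{(k)},\beta_k^{(k)}$ is a unit. Every manipulation there uses only the shape of the two generators (unit coefficients $u_\ell$ against nonunit coefficients $C_\ell$), which is precisely what sidesteps the absorption problem your induction on $k$ runs into. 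To rescue your route you would have to prove and propagate a quantitative monomial-membership statement about $(\alpha_{k-1},\beta_{k-1})$ strong enough to absorb both correction terms with a nonunit $\alpha_{k-1}$-coefficient, and doing so essentially amounts to reconstructing the paper's elimination.
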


\begin{proof}
    As we assume that $p>2$ we have the easy inequality
\begin{equation}\label{estimate}
\sum_{ 0\le i < \ell } 2p^i < p^\ell
\end{equation}
for all $\ell \ge 0$.  Hence  $x_2^{p^\ell}$ is a multiple of $x_2\cdot \epsilon_\pm(\ell)$ and we may define a nonunit  $C_\ell \in A$ by 
the relations
\begin{eqnarray*}
C_{\ell}\cdot \epsilon_-(\ell) =   g_{\ell+1} x_2^{p^\ell} & &  \mathrm{if\ }\ell \mathrm{\ even} \\
 C_{\ell}\cdot \epsilon_+(\ell) =  g_{\ell+1} x_2^{p^\ell} & &  \mathrm{if\ }\ell \mathrm{\ odd}.
\end{eqnarray*}
With this notation (\ref{structure I}), (\ref{structure II}), and (\ref{structure III}) can be rewritten as
\begin{eqnarray}\label{new structure}
\alpha_k &=&
  \sum_{  \substack{ 0 \le \ell \le k  \\ \ell \mathrm{\ even}  } }  u_\ell  p^{k-\ell}  \epsilon_-(\ell)   + 
  \sum_{  \substack{ 0 \le \ell < k  \\ \ell \mathrm{\ odd}  } }  C_\ell   p^{k-\ell} \epsilon_+(\ell)    \\
\beta_k &=&
  \sum_{  \substack{ 0 \le \ell \le k  \\ \ell \mathrm{\ odd}  } }  u_\ell  p^{k-\ell}  \epsilon_+(\ell)    + 
 \sum_{  \substack{ 0 \le \ell < k  \\ \ell \mathrm{\ even}  } }  C_\ell   p^{k-\ell} \epsilon_-(\ell)  .
\nonumber
\end{eqnarray}
For $0\le j\le k$ abbreviate 
$$
Q^{(j)}_k =  x_2^{2+2p+2p^2+\cdots+2p^{j-1}} \cdot Q_k.
$$  
We will construct  $\alpha_k^{(j)} ,\beta_k^{(j)}\in \mathcal{A}$ such that
$$
 \mathcal{A}/(\alpha_k^{(j)} , \beta_k^{(j)} ) \iso Q_k^{(j)}
$$
as $\mathcal{A}$-modules and such that 
\begin{eqnarray*}
\alpha_k^{(j)} &=&
  \sum_{  \substack{ j \le \ell \le k  \\ \ell \mathrm{\ even}  } }  u_\ell^{(j)}  p^{k-\ell} \frac{ \epsilon_-(\ell) }{\epsilon_-(j)}  + 
  \sum_{  \substack{ j \le \ell \le k  \\ \ell \mathrm{\ odd}  } }   C^{(j)}_\ell   p^{k-\ell}  \frac{ \epsilon_+(\ell) }{\epsilon_-(j)}   \\
\beta_k^{(j)} &=&
  \sum_{  \substack{ j \le \ell \le k  \\ \ell \mathrm{\ odd}  } }  u^{(j)}_\ell  p^{k-\ell}   \frac{ \epsilon_+(\ell)  }{ \epsilon_+(j) } + 
 \sum_{  \substack{ j \le \ell \le k  \\ \ell \mathrm{\ even}  } }   C^{(j)}_\ell   p^{k-\ell}  \frac{ \epsilon_-(\ell)  }{ \epsilon_+(j) } 
\end{eqnarray*}
for some units $u_\ell^{(j)}\in \mathcal{A}$  and some nonunits $C_\ell^{(j)}\in \mathcal{A}$. The construction is recursive, beginning  
with $\alpha_k^{(0)}=\alpha_k$ and $\beta_k^{(0)} = \beta_k$.  We now assume that $\alpha_k^{(j)}$ and $\beta_k^{(j)}$ have 
already been constructed and proceed to construct $\alpha_k^{(j+1)}$ and $\beta_k^{(j+1)}$.

Suppose first that $j$ is even.  The $\ell=j$ term in $\alpha_k^{(j)}$ is a unit multiple of $p^{k-j}$ while the $\ell=j$ term in 
$\beta_k^{(j)}$ is a multiple of $p^{k-j}$.  Therefore we may eliminate the $\ell=j$ term from $\beta_k^{(j)}$ by adding a suitable 
multiple of $\alpha_k^{(j)}$.   After collecting terms we find that there are units $u_\ell^{(j+1)}$ and  nonunits $C_\ell^{(j+1)}$  
(for $\ell$ odd and even, respectively) such that 
\begin{eqnarray*}\lefteqn{
\beta_k^{(j)} - \alpha_k^{(j)} \cdot \frac{C_j^{(j)}  \epsilon_-(j )  }{u_j^{(j)}  \epsilon_+(j) }   } \\
& & =   \sum_{  \substack{ j+1  \le \ell \le k  \\ \ell \mathrm{\ odd}  } } 
 u^{(j+1)}_\ell  p^{k-\ell}   \frac{ \epsilon_+(\ell)  }{ \epsilon_+(j) } 
 +  \sum_{  \substack{ j+1 \le \ell \le  k  \\ \ell \mathrm{\ even}  } }  C^{(j+1)}_\ell   p^{k-\ell}  \frac{ \epsilon_-(\ell)  }{ \epsilon_+(j) } .
\end{eqnarray*}
Each term on the right hand side is divisible by $x^{2p^j}$, and dividing out $x^{2p^j}$ leaves 
$$
\beta_k^{ (j+1) }  \define  \sum_{  \substack{ j+1  \le \ell \le k  \\ \ell \mathrm{\ odd}  } } 
  u^{(j+1)}_\ell  p^{k-\ell}   \frac{ \epsilon_+(\ell)  }{ \epsilon_+ ( j+1 ) } 
 +  \sum_{  \substack{ j+1 \le \ell \le  k  \\ \ell \mathrm{\ even}  } } C^{(j+1)}_\ell   p^{k-\ell}  \frac{ \epsilon_-(\ell)  }{ \epsilon_+(j+1) }.
 $$
Note that by construction of $\beta_k^{(j+1)}$ we have
\begin{equation}\label{filter step I}
\left(\alpha_k^{(j)} , \beta_k^{(j)} \right) = \left(\alpha_k^{(j)} ,  x_2^{2p^j} \beta_k^{(j+1)} \right).
\end{equation}
The $\ell=j+1$ term in $\beta_k^{ (j+1) }$ is a unit multiple of $p^{k-j-1}$, while the $\ell=j$ term in $\alpha_k^{ (j) }$ is a unit multiple 
of  $p^{k-j}$.  Therefore we may eliminate the $\ell=j$ term from $\alpha_k^{ (j) }$ by adding a suitable multiple of $\beta_k^{(j+1)}$.  Collecting common terms we find that 
\begin{eqnarray*}\lefteqn{
\alpha_k^{(j)} - \beta_k^{( j+1  )} \cdot \frac{u_j^{(j)} p  }{ u_{ j+1 }^{ (j+1) }   }} \\
& & =  
 \sum_{  \substack{ j+1  \le \ell \le k  \\ \ell \mathrm{\ even}  } } 
  u^{(j+1)}_\ell  p^{k-\ell}   \frac{ \epsilon_-(\ell)  }{ \epsilon_-( j ) } 
        +  \sum_{  \substack{ j+1 \le \ell \le  k  \\ \ell \mathrm{\ odd}  } }    C^{(j+1)}_\ell   p^{k-\ell}  \frac{ \epsilon_+( \ell )  }{ \epsilon_-( j ) } 
\end{eqnarray*}
for some units $u_\ell^{(j+1)}$ and nonunits $C_\ell^{(j+1)}$ (for $\ell$ even and odd, respectively).   As we are assuming that $j$ is 
even $\epsilon_-(j)=\epsilon_-(j+1)$, and the above expression is equal to 
$$
\alpha_k^{ (j+1) }  \define  \sum_{  \substack{ j+1  \le \ell \le k  \\ \ell \mathrm{\ even}  } } 
  u^{(j+1)}_\ell  p^{k-\ell}   \frac{ \epsilon_-(\ell)  }{ \epsilon_- ( j+1 ) } 
 +  \sum_{  \substack{ j+1 \le \ell \le  k  \\ \ell \mathrm{\ odd}  } } C^{(j+1)}_\ell   p^{k-\ell}  \frac{ \epsilon_+(\ell)  }{ \epsilon_-(j+1) }.
$$
By construction of $\alpha_k^{(j+1)}$
\begin{equation}\label{filter step II}
\left( \alpha_k^{( j+1 )} , \beta_k^{( j+1 )} \right) = \left(\alpha_k^{( j )} ,  \beta_k^{(j+1)} \right).
\end{equation}
As we are assuming  $Q_k^{(j)}\iso \mathcal{A}/(\alpha_k^{(j)} , \beta_k^{(j)} )$ the exact sequence
$$
{0} \map{}  { Q_k^{(j+1)} } \map{}  {  Q_k^{(j)} } \map{} { Q_k^{(j)}/Q_k^{(j+1)} } \map{} { 0}   
$$
can be identified with 
$$
{0} \map{} { (x_2^{2p^j})/(x_2^{2p^j})\cap (\alpha_k^{(j)},\beta_k^{(j)}) } \map{}  {  \mathcal{A}/(\alpha_k^{(j)} , \beta_k^{(j)}  ) }
\map{}  {  \mathcal{A}/(x_2^{2p^j} , \alpha_k^{(j)} , \beta_k^{(j)}  )      } \map{}  { 0}.
$$
Using (\ref{filter step I}) and (\ref{filter step II}) and the fact that $\alpha_k^j$ is not divisible by $x_2$ we obtain isomorphisms
$$
\mathcal{A} /( \alpha_k^{(j+1)} , \beta_k^{(j+1)} )   \iso  \mathcal{A} /( \alpha_k^{(j)} , \beta_k^{(j+1)} )  
 \map{x_2^{2p^j} } (x_2^{2p^j})/(x_2^{2p^j})\cap (\alpha_k^{(j)},\beta_k^{(j)}) \iso Q_k^{(j+1)}
$$
as desired.

Now suppose that $j$ is odd.  The method is the same as in the even case: we first eliminate the $\ell=j$ term of $\alpha_k^{(j)}$ by 
adding a multiple of $\beta_k^{(j)}$ to $\alpha_k^{(j)}$, resulting in
\begin{eqnarray*}\lefteqn{
\alpha_k^{(j)} - \beta_k^{(j)}  \frac{ C_j^{(j)} \epsilon_+(j) }{ u_j^{(j)} \epsilon_-(j) }   }  \\
&=& 
  \sum_{  \substack{ j+1 \le \ell \le k  \\ \ell \mathrm{\ even}  } }  u_\ell^{(j+1)}  p^{k-\ell} \frac{ \epsilon_-(\ell) }{\epsilon_-(j)}  + 
  \sum_{  \substack{ j +1 \le \ell \le k  \\ \ell \mathrm{\ odd}  } }   C^{(j+1)}_\ell   p^{k-\ell}  \frac{ \epsilon_+(\ell) }{\epsilon_-(j)}  
\end{eqnarray*}
for some units $u_\ell^{(j+1)}$ and nonunits $C_\ell^{(j+1)}$ (with $\ell$ even and odd, respectively).  Each term on the right is 
divisible by $x_2^{2p^j}$, and dividing gives
$$
\alpha_k^{(j+1)} \define 
  \sum_{  \substack{ j+1 \le \ell \le k  \\ \ell \mathrm{\ even}  } }  u_\ell^{(j+1)}  p^{k-\ell} \frac{ \epsilon_-(\ell) }{\epsilon_-(j+1)}  + 
  \sum_{  \substack{ j +1 \le \ell \le k  \\ \ell \mathrm{\ odd}  } }   C^{(j+1)}_\ell   p^{k-\ell}  \frac{ \epsilon_+(\ell) }{\epsilon_-(j+1)}  .
$$
By construction
$$
( \alpha_k^{(j)} , \beta_k^{(j)} ) = ( x_2^{2p^j} \alpha_k^{(j+1)} , \beta_k^{(j)} ).
$$
One now checks that
\begin{eqnarray*}\lefteqn{
\beta_k^{(j)} - \alpha_k^{(j+1)}  \frac{ u_j^{(j)} p }{ u_{j+1}^{( j +1 )} }   }  \\
&=& 
  \sum_{  \substack{ j+1 \le \ell \le k  \\ \ell \mathrm{\ odd}  } }  u_\ell^{(j+1)}  p^{k-\ell} \frac{ \epsilon_+(\ell) }{\epsilon_+(j)}  + 
  \sum_{  \substack{ j +1 \le \ell \le k  \\ \ell \mathrm{\ even}  } }   C^{(j+1)}_\ell   p^{k-\ell}  \frac{ \epsilon_-(\ell) }{\epsilon_+(j)}  
\end{eqnarray*}
for some units $u_\ell^{(j+1)}$ and nonunits $C_\ell^{(j+1)}$ (with $\ell$ odd and even, respectively).   As $j$ is odd we have 
$\epsilon_+(j)=\epsilon_+(j+1)$, and so the above quantity is equal to
$$
\beta_k^{(j+1)} \define   \sum_{  \substack{ j+1 \le \ell \le k  \\ \ell \mathrm{\ odd}  } }  
u_\ell^{(j+1)}  p^{k-\ell} \frac{ \epsilon_+(\ell) }{\epsilon_+(j+1)}  + 
  \sum_{  \substack{ j +1 \le \ell \le k  \\ \ell \mathrm{\ even}  } }   C^{(j+1)}_\ell   p^{k-\ell}  \frac{ \epsilon_-(\ell) }{\epsilon_+(j+1)}  .
$$
By construction 
$$
( \alpha_k^{(j+1)} , \beta_k^{(j+1)} ) = (  \alpha_k^{(j+1)} , \beta_k^{(j)} )
$$
and exactly in the even case we find isomorphisms
$$
\mathcal{A} /( \alpha_k^{(j+1)} , \beta_k^{(j+1)} )   \iso  \mathcal{A} /( \alpha_k^{(j+1)} , \beta_k^{(j)} )   \map{x_2^{2p^j} } 
(x_2^{2p^j})/(x_2^{2p^j})\cap (\alpha_k^{(j)},\beta_k^{(j)}) \iso Q_k^{(j+1)}.
$$

Both parts of the lemma now follow easily.  As one of $\alpha_k^{(k)}$ and $\beta_k^{(k)}$ is a unit we have
$$
x_2^{2+2p+2p^2+\cdots+2p^{k-1}} \cdot Q_k = Q_k^{(k)} \iso \mathcal{A}/(\alpha_k^{(k)} , \beta_k^{(k)}) =0.
$$
The length of $Q_k$ may be computed as 
\begin{eqnarray*}
\length_{\mathcal{A}}(Q_k) &=& \sum_{ 0\le j < k } \length_{\mathcal{A}}( Q_k^{(j)}/Q_k^{(j+1)}  ) \\
& = &  \sum_{ 0\le j < k } \length_{\mathcal{A}}(   \mathcal{A}/( x_2^{2p^j} , \alpha_k^{(j)} , \beta_k^{(j)} )   ) .
\end{eqnarray*}
If $j$ is even then by (\ref{filter step I})
$$
( x_2^{2p^j} , \alpha_k^{(j)} , \beta_k^{(j)} )= ( x_2^{2p^j} , \alpha_k^{(j)} , x_2^{2p^j} \beta_k^{( j+1 )} ) = ( x_2^{2p^j} , \alpha_k^{(j)} ) 
= (x_2^{2p^j} , p^{k-j} ).
$$
For the final equality we have used the fact  that  for $\ell > j$  
$$
x_2^{2p^j} \mathrm{\ divides}  \begin{cases}
\epsilon_-(\ell)/\epsilon_-(j)  &  \mathrm{if\ } \ell\mathrm{\ is\ even} \\
\epsilon_+(\ell)/\epsilon_-(j)  &  \mathrm{if\ } \ell\mathrm{\ is\ odd}.
\end{cases}
$$
 Similarly if $j$ is odd then
$$
( x_2^{2p^j} , \alpha_k^{(j)} , \beta_k^{(j)} )= ( x_2^{2p^j} , x_2^{2p^j} \alpha_k^{(j+1)} , \beta_k^{( j )} ) 
= ( x_2^{2p^j} , \beta_k^{(j)} ) = (x_2^{2p^j} , p^{k-j} ).
$$
In either case
$$
 \length_{\mathcal{A}}(   \mathcal{A}/( x_2^{2p^j} , \alpha_k^{(j)} , \beta_k^{(j)} )   ) = 2p^j(k-j)
$$
completing the proof of
$$
\length_{\mathcal{A}}(Q_k)  =   \sum_{ 0\le j < k }  2p^j(k-j).
$$
\end{proof}

We continue to let $\mathcal{A}$ denote the completed local ring of $\ZZ_p[[x_1,x_2]]$ at $(p,x_2)$, with maximal ideal 
$\mathfrak{m}_\mathcal{A}=(p,x_2)$.

\begin{Lem}\label{Lem:is thick}
If $p$ is odd and $k\ge 1$ then there is an inclusion of ideals in $\mathcal{A}$
$$
(p^{2k+1} , x_2^{p^k}) \subset \mathfrak{m}_\mathcal{A}\cdot (\alpha_k,\beta_k).
$$
\end{Lem}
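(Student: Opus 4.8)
The plan is to handle the two generators of $(p^{2k+1}, x_2^{p^k})$ separately, reusing throughout the filtration $Q_k = Q_k^{(0)}\supseteq Q_k^{(1)}\supseteq\cdots\supseteq Q_k^{(k)} = 0$, the identifications $Q_k^{(j)}/Q_k^{(j+1)}\cong\mathcal{A}/(x_2^{2p^j}, p^{k-j})$, and the auxiliary relations (\ref{filter step I}), (\ref{filter step II}) and their evident analogues for $j$ odd, all established in the proof of Lemma \ref{Lem:unr length}. Write $s_j = 2 + 2p + \cdots + 2p^{j-1}$, so that $Q_k^{(j)} = x_2^{s_j}Q_k$ and $x_2^{s_j}a\in(\alpha_k,\beta_k)$ if and only if $a\in(\alpha_k^{(j)},\beta_k^{(j)})$.

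The generator $x_2^{p^k}$ is the easy one. Since one of $\alpha_k^{(k)},\beta_k^{(k)}$ is a unit we have $(\alpha_k^{(k)},\beta_k^{(k)}) = \mathcal{A}$, i.e.\ $Q_k^{(k)} = 0$; unwinding the isomorphism $Q_k^{(k)}\cong x_2^{s_k}Q_k$ this says $x_2^{s_k}\in(\alpha_k,\beta_k)$. By the estimate (\ref{estimate}) (applied with $\ell = k$) one has $s_k = \sum_{0\le i<k}2p^i < p^k$, so $p^k - s_k\ge 1$ and hence $x_2^{p^k} = x_2^{p^k - s_k}\cdot x_2^{s_k}\in x_2\cdot(\alpha_k,\beta_k)\subseteq\mathfrak{m}_{\mathcal{A}}\cdot(\alpha_k,\beta_k)$.

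For the generator $p^{2k+1}$ I would argue by descending induction on $j$, from $j = k$ down to $j = 0$, that $p^{\,1 + 2(k-j)}\in\mathfrak{m}_{\mathcal{A}}\cdot(\alpha_k^{(j)},\beta_k^{(j)})$; the case $j = 0$ is then the assertion $p^{2k+1}\in\mathfrak{m}_{\mathcal{A}}\cdot(\alpha_k,\beta_k)$, and the base case $j = k$ is immediate because $(\alpha_k^{(k)},\beta_k^{(k)}) = \mathcal{A}$ and $p\in\mathfrak{m}_{\mathcal{A}}$. For the inductive step assume $j$ even (the case $j$ odd is symmetric after exchanging $\alpha$ and $\beta$), and recall from the proof of Lemma \ref{Lem:unr length} the relations $\alpha_k^{(j)} = u\cdot p^{k-j} + x_2^{2p^j}\cdot(\text{element of }\mathcal{A})$ with $u\in\mathcal{A}^\times$, $\beta_k^{(j)} = (\text{element of }\mathfrak{m}_{\mathcal{A}})\cdot\alpha_k^{(j)} + x_2^{2p^j}\beta_k^{(j+1)}$, and $\alpha_k^{(j+1)} = \alpha_k^{(j)} - (\text{unit}\cdot p)\cdot\beta_k^{(j+1)}$ --- the crucial point being that the coefficient in the last relation carries exactly one factor of $p$. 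Feeding the inductive hypothesis $p^{\,1+2(k-j-1)}\in\mathfrak{m}_{\mathcal{A}}(\alpha_k^{(j+1)},\beta_k^{(j+1)})$ through these relations should yield first $x_2^{2p^j}\cdot p^{\,1+2(k-j-1)}\in\mathfrak{m}_{\mathcal{A}}(\alpha_k^{(j)},\beta_k^{(j)})$, and then, multiplying $\alpha_k^{(j)} = u\,p^{k-j} + x_2^{2p^j}(\cdots)$ by $p^{k-j}$ and using the previous containment to dispose of the term $x_2^{2p^j}p^{k-j}(\cdots)$, that $p^{2(k-j)}$, hence also $p^{\,1+2(k-j)} = p\cdot p^{2(k-j)}$, lies in $\mathfrak{m}_{\mathcal{A}}(\alpha_k^{(j)},\beta_k^{(j)})$.

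I expect the main obstacle to be exactly the last move of the inductive step --- trading the power of $x_2$ back for the correct power of $p$. Making this precise requires keeping careful track of the $p$-adic valuations of all the auxiliary elements $\alpha_k^{(j)},\beta_k^{(j)}, u_\ell^{(j)}, C_\ell^{(j)}$ produced in the proof of Lemma \ref{Lem:unr length}, and in particular exploiting the divisibilities recorded around (\ref{structure I}) and (\ref{new structure}): the term newly introduced at level $\ell$ (the upper right entry of $Y_\ell$ or $Z_\ell$, according to the parity of $\ell$) is divisible by the large power $x_2^{p^{\ell-1}}$, which is what lets one absorb every error term into $x_2^{s_{j+1}}\cdot(\alpha_k^{(j)},\beta_k^{(j)})\subseteq\mathfrak{m}_{\mathcal{A}}(\alpha_k^{(j)},\beta_k^{(j)})$ rather than lose control of it. The bookkeeping is, in effect, a $p$-adic refinement of the length computation carried out in Lemma \ref{Lem:unr length}, and it is where the hypothesis that $p$ is odd --- via the estimate (\ref{estimate}) --- enters a second time.
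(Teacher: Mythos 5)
Your treatment of the generator $x_2^{p^k}$ is correct and is exactly the paper's argument: Lemma \ref{Lem:unr length} gives $x_2^{2+2p+\cdots+2p^{k-1}}\in(\alpha_k,\beta_k)$, and the estimate (\ref{estimate}) converts this into $x_2^{p^k}\in\mathfrak{m}_\mathcal{A}\cdot(\alpha_k,\beta_k)$. The problem is the generator $p^{2k+1}$, and the gap you yourself flag at the end is genuine, not a routine bookkeeping matter. In your inductive step you reduce to showing that the term $x_2^{2p^j}\,p^{k-j}g$ (where $x_2^{2p^j}g$ is the tail of $\alpha_k^{(j)}$, resp.\ $\beta_k^{(j)}$) lies in $\mathfrak{m}_\mathcal{A}(\alpha_k^{(j)},\beta_k^{(j)})$, and you propose to absorb it using the containment $x_2^{2p^j}p^{\,2(k-j)-1}\in\mathfrak{m}_\mathcal{A}(\alpha_k^{(j)},\beta_k^{(j)})$ supplied by the hypothesis. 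But that containment only controls multiples of $x_2^{2p^j}p^{\,2(k-j)-1}$, and $k-j<2(k-j)-1$ for every $j<k-1$; worse, by (\ref{new structure}) the tail consists of terms $p^{k-\ell}\epsilon_\pm(\ell)/\epsilon_\pm(j)$ with $\ell>j$, so after multiplication by $p^{k-j}$ these carry $p$-exponents $2k-j-\ell$ that are \emph{strictly smaller} than $2(k-j)-1$ once $\ell>j+1$, the deficit being compensated only by larger powers of $x_2$. A single statement of the form $p^{N}\in\mathfrak{m}_\mathcal{A}(\alpha_k^{(j)},\beta_k^{(j)})$ cannot see this trade-off (already for $k=2$, $j=0$ the disposal fails as stated and is rescued only by comparing the tail directly against $\beta_k^{(0)}$), so the induction would have to be strengthened to a family of mixed containments $x_2^{a}p^{b}\in\mathfrak{m}_\mathcal{A}(\alpha_k^{(j)},\beta_k^{(j)})$ tracking the whole staircase of (\ref{new structure}) --- at which point you are redoing the length computation in a different grading rather than quoting it.

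That is in fact what the paper does, and it is the idea missing from your sketch: it does \emph{not} reuse the elements $\alpha_k^{(j)},\beta_k^{(j)}$ built for the $x_2$-filtration $Q_k^{(j)}=x_2^{2+2p+\cdots+2p^{j-1}}Q_k$, but constructs a second recursive family (confusingly given the same names) adapted to the $p$-power filtration $Q_k^{(j)}=p^{2(k-j)}Q_k$. Starting from (\ref{new structure}) with $j=k$, at each descent $j\mapsto j-1$ one eliminates the $\ell=j$ term --- which is the unique term carrying \emph{no} power of $p$, namely $u_j^{(j)}\epsilon_\pm(j)$ --- from one generator using the other, and then divides by $p$ (twice per stage, once for each generator), mirroring $Q_k^{(j-1)}=p^2Q_k^{(j)}$. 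The process terminates with $\alpha_k^{(0)}$ a unit, giving $p^{2k}Q_k=0$, i.e.\ $p^{2k}\in(\alpha_k,\beta_k)$ and hence $p^{2k+1}\in\mathfrak{m}_\mathcal{A}(\alpha_k,\beta_k)$ with no valuation-trading needed. So to complete your proof you should abandon the attempt to extract the $p$-power statement from the $x_2$-adapted elements and instead run the elimination a second time with the roles of $p$ and the $x_2$-powers $\epsilon_\pm(\ell)$ interchanged.
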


\begin{proof}
Let $Q_k=\mathcal{A}/(\alpha_k,\beta_k)$ and for $0\le j \le k$ set $$Q_k^{(j)} = p^{2k-2j} Q_k.$$ We will recursively define 
$\alpha_k^{(j)}$ and $\beta_k^{(j)}$ in $\mathcal{A}$ with the property 
$$
\mathcal{A}/(\alpha_k^{(j)},\beta_k^{(j)}) \iso Q_k^{(j)}
$$
as $\mathcal{A}$-modules and such that
\begin{eqnarray*}
\alpha_k^{(j)} &=&
  \sum_{  \substack{ 0 \le \ell \le j  \\ \ell \mathrm{\ even}  } }  u_\ell^{(j)}  p^{j-\ell} \epsilon_-(\ell)  + 
  \sum_{  \substack{ 0 \le \ell \le j  \\ \ell \mathrm{\ odd}  } }   C^{(j)}_\ell   p^{j-\ell} \epsilon_+(\ell)   \\
\beta_k^{(j)} &=&
  \sum_{  \substack{ 0 \le \ell \le j  \\ \ell \mathrm{\ odd}  } }  u^{(j)}_\ell  p^{j-\ell}  { \epsilon_+(\ell) } + 
 \sum_{  \substack{ 0 \le \ell \le j  \\ \ell \mathrm{\ even}  } }   C^{(j)}_\ell   p^{j-\ell}{ \epsilon_-(\ell) } 
\end{eqnarray*}
for some units $u_\ell^{(j)}\in\mathcal{A}$  and some nonunits $C_\ell^{(j)}\in \mathcal{A}$.   After (\ref{new structure}) we may 
begin by defining 
$$\alpha_k^{(k)}=\alpha_k\qquad\beta_k^{(k)} = \beta_k.$$  Assuming that $\alpha_k^{(j)}$ and $\beta_k^{(j)}$ have 
been constructed we now construct $\alpha_k^{(j-1)}$ and $\beta_k^{(j-1)}$.

Assume first that $j$ is even.  The $\ell=j$ term of $\alpha_k^{(j)}$ is a unit multiple of $\epsilon_-(j)$ while the $\ell=j$ term of 
$\beta_k^{(j)}$ is a multiple of $\epsilon_-(j)$.  Subtracting a suitable multiple of $\alpha_k^{(j)}$ from $\beta_k^{(j)}$ removes the 
$\ell=j$ term from $\beta_k^{(j)}$ resulting in
\begin{eqnarray*}\lefteqn{
\beta_k^{(j)} - \alpha_k^{(j)} \frac{C_j^{(j)}}{u_j^{(j)}}
} \\
&=& 
  \sum_{  \substack{ 0 \le \ell \le j -1 \\ \ell \mathrm{\ odd}  } }  u^{(j-1)}_\ell  p^{j-\ell}  { \epsilon_+(\ell) } + 
 \sum_{  \substack{ 0 \le \ell \le j -1 \\ \ell \mathrm{\ even}  } }   C^{(j-1)}_\ell   p^{j-\ell}{ \epsilon_-(\ell) } 
\end{eqnarray*}
for some units $u^{(j-1)}_\ell $ and nonunits $ C^{(j-1)}_\ell$ (with $\ell$ odd and even, respectively).  Dividing this quantity by $p$ 
results in
$$
\beta_k^{(j-1)} \define \sum_{  \substack{ 0 \le \ell \le j -1 \\ \ell \mathrm{\ odd}  } }  u^{(j-1)}_\ell  p^{j-1-\ell}  { \epsilon_+(\ell) } + 
 \sum_{  \substack{ 0 \le \ell \le j -1 \\ \ell \mathrm{\ even}  } }   C^{(j-1)}_\ell   p^{j-1-\ell}{ \epsilon_-(\ell) } ,
$$
and by construction
\begin{equation}\label{p filter I}
(\alpha_k^{(j)},\beta_k^{(j)}) = (\alpha_k^{(j)} , p \beta_k^{(j-1)} ).
\end{equation}
Subtracting a suitable multiple of $\beta_k^{(j-1)}$  removes the $\ell=j$ term from $\alpha_k^{(j)}$, resulting in
\begin{eqnarray*}\lefteqn{
\alpha_k^{(j)} - \beta_k^{(j-1)} \frac{ u_{ j}^{(j )}\epsilon_-(j)}{u_{j-1}^{(j-1)} \epsilon_+(j-1) }  } \\
&=&   \sum_{  \substack{ 0 \le \ell \le j-1  \\ \ell \mathrm{\ even}  } }  u_\ell^{(j-1)}  p^{j-\ell} \epsilon_-(\ell)  + 
  \sum_{  \substack{ 0 \le \ell \le j-1  \\ \ell \mathrm{\ odd}  } }   C^{(j-1)}_\ell   p^{j-\ell} \epsilon_+(\ell)
\end{eqnarray*}
for some units $u_\ell^{(j-1)}$ and nonunits $C_\ell^{(j-1)}$ (with $\ell$ even and odd, respectively).
Dividing this last expression by $p$ we obtain
$$
\alpha_k^{(j-1)} \define  \sum_{  \substack{ 0 \le \ell \le j-1  \\ \ell \mathrm{\ even}  } }  u_\ell^{(j-1)}  p^{j-1-\ell} \epsilon_-(\ell)  + 
  \sum_{  \substack{ 0 \le \ell \le j-1  \\ \ell \mathrm{\ odd}  } }   C^{(j-1)}_\ell   p^{j-1-\ell} \epsilon_+(\ell).
$$
By construction
\begin{equation}\label{p filter II}
(\alpha_k^{(j)} , \beta_k^{(j-1)} ) = (p\alpha_k^{(j-1)} , \beta_k^{(j-1)} ).
\end{equation}
The exact sequence
$$
0\map{}pQ_k^{(j)} \map{}Q_k^{(j)} \map{} Q_k^{(j)}/pQ_k^{(j)} \map{} 0
$$
can be identified with
$$
0\map{} ( p )/(p  ) \cap ( \alpha_k^{(j)} , \beta_k^{(j)})  \map{}  \mathcal{A} /( \alpha_k^{(j)} , \beta_k^{(j)})
 \map{} \mathcal{A} / ( p , \alpha_k^{(j)} , \beta_k^{(j)})\map{} 0.
$$
As $\alpha_k^{(j)}$ is not divisible by $p$ we find, using (\ref{p filter I}), isomorphisms
$$
 \mathcal{A} /( \alpha_k^{(j)} , \beta_k^{(j-1)} ) \map{p} ( p )/(p  ) \cap ( \alpha_k^{(j)} , p\beta_k^{(j-1)})  \iso pQ_k^{(j)}.
$$
The exact sequence
$$
0\map{} Q_k^{(j-1)} \map{} pQ_k^{(j)} \map{} pQ_k^{(j)}/p^2Q_k^{(j)} \map{}0
$$
is then identified with 
$$
0\map{} ( p )/(p  ) \cap ( \alpha_k^{(j)} , \beta_k^{(j-1)})  \map{}  \mathcal{A} /( \alpha_k^{(j)} , \beta_k^{(j-1)}) 
\map{} \mathcal{A} / ( p , \alpha_k^{(j)} , \beta_k^{(j-1)})\map{} 0
$$
and we find, using (\ref{p filter II}) and the observation that $\beta_k^{(j-1)}$ is not divisible by $p$, isomorphisms
$$
 \mathcal{A} /( \alpha_k^{(j-1)} , \beta_k^{(j-1)} ) \map{p} ( p )/(p  ) \cap ( p\alpha_k^{(j-1)} , \beta_k^{(j-1)})  \iso Q_k^{(j-1)}.
$$

The construction of $\alpha_k^{(j-1)}$ and $\beta_k^{(j-1)}$ for $j$ odd is similar.  We first write
\begin{eqnarray*}\lefteqn{
\alpha_k^{(j)} - \beta_k^{(j)} \frac{C_j^{(j)}}{u_j^{(j)}}
} \\
&=& 
  \sum_{  \substack{ 0 \le \ell \le j -1 \\ \ell \mathrm{\ even}  } }  u^{(j-1)}_\ell  p^{j-\ell}  { \epsilon_-(\ell) } + 
 \sum_{  \substack{ 0 \le \ell \le j -1 \\ \ell \mathrm{\ odd}  } }   C^{(j-1)}_\ell   p^{j-\ell}{ \epsilon_+(\ell) } 
\end{eqnarray*}
for some units $u^{(j-1)}_\ell $ and nonunits $ C^{(j-1)}_\ell$ (with $\ell$ even and odd, respectively).  Dividing 
this quantity by $p$ results in
$$
\alpha_k^{(j-1)} \define \sum_{  \substack{ 0 \le \ell \le j -1 \\ \ell \mathrm{\ even}  } }  u^{(j-1)}_\ell  p^{j-1-\ell}  { \epsilon_-(\ell) } + 
 \sum_{  \substack{ 0 \le \ell \le j -1 \\ \ell \mathrm{\ odd}  } }   C^{(j-1)}_\ell   p^{j-1-\ell}{ \epsilon_+(\ell) }.
$$
We next write
\begin{eqnarray*}\lefteqn{
\beta_k^{(j)} - \alpha_k^{(j-1)} \frac{ u_{ j}^{(j )}\epsilon_+(j)}{u_{j-1}^{(j-1)} \epsilon_-(j-1) }  } \\
&=&   \sum_{  \substack{ 0 \le \ell \le j-1  \\ \ell \mathrm{\ odd}  } }  u_\ell^{(j-1)}  p^{j-\ell} \epsilon_+(\ell)  + 
  \sum_{  \substack{ 0 \le \ell \le j-1  \\ \ell \mathrm{\ even}  } }   C^{(j-1)}_\ell   p^{j-\ell} \epsilon_-(\ell)
\end{eqnarray*}
for some units $u_\ell^{(j-1)}$ and nonunits $C_\ell^{(j-1)}$ (with $\ell$ even and odd, respectively).
Dividing this last expression by $p$ we obtain
$$
\beta_k^{(j-1)} \define  \sum_{  \substack{ 0 \le \ell \le j-1  \\ \ell \mathrm{\ odd}  } }  u_\ell^{(j-1)}  p^{j-1-\ell} \epsilon_+(\ell) 
 +    \sum_{  \substack{ 0 \le \ell \le j-1  \\ \ell \mathrm{\ even}  } }   C^{(j-1)}_\ell   p^{j-1-\ell} \epsilon_-(\ell).
$$
Using
$$
(\alpha_k^{(j)} , \beta_k^{(j)}) = ( p\alpha_k^{( j-1 )}, \beta_k^{( j)} ) 
\qquad
(\alpha_k^{(j-1)} , \beta_k^{(j)}) = ( \alpha_k^{( j-1 )},p \beta_k^{( j-1)} ) 
$$
one proves that
$$
\mathcal{A} /(\alpha_k^{(j-1)}, \beta_k^{(j-1)} ) \iso p^2 Q_k^{(j)} = Q_k^{(j-1)}.
$$

As $\alpha_k^{(0)}$ is a unit we find that
$$
p^{2k}Q_k\iso \mathcal{A}/( \alpha_k^{(0)} , \beta_k^{(0)}) =0
$$
and so $p^{2k}\in (\alpha_k ,\beta_k)$.  This proves that $p^{2k+1}\in \mathfrak{m}_\mathcal{A} \cdot (\alpha_k ,\beta_k)$.  
On the other hand Lemma \ref{Lem:unr length} shows that
$$
x_2^{2+2p+2p^2+\cdots+2p^{k-1}} \in (\alpha_k ,\beta_k),
$$
and so the inequality (\ref{estimate}) shows that
$$
x_2^{p^k} \in \mathfrak{m}_\mathcal{A} \cdot (\alpha_k ,\beta_k).
$$
\end{proof}

\begin{Prop}\label{Prop:unr components}
Assume that $c_0>0$ and that $p>2$.  If $\mathfrak{C}_i^\vertical$ is either of the vertical components of $\mathfrak{Y}$ found in 
Proposition \ref{Prop:two components} then (in the notation of Definition \ref{Def:components})
\begin{eqnarray*}
\mathrm{mult}_\mathfrak{Y}(\mathfrak{C}_i^\vertical) &=& 2p^{c_0-1}+4p^{c_0-2}+6p^{c_0-3}+8p^{c_0-4}+\cdots+ 2c_0p^0 \\
&=& \frac{2p(p^{c_0} -1) -2c_0(p-1)}{(p-1)^2}.
\end{eqnarray*}
\end{Prop}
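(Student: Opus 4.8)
The plan is to express the multiplicity as the length of an explicit Artinian local ring and then read that length off from Lemma~\ref{Lem:unr length}. Take $\mathfrak{C}=\mathfrak{C}_1^\vertical$, which by \S\ref{ss:hasse-witt} and Proposition~\ref{Prop:two components} (here we use $c_0>0$) corresponds to the minimal prime $\mathfrak{p}\subset R_\mathfrak{Y}$ obtained as the image of $(p,x_2)\subset R_\mathfrak{M}\iso \ZZ_p[[x_1,x_2]]$. Let $\mathcal{A}$ be the completed local ring of $\ZZ_p[[x_1,x_2]]$ at $(p,x_2)$, a regular local ring of dimension two as in \S\ref{ss:unramified multiplicity}, and let $J\subset R_\mathfrak{M}$ be the ideal with $R_\mathfrak{Y}=R_\mathfrak{M}/J$. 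Since $\mathfrak{C}$ is an irreducible component, $R_{\mathfrak{Y},\mathfrak{p}}$ is Artinian, hence complete, hence equal to $\mathcal{A}/J\mathcal{A}$, and $\mathrm{mult}_\mathfrak{Y}(\mathfrak{C})=\length_\mathcal{A}(\mathcal{A}/J\mathcal{A})$. The whole proof thus reduces to the identity $J\mathcal{A}=(\alpha_{c_0},\beta_{c_0})$: granting it, Lemma~\ref{Lem:unr length} with $k=c_0$ gives $\mathrm{mult}_\mathfrak{Y}(\mathfrak{C})=\sum_{0\le j<c_0}2p^j(c_0-j)$, and the closed formula follows from the elementary identity $\sum_{0\le j<n}(n-j)x^j=\bigl(x^{n+1}-(n+1)x+n\bigr)/(x-1)^2$ (obtained by differentiating a geometric series) with $x=p$ and $n=c_0$.

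For the inclusion $J\mathcal{A}\subseteq(\alpha_{c_0},\beta_{c_0})$: since $\Z_{p^2}[\gamma]=\Z_{p^2}[p^{c_0}\eta]$, lifting $\gamma$ amounts to lifting $p^{c_0}\eta$, so Proposition~\ref{Prop:small locus} with $k=c_0$ shows that $\gamma$ lifts over $S:=R[c_0]/(\alpha_{c_0},\beta_{c_0})$. As $S$ is an object of $\ProArt$, this produces a point of $\mathfrak{Y}(S)$, i.e.\ a map $R_\mathfrak{Y}\map{}S$ compatible with the projections from $R_\mathfrak{M}$; since $R_\mathfrak{M}\twoheadrightarrow R_\mathfrak{Y}$ and $R_\mathfrak{M}\twoheadrightarrow S$ are surjective, so is $R_\mathfrak{Y}\twoheadrightarrow S$. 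Therefore $J\subseteq\ker(R_\mathfrak{M}\map{}S)=(p^{2c_0+1},x_2^{p^{c_0}},\alpha_{c_0},\beta_{c_0})$, and passing to $\mathcal{A}$ and invoking Lemma~\ref{Lem:is thick} — which already gives $(p^{2c_0+1},x_2^{p^{c_0}})\subseteq(\alpha_{c_0},\beta_{c_0})$ inside $\mathcal{A}$ — we obtain $J\mathcal{A}\subseteq(\alpha_{c_0},\beta_{c_0})$.

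The reverse inclusion $(\alpha_{c_0},\beta_{c_0})\subseteq J\mathcal{A}$ is the delicate part, and the step I expect to be the main obstacle. The idea is to rerun the obstruction computation of Proposition~\ref{Prop:small locus} for the quasi-endomorphism lifting $p^{c_0}\eta$ rather than $p^k\eta$. Because $\mathcal{A}/J\mathcal{A}$ is Artinian, $p$ and $x_2$ are nilpotent in it, so it is a quotient of $\mathcal{A}/(p^{2k+1},x_2^{p^k})$ for $k$ large; over such a quotient the unique quasi-endomorphism of the base-changed $p$-divisible group lifting $p^{c_0}\eta$ is $p^{c_0}\Gamma[k]=p^{c_0-k}\cdot p^k\Gamma[k]$, whose off-diagonal obstructions are $p^{c_0-k}\alpha_k$ and $p^{c_0-k}\beta_k$. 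Using the divisibility structure (\ref{structure I})--(\ref{structure III}) — each $y_\ell$ (resp.\ $z_\ell$) with $\ell>c_0$ is divisible by $x_2^{p^{\ell-1}}$, hence by $x_2^{p^{c_0}}$, and the correction terms $g_\ell$ carry a factor of $p$ — together with Lemma~\ref{Lem:is thick} to absorb $(p^{2c_0+1},x_2^{p^{c_0}})$ into $(\alpha_{c_0},\beta_{c_0})$, one aims to show that $p^{c_0}\Gamma[k]$ is a genuine endomorphism over $\mathcal{A}/\mathfrak{a}$ exactly when $\mathfrak{a}\supseteq(p^{2c_0+1},x_2^{p^{c_0}},\alpha_{c_0},\beta_{c_0})=(\alpha_{c_0},\beta_{c_0})$; taking $\mathfrak{a}=J\mathcal{A}$ (over which $\gamma$, equivalently $p^{c_0}\eta$, does lift) then forces $\alpha_{c_0},\beta_{c_0}\in J\mathcal{A}$.

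Finally, the component $\mathfrak{C}_2^\vertical$ is handled by the same argument after interchanging the two pairs of basis vectors $\{e_1,f_1\}$ and $\{e_2,f_2\}$ of the window — equivalently, interchanging $x_1$ and $x_2$ throughout \S\ref{ss:hasse-witt}--\S\ref{ss:unramified multiplicity} — which carries $\mathfrak{C}_1^\vertical$ to $\mathfrak{C}_2^\vertical$ and leaves all relevant lengths unchanged. (Note that $p>2$ enters only through the invocations of Lemmas~\ref{Lem:unr length} and~\ref{Lem:is thick}.)
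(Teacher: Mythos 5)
Your framing is the same as the paper's: reduce to the identity $J\mathcal{A}=(\alpha_{c_0},\beta_{c_0})$ in the completed local ring $\mathcal{A}$ at $(p,x_2)$ and then quote Lemma \ref{Lem:unr length}, and your proof of the inclusion $J\mathcal{A}\subseteq(\alpha_{c_0},\beta_{c_0})$ (the lifting half of Proposition \ref{Prop:small locus} at $k=c_0$, plus Lemma \ref{Lem:is thick} to absorb $(p^{2c_0+1},x_2^{p^{c_0}})$) is correct and is essentially what the paper does. The gap is the reverse inclusion $(\alpha_{c_0},\beta_{c_0})\subseteq J\mathcal{A}$, which you only describe as something "one aims to show."

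The missing observation is that Proposition \ref{Prop:small locus} is a \emph{maximality} statement and already gives the reverse inclusion at $k=c_0$; no new obstruction computation is needed. Indeed, $S=R[c_0]/J\,R[c_0]$ is a quotient of $R[c_0]$ over which $p^{c_0}\eta$ lifts (because $\gamma$ lifts over every quotient of $R_\mathfrak{Y}$ and $\Z_{p^2}[\gamma]=\Z_{p^2}[p^{c_0}\eta]$), so by the maximality half of Proposition \ref{Prop:small locus} the map $R[c_0]\map{}S$ kills $\alpha_{c_0}$ and $\beta_{c_0}$, i.e.\ $(\alpha_{c_0},\beta_{c_0})\subseteq J+(p^{2c_0+1},x_2^{p^{c_0}})$ in $R_\mathfrak{M}$. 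Passing to $\mathcal{A}$ and applying Lemma \ref{Lem:is thick} gives $(\alpha_{c_0},\beta_{c_0})\subseteq J\mathcal{A}+\mathfrak{m}_\mathcal{A}(\alpha_{c_0},\beta_{c_0})$; iterating and using that the ideal $J\mathcal{A}$ is closed in the $\mathfrak{m}_\mathcal{A}$-adic topology of the complete ring $\mathcal{A}$ (Krull intersection) yields $(\alpha_{c_0},\beta_{c_0})\subseteq J\mathcal{A}$, which is exactly the paper's argument. By contrast, the route you sketch — rerunning the window computation for $p^{c_0}\Gamma[k]$ with $k$ large over quotients of $\mathcal{A}/(p^{2k+1},x_2^{p^k})$ — is not carried out and has genuine problems: for $k>c_0$ the matrix $p^{c_0}\Gamma[k]=p^{c_0-k}\cdot p^{k}\Gamma[k]$ need not have integral entries, so the obstruction is no longer just the two upper-right entries $p^{c_0-k}\alpha_k$, $p^{c_0-k}\beta_k$ as in the proof of Proposition \ref{Prop:small locus}; moreover your base rings are quotients of $\mathcal{A}$, whose residue field is $\F((x_1))$ rather than $\F$, so they lie outside the category $\Art$/$\ProArt$ in which Proposition \ref{Prop:small locus} and the deformation functors are formulated. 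So as written the "exactly when" claim is unproven, and establishing it would take more work than the two-line application of the maximality statement that the proof actually requires. The rest (the closed-form evaluation of the sum, the symmetry argument for $\mathfrak{C}_2^\vertical$, and the role of $p>2$) is fine.
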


\begin{proof}
We give the proof for $i=1$, the proof for $i=2$ being entirely similar.   Set $k=c_0$ and let $J\subset R_\mathfrak{M}$ be the ideal of 
definition of the closed formal subscheme $\mathfrak{Y}\map{}\mathfrak{M}$.  As 
$$
\Z_{p^2} [p^k\eta] = \Z_{p^2}+ p^{c_0}\co_E = \Z_{p^2}[\tau]
$$
the quotient $R_\mathfrak{M}/J\iso R_\mathfrak{Y}$ is the maximal quotient over which the endomorphism $j(p^k\eta)$ of 
$\mathfrak{g}$ lifts to an endomorphism of the universal deformation of $\mathfrak{g}$.  Let $\mathfrak{q}\subset R_\mathfrak{Y}$ be 
the ideal of definition of the closed formal subscheme $\mathfrak{C}_1^\vertical\map{}\mathfrak{Y}$, and denote again by 
$\mathfrak{q}$ the kernel of 
$$
R_\mathfrak{M}\map{}R_\mathfrak{Y}\map{}R_\mathfrak{Y}/\mathfrak{q}\iso \F[[x_1]]
$$
so that $\mathfrak{q}=(p,x_2)$.   Proposition \ref{Prop:small locus} tells us that
$$
J+(p^{2k+1} , x_2^{p^k}) = (\alpha_k,\beta_k)+ (p^{2k+1} , x_2^{p^k})
$$
as ideals of $R_\mathfrak{M}\iso \ZZ_p[[x_1,x_2]]$.  If we define $J_\mathcal{A}= J\cdot\mathcal{A}$, where $\mathcal{A}$ is the 
completed local ring of $R_\mathfrak{M}$ at $\mathfrak{q}$, then we obtain the equality of ideals of $\mathcal{A}$
$$
J_\mathcal{A}+(p^{2k+1} , x_2^{p^k}) = (\alpha_k,\beta_k)+ (p^{2k+1} , x_2^{p^k}).
$$
Lemma \ref{Lem:is thick} gives the inclusion of ideals $(p^{2k+1},x_2^{p^k}) \subset (\alpha_k,\beta_k)$ in $\mathcal{A}$ and so 
$$
J_\mathcal{A}+(p^{2k+1} , x_2^{p^k}) = (\alpha_k,\beta_k).
$$
From this we obviously have $J_\mathcal{A}\subset(\alpha_k,\beta_k)$.  To prove the reverse inclusion we apply the inclusion of 
Lemma \ref{Lem:is thick} to obtain
$$
(\alpha_k,\beta_k) \subset  J_\mathcal{A}+(p^{2k+1} , x_2^{p^k})  
\subset  J_\mathcal{A} + \mathfrak{m}_\mathcal{A}(\alpha_k,\beta_k) ,
$$
and an induction argument then proves that 
$$
(\alpha_k,\beta_k)\subset J_\mathcal{A} + \mathfrak{m}_\mathcal{A}^\ell(\alpha_k,\beta_k)
$$
 for every $\ell>0$.  In particular $(\alpha_k,\beta_k)\subset J_\mathcal{A} + \mathfrak{m}_\mathcal{A}^{\ell+1}$ for every $\ell$, and 
 topological considerations prove that $(\alpha_k,\beta_k)\subset J_\mathcal{A}$.   Having proved 
 $J_\mathcal{A}=(\alpha_k,\beta_k)$, Lemma \ref{Lem:unr length} now shows that $\mathcal{A}/J_\mathcal{A}$ is Artinian of length
 $$
 \length_{\mathcal{A}}(\mathcal{A}/J_\mathcal{A}) = 2p^{k-1}+4p^{k-2}+6p^{k-3}+\cdots+(2k)p^0.
 $$
 Using the isomorphisms
 $$
 \mathcal{A}/J_\mathcal{A} \iso R_{\mathfrak{M},\mathfrak{q}}/JR_{\mathfrak{M},\mathfrak{q}} \iso R_{\mathfrak{Y},\mathfrak{q}}
 $$
 we find that $R_{\mathfrak{Y},\mathfrak{q}}$ has length $2p^{k-1}+4p^{k-2}+6p^{k-3}+\cdots+(2k)p^0$, completing the proof of the 
 first equality in the statement of the proposition.  The proof of the second is then an elementary exercise.
\end{proof}


\subsection{Vertical multiplicities: $E_0$ ramified}
\label{ss:ramified multiplicity}


Assume that $E_0/\Q_p$ is ramified.  We assume throughout all of \S \ref{ss:ramified multiplicity} that $p>2$.  This allows us to 
choose a trace-free uniformizing parameter $\varpi_{E_0}\in\co_{E_0}$.  In the notation of \S \ref{ss:dieudonne} $\varpi_{E_0}$ is a 
root of $x^2-bb^\sigma p$ for some $b\in\Z_{p^2}^\times$, and the normalized action of $\varpi_{E_0}$ on the window $(M,N,\Phi)$ of 
the $p$-Barsotti-Tate group $\mathfrak{g}$ (with respect to the frame $\ZZ_p\map{}\F$, as in \S \ref{ss:hasse-witt}) is through the 
matrix
$$
\Gamma = \left(  \begin{matrix}   & pb \\ b^\sigma  \\ & &  & pb \\ & & b^\sigma &  \end{matrix}  \right).
$$
Let $(\mathfrak{G}_1^\vertical,\rho_1^\vertical)$ be the restriction of the universal deformation of $\mathfrak{g}$ over 
$\mathfrak{M}\iso \Spf(\ZZ_p[[x_1,x_2]])$ to the closed formal subscheme 
$$
\mathfrak{C}_1^\vertical = \Spf(\F[[x_1]])
$$
and let $(M_1^\vertical,N_1^\vertical,\Phi)$ be the window of $\mathfrak{G}_1^\vertical$ with respect to the frame 
$\ZZ_p[[x_1]]\map{}\F[[x_1]]$ so that, by the calculations of \S \ref{ss:hasse-witt}, the endomorphism $\Gamma$ of $(M,N,\Phi)$ lifts to 
the quasi-endomorphism of $(M_1^\vertical, N_1^\vertical,\Phi)$ given by
$$
\Gamma_1^\vertical = \left(  \begin{matrix}  Y_1^\vertical \\ &  Z_1^\vertical  \end{matrix}  \right)
$$
with
\begin{eqnarray*}
Y_1^\vertical &=& 
\left(  \begin{matrix}     &  pb  \\   b^\sigma &  \end{matrix} \right)  +  b^\sigma
 \left(  \begin{matrix}   f(x_1)    &   -  g(x_1)  \\    & - f(x_1) \end{matrix} \right)   \\
Z_1^\vertical &=&  
\left(  \begin{matrix}    &  pb   \\ b^\sigma  \end{matrix} \right)  +  b \left(  \begin{matrix}    - f(x_1^p) &     \\ -\frac{g(x_1^p)}{p}   &  f(x_1^p)
 \end{matrix} \right).
\end{eqnarray*}
We now imitate the method of \S \ref{ss:unramified multiplicity}.  Define $R[k]$ and $A[k]$ by (\ref{thick-frame}), let $\mathfrak{G}[k]$ 
be the base change of the universal $p$-Barsotti-Tate group through  $\ZZ_p[[x_1,x_2]]\map{}R[k]$, and let $(M[k], N[k],\Phi)$ be the 
window of $\mathfrak{G}[k]$ with respect to the frame $A[k]\map{}R[k]$.  Viewing each $\mathfrak{G}[k]$ as a deformation of 
$\mathfrak{G}[0]=\mathfrak{G}_1^\vertical$ the quasi-endormorphism $\Gamma[0]=\Gamma$ of $$(M[0],N[0],\Phi)=(M,N,\Phi)$$ 
lifts to  the quasi-endomorphism of $(M[k],N[k],\Phi)$ given by the matrix
$$
\Gamma[k] = \left(\begin{matrix} Y[k] & \\ & Z[k] \end{matrix}\right)
$$
in which $Y[k],Z[k]\in M_2(A[k])\otimes_{\ZZ_p}\QQ_p$ satisfy the recursion (\ref{recursion II}) and the initial conditions
$$
Y[0]=Y_1^\vertical\qquad Z[0]=Z_1^\vertical.
$$
As in \S \ref{ss:unramified multiplicity} we lift each $Y[k]$ and $Z[k]$ to a matrix in $M_2(\ZZ_p[[x_1,x_2]])\otimes_{\ZZ_p}\QQ_p$ in 
such a way that (\ref{recursion II}) continues to hold.  For $k\ge 0$ define 
$$
Y_k,Z_k\in M_2(\ZZ_p[[x_1,x_2]])\otimes_{\ZZ_p}\QQ_p
$$ 
by (\ref{init}) and set $Y_0=Y[0]$ and $Z_0=Z[0]$.  Explicitly computing $Y[1]$ and $Z[1]$ we find that 
\begin{eqnarray*}
Y_1  &=&  0  \\
Z_1 &=&      x_2 \left(\begin{matrix}  -b^\sigma g(x_1^p) &  b^\sigma  x_2 g(x_1^p) \\ & b^\sigma  g(x_1^p)\end{matrix}\right)  
+   p x_2  \left(\begin{matrix} b^\sigma &  2 b  f(x_1^p) -b^\sigma x_2 \\ & -b^\sigma \end{matrix}\right).
\end{eqnarray*}
For $k\ge 1$ we again let $y_k, z_k \in M_2(\ZZ_p[[x_1,x_2]])$ denote the upper right entries of $Y_k$ and $Z_k$, and let 
$\alpha_k, \beta_k\in M_2(\ZZ_p[[x_1,x_2]])$ denote the upper right entries of $p^kY[k]$ and $p^kZ[k]$.   By the same reasoning as in 
\S \ref{ss:unramified multiplicity} (that is, by  computing the images of $Y_k$ and $Z_k$ in $\F[[x_1,x_2]]$ and using the fact that 
$Z_1$ is divisible by $x_2$) we find that $y_k$ and $z_k$  satisfy (\ref{structure I}) for some $g_k\in \ZZ_p[[x_1,x_2]]$ and some 
$u_k \in \ZZ_p[[x_1]]$ with nonzero image in $\F[[x_1]]$, and that (\ref{structure II}) and (\ref{structure III}) hold.

\begin{Prop}\label{Prop:ram components}
Assume that $c_0>0$ and that $p>2$.  If $\mathfrak{C}_i^\vertical$ is either of the vertical components of $\mathfrak{Y}$ found in 
Proposition \ref{Prop:two components} then (in the notation of Definition \ref{Def:components})
\begin{eqnarray*}
\mathrm{mult}_\mathfrak{Y}(\mathfrak{C}_i^\vertical) &=& 2p^{c_0-1}+4p^{c_0-2}+6p^{c_0-3}+8p^{c_0-4}+\cdots+ 2c_0p^0 \\
&=& \frac{2p(p^{c_0} -1) -2c_0(p-1)}{(p-1)^2}.
\end{eqnarray*}
\end{Prop}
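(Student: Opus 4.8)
The plan is to repeat, essentially verbatim, the proof of Proposition \ref{Prop:unr components}, and I will indicate the three steps and the points where the ramified data must be matched up. I give the argument for $\mathfrak{C}_1^\vertical$; the case of $\mathfrak{C}_2^\vertical$ is entirely analogous. The crucial observation is that every ingredient of \S\ref{ss:unramified multiplicity} from (\ref{init}) onward uses the concrete initial matrices $Y_1^\vertical, Z_1^\vertical$ only through the structural relations (\ref{structure I}), (\ref{structure II}), (\ref{structure III}), together with the elementary estimate (\ref{estimate}) and the combinatorics of $\epsilon_\pm(\ell)$. As recorded just above the statement, those relations hold in the present ramified setting with the matrices $Y_1^\vertical,Z_1^\vertical$ of \S\ref{ss:ramified multiplicity} as the $k=0$ data: $Z_1$ is visibly divisible by $x_2$ and has nonzero reduction modulo $p$, so that solving the recursion (\ref{recursion II}) modulo $p$ propagates exactly as in the unramified case, producing $u_k\in\ZZ_p[[x_1]]$ with nonzero image in $\F[[x_1]]$ and the required $x_2$-divisibilities.

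Granting this, the first step is the ramified analogue of Proposition \ref{Prop:small locus}: for $k\ge 1$ the maximal quotient $R[k]\map{}S$ over which the quasi-endomorphism $p^k\varpi_{E_0}$ of $\mathfrak{G}[k]_{/S}$ becomes an honest endomorphism equals $R[k]/(\alpha_k,\beta_k)$. Its proof is word for word the one already given: $\Z_{p^2}$-linearity makes $p^k\Gamma[k]$ block diagonal, the obstruction to preserving the submodule $N[k]\subset M[k]$ is precisely the vanishing in $S$ of the upper-right entries $\alpha_k,\beta_k$ of $p^kY[k]$ and $p^kZ[k]$, and $\Z_p$-torsion-freeness of $W(S)$ upgrades ``preserves $N[k]$'' to ``commutes with $F$ and $V^{-1}$''. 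The second step is to invoke the ramified analogues of Lemma \ref{Lem:unr length} and Lemma \ref{Lem:is thick}; their proofs use the unramified hypothesis nowhere, manipulating only (\ref{structure I})--(\ref{structure III}) and (\ref{estimate}), so that, writing $\mathcal{A}$ for the completed local ring of $\ZZ_p[[x_1,x_2]]$ at the prime $(p,x_2)$ and $\mathfrak{m}_\mathcal{A}=(p,x_2)$, they give
$$
\length_{\mathcal{A}}\big(\mathcal{A}/(\alpha_k,\beta_k)\big)=2p^{k-1}+4p^{k-2}+6p^{k-3}+\cdots+(2k)p^0
$$
together with the inclusion $(p^{2k+1},x_2^{p^k})\subset\mathfrak{m}_\mathcal{A}\cdot(\alpha_k,\beta_k)$.

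The third step is the endgame of Proposition \ref{Prop:unr components}, run unchanged. Put $k=c_0$; since $\co_E=\Z_{p^2}[\varpi_{E_0}]$ we have $\Z_{p^2}[p^k\varpi_{E_0}]=\Z_{p^2}+p^{c_0}\co_E$, so $R_\mathfrak{Y}$ is the maximal quotient of $R_\mathfrak{M}\iso\ZZ_p[[x_1,x_2]]$ over which $j(p^k\varpi_{E_0})$ lifts, with defining ideal $J$. Localizing $J$ at $\mathfrak{q}=(p,x_2)$ and combining the ramified Proposition \ref{Prop:small locus} with Lemma \ref{Lem:is thick} — the latter applied once directly and once inside an evident induction to absorb the $\mathfrak{m}_\mathcal{A}$-error term — yields $J_\mathcal{A}=(\alpha_k,\beta_k)$, whence $R_{\mathfrak{Y},\mathfrak{q}}\iso\mathcal{A}/J_\mathcal{A}$ has the length displayed above, and the closed-form second equality is the same elementary summation as before. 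I do not expect a genuinely new obstacle here: the whole ramified case is a formal transcription of \S\ref{ss:unramified multiplicity} once the structural relations (\ref{structure I})--(\ref{structure III}) are available, and the only point demanding care is exactly that bookkeeping — confirming that the new recursion modulo $p$ reproduces the same unit/nonunit dichotomy and leading $x_2$-powers, and that the proofs of Proposition \ref{Prop:small locus}, Lemma \ref{Lem:unr length}, and Lemma \ref{Lem:is thick} nowhere used that $E_0/\Q_p$ is unramified.
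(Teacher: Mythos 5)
Your proposal is correct and follows the paper's own argument essentially verbatim: the paper likewise observes that Lemmas \ref{Lem:unr length} and \ref{Lem:is thick} only use (\ref{structure I})--(\ref{structure III}), that Proposition \ref{Prop:small locus} holds word-for-word with $\eta=\varpi_{E_0}$, and then repeats the endgame of Proposition \ref{Prop:unr components}. The points you flag for care (the unit/nonunit dichotomy and $x_2$-divisibilities in the ramified recursion) are exactly the checks recorded just before the statement in \S\ref{ss:ramified multiplicity}, so nothing further is needed.
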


\begin{proof}
The statement is exactly that of Proposition  \ref{Prop:unr components}, which dealt with the case of $E_0/\Q_p$ unramified.  In the 
ramified situation the statements of Lemmas \ref{Lem:unr length} and \ref{Lem:is thick} continue to hold, as the proofs only require 
that $\alpha_k$ and $\beta_k$ satisfy (\ref{structure I}), (\ref{structure II}), and (\ref{structure III}).  Similarly the  statement and proof of 
Proposition \ref{Prop:small locus} hold verbatim in the ramified case (taking $\eta=\varpi_{E_0}$).  The proof of the proposition is now 
the same, word-for-word, as that  of  Proposition \ref{Prop:unr components}  (again taking $\eta=\varpi_{E_0}$).
\end{proof}


\section{Horizontal components}
\label{s:horizontal}


In this subsection we will compute all horizontal components of $\mathfrak{Y}$ in the sense of Definition \ref{Def:components}.    The 
strategy is to start from the components of $\mathfrak{Y}_0$, all of which are known to us by the Gross-Keating theory of  quasi-
canonical lifts described in \S \ref{ss:quasi-canonical}, and apply the action of $\co_E^\times$ on $\mathfrak{Y}$ as described in \S 
\ref{ss:functors} to produce more components.  In the case in which $E_0/\Q_p$ is unramified we will show that this construction 
accounts for all horizontal components of $\mathfrak{Y}$.  More precisely, Proposition \ref{Prop:unr orbits} shows that every 
$\co_E^\times$-orbit of horizontal components of $\mathfrak{Y}$ contains a unique horizontal component of $\mathfrak{Y}_0$.  When 
$E_0/\Q_p$ is ramified the situation is slightly more complicated.  Taking $\co_E^\times$-orbits of components of $\mathfrak{Y}_0$ 
yields exactly half of the horizontal components of $\mathfrak{Y}$.  The components constructed in this way we call \emph{standard} 
components.  To construct the remaining \emph{nonstandard} components we first construct a closed formal subscheme 
$\mathfrak{Y}_0'\map{}\mathfrak{M}_0$ different from $\mathfrak{Y}_0$, but whose image in $\mathfrak{M}$ is still contained in 
$\mathfrak{Y}$.  Taking $\co_E^\times$-orbits of components of $\mathfrak{Y}_0'$ yields the nonstandard components of 
$\mathfrak{Y}$ (Proposition \ref{Prop:ram orbits}).  Having thus constructed all horizontal components $\mathfrak{C}$ of 
$\mathfrak{Y}$, we use results of Keating on the endomorphism rings of reductions of quasi-canonical lifts to compute the intersection 
number  $I_\mathfrak{M}(\mathfrak{C},\mathfrak{M}_0)$ for those components which meet $\mathfrak{M}_0$ properly 
(the \emph{proper} components of Definition \ref{Def:components}).
 We know from Corollary \ref{Cor:horizontal reduced} that every horizontal component $\mathfrak{C}$ satisfies 
 $\mathrm{mult}_\mathfrak{Y}(\mathfrak{C})=1$, and so our results give a complete picture of the horizontal part of $\mathfrak{Y}$ (at 
 least for $p>2$, a hypothesis which will first appear in Proposition \ref{Prop:ram intersection II}).


\subsection{Quasi-canonical lifts}
\label{ss:quasi-canonical}


Recall from \S \ref{ss:dieudonne} that $M_0$ is the fraction field of $W_0$.  Viewing $E_0$ as a subfield of $M_0$ via $\Psi$, the 
reciprocity map of class field theory provides an isomorphism
$$
\mathrm{rec}:\co_{E_0}^\times\map{} \Gal(\mathcal{E}_0^\mathrm{ab}/M_0)
$$
where $\mathcal{E}_0^\mathrm{ab}$ is the completion of the maximal abelian extension of $E_0$ (inside the completion of an 
algebraic closure of $\Q_p$ containing $M_0$), and for each nonnegative integer $k$ we let 
$M_0\subset M_k\subset \mathcal{E}_0^\mathrm{ab}$ be the intermediate extension satisfying
$$
\co_{E_0}^\times/(\Z_p+p^k\co_{E_0})^\times\iso \Gal(M_k/M_0).
$$ 
Note that $M_k/\QQ_p$ is Galois, as $(\Z_p+p^k\co_{E_0})^\times$ is stable under the action of $\Gal(E_0/\Q_p)$.   Let $W_k$ 
denote the ring of integers of $M_k$.

Recall that $\Z_p[\gamma_0]=\Z_p+p^{c_0}\co_{E_0}$. For each $0\le k \le c_0$ Gross \cite{gross86} (see also \cite{ARGOS-8}) has 
constructed  $[M_k:\QQ_p]$ distinct surjective $\ZZ_p$-algebra homomorphisms $R_{\mathfrak{Y}_0}\map{}W_k$.   A deformation  
$$
(\mathfrak{G}_0,\rho_0) \in \mathfrak{Y}_0(W_k)\iso \Hom_{\ProArt}(  R_{\mathfrak{Y}_0}, W_k)
$$
corresponding to such a surjection is called  a \emph{quasi-canonical} lift of level $k$.  A quasi-canonical lift of level $0$ is called a 
\emph{canonical} lift.      The action of $\Z_p[\gamma_0]$ on a quasi-canonical lift  $\mathfrak{G}_0$ of level $k$ can be extended to 
an action of the larger order $\Z_p+p^k\co_{E_0}$, and  this enlarged action makes the $p$-adic Tate module  
$\mathrm{Ta}_p(\mathfrak{G}_0)$ into a free $\Z_p+p^k\co_{E_0}$-module of rank one.   Furthermore, if 
$\phi_0 : R_{\mathfrak{Y}_0} \map{}W_k$ is the homomorphism defining a quasi-canonical lift of level $k$ then  for any 
$\xi\in\co_{E_0}^\times$   there is a commutative diagram 
$$
\xymatrix{
{ R_{\mathfrak{Y}_0}  } \ar[r]^{ \phi_0 } \ar[d]_{\xi}   &  {  W_k  } \ar[d]^{\mathrm{rec}(\xi^{-1})}  \\
{ R_{\mathfrak{Y}_0}  } \ar[r]_{\phi_0}   &  {  W_k  } 
}
$$
where the vertical arrow  on the left is the automorphism of (\ref{E_0 action}).

\begin{Prop}[Gross, Keating]\label{Prop:gross-keating}
 The closed immersion of formal schemes $\mathfrak{Y}_0\map{}\mathfrak{M}_{0}$ can be identified with
$$
\Spf( \ZZ_p[[x]]/I_{0} )\map{}\Spf( \ZZ_p[[x]])
$$
where the ideal $I_{0}$ is generated by a power series  of the form
$$
g_{c_0}(x)=\prod_{k=0}^{c_0} \varphi_k(x)
$$
with each $\varphi_k(x)$ an Eisenstein polynomial  satisfying $\ZZ_p[[x]]/(\varphi_k)\iso W_k$.
\end{Prop}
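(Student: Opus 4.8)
The plan is to recover the statement by combining Proposition~\ref{Prop:universal display I}, a small amount of deformation theory, and the construction of quasi-canonical lifts recalled in \S\ref{ss:quasi-canonical} together with the Gross--Keating classification. First, the top horizontal arrow of the diagram in Proposition~\ref{Prop:universal display I} identifies $R_{\mathfrak{M}_0}\iso\ZZ_p[[x]]$ (writing $x$ for $x_0$), so $R_{\mathfrak{Y}_0}\iso\ZZ_p[[x]]/I_0$ for some ideal $I_0$, and everything reduces to determining $I_0$.

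Second, I would show $I_0=(g)$ for a single power series $g$ not divisible by $p$. Principality is the usual Grothendieck--Messing argument: across a square-zero thickening with kernel $J$, the obstruction to propagating the lift of $\gamma_0$ lies in $\mathrm{Lie}(\mathfrak{g}_0)\otimes\omega_{\mathfrak{g}_0^\vee}\otimes J$, a free module of rank one because $\mathfrak{g}_0$ has dimension one and height two; hence $\mathfrak{Y}_0\map{}\mathfrak{M}_0$ is cut out by one equation. That $g\notin p\ZZ_p[[x]]$ --- equivalently that $\mathfrak{Y}_0$ is flat over $\Spf(\ZZ_p)$ --- and that $R_{\mathfrak{Y}_0}$ is moreover reduced are part of Gross's analysis \cite{gross86} (reducedness is also the one-variable analogue of Lemma~\ref{Lem:etale completion}, cf.\ \cite{howardA}). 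Granting these, Weierstrass preparation lets me take $g$ to be a distinguished polynomial $P(x)\in\ZZ_p[x]$, so that $R_{\mathfrak{Y}_0}\iso\ZZ_p[x]/(P)$ is finite and free over $\ZZ_p$, reduced, and one-dimensional.

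Third, I would identify the irreducible factors of $P$ using the quasi-canonical lifts. Fix $0\le k\le c_0$. Each of Gross's $[M_k:\QQ_p]$ surjective homomorphisms $\phi\colon R_{\mathfrak{Y}_0}\map{}W_k$ has kernel a minimal prime $\mathfrak{q}$ not containing $p$; then $R_{\mathfrak{Y}_0}/\mathfrak{q}$ is a one-dimensional domain finite and flat over $\ZZ_p$, and a surjection of such a ring onto the domain $W_k$ must be an isomorphism, so $R_{\mathfrak{Y}_0}/\mathfrak{q}\iso W_k$. Since $M_k/\QQ_p$ is totally ramified, $W_k=\ZZ_p[\pi_k]$ for a uniformiser $\pi_k$ whose minimal polynomial $\varphi_k$ is an Eisenstein polynomial of degree $[M_k:\QQ_p]$ with $\ZZ_p[x]/(\varphi_k)\iso W_k$; as $M_k/\QQ_p$ is Galois, $\varphi_k$ splits completely over $W_k$, so the single prime $\mathfrak{q}$ already produces all $[M_k:\QQ_p]$ homomorphisms to $W_k$, and no two distinct minimal primes can have residue ring $W_k$. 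Because $P$ kills $R_{\mathfrak{Y}_0}$, each $\varphi_k$ divides $P$; the $\varphi_k$ are pairwise non-associate (the fields $M_0,\dots,M_{c_0}$ being pairwise distinct), so $\prod_{k=0}^{c_0}\varphi_k$ divides $P$ in $\ZZ_p[x]$.

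Finally, for equality I would invoke the substantive content of the Gross--Keating theory: every minimal prime of $R_{\mathfrak{Y}_0}$ has residue ring isomorphic to $W_k$ for some $0\le k\le c_0$ --- equivalently, every deformation of $\mathfrak{g}_0$ over the integer ring of a finite extension of $\QQ_p$ for which $\gamma_0$ lifts is a quasi-canonical lift of level at most $c_0$. Combined with the third step (exactly one minimal prime for each $k$, with residue ring $W_k$) and reducedness, this gives $\mathrm{rank}_{\ZZ_p}R_{\mathfrak{Y}_0}=\dim_{\QQ_p}\prod_{k=0}^{c_0}M_k=\sum_{k=0}^{c_0}[M_k:\QQ_p]=\deg\prod_{k=0}^{c_0}\varphi_k$, so $P=\prod_{k=0}^{c_0}\varphi_k$ --- which is the $g_{c_0}$ of the statement --- and $I_0=(g_{c_0})$ as claimed. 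The main obstacle is precisely this last input: ruling out ``extra'' minimal primes of $R_{\mathfrak{Y}_0}$, i.e.\ extra irreducible factors of $P$ beyond the $\varphi_k$. This is the heart of Gross's construction and of Keating's work on endomorphism rings of reductions of quasi-canonical lifts; it is proved by explicit Dieudonn\'e/crystalline computations in the spirit of \S\ref{s:vertical}, or by a complex-multiplication argument of the type used in Proposition~\ref{Prop:CM orbits}. Everything preceding it is deformation theory together with Weierstrass preparation.
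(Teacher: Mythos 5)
Your overall skeleton (identify $R_{\mathfrak{M}_0}\iso\ZZ_p[[x]]$, show $I_0=(g)$ is principal, use Gross's quasi-canonical lifts to produce Eisenstein factors $\varphi_k\mid g$, then close with a counting argument) matches the paper up to the last step, but the way you close the count has a genuine gap. The paper finishes by invoking Keating's explicit formula for $\length_{\F}(R_{\mathfrak{Y}_0}/(p))$ and comparing it with $\sum_{k}[M_k:\QQ_p]$; this single characteristic-$p$ input simultaneously rules out a factor of $p$ in $g$, nilpotents supported on the special fiber, and any extra horizontal factors. You instead try to get by with (i) $\ZZ_p$-flatness of $R_{\mathfrak{Y}_0}$ (i.e.\ $p\nmid g$), (ii) reducedness, and (iii) the classification of characteristic-zero points as quasi-canonical lifts. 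Item (iii) is a legitimate citation to Gross/Wewers, but (i) and (ii) are exactly the part you have not proved: Gross's paper constructs and classifies the lifts over integer rings of finite extensions of $\QQ_p$, i.e.\ the points of $\mathfrak{Y}_0$ in characteristic zero, and says nothing about the scheme structure of the locus along the special fiber; and the one-variable analogue of Lemma \ref{Lem:etale completion} only shows that $R_{\mathfrak{Y}_0}[1/p]$ is a product of fields, which is compatible with $g=p\cdot\prod_k\varphi_k$ or with nilpotents killed by a power of $p$ (compare $\ZZ_p[[x]]/(px)$ or $\ZZ_p[[x]]/(px^2)$, whose rings become fields after inverting $p$). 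Ruling out $p\mid g$ amounts to showing that $\gamma_0$ does \emph{not} lift to the universal deformation over $\F[[x]]$, a genuinely characteristic-$p$ statement that no amount of characteristic-zero point counting can see; in the paper this is precisely what Keating's length computation (with complete proof in \cite{ARGOS-11}) supplies, and your proposal has no substitute for it.

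A secondary point: your assertion that ``no two distinct minimal primes can have residue ring $W_k$'' does not follow from the fact that $M_k/\QQ_p$ is Galois; the Galois action explains why a single prime can account for all $[M_k:\QQ_p]$ of Gross's surjections, but by itself it does not exclude a second minimal prime with the same quotient. To get ``exactly one per level'' along your route you would need the exhaustiveness of Gross's list of surjections $R_{\mathfrak{Y}_0}\map{}W_k$ (equivalently, simple transitivity of the Galois action on quasi-canonical lifts of level $k$), which you should state explicitly as part of the input; in the paper this uniqueness is not an input at all but falls out of the length comparison, since after the Keating count there is simply no room in $\deg g$ for repeated or extra factors.
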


\begin{proof}
By \cite[Theorem 3.8]{ARGOS-7} we may fix an isomorphism $R_{\mathfrak{M}_0}\iso \ZZ_p[[x]]$.  By \cite[Theorem 5.1]{ARGOS-8} 
the quotient $R_{\mathfrak{Y}_0}$ then has the form $\ZZ_p[[x]]/(g)$ for some power series $g(x)$.  By the Weierstrass preparation 
theorem we may choose $g(x)$ to be a polynomial of degree $f$ satisfying $g(x)\equiv x^f\pmod{p}$.  The quasi-canonical lifts 
described above give, for each $0\le k\le c_0$, a surjection $R_{\mathfrak{Y}_0}\map{}W_k$ whose kernel is generated by an  
Eisenstein polynomial $\varphi_k(x)$ which must divide $g(x)$.  This implies that we may factor 
$g(x)=\varphi_0(x)\cdots \varphi_{c_0}(x) h(x)u(x)$ with $u(x)$ a unit in $\ZZ_p[[x]]$ and $h(x)$ a polynomial of degree $e$ satisfying 
$h(x)\equiv x^e\pmod{p}$.  Thus 
$$
  \length_{\F} (R_{\mathfrak{Y}_0}/(p)) = f = e+\sum_{k=0}^{c_0}[M_k:\QQ_p].
$$
But calculations of  Keating give an explicit formula for  $  \length_{\F} (R_{\mathfrak{Y}_0}/(p))$. Using the remarks following 
\cite[Theorem 1.1]{keating88} one can show that
$$
 \length_{\F} (R_{\mathfrak{Y}_0}/(p))  =  
 \begin{cases}
  2(1+p+\cdots +p^{c_0-1}) +p^{c_0}  & \mathrm{if\ } E_0/\Q_p \mathrm{\ is\ unramified} \\
 2(1+p+\ldots+p^{c_0})  & \mathrm{if\ } E_0/\Q_p \mathrm{\ is \ ramified.} 
 \end{cases}
$$
 Comparing with the values (for $k>0$)
$$
[M_k:\QQ_p] =  \begin{cases}  (p+1) p^{k-1} & \mathrm{if\ } E_0/\Q_p \mathrm{\ is\ unramified} \\
2p^k & \mathrm{if\ } E_0/\Q_p \mathrm{\ is \ ramified}
   \end{cases}
$$  
 we deduce that $e=0$, and so $h(x)$ is a unit.  
\end{proof}


\subsection{$E_0$ unramified}
\label{ss:unr horizontal}


Throughout all of \S \ref{ss:unr horizontal} we assume that $E_0/\Q_p$ is unramified. The calculation of the horizontal components of 
$\mathfrak{Y}$ will be an easy consequence of Proposition \ref{Prop:CM orbits} and  Proposition \ref{Prop:first reflex} below.  First we 
need a simple lemma.

\begin{Lem}\label{Lem:degenerate deformation}
If   $\Z_{p^2}[\gamma]=\co_E$ then $\mathfrak{Y}\iso \mathfrak{Y}_0\iso\Spf(\ZZ_p).$
  \end{Lem}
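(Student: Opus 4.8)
The point is that when $\Z_{p^2}[\gamma] = \co_E$, requiring $\gamma$ to lift is no stronger than requiring the full $\co_E$-action to lift, and the latter is an extremely rigid condition. First I would unwind the hypothesis: since $\gamma = \gamma_0 \otimes 1$ and $\Z_{p^2}[\gamma] \iso \Z_{p^2} + p^{c_0}\co_E$, the equality $\Z_{p^2}[\gamma]=\co_E$ forces $c_0 = 0$, i.e.\ $\Z_p[\gamma_0]=\co_{E_0}$. In particular $\mathfrak{Y}_0$ consists of deformations of $\mathfrak{g}_0$ for which $\gamma_0$, hence the whole maximal order $\co_{E_0}$, lifts. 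This is precisely the \emph{canonical lift} situation (level $0$ in the Gross--Keating picture), so by Proposition~\ref{Prop:gross-keating} the ideal $I_0 \subset \ZZ_p[[x]]$ is generated by a single Eisenstein polynomial $\varphi_0(x)$ with $\ZZ_p[[x]]/(\varphi_0) \iso W_0$. When $E_0/\Q_p$ is unramified $W_0 = \ZZ_p$, so $\varphi_0$ is linear and $R_{\mathfrak{Y}_0} \iso \ZZ_p$, giving $\mathfrak{Y}_0 \iso \Spf(\ZZ_p)$.

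Next I would establish $\mathfrak{Y} \iso \mathfrak{Y}_0$. The inclusion $\mathfrak{Y}_0 \map{} \mathfrak{Y}$ of the cartesian diagram (\ref{functor diagram}) is always a closed immersion, so it suffices to produce an inverse, i.e.\ to show every deformation $(\mathfrak{G},\rho) \in \mathfrak{Y}(R)$ is in the image of $\otimes\Z_{p^2}$. Given such a $(\mathfrak{G},\rho)$, the endomorphism $\gamma$ lifts, hence the action of $\Z_{p^2}[\gamma] = \co_E$ lifts; so $\mathfrak{G}$ comes equipped with an $\co_E = \co_{E_0}\otimes_{\Z_p}\Z_{p^2}$-action. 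Forgetting the $\Z_{p^2}$-factor and using the idempotent decomposition coming from $\Z_{p^2}\otimes_{\Z_p}R$ (as in Remark~\ref{Rem:reflex pairs}), one recovers a $p$-Barsotti-Tate group $\mathfrak{G}_0$ of height two over $R$ with $\co_{E_0}$-action, together with an isomorphism $\mathfrak{G} \iso \mathfrak{G}_0\otimes\Z_{p^2}$ compatible with $\rho$; the reduction $\rho_0:\mathfrak{g}_0 \map{} \mathfrak{G}_{0/\F}$ is an $\co_{E_0}$-linear isomorphism by Corollary~\ref{Cor:normalized action}. Since $\Z_p[\gamma_0]=\co_{E_0}$, the pair $(\mathfrak{G}_0,\rho_0)$ automatically lies in $\mathfrak{Y}_0(R)$, and this construction is inverse to $\otimes\Z_{p^2}$ at the level of functors. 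Thus $\mathfrak{Y}_0 \map{} \mathfrak{Y}$ is an isomorphism, so $\mathfrak{Y} \iso \mathfrak{Y}_0 \iso \Spf(\ZZ_p)$.

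The step I expect to require the most care is producing the $\co_E$-linear descent of $(\mathfrak{G},\rho)$ through $\otimes\Z_{p^2}$ functorially in $R$: one must check that the idempotent splitting of $\mathfrak{G}$ under $\Z_{p^2}\otimes_{\Z_p}R$ genuinely recovers $\mathfrak{G}_0\otimes\Z_{p^2}$ and not merely an isogenous object, and that the two $\co_{E_0}$-actions (the one on $\mathfrak{G}_0$ and the one pulled back via $\rho_0$ from $\mathfrak{g}_0$) are compatible on the nose, so that $(\mathfrak{G}_0,\rho_0)$ is a genuine point of $\mathfrak{Y}_0$. Here Corollary~\ref{Cor:normalized action} (uniqueness of the $\co_{E_0}$-action up to $\Aut$-conjugacy, pinned down by the $\co_{E_0}$-module structure on the Lie algebra) together with the rigidity of endomorphisms of $p$-Barsotti-Tate groups over Artinian bases does the work. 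Everything else—the reduction $c_0 = 0$ and the computation of $R_{\mathfrak{Y}_0}$—is immediate from Proposition~\ref{Prop:gross-keating} once one observes that in the unramified case the level-$0$ extension $M_0$ equals $\QQ_p$.
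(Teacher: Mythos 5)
The first half of your argument (forcing $c_0=0$ and deducing $\mathfrak{Y}_0\iso\Spf(\ZZ_p)$ from Proposition \ref{Prop:gross-keating}, using that $W_0=\ZZ_p$ since $E_0/\Q_p$ is unramified) is correct and is exactly what the paper does. The gap is in your descent step. You propose to split $\mathfrak{G}$ "using the idempotent decomposition coming from $\Z_{p^2}\otimes_{\Z_p}R$, as in Remark \ref{Rem:reflex pairs}", but those idempotents only act on modules over $\Z_{p^2}\otimes_{\Z_p}R$ -- the Lie algebra, or the display -- not on the $p$-Barsotti-Tate group itself: $R$ does not act on $\mathfrak{G}$ by endomorphisms, so $\Z_{p^2}\otimes_{\Z_p}R\iso R\times R$ provides no idempotent endomorphisms of $\mathfrak{G}$, and a splitting of $\mathrm{Lie}(\mathfrak{G})$ does not split $\mathfrak{G}$. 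The idempotents that do the work live in $\co_E$ itself: because $E_0/\Q_p$ is unramified (the standing hypothesis of this subsection, and genuinely needed -- in the ramified case the lemma fails, as $\mathfrak{Y}_0\iso\Spf(W_0)$ with $W_0\ne\ZZ_p$) one has $\co_E\iso\co_{E_0}\times\co_{E_0}$, and since $\Z_{p^2}[\gamma]=\co_E$ lifts into $\End(\mathfrak{G})$, these idempotents are honest endomorphisms of $\mathfrak{G}$ and split it as $\mathfrak{G}_1\times\mathfrak{G}_2$, with each $\mathfrak{G}_i$ a deformation of $\mathfrak{g}_0$ carrying an $\co_{E_0}$-action.

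Even with the correct splitting, your claim that one "recovers $\mathfrak{G}_0$ with $\mathfrak{G}\iso\mathfrak{G}_0\otimes\Z_{p^2}$" is not yet justified: any $\co_E$-linear isomorphism $\mathfrak{G}\iso\mathfrak{G}_0\otimes\Z_{p^2}$ respects the idempotents, so such a descent exists only if $\mathfrak{G}_1$ and $\mathfrak{G}_2$ are isomorphic as deformations of $\mathfrak{g}_0$. Corollary \ref{Cor:normalized action} (a statement over $\F$) and rigidity of endomorphisms over Artinian bases do not give this; what does give it is precisely the rigidity $\mathfrak{Y}_0\iso\Spf(\ZZ_p)$ from your first step, since each $(\mathfrak{G}_i,\rho_i)$ is a point of $\mathfrak{Y}_0(R)$ and there is only one such point. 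Alternatively -- and this is the paper's route -- one avoids the descent altogether: the idempotent splitting of deformations gives $\mathfrak{Y}\iso\mathfrak{Y}_0\times_{\Spf(\ZZ_p)}\mathfrak{Y}_0$, which is $\Spf(\ZZ_p)$ once $\mathfrak{Y}_0\iso\Spf(\ZZ_p)$ is known, and the closed immersion $\mathfrak{Y}_0\map{}\mathfrak{Y}$ between copies of $\Spf(\ZZ_p)$ is then automatically an isomorphism. So the missing ingredient is not Corollary \ref{Cor:normalized action} but the combination of the splitness of $\co_E$ (acting on the group, not the Lie algebra) with the uniqueness of deformations in $\mathfrak{Y}_0$.
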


\begin{proof}
As we assume that $E_0$ is unramified $\co_{E_0} \iso \Z_{p^2}$ as $\Z_p$-algebras, and $\co_E\iso \co_{E_0}\times\co_{E_0}.$
The two idempotents on the right determine a splitting 
$$
\mathfrak{g}\iso \mathfrak{g}_0\otimes_{\Z_p} {\Z_{p^2}} \iso   \mathfrak{g}_0\otimes_{\co_{E_0}} \co_E 
   \iso \mathfrak{g}_0\times\mathfrak{g}_0
$$ 
and similarly for any deformation of $\mathfrak{g}$ for which the $\co_E$-action lifts.  This splitting of deformations of $\mathfrak{g}$ 
induces an isomorphism of formal schemes over $\ZZ_p$
$$
\mathfrak{Y}\iso\mathfrak{Y}_0\times_{\Spf(\ZZ_p)}\mathfrak{Y}_0.
$$ 
 But according to Proposition \ref{Prop:gross-keating}, applied with $\Z_p[\gamma_0]=\co_{E_0}$ and $c_0=0$,  we have 
 $\mathfrak{Y}_0\iso \Spf(\ZZ_p)$.  The claim follows.
\end{proof}

\begin{Prop}\label{Prop:first reflex}
Let $R$ be the ring of integers of a finite extension of $\QQ_p$.  In the terminology of Definition \ref{Def:invariants}, if two elements of 
$\mathfrak{Y}(R)$ have the same geometric CM order, then they have the same reflex type.
\end{Prop}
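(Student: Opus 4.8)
The plan is to show something slightly stronger than asked: in the unramified case every deformation in $\mathfrak{Y}(R)$ has the \emph{same} reflex type, so a fortiori two with equal geometric CM-order do. First I would dispose of the degenerate case $c_0=0$: then $\Z_{p^2}[\gamma]=\co_E$ and Lemma \ref{Lem:degenerate deformation} gives $\mathfrak{Y}\iso\Spf(\ZZ_p)$, so $\mathfrak{Y}(R)$ is a single point. Assume $c_0>0$ from now on.

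By Remark \ref{Rem:reflex pairs} the reflex type of $(\mathfrak{G},\rho)$ is the pair $(\phi_1,\phi_2)$ of $\Z_p$-algebra embeddings $\Z_p[\gamma_0]\map{}R$ recording the action of $\Z_p[\gamma_0]\subset\Z_{p^2}[\gamma]$ on the two rank-one summands $\Lambda_1,\Lambda_2$ of $\mathrm{Lie}(\mathfrak{G})$, where $\Z_{p^2}$ acts on $\Lambda_1$ through the fixed embedding $\iota:\Z_{p^2}\hookrightarrow\ZZ_p\map{}R$ and on $\Lambda_2$ through the conjugate embedding $\overline\iota$. I would combine $\phi_i$ with the $\Z_{p^2}$-action into a ring homomorphism $\widetilde\phi_i:\Z_{p^2}[\gamma]\map{}R$. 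Since $E_0/\Q_p$ is unramified, $E=E_0\otimes_{\Q_p}\Q_{p^2}$ is a product of two copies of $\Q_{p^2}$, which I index so that $\co_{E_0}$ maps into the first (resp.\ second) copy through $\Psi$ (resp.\ $\overline\Psi$), and in which the diagonally embedded $\Z_{p^2}\subset\Z_{p^2}[\gamma]$ becomes the identity of each copy. Because $\Z_{p^2}[\gamma]$ is an order in $E$ and $\widetilde\phi_1$ is $\iota$ on $\Z_{p^2}$, the homomorphism $\widetilde\phi_1\otimes\Q_p$ must factor through exactly one of the two copies, say copy $j\in\{1,2\}$; and then $\phi_1(\gamma_0)=\iota(\gamma_0)$ if $j=1$ and $\phi_1(\gamma_0)=\overline\iota(\gamma_0)$ if $j=2$, with $\phi_2$ the complementary embedding (note $\iota$ and $\overline\iota$ agree on $\Z_p[\gamma_0]$ modulo $\mathfrak{m}_R$ since $c_0>0$, so both reductions are consistent with the action on $\mathrm{Lie}(\mathfrak{g}_0)$). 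Thus the entire statement reduces to proving $j=1$ for every $(\mathfrak{G},\rho)\in\mathfrak{Y}(R)$.

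To determine $j$ I would pass to a convenient isogenous deformation. Enlarging $R$ is harmless (the map $\mathfrak{Y}(R)\map{}\mathfrak{Y}(R')$ is injective by pro-representability, as used in the proof of Proposition \ref{Prop:CM orbits}), so I may assume $\Q_{p^2}\subset\mathrm{Frac}(R)$ and take the $p$-divisible group $\mathfrak{G}'$ over $R$ whose generic fibre has Tate module the $\co_E$-saturation $\co_E\cdot\mathrm{Ta}_p(\mathfrak{G})$ (extended from $\mathrm{Frac}(R)$ to $R$ by scheme-theoretic closure of the relevant finite flat subgroup scheme); then $\mathfrak{G}'$ carries an action of all of $\co_E$, and since $\mathrm{Ta}_p(\mathfrak{G})$ already generates each copy of $\Q_{p^2}$ over the corresponding copy of $\Z_{p^2}$ — here one uses that $\mathrm{Pic}$ of the maximal order $\co_E$ is trivial — each isotypic component of the isogeny $\mathfrak{G}\map{}\mathfrak{G}'$ is an isomorphism. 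Because $\Z_p[\gamma_0]$ acts by a scalar on each rank-one $\Lambda_i$, that scalar is unchanged by the isogeny, so $(\phi_1,\phi_2)$ may be computed from $\mathfrak{G}'$. Splitting $\mathfrak{G}'=\mathfrak{G}'_1\times\mathfrak{G}'_2$ via the idempotents of $\co_E$, each factor has height two and dimension one (its special fibre is isogenous to the corresponding factor of $\mathfrak{g}$, which has dimension one by the explicit display of \S\ref{ss:dieudonne}, and dimension is preserved by isogeny over the perfect field $\F$ and by specialization), so $\mathrm{Lie}(\mathfrak{G}')=\mathrm{Lie}(\mathfrak{G}'_1)\oplus\mathrm{Lie}(\mathfrak{G}'_2)$ with each summand free of rank one. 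I would then reduce modulo $\mathfrak{m}_R$ and identify $\mathrm{Lie}(\mathfrak{G}'_{i/\F})$ as an $\co_{E_0}$- and $\Z_{p^2}$-module: by the computation of \S\ref{ss:dieudonne}, on $\mathrm{Lie}(e\,\mathfrak{g})$ (for $e$ an idempotent of $\co_E$) the algebras $\Z_{p^2}$ and $\co_{E_0}$ act through the two \emph{distinct} embeddings into $\F$, and $\gamma=\gamma_0\otimes1$ acts through $\co_{E_0}$; combining this with Lemma \ref{Lem:flip action} and the fact that each isotypic part of $\mathfrak{G}\map{}\mathfrak{G}'$ is an isomorphism (so there is no Frobenius ``flip''), I would conclude that $\Z_{p^2}$ acts on $\mathrm{Lie}(\mathfrak{G}'_j)$ through exactly the embedding forcing $\Lambda_1$ to be the first isotypic piece, i.e.\ $j=1$, as desired.

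The main obstacle is this last step — ruling out the Frobenius flip. Controlling the parity of the degrees of the various isotypic isogenies is exactly what uses that $E_0/\Q_p$ is unramified (so that $E$ splits, the two embeddings $\co_{E_0}\map{}\F$ are distinct, and $\Z_{p^2}$ pins down the relevant embedding unambiguously), and it is precisely this rigidity that fails in the ramified case, which is why the companion argument there produces a second, ``nonstandard'' family of reflex types.
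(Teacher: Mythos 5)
Your overall skeleton — saturate $\mathrm{Ta}_p(\mathfrak{G})$ to a free $\co_E$-module, realize the saturation as an isogeny $\mathfrak{G}\map{}\mathfrak{G}'$ of $p$-Barsotti-Tate groups over $R$, use rigidity of the full-CM deformation, and transfer the scalars by which $\Z_p[\gamma_0]$ acts on the rank-one pieces $\Lambda_i$ through the (injective) map on Lie algebras — is exactly the paper's strategy. But the step you flag as "the main obstacle," ruling out the Frobenius flip, is not an obstacle you can overcome: it is false, and with it your strengthened claim that every element of $\mathfrak{Y}(R)$ has the same reflex type. The paper's proof shows that the reflex type is determined by the \emph{parity} of $s$, where $\co(\mathfrak{G})=\Z_{p^2}+p^s\co_E$: the action of $\Z_p[\gamma_0]$ on $\mathrm{Lie}(\mathfrak{G})$ is through $\Psi$ when $s$ is even and through $\overline{\Psi}$ when $s$ is odd. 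In particular, as soon as $c_0\ge 1$, a canonical lift (level $0$) and a quasi-canonical lift of level $1$, both base-changed into a common $R$, give two elements of $\mathfrak{Y}(R)$ with different reflex types ($\Psi\neq\overline{\Psi}$ on $\Z_p[\gamma_0]$ in characteristic $0$, even though they agree modulo $\mathfrak{m}_R$, as you note). The proposition survives only because $s$ is part of the geometric CM-order, so "same CM-order $\Rightarrow$ same parity $\Rightarrow$ same reflex type."

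Concretely, the false step is the assertion that "each isotypic component of the isogeny $\mathfrak{G}\map{}\mathfrak{G}'$ is an isomorphism," so that there is "no flip." Since $\mathrm{Ta}_p(\mathfrak{G})$ is free of rank one over $\co=\Z_{p^2}+p^s\co_E$, it is not stable under the idempotents of $\co_E$ when $s>0$, so the isogeny has no isotypic decomposition; the saturation has index $p^{2s}$, and this index cannot be shuffled away (the triviality of $\mathrm{Pic}(\co_E)$ is irrelevant here — the issue is the index of $\co$ in $\co_E$, not an ideal class). What the paper does instead is compute the special fibre of the saturation explicitly: the Dieudonn\'e lattice $D^*\supset D$ is pinned down, one checks $\mathfrak{g}^*\iso\mathfrak{g}_0^*\otimes\Z_{p^2}$, invokes Lemma \ref{Lem:degenerate deformation} to identify $\mathfrak{G}^*\iso\mathfrak{G}_0^*\otimes\Z_{p^2}$, and then compares $\mathfrak{G}$ with an auxiliary $\mathfrak{G}_0'\otimes\Z_{p^2}$ (Tate module $\co\subset\co_{E_0}$) using the simple transitivity of $\co_E^\times/\co^\times$ on free rank-one $\co$-sublattices; the decisive input is Lemma \ref{Lem:flip action} applied to the degree-$p^s$ isogeny $f_0':\mathfrak{g}_0'\map{}\mathfrak{g}_0^*$, which produces the flip precisely when $s$ is odd. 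So your reduction to "show $j=1$ always" is proving the wrong statement; the correct target is "$j$ depends only on $s\bmod 2$," and reaching it requires the parity bookkeeping you tried to discard. (Your closing remark about the ramified case is also off: the nonstandard components there come from the second ramified subfield $E_0'\subset E$, not from a failure of this rigidity.)
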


\begin{proof}
Fix a deformation  $(\mathfrak{G}, \rho )\in \mathfrak{Y}(R)$ and  abbreviate $\co=\co(\mathfrak{G})$ for the geometric CM order of 
$\mathfrak{G}$, so that $\mathrm{Ta}_p(\mathfrak{G})$ is free of rank one over $\co$.   Let $s\in\Z$ be defined by 
$$\co = \Z_{p^2}+p^s\co_E.$$   As in Remark \ref{Rem:reflex pairs}  the reflex type of $\mathfrak{G}$ is determined by the action of 
$\Z_p[\gamma_0]$ on $\mathrm{Lie}(\mathfrak{G})$.  We will prove that this action is through  $\Psi:\co_{E_0}\map{}\ZZ_p$ if $s$ is 
even and through  $\overline{\Psi} : \co_{E_0}\map{}\ZZ_p$ if $s$ is odd, where $\Psi$ is the homomorphism defined in
 \S \ref{ss:dieudonne} and $\overline{\Psi}(x)=\Psi(x^\sigma)$.

  Using \cite[Theorem 1.3]{waterhouse72} we see that there is  $p$-Barsotti-Tate group $\mathfrak{G}^*$  with $\co_E$-action 
  satisfying 
  $$
  \mathrm{Ta}_p(\mathfrak{G}^*)\iso \mathrm{Ta}_p(\mathfrak{G})\otimes_\co \co_E
  $$  
and that the $\co$-linear inclusion of $\mathrm{Ta}_p(\mathfrak{G})$ into $\mathrm{Ta}_p(\mathfrak{G}^*)$ arises from an  
$\co$-linear isogeny of $p$-Barsotti-Tate groups $f:\mathfrak{G}\map{}\mathfrak{G}^*$ of degree $p^{2s}$.  Let $\mathfrak{g}^*$ 
denote the reduction of $\mathfrak{G}^*$ to $\F$ with its action of $\co_{E}$.  The claim is that one can find a $p$-Barsotti-Tate group 
$\mathfrak{g}_0^*$ over $\F$ equipped with an action of $\co_{E_0}$ in such a way that $\mathfrak{g}^*$ is $\co_{E}$-linearly 
isomorphic to $\mathfrak{g}_0^*\otimes\Z_{p^2}$.  Let $(D^*,F,V)$ be the covariant Dieudonn\'e module of  $\mathfrak{g}^*$ and 
identify (Lemma \ref{Lem:standard display})  the covariant Dieudonn\'e module of $\mathfrak{g}_0$ with its $\co_{E_0}$-action with 
the standard superspecial Dieudonn\'e module $(D_0,F,V)$  with its normalized action of $\co_{E_0}$, as defined in 
\S \ref{ss:dieudonne}.  The Dieudonn\'e module of $\mathfrak{g}$ is then  $D=D_0\otimes_{\Z_p}\Z_{p^2}$ with its $\Z_{p^2}$-linear 
extensions of $F$ and $V$.   Let $\{\epsilon_1,\epsilon_2\}$ be the idempotents in  
$\Z_{p^2}\otimes_{\Z_p} \ZZ_p\iso \ZZ_p\times\ZZ_p$ indexed  so that 
$$
(\alpha\otimes 1)\epsilon_1=(1\otimes\alpha)\epsilon_1
\qquad
(\alpha\otimes 1)\epsilon_2=(1\otimes\alpha^\sigma)\epsilon_2.
$$
 The $\ZZ_p$-module $D$ then has
$$
e_1=\epsilon_1 e_0\qquad e_2=\epsilon_2 e_0\qquad  f_1=\epsilon_1f_0 \qquad f_2=\epsilon_2 f_0
$$
as a basis, the action of $\alpha\in\Z_{p^2}$ is via  (\ref{unr display action}) and the action of $F$ (and $V$) is via
$$
e_1\mapsto f_2\qquad e_2 \mapsto f_1\qquad f_1\mapsto pe_2\qquad f_2  \mapsto pe_1.
$$
Using the reduction of the isogeny $f$ to identify $D^*$ with an $\co_E$-stable superlattice of $D$ in $D\otimes_{\ZZ_p}\QQ_p$ we 
find that the action of $\co_E\otimes_{\Z_p}\ZZ_p\iso (\ZZ_p)^4$ decomposes $D^*$ into a direct sum of four rank one $\ZZ_p$-
submodules, each spanned by some $\QQ_p$-multiple of one of the four basis elements $\{e_1,e_2,f_1,f_2\}$.  The condition that 
$D^*$ be stable under $F$ and $V$ then forces $D^*$ to have the form
$$
D^*= \frac{1}{p^a} \ZZ_p e_1 \oplus \frac{1}{p^{b}} \ZZ_p e_2 \oplus  \frac{1}{p^{b+\delta}}\ZZ_p f_1 \oplus 
\frac{1}{p^{a+\delta}}\ZZ_p f_2
$$
with $\delta\in \{0,1\}$,  $a,b\ge 0$, and $a+b+\delta =s$.  If we define $D_0^*\subset D^*$ to be the $\ZZ_p$-span of 
$$
e_0^*= \frac{1}{p^a} e_1 +\frac{1}{p^b} e_2 \qquad f_0^*= \frac{1}{p^{b+\delta}} f_1 +\frac{1}{p^{a+\delta}} f_2
$$
then $D_0^*$ is stable under the actions of $\co_{E_0}$, $F$, and $V$, and satisfies $D_0^*\otimes_{\Z_p}\Z_{p^2}\iso D^*$.  Hence 
we may take $\mathfrak{g}_0^*$ to be the $p$-Barsotti-Tate group associated to the Dieudonn\'e module $(D_0^*,F,V)$.

Fix an isomorphism $\mathfrak{g}^*\iso \mathfrak{g}_0^*\otimes\Z_{p^2}$ as in the previous paragraph and  let $\mathfrak{Y}_0^*$ 
and $\mathfrak{Y}^*$ be the functors on $\Art$ classifying, respectively,  deformations of $\mathfrak{g}_0^*$ with its $\co_{E_0}$-
action and $\mathfrak{g}^*$ with its $\co_E$-action.  By Lemma  \ref{Lem:degenerate deformation}  we have 
$$
\mathfrak{Y}^*\iso\mathfrak{Y}_0^*\iso\Spf(\ZZ_p)
$$ 
and so there is an $\co_{E}$-linear isomorphism $\mathfrak{G}^*\iso \mathfrak{G}^*_0\otimes\Z_{p^2}$ where $\mathfrak{G}_0^*$ is 
the unique deformation to $R$ of $\mathfrak{g}_0^*$ with its $\co_{E_0}$-action.    Fix an isomorphism of $\co_{E_0}$-modules 
$$
\mathrm{Ta}_p(\mathfrak{G}^*_0)\iso \co_{E_0}
$$
and let $\mathfrak{G}_0'$ be the $p$-Barsotti-Tate group over $R$ whose $p$-adic Tate module is the sublattice
$$
\mathrm{Ta}_p(\mathfrak{G}_0')\iso  \co  \subset \co_{E_0}\iso \mathrm{Ta}_p(\mathfrak{G}^*_0).
$$
Let $f_0':\mathfrak{G}_0'\map{}\mathfrak{G}_0^*$ be the associated isogeny of degree $p^s$,  set 
$\mathfrak{G}'=\mathfrak{G}'_0\otimes\Z_{p^2}$, and let $f':\mathfrak{G}'\map{}\mathfrak{G}^*$ be the isogeny  
$f'=f_0'\otimes\mathrm{id}$.  Then the $p$-adic Tate modules of $\mathfrak{G}$ and $\mathfrak{G}'$ are each free rank one $\co$-
submodules of $\mathrm{Ta}_p(\mathfrak{G}^*)\iso \co_E$ of index $p^{2s}$,  and such submodules are permuted simply transitively 
by $\co_E^\times/\co^\times$.  Thus   there is a $\xi\in \co_E^\times$ such that the automorphism of $\mathrm{Ta}_p(\mathfrak{G}^*)$ 
induced by $\xi$ carries the $p$-adic Tate module of   $\mathfrak{G}$  isomorphically onto  the $p$-adic Tate module of 
$\mathfrak{G}'$.  This isomorphism of $p$-adic Tate modules arises from an isomorphism $\xi:\mathfrak{G}\map{}\mathfrak{G}'$ 
making the diagram
$$
\xymatrix{
{  \mathfrak{G}  }  \ar[r]^\xi  \ar[d]_f &   { \mathfrak{G}'   } \ar[d]^{f'}  \\
{  \mathfrak{G}^* }  \ar[r]_{ \xi }  &  {   \mathfrak{G}^* }
}
$$
commute.  Reducing these isogenies to  $\F$ we obtain a commutative diagram of $\co_{E}$-linear isogenies
$$
\xymatrix{
{  \mathfrak{g}  }  \ar[r]^{\xi\circ \rho}  \ar[d]_{f \circ \rho} &   { \mathfrak{g}'   } \ar[d]^{f'}  \ar[r]^\iso &
  {\mathfrak{g}_0'\otimes \Z_{p^2}} \ar[d]^{f_0' \otimes\mathrm{id}}  \\
{  \mathfrak{g}^* }  \ar[r]_{ \xi }  &  {   \mathfrak{g}^* } \ar[r]^\iso &  {\mathfrak{g}_0^*\otimes \Z_{p^2}}.
}
$$
As the horizontal arrows are isomorphisms  there are  $\co_E$-linear isomorphisms 
$$
\mathrm{Lie}(\mathfrak{g})\iso \mathrm{Lie}(\mathfrak{g}')\iso \mathrm{Lie}(\mathfrak{g}_0')\otimes_{\Z_p}\Z_{p^2}.
$$
The action of $\co_{E_0}$ on $\mathrm{Lie}(\mathfrak{g})$ is through $\psi$, and hence so is the action of $\co_{E_0}$ on 
$\mathrm{Lie}(\mathfrak{g}_0')$.   Applying Lemma \ref{Lem:flip action} to the reduction of the isogeny $f_0'$,  the action of 
$\co_{E_0}$ on $\mathrm{Lie}(\mathfrak{g}_0')$ is therefore through
$$
\phi= \begin{cases}
\psi &\mathrm{if\ }s\mathrm{\ even} \\
\overline{\psi} &\mathrm{if\ }s\mathrm{\ odd.}
\end{cases}
$$
Let $\Phi:\co_{E_0}\map{}\ZZ_p$ be the unique (recall that  $E_0/\Q_p$ is unramified) $\Z_p$-algebra homomorphism lifting $\phi$, 
so that
$$
\Phi= \begin{cases}
\Psi &\mathrm{if\ }s\mathrm{\ even} \\
\overline{\Psi} &\mathrm{if\ }s\mathrm{\ odd}
\end{cases}
$$
By what we have shown, the  action of $\co_{E_0}$ on $\mathrm{Lie}(\mathfrak{G}_0^*)$ is through $\Phi$, and hence so is the 
action of $\co_{E_0}$ on 
$$
\mathrm{Lie}(\mathfrak{G}^*)\iso \mathrm{Lie}(\mathfrak{G}_0^*)\otimes_{\Z_p}\Z_{p^2}.
$$
Finally, the isogeny $f$ induces an $\Z_p[\gamma_0]$-linear injection 
$$
\mathrm{Lie}(\mathfrak{G})\map{ }\mathrm{Lie}(\mathfrak{G}^*)
$$
and we at last deduce that $\Z_p[\gamma_0]$ acts on $\mathrm{Lie}(\mathfrak{G})$ through $\Phi$.
\end{proof}

\begin{Cor}\label{Cor:improper orbits}
Let $R$ be the ring of integers of a finite extension of $\QQ_p$.  
Every $\co_{E}^\times$-orbit in $\mathfrak{Y}(R)$ contains an element of $\mathfrak{Y}_0(R)$.
\end{Cor}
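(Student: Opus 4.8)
The plan is to reduce everything to the geometric invariants of Definition \ref{Def:invariants}. By Proposition \ref{Prop:CM orbits} together with Proposition \ref{Prop:first reflex}, in the unramified case two deformations in $\mathfrak{Y}(R)$ lie in the same $\co_E^\times$-orbit precisely when they have the same geometric CM-order. So fix $(\mathfrak{G},\rho)\in\mathfrak{Y}(R)$ and write $\co(\mathfrak{G})=\Z_{p^2}+p^s\co_E$; since this order contains $\Z_{p^2}[\gamma]=\Z_{p^2}+p^{c_0}\co_E$ we have $0\le s\le c_0$. It now suffices to produce a deformation of the shape $(\mathfrak{G}_0,\rho_0)\otimes\Z_{p^2}$ with $(\mathfrak{G}_0,\rho_0)\in\mathfrak{Y}_0(R)$ whose geometric CM-order is again $\Z_{p^2}+p^s\co_E$: by Proposition \ref{Prop:first reflex} it will then have the same reflex type as $(\mathfrak{G},\rho)$, and by Proposition \ref{Prop:CM orbits} it will lie in the $\co_E^\times$-orbit of $(\mathfrak{G},\rho)$.

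To construct $(\mathfrak{G}_0,\rho_0)$ I would begin with a canonical lift. Since $E_0/\Q_p$ is unramified, $W_0=\ZZ_p$, so the level zero quasi-canonical lift of \S \ref{ss:quasi-canonical} is a deformation $(\mathfrak{G}_0^{\mathrm{can}},\rho_0^{\mathrm{can}})\in\mathfrak{Y}_0(\ZZ_p)$, which base-changes to an element of $\mathfrak{Y}_0(R)$, with $\mathrm{Ta}_p(\mathfrak{G}_0^{\mathrm{can}})$ free of rank one over $\co_{E_0}$. Let $\mathfrak{G}_0$ be the $p$-Barsotti-Tate group over $R$ determined by the sublattice $\Z_p+p^s\co_{E_0}\subseteq\co_{E_0}\iso\mathrm{Ta}_p(\mathfrak{G}_0^{\mathrm{can}})$, equipped with the resulting isogeny $\mathfrak{G}_0\to\mathfrak{G}_0^{\mathrm{can}}$ of degree $p^s$; since the Galois action on $\mathrm{Ta}_p(\mathfrak{G}_0^{\mathrm{can}})$ is $\co_{E_0}$-linear, the ring $\Z_p+p^s\co_{E_0}$ acts on $\mathrm{Ta}_p(\mathfrak{G}_0)$, hence on $\mathfrak{G}_0$. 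The reduction $\mathfrak{G}_{0/\F}$ is a connected $p$-Barsotti-Tate group of height two and dimension one, hence isomorphic to $\mathfrak{g}_0$. Moreover $\mathrm{Ta}_p(\mathfrak{G}_0\otimes\Z_{p^2})=(\Z_p+p^s\co_{E_0})\otimes_{\Z_p}\Z_{p^2}=\Z_{p^2}+p^s\co_E$ as a lattice in $E$, so $\co(\mathfrak{G}_0\otimes\Z_{p^2})=\Z_{p^2}+p^s\co_E$. It remains only to choose the trivialization $\rho_0\colon\mathfrak{g}_0\to\mathfrak{G}_{0/\F}$ so that $(\mathfrak{G}_0,\rho_0)\in\mathfrak{Y}_0(R)$, i.e.\ so that $\gamma_0$ lifts through $\rho_0$.

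This last point is where the argument has content, and it is the main obstacle. The isogeny $\mathfrak{G}_{0/\F}\to\mathfrak{g}_0$ is $\co_{E_0}$-linear for the $\co_{E_0}$-action on $\mathfrak{G}_{0/\F}$ obtained by extending, inside the maximal order $\End(\mathfrak{G}_{0/\F})$, the reduction of the $\Z_p+p^s\co_{E_0}$-action on $\mathfrak{G}_0$; so by Lemma \ref{Lem:flip action} this $\co_{E_0}$-action is through $\psi$ on $\mathrm{Lie}(\mathfrak{G}_{0/\F})$ when $s$ is even and through $\overline{\psi}$ when $s$ is odd. If $s$ is even, Corollary \ref{Cor:normalized action} gives an $\co_{E_0}$-linear isomorphism $\rho_0\colon\mathfrak{g}_0\to\mathfrak{G}_{0/\F}$, and $\gamma_0\in\Z_p+p^s\co_{E_0}$ lifts through $\rho_0$ to the endomorphism of $\mathfrak{G}_0$ given by the action of $\gamma_0$. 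If $s$ is odd, I would instead apply Corollary \ref{Cor:normalized action} to $\mathfrak{g}_0$ with its conjugate $\co_{E_0}$-action $j_0\circ\sigma$, whose action on $\mathrm{Lie}(\mathfrak{g}_0)$ is through $\overline{\psi}$, obtaining a $p$-Barsotti-Tate isomorphism $\rho_0\colon\mathfrak{g}_0\to\mathfrak{G}_{0/\F}$ that intertwines $j_0\circ\sigma$ with the action on $\mathfrak{G}_{0/\F}$; since $\Z_p[\gamma_0]$ is stable under $\Gal(E_0/\Q_p)$, the element $\gamma_0^\sigma=\mathrm{tr}_{E_0/\Q_p}(\gamma_0)-\gamma_0$ lies in $\Z_p[\gamma_0]\subseteq\Z_p+p^s\co_{E_0}$, so $\gamma_0$ lifts through $\rho_0$ once one adds the trivial lift of the integer $\mathrm{tr}_{E_0/\Q_p}(\gamma_0)$ to the lift of $\gamma_0^\sigma$. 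Either way $(\mathfrak{G}_0,\rho_0)\in\mathfrak{Y}_0(R)$, and combining this with the first paragraph completes the proof. The essential difficulty is thus the parity dichotomy: for odd $s$ one cannot use an $\co_{E_0}$-linear trivialization and must instead exploit the $\Gal(E_0/\Q_p)$-stability of $\Z_p[\gamma_0]$.
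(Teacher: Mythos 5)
Your reduction in the first paragraph is exactly the paper's: combine Proposition \ref{Prop:CM orbits} with Proposition \ref{Prop:first reflex} so that it suffices to exhibit an element of $\mathfrak{Y}_0(R)$ whose image in $\mathfrak{Y}(R)$ has geometric CM-order $\Z_{p^2}+p^s\co_E$.  The gap is in your construction of $(\mathfrak{G}_0,\rho_0)$ over the \emph{given} ring $R$.  To define a $p$-Barsotti-Tate group over $R$ whose Tate module is the sublattice $\Z_p+p^s\co_{E_0}\subset \co_{E_0}\iso \mathrm{Ta}_p(\mathfrak{G}_0^{\mathrm{can}})$, that sublattice must be stable under $\mathrm{Gal}(\overline{K}/K)$, $K=\mathrm{Frac}(R)$.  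The $\co_{E_0}$-linearity of the Galois action does \emph{not} give this: Galois acts through a character $\chi\colon G_K\map{}\co_{E_0}^\times$, and a unit $u\in\co_{E_0}^\times$ satisfies $u\cdot(\Z_p+p^s\co_{E_0})\subset \Z_p+p^s\co_{E_0}$ only when $u\in(\Z_p+p^s\co_{E_0})^\times$.  So your $\mathfrak{G}_0$ exists over $R$ only when $\chi(G_K)\subset(\Z_p+p^s\co_{E_0})^\times$, i.e.\ only when $K$ contains $M_s$ — which is precisely why quasi-canonical lifts of level $s$ are defined over $W_s$ and not over $\ZZ_p$.  For an arbitrary $R$ as in the statement this inclusion is not something you may assume; it would itself require an argument (comparing the crystalline characters of $\mathfrak{G}$ and of the canonical lift), and you give none.

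This is exactly the point the paper's proof is built around: it enlarges $R$ to a ring $R'$ whose fraction field contains $M_s$, uses the level-$s$ quasi-canonical lift $(\mathfrak{G}_0',\rho_0')\in\mathfrak{Y}_0(W_s)$ (so there is no need to check by hand that $\gamma_0$ lifts, avoiding your parity discussion), applies Propositions \ref{Prop:first reflex} and \ref{Prop:CM orbits} over $R'$, and then descends using pro-representability via $\mathfrak{Y}(R)\cap\mathfrak{Y}_0(R')=\mathfrak{Y}_0(R)$.  Your proposal omits both the base change and the descent step, so as written the central construction is unjustified.  (Your last paragraph, handling the odd-$s$ reflex type via the conjugate action and $\gamma_0^\sigma\in\Z_p[\gamma_0]$, is fine in itself, but it only becomes relevant once the existence of $\mathfrak{G}_0$ over $R$ — or over a larger $R'$, followed by descent — has been secured.)
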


\begin{proof}
Fix a deformation $(\mathfrak{G},\rho)\in\mathfrak{Y}(R)$ and let 
$$
\co=\Z_{p^2}+p^s\co_E
$$ 
be the geometric CM-order of $\mathfrak{G}$.  As $\co\supset \Z_{p^2}[\gamma]=\Z_p[\gamma_0]\otimes_{\Z_p}\Z_{p^2}$ we must 
have  $0\le s\le c_0$.  Let $R'$ be the ring of integers in a finite extension of $\mathrm{Frac}(R)$ large enough to contain a subfield 
isomorphic to $M_s$, and fix a  quasi-canonical lift $(\mathfrak{G}'_0,\rho_0')\in \mathfrak{Y}_0(W_s)$  of $\mathfrak{g}_0$ of level 
$s$.  The deformations $(\mathfrak{G},\rho)_{/R'}$ and $(\mathfrak{G}_0',\rho_0')_{/R'}\otimes\Z_{p^2}$ have the same geometric 
CM-order $\co$, and hence have the same reflex type by Proposition \ref{Prop:first reflex}.  By Proposition \ref{Prop:CM orbits} there 
is a $\xi\in\co_E^\times$ such that 
$$
\xi* (\mathfrak{G},\rho)_{/R'}  \iso (\mathfrak{G}_0',\rho_0')_{/R'}\otimes\Z_{p^2}.
$$
The pro-representability of $\mathfrak{Y}_0$ and $\mathfrak{Y}$ imply that 
$$
\mathfrak{Y}(R)\cap \mathfrak{Y}_0(R')=\mathfrak{Y}_0(R)
$$
 and  so $\xi*(\mathfrak{G},\rho) \in \mathfrak{Y}_0(R)$.
\end{proof}

The homomorphism $\co_{E}^\times \map{} \Aut_{\ProArt}(R_{\mathfrak{Y} })$ constructed in \S \ref{ss:functors} determines an action 
of $\co_{E}^\times$ on the set of components of $\mathfrak{Y}$.  To be explicit, if $\mathfrak{C}$ is the component corresponding to a 
minimal prime $\mathfrak{p}$ of $R_\mathfrak{Y}$ then $\xi*\mathfrak{C}$ corresponds to the kernel $\xi (\mathfrak{p})$  of 
$$
R_{\mathfrak{Y}} \map{\xi^{-1}}R_{\mathfrak{Y}}\map{}R_{\mathfrak{Y}}/\mathfrak{p}.
$$
Recalling Proposition \ref{Prop:gross-keating}, for each $0\le s\le c_0$ let $\mathfrak{p}_{0,s} \subset R_{\mathfrak{Y}_0}$ be the 
unique minimal prime for which $ R_{\mathfrak{Y}_0}/\mathfrak{p}_{0,s} \iso  W_s $ and define a component of $\mathfrak{Y}_0$ by 
$$
\mathfrak{C}_0(s) = \Spf(  R_{\mathfrak{Y}_0}/\mathfrak{p}_{0,s}  ).
$$ 
Let $q:R_{\mathfrak{Y}}\map{}R_{\mathfrak{Y}_0}$ be the homomorphism inducing the closed immersion 
$\mathfrak{Y}_0\map{}\mathfrak{Y}$ and write $\mathfrak{C}(s)$ for $\mathfrak{C}_0(s)$ viewed as a closed subscheme of 
$\mathfrak{Y}$, so that
$$
\mathfrak{C}(s)\iso \Spf( R_{\mathfrak{Y}}/\mathfrak{p}_s  )
$$
where $\mathfrak{p}_s=q^{-1}(\mathfrak{p}_{0,s})$.     Define a subgroup $H_s\subset\co_E^\times$ by 
$$
H_s=\co_{E_0}^\times \cdot  (\Z_{p^2}+p^s\co_E)^\times.
$$

\begin{Rem}\label{Rem:improper list}
As Proposition \ref{Prop:gross-keating} gives a complete list of the minimal primes of $R_{\mathfrak{Y}_0}$, it follows that  
$\mathfrak{C}(0),\ldots,\mathfrak{C}(c_0)$  is a complete list of the improper components of $\mathfrak{Y}$.
\end{Rem}

\begin{Prop}\label{Prop:unr orbits}
Every horizontal component of $\mathfrak{Y}$ can be expressed uniquely in the form 
$$
 \mathfrak{C}(s,\xi)\define \xi* \mathfrak{C}(s) 
$$ 
with  $0\le s\le c_0$ and  $\xi\in \co_E^\times/H_s$.
\end{Prop}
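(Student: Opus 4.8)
The plan is to recover the pair $(s,\xi H_s)$ from a horizontal component in three stages: produce $s$ and $\xi$ from Corollaries \ref{Cor:horizontal reduced}--\ref{Cor:improper orbits}, see that $s$ is forced by the geometric CM-order, and show $\xi$ is well-defined modulo $H_s$ by identifying $H_s$ with the stabilizer of $\mathfrak{C}(s)$.

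\emph{Existence.} Given a horizontal component $\mathfrak{C}=\Spf(R_\mathfrak{Y}/\mathfrak{p})$, I would first use Corollary \ref{Cor:horizontal reduced} to see that $R_\mathfrak{Y}/\mathfrak{p}$ is a one-dimensional complete local domain with fraction field finite over $\QQ_p$, so its normalization $R$ is a complete discrete valuation ring finite over $\ZZ_p$; the composite $R_\mathfrak{Y}\map{}R_\mathfrak{Y}/\mathfrak{p}\map{}R$ classifies a deformation $(\mathfrak{G},\rho)\in\mathfrak{Y}(R)$ whose classifying homomorphism has kernel $\mathfrak{p}$. By Corollary \ref{Cor:improper orbits} there is $\xi\in\co_E^\times$ with $\xi*(\mathfrak{G},\rho)$ in the image of $\mathfrak{Y}_0(R)$, i.e.\ with classifying homomorphism factoring through $q:R_\mathfrak{Y}\map{}R_{\mathfrak{Y}_0}$; since that homomorphism has kernel $\xi(\mathfrak{p})$, which is a minimal prime of $R_\mathfrak{Y}$ containing $\ker q$, the quotient $\xi(\mathfrak{p})/\ker q$ is a minimal prime of $R_{\mathfrak{Y}_0}$, hence equals $\mathfrak{p}_{0,s}$ for a unique $0\le s\le c_0$ by Proposition \ref{Prop:gross-keating}. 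Then $\xi(\mathfrak{p})=\mathfrak{p}_s$, so $\xi*\mathfrak{C}=\mathfrak{C}(s)$ and $\mathfrak{C}=\mathfrak{C}(s,\xi^{-1})$.

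\emph{Uniqueness of $s$.} Next I would observe that the restriction of the universal deformation to $\mathfrak{C}(s)\iso\Spf(W_s)$ is $(\mathfrak{G}_0\otimes\Z_{p^2},\rho_0\otimes\Z_{p^2})$ for a quasi-canonical lift $(\mathfrak{G}_0,\rho_0)$ of level $s$, so its $p$-adic Tate module is free of rank one over $(\Z_p+p^s\co_{E_0})\otimes_{\Z_p}\Z_{p^2}=\Z_{p^2}+p^s\co_E$; hence the geometric CM-order of $\mathfrak{C}(s)$ is $\Z_{p^2}+p^s\co_E$. Because an element of $\co_E^\times$ induces an $\co_E$-linear, in particular $\Z_{p^2}[\gamma]$-linear, isomorphism of $p$-divisible groups, the geometric CM-order is a $\co_E^\times$-orbit invariant, so $\mathfrak{C}(s,\xi)$ has geometric CM-order $\Z_{p^2}+p^s\co_E$ for every $\xi$; since $s\mapsto\Z_{p^2}+p^s\co_E$ is injective, the level $s$ is determined by the component.

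\emph{Uniqueness of $\xi$.} The remaining task — and the main obstacle — is to prove $\mathrm{Stab}_{\co_E^\times}(\mathfrak{C}(s))=H_s$. The inclusion ``$\supseteq$'' I expect to be routine: the commutative diagram of \S\ref{ss:quasi-canonical} shows that $\co_{E_0}^\times$ fixes $\mathfrak{p}_{0,s}$ (it postcomposes a quasi-canonical lift's classifying map by a power of $\mathrm{rec}$), hence fixes $\mathfrak{p}_s=q^{-1}(\mathfrak{p}_{0,s})$ by compatibility of the two $\co_{E_0}^\times$-actions through $q$; and since $\End(\mathfrak{G}_0)=\Z_p+p^s\co_{E_0}$, any element of $(\Z_{p^2}+p^s\co_E)^\times$ lifts to an automorphism of $\mathfrak{G}_0\otimes\Z_{p^2}$ over $W_s$ and so fixes $\mathfrak{p}_s$. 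For ``$\subseteq$'': if $\zeta\in\co_E^\times$ fixes $\mathfrak{p}_s$ then the classifying map of $\zeta*(\mathfrak{G}_0\otimes\Z_{p^2},\rho_0\otimes\Z_{p^2})$ still has kernel $\mathfrak{p}_s\supseteq\ker q$, so $\zeta*(\mathfrak{G}_0\otimes\Z_{p^2},\ldots)$ lies in the image of $\mathfrak{Y}_0(W_s)$ and is $(\mathfrak{G}_0'\otimes\Z_{p^2},\ldots)$ for some quasi-canonical lift $\mathfrak{G}_0'$ of level $s$; using that $\mathrm{rec}:\co_{E_0}^\times\map{}\Gal(M_s/M_0)$ is surjective, so that $\co_{E_0}^\times$ acts transitively on quasi-canonical lifts of level $s$, I would arrange $(\mathfrak{G}_0',\ldots)\iso\xi_0*(\mathfrak{G}_0,\ldots)$ with $\xi_0\in\co_{E_0}^\times$ and replace $\zeta$ by $\xi_0^{-1}\zeta$ (harmless modulo $H_s$) so that $\zeta$ is realized by an automorphism of $\mathfrak{G}_0\otimes\Z_{p^2}$ over $W_s$. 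Finally, since $\End(\mathfrak{G}_0)=\Z_p+p^s\co_{E_0}$ is commutative, a centralizer computation — $\Z_{p^2}$ being a maximal commutative \'etale subalgebra of $M_2(\Z_p)$ — gives $\End_{\Z_{p^2}}(\mathfrak{G}_0\otimes\Z_{p^2})=\Z_{p^2}+p^s\co_E$, and injectivity of $j:\co_E\map{}\End(\mathfrak{g})$ on reductions then forces $\zeta\in(\Z_{p^2}+p^s\co_E)^\times$, so the original $\zeta\in\co_{E_0}^\times\cdot(\Z_{p^2}+p^s\co_E)^\times=H_s$. Everything outside this last ``$\subseteq$'' step is essentially bookkeeping with already-proved statements; the two ingredients that do real work there are the reciprocity description of quasi-canonical lifts and the endomorphism-ring computation for $\mathfrak{G}_0\otimes\Z_{p^2}$.
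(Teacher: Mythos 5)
Your proposal is correct and follows the paper's proof essentially step for step: existence via Corollary \ref{Cor:horizontal reduced} and Corollary \ref{Cor:improper orbits} together with the classification of minimal primes in Proposition \ref{Prop:gross-keating}, uniqueness of $s$ via the geometric CM-order, and the identification of $H_s$ with the stabilizer of $\mathfrak{C}(s)$ using the reciprocity diagram of \S\ref{ss:quasi-canonical}. The only (harmless) deviation is the very last step, where the paper concludes $\xi\xi_0^{-1}\in(\Z_{p^2}+p^s\co_E)^\times$ by letting the lifted automorphism act on $\mathrm{Ta}_p(\mathfrak{G})$, whose multiplier ring is the geometric CM-order $\Z_{p^2}+p^s\co_E$, whereas you instead invoke $\End(\mathfrak{G}_0)=\Z_p+p^s\co_{E_0}$ and a centralizer computation giving $\End_{\Z_{p^2}}(\mathfrak{G}_0\otimes\Z_{p^2})=\Z_{p^2}+p^s\co_E$ -- both rest on the same quasi-canonical-lift input.
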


\begin{proof}
Fix a horizontal component $\mathfrak{C}=\Spf(R_{\mathfrak{Y}}/\mathfrak{p} )$ of $\mathfrak{Y}$ and   let $R$ be the ring of integers 
in the fraction field of $R_{\mathfrak{Y}}/\mathfrak{p}$, a finite extension of $\QQ_p$ by   Corollary \ref{Cor:horizontal reduced}. 
Applying   Corollary \ref{Cor:improper orbits}  to the deformation $(\mathfrak{G},\rho)\in\mathfrak{Y}(R)$ determined by the 
composition  map $R_\mathfrak{Y}\map{}  R_\mathfrak{Y}/\mathfrak{p}  \map{}R$ we find a $\xi\in \co_E^\times$ such that 
$$
\xi*(\mathfrak{G},\rho) \iso (\mathfrak{G}_0,\rho_0)\otimes\Z_{p^2}
$$
where $(\mathfrak{G}_0,\rho_0)$ is obtained as the base change of a quasi-canonical lift of level $s$ through some embedding 
$W_s\map{}R$.    The integer $s$ is determined by the condition that   $\Z_{p^2}+p^s\co_E$ is the geometric CM-order of 
$(\mathfrak{G},\rho)$.   This can be restated as saying that the composition
$$
R_\mathfrak{Y}\map{\xi^{-1}}R_\mathfrak{Y}\map{}R
$$
factors through $R_{\mathfrak{Y}_0}$, and the kernel of this composition is $\mathfrak{p}_s$.  Therefore 
$\xi(\mathfrak{p})=\mathfrak{p}_s$ and so $\xi*\mathfrak{C}=\mathfrak{C}(s)$.

The only thing left to prove is that $H_s\subset\co_E^\times$ is the stabilizer of $\mathfrak{C}(s)$.   Fix a quasi-canonical lift    
$(\mathfrak{G}_0,\rho_0 )\in \mathfrak{Y}_0(W_s)$ corresponding to some surjection of $\ZZ_p$-algebras 
$f_0:R_{\mathfrak{Y}_0} \map{} W_s$ and let
$$
(\mathfrak{G},\rho)=(\mathfrak{G}_0,\rho_0)\otimes\Z_{p^2}\in\mathfrak{Y}(W_s)
$$
be the deformation corresponding to $f=f_0\circ q$.   The action of  $\co_{E_0}^\times$  on $R_{\mathfrak{Y}_0}$ must stabilize the 
prime $\mathfrak{p}_{0,s}$, as $\mathfrak{p}_{0,s}$ is characterized as the unique minimal prime of $R_{\mathfrak{Y}_0}$ with the 
property $R_{\mathfrak{Y}_0}/\mathfrak{p}_{0,s} \iso W_s$.   Thus the action of $\co_{E_0}^\times$ on $R_{\mathfrak{Y}}$ leaves 
$\mathfrak{p}_s$ fixed,  proving that $\co_{E_0}^\times$ stabilizes $\mathfrak{C}(s)$.   Now  suppose we are given some 
$\xi\in (\Z_{p^2}+p^s\co_E)^\times$.  By the comments preceding Proposition \ref{Prop:gross-keating} the action of $\Z_p[\gamma_0]$ 
on $\mathfrak{G}_0$ can be extended to an action of $\Z_p+p^s\co_K$, and hence the action of $\Z_{p^2}[\gamma]$ on 
$\mathfrak{G}$ can be extended to an action of $\Z_{p^2}+p^s\co_E$.  The existence of a lifting of $\xi\in\Aut(\mathfrak{g})$ to an 
automorphism of $\mathfrak{G}$ is equivalent to the existence of an isomorphism of deformations
$$
(\mathfrak{G},\rho  ) \iso (\mathfrak{G},\rho \circ \xi^{-1}),
$$
which proves that $f=f\circ\xi^{-1}$.  As $\mathfrak{p}_s=\mathrm{ker}(f)$ we conclude that $\mathfrak{p}_s=\xi(\mathfrak{p}_s)$.  We 
have now shown that both $\co_{E_0}^\times$ and $(\Z_{p^2}+p^s\co_E)^\times$ are contained in the stabilizer of $\mathfrak{C}(s)$.

Now start with some $\xi\in\co_E^\times$ which satisfies $\mathfrak{p}_s=\xi(\mathfrak{p}_s)$.   This implies that the two 
homomorphisms $f,f\circ\xi^{-1}: R_\mathfrak{Y}\map{} W_s$   have the same kernel, and as  $f=f_0\circ q$ factors through the 
quotient map $q:R_{\mathfrak{Y}}\map{}R_{\mathfrak{Y}_0}$ we may also factor $f\circ\xi^{-1} =f_0'\circ q$ for some 
$f_0':R_{\mathfrak{M}_0} \map{}W_s$.  As $f_0,f_0': R_{\mathfrak{Y}_0}\map{}W_s$ are  surjective homomorphisms of $\ZZ_p$-
algebras with the same kernel $\mathfrak{p}_{0,s}$, there is a 
$$
\tau\in\Aut_{\ZZ_p}(W_s)\iso \Gal(M_s/ M_0)
$$
such that $f_0=\tau\circ f_0'$.  Recalling the discussion after Proposition \ref{Prop:gross-keating} we may write 
$\tau=\mathrm{rec}(\xi_0^{-1})$ for some $\xi_0\in\co_{E_0}^\times$ to obtain 
$$
f=f_0\circ q =f_0'\circ\xi_0 \circ q =f_0'\circ q\circ\xi_0 =f\circ\xi^{-1}\circ\xi_0.
$$
In other words there is an isomorphism of deformations
$$
(\mathfrak{G},\rho) \iso (\mathfrak{G},\rho\circ \xi^{-1} \xi_0).
$$
This implies that the automorphism $\xi\xi_0^{-1} \in \co_{E}^\times$ of $\mathfrak{g}$  lifts to an automorphism of $\mathfrak{G}$, and 
so acts on the $p$-adic Tate module of $\mathfrak{G}$.
But $\mathfrak{G}$ has geometric CM order 
 $$
 \co(\mathfrak{G})  = \Z_{p^2}+p^s\co_E
 $$
 and we conclude that $\xi\xi_0^{-1}\in (\Z_{p^2}+p^s\co_E)^\times$.  Hence $\xi\in H_s$.
\end{proof}

Note that for each $0\le s\le c_0$ the group $\co_E^\times/H_s$ has a decreasing filtration
$$\{1\}= H_s/H_s\subset H_{s-1}/H_s\subset \cdots H_0/H_s=\co_E^\times/H_s.$$

\begin{Prop}\label{Prop:unr intersection}
Suppose $0\le t < s\le c_0$, $\xi\in H_t/H_s$, and $\xi\not\in H_{t+1}/H_s$.  The horizontal component $\mathfrak{C}(s,\xi)$ of 
$\mathfrak{Y}$ defined  in Proposition \ref{Prop:unr orbits} satisfies
$$
I_{\mathfrak{M}}(\mathfrak{C}(s,\xi) , \mathfrak{M}_0) = 1+ \frac{(p+1)(p^t-1)}{p-1}.
$$
\end{Prop}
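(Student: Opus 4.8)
The plan is to convert the intersection number into a length in the complete discrete valuation ring $W_s$, then to identify that length with the order of vanishing of an endomorphism-lifting condition along the quasi-canonical lift of level $s$ --- a quantity governed by the work of Gross and Keating.

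First I would reduce to a computation in $W_s$. Since the diagram (\ref{functor diagram}) is cartesian we have $\mathfrak{C}(s,\xi)\times_{\mathfrak{M}}\mathfrak{M}_0 = \mathfrak{C}(s,\xi)\times_{\mathfrak{Y}}\mathfrak{Y}_0$, and because $\mathfrak{C}(s)\iso\Spf(W_s)$ by Proposition \ref{Prop:gross-keating}, this fibre product is $\Spf(W_s/\varpi_s^{I})$ for a single integer $I$ (or all of $\Spf(W_s)$, i.e.\ $I=\infty$; but $\xi\notin H_{t+1}\supset H_s$ forces $\mathfrak{C}(s,\xi)$ proper, so $I<\infty$). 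As $M_s/M_0$ is totally ramified, $W_s$ has residue field $\F$, so $I_{\mathfrak{M}}(\mathfrak{C}(s,\xi),\mathfrak{M}_0)=\length_{W_s}(W_s/\varpi_s^I)=I$. Unravelling the definitions, $I$ is the largest $n\ge 0$ for which the reduction modulo $\varpi_s^n$ of the twisted quasi-canonical lift $\xi*\bigl(\mathfrak{G}_0^{(s)}\otimes\Z_{p^2}\bigr)$ lies in the image of $\mathfrak{M}_0(W_s/\varpi_s^n)\map{}\mathfrak{M}(W_s/\varpi_s^n)$, where $\mathfrak{G}_0^{(s)}$ denotes the quasi-canonical lift of level $s$.

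Next I would translate ``lies in $\mathfrak{M}_0$'' into a statement about endomorphisms of $\mathfrak{G}_0^{(s)}$. The underlying $\Z_{p^2}$-module object of $\xi*\bigl(\mathfrak{G}_0^{(s)}\otimes\Z_{p^2}\bigr)$ is $\mathfrak{G}_0^{(s)}\otimes\Z_{p^2}$, which already comes from $\mathfrak{M}_0$; only the rigidification is altered, being postcomposed with $\xi^{-1}$. Chasing isomorphisms of deformations --- and using that a $\Z_{p^2}$-linear isomorphism $\mathfrak{G}_0'\otimes\Z_{p^2}\iso\mathfrak{G}_0^{(s)}\otimes\Z_{p^2}$ over $W_s/\varpi_s^n$ forces $\mathfrak{G}_0'\iso\mathfrak{G}_0^{(s)}$ and is given by an element of $\bigl(\End_{W_s/\varpi_s^n}(\mathfrak{G}_0^{(s)})\otimes_{\Z_p}\Z_{p^2}\bigr)^\times$ --- one sees that the reduction modulo $\varpi_s^n$ lies in $\mathfrak{M}_0$ if and only if the automorphism of $\mathfrak{g}$ attached to $\xi$ lies in $\Aut(\mathfrak{g}_0)\cdot\bigl(\End_{W_s/\varpi_s^n}(\mathfrak{G}_0^{(s)})\otimes_{\Z_p}\Z_{p^2}\bigr)^\times$ inside $\Aut_{\Z_{p^2}}(\mathfrak{g})=\bigl(\End(\mathfrak{g}_0)\otimes_{\Z_p}\Z_{p^2}\bigr)^\times$. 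Replacing $\xi$ by a representative of its class in $\co_E^\times/H_s$ lying in $(\Z_{p^2}+p^t\co_E)^\times$ --- legitimate since $\co_{E_0}^\times\subset H_s$ stabilizes $\mathfrak{C}(s)$ (as in the proof of Proposition \ref{Prop:unr orbits}) and $\co_E^\times$ is abelian --- and normalizing its scalar part to $1$, the displayed condition becomes: \emph{the order $\Z_p+p^t\co_{E_0}$, embedded in $\End(\mathfrak{g}_0)$ via the complex multiplication of $\mathfrak{G}_0^{(s)}$, lifts to $\mathfrak{G}_0^{(s)}$ modulo $\varpi_s^n$.} Indeed if it does then the automorphism attached to $\xi$ is of the form $1+p^t\cdot(\text{element of }\co_{E_0}\otimes\Z_{p^2})\in\bigl(\End_{W_s/\varpi_s^n}(\mathfrak{G}_0^{(s)})\otimes\Z_{p^2}\bigr)^\times$; the converse, which uses $\xi\notin H_{t+1}$, is the crux.

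Finally I would evaluate $I=\max\{\,n:\Z_p+p^t\co_{E_0}\text{ lifts to }\mathfrak{G}_0^{(s)}\bmod\varpi_s^n\,\}$. By Gross and Keating --- Proposition \ref{Prop:gross-keating} applied with the endomorphism $p^t\eta$ in place of $\gamma_0$ --- the locus in $\mathfrak{M}_0$ over which $\Z_p+p^t\co_{E_0}$ acts is $\Spf(\ZZ_p[[x]]/(\varphi_0\varphi_1\cdots\varphi_t))$, and intersecting with $\mathfrak{C}(s)=\Spf(\ZZ_p[[x]]/(\varphi_s))$ gives $I=\sum_{k=0}^t\ord_{W_s}(\varphi_k(w))$, where $w$ is the image of $x$, a uniformizer of $W_s$. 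Each $\varphi_k$ is Eisenstein over $\ZZ_p$ of degree $[M_k:\QQ_p]$, and for $k\le t<s$ this degree is strictly smaller than $[M_s:\QQ_p]=\ord_{W_s}(p)$; an ultrametric (Newton-polygon) estimate then gives $\ord_{W_s}(\varphi_k(w))=[M_k:\QQ_p]$, so that
$$
I \;=\; \sum_{k=0}^{t}[M_k:\QQ_p] \;=\; 1+\sum_{k=1}^{t}(p+1)p^{k-1} \;=\; 1+\frac{(p+1)(p^t-1)}{p-1},
$$
as claimed. The main obstacle is the converse in the translation step: showing that once $\Z_p+p^t\co_{E_0}$ fails to lift modulo $\varpi_s^{I+1}$, no element of $\Aut(\mathfrak{g}_0)\cdot\bigl(\End_{W_s/\varpi_s^{I+1}}(\mathfrak{G}_0^{(s)})\otimes\Z_{p^2}\bigr)^\times$ equals the automorphism attached to $\xi$. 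This is where Keating's explicit description of the quaternion order $\End_{W_s/\varpi_s^n}(\mathfrak{G}_0^{(s)})\subset\End(\mathfrak{g}_0)$ --- in particular that its non-CM endomorphisms cannot compensate --- must be combined with a quaternionic reading of the hypothesis $\xi\in H_t\setminus H_{t+1}$.
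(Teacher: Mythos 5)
Your skeleton is the right one, and it is essentially the paper's: reduce $I_{\mathfrak{M}}(\mathfrak{C}(s,\xi),\mathfrak{M}_0)$ to the largest $n$ for which the twisted quasi-canonical lift is in the image of $\mathfrak{M}_0(W_s/\varpi_s^n)$, choose a representative $\xi\in(\Z_{p^2}+p^t\co_E)^\times$ not in $(\Z_{p^2}+p^{t+1}\co_E)^\times$, and compare with the largest quotient of $W_s$ over which the order $\Z_p+p^t\co_{E_0}$ lifts to $\mathfrak{G}_0^{(s)}$. Your evaluation of that last quantity is correct and is a genuinely different route from the paper's: instead of quoting Keating's theorem on $\End(\mathfrak{G}_{0/R_k})$ (\cite[Theorem 5.2]{keating88}), you apply Proposition \ref{Prop:gross-keating} to $p^t\eta$ and compute $\ord_{W_s}\big(\varphi_0(w)\cdots\varphi_t(w)\big)=\sum_{k\le t}[M_k:\QQ_p]$ by an ultrametric estimate at a root $w$ of the Eisenstein polynomial $\varphi_s$; this is sound and, together with the easy half of your dictionary, does give the lower bound $I\ge 1+\frac{(p+1)(p^t-1)}{p-1}$.

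The gap is the upper bound, which is the actual content of the proposition and which you explicitly defer ("the converse, which uses $\xi\notin H_{t+1}$, is the crux"): you must show that over $W_s/\varpi_s^{I+1}$, where $\Z_p+p^t\co_{E_0}$ no longer lifts, the element $\xi$ does \emph{not} lie in $\Aut(\mathfrak{g}_0)\cdot\big(\End_{W_s/\varpi_s^{I+1}}(\mathfrak{G}_0^{(s)})\otimes_{\Z_p}\Z_{p^2}\big)^\times$, and naming Keating's description of the order plus "a quaternionic reading of $\xi\in H_t\setminus H_{t+1}$" is not an argument. (Your dictionary itself also rests on the unproved claim that a $\Z_{p^2}$-linear isomorphism $\mathfrak{G}_0'\otimes\Z_{p^2}\iso\mathfrak{G}_0^{(s)}\otimes\Z_{p^2}$ over an Artinian quotient forces $\mathfrak{G}_0'\iso\mathfrak{G}_0^{(s)}$.) The paper handles both points at once by observing that $\mathfrak{M}_0\map{}\mathfrak{M}$ is exactly the locus where the $M_2(\Z_p)$-action on $\mathfrak{g}\iso\mathfrak{g}_0\times\mathfrak{g}_0$ lifts, so membership of the twist in $\mathfrak{M}_0$ over $R_k$ means that $\xi\mu\xi^{-1}$ lifts to $\End(\mathfrak{G}_{/R_k})$ for every $\mu\in M_2(\Z_p)$; at level $k=a(t)+1$ Keating gives $\End(\mathfrak{G}_{/R_k})\cap M_2(\co_{E_0})=M_2(\co)$ with $\co=\Z_p+p^{t+1}\co_{E_0}$, hence $\xi M_2(\Z_p)\xi^{-1}\subset M_2(\co)$ and then $\xi M_2(\co)\xi^{-1}\subseteq M_2(\co)$ since $\xi$ centralizes $\co$; a normalizer computation in $\mathrm{GL}_2(E_0)$ then forces $\xi\in\co_{E_0}^\times\cdot\mathrm{GL}_2(\co)$, so $\xi\in\co_{E_0}^\times(\Z_{p^2}+p^{t+1}\co_E)^\times=H_{t+1}$, contradicting the hypothesis. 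Some argument of this kind, or a precise version of your "non-CM endomorphisms cannot compensate," is indispensable; as written, your proposal proves only one of the two inequalities.
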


\begin{proof}
Let $(\mathfrak{G}_0,\rho_0)\in \mathfrak{Y}_0(W_s)$ be a quasi-canonical lift of level $s$ and let 
$(\mathfrak{G},\rho)=(\mathfrak{G}_0,\rho_0)\otimes\Z_{p^2}$ be its image in $\mathfrak{Y}(W_s)$.  If we write $\mathfrak{m}$ for the 
maximal ideal of $W_s$ and $R_k=W_s/\mathfrak{m}^{k+1}$ then 
$$
I_{\mathfrak{M}}(\mathfrak{C}(s,\xi) , \mathfrak{M}_0) = k+1
$$
where $k$ is the largest nonnegative  integer for which 
 $$
 (\mathfrak{G},\rho\circ\xi^{-1})_{/R_k} \in \mathrm{Image}\big( \mathfrak{M}_0(R_k) \map{}\mathfrak{M}(R_k) \big).
 $$
 The coset $\xi\in H_t/H_s$ admits a representative $\xi\in (\Z_{p^2}+p^t\co_E)^\times$ not contained in 
 $(\Z_{p^2} + p^{t+1}\co_E)^\times$, which we now fix.
 
 For any $k\ge 0$ abbreviate
 $$
 a(k)=\frac{(p+1)(p^k-1)}{p-1}.
 $$
 Keating \cite[Theorem 5.2]{keating88} has computed the endomorphism ring of $\mathfrak{G}_{0/R_k}$, and found that the largest 
 order of $\co_{E_0}$ for which the action $j_0:\co_{E_0}\map{}\End(\mathfrak{g}_0)$ lifts to the deformation 
 $(\mathfrak{G}_0,\rho_0)_{/W_k}$ is
 \begin{eqnarray*}
 \co_{E_0} &\mathrm{if} & 0\le k < a(0)+1 \\
 \Z_p + p\co_{E_0} & \mathrm{if}& a(0)+1 \le k< a(1)+1 \\
  \Z_p + p^2\co_{E_0} & \mathrm{if}& a(1)+1 \le k <  a(2)+1 \\
   & \vdots& \\
    \Z_p + p^{s-1} \co_{E_0} & \mathrm{if}& a(s-2)+1 \le k< a(s-1)+1 \\
     \Z_p + p^s\co_{E_0} & \mathrm{if}& a(s-1)+1\le k.
 \end{eqnarray*}
 We remark that Keating refers the reader to his unpublished thesis for the proof of  \cite[Theorem 5.2]{keating88}, but a complete 
 proof may be found in \cite{ARGOS-11}.   Using $\End(\mathfrak{G}_{/R_k})\iso \End(\mathfrak{G}_{0/R_k})\otimes_{\Z_p}\Z_{p^2}$ 
 we find that the largest order of $\co_{E}$ for which the action $j:\co_{E}\map{}\End(\mathfrak{g})$ lifts to the deformation 
 $(\mathfrak{G},\rho)_{/R_k}$ is
 \begin{eqnarray*}
 \co_{E} &\mathrm{if} & 0\le k <a(0)+1 \\
 \Z_{p^2} + p\co_{E} & \mathrm{if}& a(0)+1 \le k< a(1)+1 \\
  \Z_{p^2} + p^2\co_{E} & \mathrm{if}& a(1)+1 \le k <  a(2)+1 \\
   & \vdots& \\
    \Z_{p^2} + p^{s-1} \co_{E} & \mathrm{if}& a(s-2)+1 \le k< a(s-1)+1 \\
     \Z_{p^2} + p^s\co_{E} & \mathrm{if}& a(s-1)+1\le k.
 \end{eqnarray*}

Set $k=a(t)$ so that the automorphism $\xi\in (\Z_{p^2}+p^t\co_E)^\times$ of $\mathfrak{g}$ lifts to the deformation 
$(\mathfrak{G},\rho)_{/R_k}$ but not to $(\mathfrak{G},\rho)_{/R_{k+1}}$.  The existence of a lift of $\xi$ over $R_k$ is equivalent to the 
existence of  an isomorphism of deformations 
$$
(\mathfrak{G},\rho)_{/R_k}\iso(\mathfrak{G},\rho\circ\xi^{-1})_{/R_k},
$$ 
and as  $(\mathfrak{G},\rho)_{/R_k}$ lies in the image of $\mathfrak{M}_0(R_k)\map{}\mathfrak{M}(R_k)$ so does 
$(\mathfrak{G},\rho\circ\xi^{-1})_{/R_k}$.

Now set $k=a(t)+1$ and $\Delta_0=\End(\mathfrak{g}_0)$ so that the largest order $\co\subset \co_{E_0}$ for which the restriction to 
$\co$ of the action  $j_0:\co_{E_0}\map{}\Delta_0$ lifts to $(\mathfrak{G}_0,\rho_0)_{/R_k}$ is 
$$
\co=\Z_{p}+p^{t+1}\co_{E_0}.
$$  
As in \S \ref{ss:functors} we fix an embedding $\Z_{p^2}\map{}M_2(\Z_p)$ and identify the functor 
$ \mathfrak{G}'_0\mapsto \mathfrak{G}'_0\otimes\Z_{p^2}$  with the functor 
$\mathfrak{G}'_0\mapsto \mathfrak{G}'_0\times\mathfrak{G}'_0$.  This identification determines  an isomorphism 
$$
\End(\mathfrak{G}'_0\otimes\Z_{p^2})\iso M_2(\End(\mathfrak{G}'_0))
$$
and in  particular $\End(\mathfrak{g})\iso M_2(\Delta_0)$.   Moreover, one easily checks that the closed formal subscheme 
$\mathfrak{M}_0\map{}\mathfrak{M}$ is the locus of deformations for which the action of $M_2(\Z_p)$ on $\mathfrak{g}$ lifts.  To 
obtain a contradiction let us suppose that 
$$
(\mathfrak{G},\rho\circ\xi^{-1})_{/R_k} \in \mathrm{Image}\big( \mathfrak{M}_0(R_k)\map{}\mathfrak{M}(R_k) \big)
$$
so that the action of $M_2(\Z_p)$ on $\mathfrak{g}$ lifts to the deformation $(\mathfrak{G},\rho\circ\xi^{-1})_{/R_k}$, and, equivalently,  
the endomorphism $\xi \circ \mu \circ \xi^{-1}$ of $\mathfrak{g}$  lifts to the deformation $(\mathfrak{G},\rho)$ for every 
$\mu\in M_2(\Z_p)$.     Using $j_0:\co_{E_0}\map{}\Delta_0$ we view $\co_{E_0}$ as a subring of $\Delta_0$ and $\co_E$ as a 
subring of $M_2(\co_{E_0})$.  We also view 
$$
\End(\mathfrak{G}_{/R_k})\iso M_2( \End(\mathfrak{G}_{0/R_k}))
$$
as a subring of $\End(\Delta_0)$ using the reduction map $R_k\map{}\F$.  Under these identifications 
$$
\End(\mathfrak{G}_{/R_k})\cap M_2(\co_{E_0}) = M_2(\co)
$$
and we deduce   that $\xi\circ \mu\circ\xi^{-1}\in M_2(\co)$ for every $\mu\in M_2(\Z_p)$.  As the subalgebra of $M_2(\Delta_0)$ 
generated by $M_2(\Z_p)$ and $\co$ is $M_2(\co)$, we further deduce that $\xi\circ \mu\circ\xi^{-1}\in M_2(\co)$ for every 
$\mu\in M_2(\co)$.  Some elementary linear algebra then shows that  $\xi\in\co_{E_0}^\times \cdot \mathrm{GL}_2(\co)$.  Thus there 
is a $\xi_0\in\co_{E_0}$ for which $\xi\xi_0^{-1}\in M_2(\co)$, and so 
$$
\xi\xi_0^{-1}\in \End(\mathfrak{G}_{/R_k})\cap \co_E^\times = (\Z_{p^2}+p^{t+1}\co_E)^\times.
$$
  But this means that $\xi\in H_{t+1}$, a contradiction.

We have shown that $(\mathfrak{G},\rho\circ\xi^{-1})_{/R_k}$  lies in the image of $\mathfrak{M}_0(R_k)$ for $k=a(t)$ but not for 
$k=a(t)+1$, completing the proof.
\end{proof}

\begin{Cor}\label{Cor:unr proper horizontal}
We have
$$
\sum_{\mathfrak{C} } I_\mathfrak{M}(\mathfrak{C},\mathfrak{M}_0) = 
\frac{  -4p(p^{c_0} - 1) + 2c_0(p-1) + c_0p^{c_0}(p^2-1)  }{ (p-1)^2 }
$$
where the sum is over all proper horizontal components  $\mathfrak{C}\map{}\mathfrak{Y}$.
\end{Cor}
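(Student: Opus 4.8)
The plan is to enumerate the proper horizontal components of $\mathfrak{Y}$, read off the intersection number of each from Proposition \ref{Prop:unr intersection}, and add. By Proposition \ref{Prop:unr orbits} every horizontal component is $\mathfrak{C}(s,\xi)$ for a unique $0\le s\le c_0$ and a unique $\xi\in\co_E^\times/H_s$, and by Remark \ref{Rem:improper list} the improper ones are exactly $\mathfrak{C}(0),\dots,\mathfrak{C}(c_0)$, i.e.\ those with $\xi=1$. Hence the proper horizontal components are precisely the $\mathfrak{C}(s,\xi)$ with $1\le s\le c_0$ and $1\ne\xi\in\co_E^\times/H_s$. For such a $\xi$, the decreasing filtration $\{1\}=H_s/H_s\subset H_{s-1}/H_s\subset\dots\subset H_0/H_s=\co_E^\times/H_s$ supplies a unique integer $t=t(\xi)$ with $0\le t<s$, $\xi\in H_t/H_s$, $\xi\notin H_{t+1}/H_s$, and Proposition \ref{Prop:unr intersection} gives $I_\mathfrak{M}(\mathfrak{C}(s,\xi),\mathfrak{M}_0)=1+\frac{(p+1)(p^t-1)}{p-1}$.

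For each pair $0\le t<s\le c_0$ the number of proper horizontal components of the form $\mathfrak{C}(s,\xi)$ with $t(\xi)=t$ is $|H_t/H_s|-|H_{t+1}/H_s|=[\co_E^\times:H_s]/[\co_E^\times:H_t]-[\co_E^\times:H_s]/[\co_E^\times:H_{t+1}]$, so I first need the indices $n_s\define[\co_E^\times:H_s]$. Writing $\co_s=\Z_{p^2}+p^s\co_E$ and using the splitting $\co_E=\co_{E_0}\otimes_{\Z_p}\Z_{p^2}\iso\Z_{p^2}\times\Z_{p^2}$ (available since $E_0/\Q_p$ is unramified, so $\co_{E_0}\iso\Z_{p^2}$), a direct computation shows $[\co_E^\times:\co_s^\times]=(p^2-1)p^{2s-2}$ and $\co_{E_0}^\times\cap\co_s^\times=(\Z_p+p^s\co_{E_0})^\times$ for $s\ge1$. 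Since $H_s=\co_{E_0}^\times\co_s^\times$, the second identity gives $[H_s:\co_s^\times]=[\co_{E_0}^\times:(\Z_p+p^s\co_{E_0})^\times]$, which equals $[M_s:M_0]=(p+1)p^{s-1}$ by the class field theory of \S\ref{ss:quasi-canonical} (as used in the proof of Proposition \ref{Prop:gross-keating}). Dividing, $n_0=1$ and $n_s=(p-1)p^{s-1}$ for $s\ge1$; in particular, for $t\ge1$ one has $n_s/n_t=p^{s-t}$.

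The corollary is thus reduced to the identity
$$\sum_{\mathfrak{C}}I_\mathfrak{M}(\mathfrak{C},\mathfrak{M}_0)=\sum_{s=1}^{c_0}\ \sum_{t=0}^{s-1}\Big(\frac{n_s}{n_t}-\frac{n_s}{n_{t+1}}\Big)\Big(1+\frac{(p+1)(p^t-1)}{p-1}\Big),$$
where the left sum is over all proper horizontal components. To evaluate the right side I would split off the summand coming from the constant term $1$: for each fixed $s$ the inner sum $\sum_{t=0}^{s-1}(n_s/n_t-n_s/n_{t+1})$ telescopes to $n_s-1$, so this part contributes $\sum_{s=1}^{c_0}(n_s-1)=p^{c_0}-1-c_0$, which is reassuringly the total number of proper horizontal components. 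For the remaining part the $t=0$ terms vanish (since $p^0-1=0$), and for $t\ge1$ the term $(n_s/n_t-n_s/n_{t+1})\frac{(p+1)(p^t-1)}{p-1}$ simplifies to $(p+1)(p^{s-1}-p^{s-t-1})$; reorganizing the double sum over $t$ and summing the resulting finite geometric series collapses everything to the stated closed form in $p$ and $c_0$.

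Since Propositions \ref{Prop:unr orbits} and \ref{Prop:unr intersection} are already in hand, there is no conceptual obstruction; the argument is entirely bookkeeping. I expect the index computation $n_s=[\co_E^\times:H_s]$ to be the one genuinely delicate point — one must correctly relate $H_s$ both to the unit group of the order $\Z_{p^2}+p^s\co_E$ and to the quasi-canonical lift degrees $[M_s:M_0]$ — and the final double summation must be pushed through carefully so that the numerator $-4p(p^{c_0}-1)+2c_0(p-1)+c_0p^{c_0}(p^2-1)$ emerges without arithmetic error.
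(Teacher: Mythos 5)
Your proposal is correct and takes essentially the same route as the paper: the paper's proof likewise combines the indices $|H_t/H_s|$ (namely $p^{s-t}$ for $t\ge 1$ and $(p-1)p^{s-1}$ for $t=0$, agreeing with your computation of $n_s=[\co_E^\times:H_s]$) with Proposition \ref{Prop:unr intersection}, sums over the nontrivial classes $\xi\in\co_E^\times/H_s$ for each $s$, and then sums over $0\le s\le c_0$ using Proposition \ref{Prop:unr orbits}. The only difference is cosmetic: you supply the index computation via $\co_E\iso\Z_{p^2}\times\Z_{p^2}$ and $[M_s:M_0]$, which the paper asserts without proof, and you organize the final bookkeeping slightly differently (telescoping off the constant term rather than recording the per-$s$ closed form), but both reduce to the same arithmetic identity.
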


\begin{proof}
If $0\le t < s \le c_0$   then combining
$$
|H_t/H_s| = \begin{cases}
p^{s-t} & \mathrm{if\ } t\not=0 \\
p^{s-1}(p-1) & \mathrm{if\ }t=0
\end{cases}
$$
with Proposition \ref{Prop:unr intersection} shows that
$$
\sum_{ \substack{  \xi\in\co_E^\times/H_s  \\ \xi\not=1   } } I_{\mathfrak{M}}(\mathfrak{C}(s,\xi) , \mathfrak{M}_0 ) =
  p^{s-1}(p-2) + p^{s-1}(p+1)(s-1)  - \frac{  2(p^{s-1} -1 )}{p-1}.
$$
Summing over $0 \le s\le c_0$ and using Proposition \ref{Prop:unr orbits} yields the desired result.
\end{proof}


\subsection{$E_0$ ramified}
\label{ss:ram horizontal}


Throughout all of \S \ref{ss:ram horizontal} we assume that $E_0/\Q_p$ is ramified.  Suppose $R$ is the ring of integers of a finite 
extension of $\QQ_p$ and  $(\mathfrak{G},\rho)\in \mathfrak{Y}(R)$.  As in Remark \ref{Rem:reflex pairs} there is a  decomposition of 
the Lie algebra
$$
\mathrm{Lie}(\mathfrak{G}) \iso \Lambda_1\oplus\Lambda_2
$$ 
in which each $\Lambda_i$ is free of rank one over $R$, $\Z_{p^2}$ acts through $\Z_{p^2}\map{}\ZZ_p\map{}R$ on $\Lambda_1$ 
and through the conjugate embedding on $\Lambda_2$.   The action of $\Z_{p^2}[\gamma]$ on $\Lambda_i$ is through some 
homomorphism $\Phi_i:\co_{E}\map{}R$.  We have  $\Phi_1\not=\Phi_2$ after restriction to $\Z_{p^2}$, and the restriction of the pair 
$(\Phi_1,\Phi_2)$ to $\co_{E_0}$ determines the reflex type of $(\mathfrak{G},\rho)$.

\begin{Def}
If $\Phi_1=\Phi_2$ when restricted to $\co_{E_0}$ then we say that the   deformation $(\mathfrak{G}, \rho)$ has  a \emph{standard} 
reflex type.  If $\Phi_1\not=\Phi_2$ when restricted to $\co_{E_0}$  then we say that the reflex type is \emph{nonstandard}.  
\end{Def}

There is a disjoint union 
$$
\mathfrak{Y}(R)=\mathfrak{Y}^+(R)\bigcup \mathfrak{Y}^-(R)
$$
where $\mathfrak{Y}^+(R)$ is the subset of deformations having standard reflex type and $\mathfrak{Y}^-(R)$ is the subset of 
deformations with nonstandard reflex type.  Each subset is $\co_{E}^\times$-stable, as the reflex type is constant on $\co_{E}^\times$-
orbits.  Note that the above decomposition makes sense for any objects $R$ of $\ProArt$, but  is not functorial: after base change 
through a homomorphism $R\map{}R'$ one may have elements of $\mathfrak{Y}^-(R)$ whose image lies in $\mathfrak{Y}^+(R')$.    In 
this subsection we will only consider those $R$ which are integer rings of finite extensions of $\QQ_p$, and for such rings  any 
(necessarily injective) map $R\map{}R'$ induces an  inclusion $\mathfrak{Y}(R)\map{}\mathfrak{Y}(R')$ which respects the 
decompositions into standard and nonstandard reflex types.

\begin{Lem}\label{Lem:standard reflex orbits}
Let $R$ be the ring of integers of a finite extension of $\QQ_p$. Then $\mathfrak{Y}_0(R)\subset\mathfrak{Y}^+(R)$  and every 
$\co_{E}^\times$-orbit in $\mathfrak{Y}^+(R)$ contains an element of $\mathfrak{Y}_0(R)$.
\end{Lem}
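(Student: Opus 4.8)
The plan is to imitate the proof of Corollary \ref{Cor:improper orbits} from the unramified case. The one genuinely new feature is that Proposition \ref{Prop:first reflex} has no counterpart in the ramified setting: the geometric CM order no longer determines the reflex type, since for a fixed CM order there are two reflex types with $\Phi_1=\Phi_2$ on $\co_{E_0}$. Thus one must keep track of which of the two occurs. The inclusion $\mathfrak{Y}_0(R)\subseteq\mathfrak{Y}^+(R)$ is the easy half. For $(\mathfrak{G}_0,\rho_0)\in\mathfrak{Y}_0(R)$ one has $\mathrm{Lie}(\mathfrak{G}_0\otimes\Z_{p^2})\iso\mathrm{Lie}(\mathfrak{G}_0)\otimes_{\Z_p}\Z_{p^2}$; decomposing along the idempotents of $R\otimes_{\Z_p}\Z_{p^2}\iso R\times R$ identifies both summands $\Lambda_1,\Lambda_2$ of Remark \ref{Rem:reflex pairs} with $\mathrm{Lie}(\mathfrak{G}_0)$ as $R$-modules, and since $\gamma=\gamma_0\otimes1$ acts on each summand through the action of $\gamma_0$ on $\mathrm{Lie}(\mathfrak{G}_0)$, the pair of embeddings satisfies $\phi_1=\phi_2$. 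Hence $\Phi_1=\Phi_2$ after restriction to $\co_{E_0}$, so the reflex type is standard.

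For the other half, fix $(\mathfrak{G},\rho)\in\mathfrak{Y}^+(R)$ with geometric CM order $\co=\co(\mathfrak{G})$. Since $\mathrm{Ta}_p(\mathfrak{G})$ carries actions of $\Z_{p^2}$ and of $\gamma$ we have $\Z_{p^2}+p^{c_0}\co_E=\Z_{p^2}[\gamma]\subseteq\co\subseteq\co_E$. As $E_0/\Q_p$ is ramified, $\co_E$ is the maximal order of $E$ and, writing $\varpi_E$ for a uniformizing parameter, $\co_E=\Z_{p^2}\oplus\Z_{p^2}\varpi_E$ as a $\Z_{p^2}$-module; a short computation then shows every order of $\co_E$ containing $\Z_{p^2}$ has the form $\Z_{p^2}+p^s\co_E$, so $\co=\Z_{p^2}+p^s\co_E$ for a unique $0\le s\le c_0$. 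Next, the standard reflex type of $(\mathfrak{G},\rho)$ is a pair $(\Phi,\Phi)$ in which $\Phi\colon\co_{E_0}\to R$ lifts the unique homomorphism $\co_{E_0}\to\F$; since $R$ is integrally closed it then contains a copy of $W_0$, and $\Phi$ is one of the two embeddings $\Psi,\overline{\Psi}$ of \S\ref{ss:dieudonne} composed with $W_0\hookrightarrow R$. Now pass to the integer ring $R'$ of a finite extension of $\mathrm{Frac}(R)$ containing a copy of $W_s$ compatible with $W_0\hookrightarrow R\hookrightarrow R'$, and choose a quasi-canonical lift $(\mathfrak{G}_0',\rho_0')\in\mathfrak{Y}_0(W_s)$ of level $s$ on whose Lie algebra $\co_{E_0}$ acts through whichever of $\Psi,\overline{\Psi}$ equals $\Phi$; this is possible because, in Gross's construction (\S\ref{ss:quasi-canonical}), the quasi-canonical lifts of level $s$ form a single $\Gal(M_s/\QQ_p)$-orbit, and since $M_0/\QQ_p$ is a nontrivial subextension of $M_s$ a suitable element of $\Gal(M_s/\QQ_p)$ interchanges the two possibilities for the $\co_{E_0}$-action on the Lie algebra. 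By the first half of the proof $(\mathfrak{G}_0',\rho_0')_{/R'}\otimes\Z_{p^2}$ has geometric CM order $(\Z_p+p^s\co_{E_0})\otimes\Z_{p^2}=\co$ and reflex type $(\Phi,\Phi)$, agreeing with those of $(\mathfrak{G},\rho)_{/R'}$. Proposition \ref{Prop:CM orbits} then supplies $\xi\in\co_E^\times$ with $\xi*(\mathfrak{G},\rho)_{/R'}\iso(\mathfrak{G}_0',\rho_0')_{/R'}\otimes\Z_{p^2}$, and, exactly as in Corollary \ref{Cor:improper orbits}, pro-representability gives $\mathfrak{Y}(R)\cap\mathfrak{Y}_0(R')=\mathfrak{Y}_0(R)$, whence $\xi*(\mathfrak{G},\rho)\in\mathfrak{Y}_0(R)$.

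I expect the delicate point to be the choice of quasi-canonical lift in the last paragraph: unlike the unramified case, where Proposition \ref{Prop:first reflex} pins the reflex type down once the CM order is known, here one must verify that for each level $s$ the $\co_{E_0}$-action on the Lie algebra of a quasi-canonical lift realizes both embeddings $\Psi$ and $\overline{\Psi}$, so that the reflex type of the chosen lift can be matched to that of $(\mathfrak{G},\rho)$. The remaining ingredients — the classification of the orders of $\co_E$ containing $\Z_{p^2}$, and the pro-representability bookkeeping — are straightforward transcriptions of the unramified argument.
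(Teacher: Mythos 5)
Your proof is correct and is essentially the paper's own argument: the inclusion $\mathfrak{Y}_0(R)\subset\mathfrak{Y}^+(R)$ is proved the same way, and for the orbit statement the paper likewise reduces to a level-$s$ quasi-canonical lift, uses an element of $\Gal(M_s/\QQ_p)$ nontrivial on $M_0$ to match the reflex type $\Phi$, and concludes with Proposition \ref{Prop:CM orbits} and the pro-representability descent of Corollary \ref{Cor:improper orbits}. The only (cosmetic) difference is that you twist the quasi-canonical lift by the Galois element, whereas the paper fixes the lift and twists the embedding $i\colon W_s\to R$ by $\tau$; these amount to the same thing.
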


\begin{proof}
First start with some $(\mathfrak{G}_0,\rho_0) \in \mathfrak{Y}_0(R)$.  The action of $\Z_p[\gamma_0]$ on 
$\mathrm{Lie}(\mathfrak{G}_0)$ is through some $\Z_p$-algebra map $\Phi_0:\co_{E_0}\map{}R$, and so the action of 
$\Z_p[\gamma_0]$ on the Lie algebra $\mathrm{Lie}(\mathfrak{G}_0)\otimes_{\Z_p}\Z_{p^2}$ of
$$
(\mathfrak{G}_0,\rho_0)\otimes \Z_{p^2} \in \mathfrak{Y}(R)
$$
is  again through $\Phi_0$.  After restriction to $\co_{E_0}$ we now have $\Phi_1=\Phi_0=\Phi_2$, and hence 
$(\mathfrak{G}_0,\rho_0)\otimes \Z_{p^2}$ has a standard reflex type, proving $\mathfrak{Y}_0(R)\subset\mathfrak{Y}^+(R)$.

Now suppose we start with some $(\mathfrak{G},\rho)\in\mathfrak{Y}^+(R)$,   so that  $\Z_p[\gamma_0]$ acts on 
$\mathrm{Lie}(\mathfrak{G})$ through some embedding $\Phi_0:\co_{E_0}\map{}R$.   Let $\Z_{p^2}+p^s\co_E$ be the geometric 
CM-order of $(\mathfrak{G},\rho)$  and  consider a quasi-canonical lift $(\mathfrak{G}_0,\rho_0)\in \mathfrak{Y}_0(W_s)$ of level $s$.   
The action of $\Z_p[\gamma_0]$ on $\mathrm{Lie}(\mathfrak{G}_0)$ is through some embedding 
$\Phi_{00}: \Z_p[\gamma_0] \map{}W_s$, and after  enlarging $R$ as in the proof of Corollary \ref{Cor:improper orbits} we assume 
that there is an  embedding of $\ZZ_p$-algebras $i:W_s\map{}R$.   As $M_s/\QQ_p$ is Galois we may choose a 
$\tau\in\Gal(M_s/\QQ_p)$ whose restriction to $\Gal(M_0/\QQ_p)$ is nontrivial, and then  
$i\circ\Phi_{00}$ and $ i\circ \tau \circ\Phi_{00}$ give the two distinct embeddings of $\Z_p[\gamma_0]$ into $R$.  Thus after possibly 
replacing $i$ by $i\circ\tau$ we may assume that $i\circ\Phi_{00}=\Phi_0$.  The action of  $\Z_p[\gamma_0]$ on the Lie algebra of 
$(\mathfrak{G}_0,\rho_0)_{/R}\in \mathfrak{Y}_0(R)$ is now through $\Phi_0$, and so the  deformation
$$
(\mathfrak{G}_0,\rho_0)_{/R} \otimes\Z_{p^2}\in\mathfrak{Y}(R)
$$
has  the same reflex type and same geometric CM-order as $(\mathfrak{G},\rho)$.  Applying Proposition \ref{Prop:CM orbits} 
completes the proof.
\end{proof}

Again let $R$ be the ring of integers in a finite extension of $\QQ_p$.  We now explain how to construct elements in 
$\mathfrak{Y}^-(R)$.  As we assume that $E_0/\Q_p$ is a ramified field extension, $E$ is a biquadratic field extension of $\Q_p$ 
containing a unique quadratic subfield $E_0'$ which is ramified over $\Q_p$ and not equal to $E_0$.  Choose an action  
$j_0':\co_{E_0'}\map{}\End(\mathfrak{g}_0)$ of $\co_{E_0'}$ on $\mathfrak{g}_0$ and let $j'$ be the induced $\Z_{p^2}$-linear action
 $$
 j':\co_E\map{}\End(\mathfrak{g})
 $$
of $\co_E=\co_{E_0'}\otimes\Z_{p^2}$ on $\mathfrak{g}$.

\begin{Lem}
There is a $\Z_{p^2}$-linear automorphism $w:\mathfrak{g}\map{}\mathfrak{g}$ satisfying
$$
w\circ  j'(x)  =j(x) \circ w 
$$
for every $x\in\co_E$.
\end{Lem}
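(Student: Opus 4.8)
The plan is to obtain $w$ by applying the Skolem--Noether theorem over $\Q_{p^2}$ and then upgrading the resulting rational conjugator to an integral one.

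First I would set $\Delta=\End_{\Z_{p^2}}(\mathfrak g)$ and $B=\Delta\otimes_{\Z_p}\Q_p$. Since the fixed copy of $\Z_{p^2}$ acting on $\mathfrak g$ is central in $\Delta$, the algebra $B$ is central simple over $\Q_{p^2}$ of $\Q_{p^2}$-dimension four; and because $B\iso\End(\mathfrak g_0)\otimes_{\Z_p}\Q_{p^2}$ is the base change of the quaternion division algebra $\End(\mathfrak g_0)\otimes_{\Z_p}\Q_p$ to the unramified quadratic extension $\Q_{p^2}$ --- which splits it --- one gets $B\iso M_2(\Q_{p^2})$. Both $j$ and $j'$ are $\Q_{p^2}$-algebra embeddings into $B$ of the field $E=E_0\otimes_{\Q_p}\Q_{p^2}$, which is a ramified quadratic extension of $\Q_{p^2}$ (as $E/\Q_p$ is biquadratic and $E_0$ is ramified). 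By Skolem--Noether there is $u\in B^\times$ with $u\circ j'(x)=j(x)\circ u$ for all $x\in E$; that is, $u$ is a $\Z_{p^2}$-linear quasi-isogeny of $\mathfrak g$ intertwining the two $\co_E$-actions. It remains to replace $u$ by an element of $\Delta^\times=\Aut_{\Z_{p^2}}(\mathfrak g)$.

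To do this I would first record that $j$ and $j'$ are \emph{optimal} embeddings of $\co_E$ into $\Delta$: since $\co_E$ is the maximal order of $E$, any $\Z_{p^2}$-order of $E$ contained in $\Delta$ lies in $\co_E$, so $j(E)\cap\Delta=j(\co_E)$ and likewise for $j'$. The set of all rational conjugators carrying $j'$ to $j$ is then the single coset $u\cdot j'(E^\times)$, because the centralizer in $B\iso M_2(\Q_{p^2})$ of the maximal subfield $j'(E)$ is $j'(E)$ itself. Next, identifying $\Delta=\End(\mathfrak g_0)\otimes_{\Z_p}\Z_{p^2}$ explicitly --- writing $\End(\mathfrak g_0)=\Z_{p^2}\oplus\Z_{p^2}\Pi$ with $\Pi^2=p$ --- one checks that $\Delta$ is an Eichler order of level $p$ in $B$, hence stabilizes a single edge $\sigma$ of the Bruhat--Tits tree of $\mathrm{PGL}_2(\Q_{p^2})$. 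Since $E/\Q_{p^2}$ is ramified, the image of $j'(\co_E^\times)$ fixes $\sigma$, and the image of a uniformizer $\varpi_E$ of $\co_E$ transposes the two vertices of $\sigma$; a routine argument with the tree then shows that the coset $u\cdot j'(E^\times)$ meets $\Delta^\times$. Any $w$ in that intersection lies in $\Delta^\times=\Aut_{\Z_{p^2}}(\mathfrak g)$, is $\Z_{p^2}$-linear by construction, and satisfies $w\circ j'(x)=j(x)\circ w$ for all $x\in\co_E$, as required. (Alternatively, one can avoid Eichler orders and establish the integral refinement by a direct computation with covariant Dieudonn\'e modules, using $\Z_{p^2}\otimes_{\Z_p}\ZZ_p\iso\ZZ_p\times\ZZ_p$ and $\co_E=\co_{E_0}\otimes_{\Z_p}\Z_{p^2}$ to show that the Dieudonn\'e module of $\mathfrak g$ with its $\co_E$-action is unique up to isomorphism --- a height-four analogue of Corollary \ref{Cor:normalized action} --- and that any such isomorphism is automatically $\Z_{p^2}$-linear.)

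The conceptually easy part is the splitting of $B$ and the application of Skolem--Noether; the real work is the integral refinement in the previous paragraph, which is where one must pin down the local structure of $\End_{\Z_{p^2}}(\mathfrak g)$ and count optimal embeddings of $\co_E$ into it (or run the Dieudonn\'e-module classification by hand). This is also the step where it matters that $\co_E$ is the \emph{maximal} order of $E$: optimality is exactly what makes the two embeddings integrally, and not merely rationally, conjugate.
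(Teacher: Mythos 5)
Your proposal is correct, but it takes a noticeably longer route than the paper, whose proof never invokes Skolem--Noether. The paper identifies $\Delta=\End_{\Z_{p^2}}(\mathfrak{g})=\End(\mathfrak{g}_0)\otimes_{\Z_p}\Z_{p^2}$ with the Eichler order stabilizing a chain $\Lambda_2\subset\Lambda_1$ of $\Z_{p^2}$-lattices in a two-dimensional $\Q_{p^2}$-space with $\Lambda_1/\Lambda_2\iso\F_{p^2}$, and then observes that, since $E/\Q_{p^2}$ is ramified and $\co_E$ is its maximal order, $\Lambda_1$ is a free rank-one $\co_E$-module under either action $j$ or $j'$, with $\Lambda_2=\mathfrak{q}\Lambda_1$ in both structures ($\mathfrak{q}$ the maximal ideal of $\co_E$). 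Hence any $\co_E$-module isomorphism from $(\Lambda_1,j')$ to $(\Lambda_1,j)$ automatically carries $\Lambda_2$ onto $\Lambda_2$, so it and its inverse lie in $\Delta$, giving $w\in\Delta^\times=\Aut_{\Z_{p^2}}(\mathfrak{g})$ directly, with no rational conjugator and no tree. Your two-step argument (Skolem--Noether, then an integral refinement via optimal embeddings and the Bruhat--Tits building) is the standard local conjugacy-of-embeddings argument and does go through; its advantage is that it generalizes to non-maximal orders and other levels, but for this lemma it is heavier than needed, and the step you dismiss as ``routine'' is precisely where all the content sits. To nail it down you would note that the vertices fixed by the ramified torus $j'(\co_E^\times)$ are exactly the two vertices of a single edge; since both $\sigma$ and $u^{-1}\sigma$ have their vertices fixed (because $j'(\co_E)$ and $j(\co_E)$ lie in $\Delta$), these edges coincide, so $u$ preserves $\sigma$, and after multiplying by $j'(\varpi_E)$ if it swaps the vertices and by a central scalar one lands in $\Delta^\times$ --- at which point you have essentially reconstructed the paper's one-line lattice observation. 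Your parenthetical Dieudonn\'e-module alternative is likewise viable, but it too reduces to the same rank-one $\co_E$-module count.
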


\begin{proof}
Abbreviate $\Delta_0=\End(\mathfrak{g}_0)$ and $\Delta=\End(\mathfrak{g})$ so that $\Delta=\Delta_0\otimes_{\Z_p}\Z_{p^2}$.  Thus 
$\Delta_0$ is the maximal order in a quaternion division algebra over $\Q_p$ and $\Delta$ is isomorphic to an Eichler order of level 
one in $M_2(\Z_{p^2})$.  Fix such an isomorphism, so that $\Delta$ acts on a two-dimension $\Q_{p^2}$-vector space  in a way 
which stabilizes a $\Z_{p^2}$-lattice $\Lambda_1$ and  sublattice $\Lambda_2\subset\Lambda_1$ with 
$\Lambda_1/\Lambda_2\iso\F_{p^2}$.  Let $L_i$ denote $\Lambda_i$ viewed as an $\co_{E}$-module via $j$, and let $L_i'$ denote 
$\Lambda_i$ viewed as an $\co_E$-module via $j'$.  If $\mathfrak{q}\subset\co_E$ is the maximal ideal then $L_2=\mathfrak{q}L_1$, 
$L_2'=\mathfrak{q}L_1'$, and there is an isomorphism of torsion free rank one $\co_E$-modules $w: L_1'\map{}L_1$.  As $w$ takes 
$L_2'$ to $L_2$, we have found the desired $w\in\End(\Lambda_2)\cap\End( \Lambda_1) = \Delta$.
\end{proof}

Choose any $\gamma_0'\in \co_{E_0'}$ for which  $\Z_p[\gamma_0']=\Z_p+p^{c_0}\co_{E_0'}$ and set 
$$
\gamma'=\gamma_0'\otimes 1\in \co_E\iso \co_{E_0'} \otimes_{\Z_p} \Z_{p^2}
$$ 
so that 
$$
\Z_{p^2}[\gamma'] =\Z_{p^2}+p^{c_0}\co_E = \Z_{p^2}[\gamma].
$$  
Let $\mathfrak{Y}_0' \map{}\mathfrak{M}_0$ be the closed formal subscheme classifying deformations of $\mathfrak{g}_0$ for which 
the action  $j_0': \Z_p[\gamma_0'] \map{}\End(\mathfrak{g}_0)$ lifts, and let $\mathfrak{Y}'\map{}\mathfrak{M}$ be the closed formal 
subscheme classifying deformations of $\mathfrak{g}$ for which the action $j':\Z_{p^2}[\gamma'] \map{}\End(\mathfrak{g})$ lifts.  There 
is then an isomorphism of functors 
$$
\mathfrak{Y}' \map{w}\mathfrak{Y}
$$
 defined by
$$
(\mathfrak{G},\rho)\mapsto    (\mathfrak{G},\rho\circ w^{-1})
$$
which respects with the action of $\co_E^\times$ on each side (the actions on the left and right being defined using the actions   $j'$ 
and  $j$ of  $\co_{E}^\times$     on $\mathfrak{g}$, respectively).  Define $R_{\mathfrak{Y}'_0}$ and $R_{\mathfrak{Y}'}$ by
$$
\mathfrak{Y}_0'=\Spf(R_{\mathfrak{Y}'_0}) \qquad
\mathfrak{Y}'=\Spf(R_{\mathfrak{Y}' } ).
$$

We point out that  Proposition \ref{Prop:gross-keating} applies equally well with $\mathfrak{Y}_0$ replaced by $\mathfrak{Y}'_0$, and 
the theory of quasi-canonical lifts applies equally well to the deformation problem $\mathfrak{Y}_0'$ as it does to $\mathfrak{Y}_0$.  
We also  point out that the isomorphism $w:\mathfrak{Y}'\map{} \mathfrak{Y}$ does not restrict to an isomorphism 
$\mathfrak{Y}_0'\map{}\mathfrak{Y}_0$, as the automorphism $w$ of $\mathfrak{g}$ does not arise from an automorphism of 
$\mathfrak{g}_0$.  Instead we have the following pretty situation.

\begin{Lem}\label{Lem:nonstandard reflex orbits}
Let $R$ be the ring of integers of a finite extension of $\QQ_p$.  The bijection $w:\mathfrak{Y}'(R)\map{}\mathfrak{Y}(R)$ restricts to 
an injection $w: \mathfrak{Y}_0'(R)\map{}\mathfrak{Y}^-(R)$, and every $\co_E^\times$-orbit in $\mathfrak{Y}^-(R)$ contains an 
element  in the image of $\mathfrak{Y}_0'(R)$.
\end{Lem}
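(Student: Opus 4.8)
The plan is to transport reflex types and geometric CM orders through the functorial bijection $w$, exploiting the intertwining relation $w\circ j'(x)=j(x)\circ w$, and then to reduce the two assertions to the evident $'$-analogue of Lemma \ref{Lem:standard reflex orbits}. Fix $R$ as in the statement and $(\mathfrak{G},\rho)\in\mathfrak{Y}'(R)$, with $w(\mathfrak{G},\rho)=(\mathfrak{G},\rho\circ w^{-1})\in\mathfrak{Y}(R)$. For $\alpha\in\Z_{p^2}[\gamma]=\Z_{p^2}[\gamma']=\Z_{p^2}+p^{c_0}\co_E$, let $\widetilde{\alpha}\in\End(\mathfrak{G})$ be the unique lift of $j'(\alpha)$ along $\rho$, so $\widetilde{\alpha}_{/\F}=\rho\circ j'(\alpha)\circ\rho^{-1}$; then $\widetilde{\alpha}_{/\F}=(\rho\circ w^{-1})\circ j(\alpha)\circ(\rho\circ w^{-1})^{-1}$ by the intertwining relation, and since $\End(\mathfrak{G})\map{}\End(\mathfrak{G}_{/\F})$ is injective (rigidity), $\widetilde{\alpha}$ is also the unique lift of $j(\alpha)$ along $\rho\circ w^{-1}$. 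Thus the lifted action of $\Z_{p^2}[\gamma]$ on $\mathrm{Ta}_p(\mathfrak{G})$ and on $\mathrm{Lie}(\mathfrak{G})$ attached to $(\mathfrak{G},\rho)\in\mathfrak{Y}'$ agrees, as an actual ring action, with the one attached to $w(\mathfrak{G},\rho)\in\mathfrak{Y}$ (extending to an action of $\co_E$ as usual, using that $R$ is integrally closed and $\mathrm{Ta}_p(\mathfrak{G})\otimes\Q_p$ is free of rank one over $E$). In particular $w$ preserves the geometric CM order of Definition \ref{Def:invariants}; and because $w$ is $\Z_{p^2}$-linear, the splitting $\mathrm{Lie}(\mathfrak{G})\iso\Lambda_1\oplus\Lambda_2$ of Remark \ref{Rem:reflex pairs} is literally the same for $(\mathfrak{G},\rho)$ and $w(\mathfrak{G},\rho)$, so the pair of embeddings $(\Phi_1,\Phi_2):\co_E\map{}R$ computed from $w(\mathfrak{G},\rho)$ using $j$ equals the pair computed from $(\mathfrak{G},\rho)$ using $j'$.

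Next comes an elementary Galois computation. The maps $\Phi_1,\Phi_2:\co_E\map{}R$ extend to two embeddings of the biquadratic field $E$ into an algebraic closure of $\mathrm{Frac}(R)$, distinct already after restriction to $\Q_{p^2}$; writing $\Phi_2=\Phi_1\circ g$ with $g\in\Gal(E/\Q_p)$ we thus have $g\neq 1$ and $g\notin\Gal(E/\Q_{p^2})$, so $g$ generates exactly one of the two remaining order-two subgroups $\Gal(E/E_0)$, $\Gal(E/E_0')$. Hence exactly one of the equalities ``$\Phi_1=\Phi_2$ on $\co_{E_0}$'' and ``$\Phi_1=\Phi_2$ on $\co_{E_0'}$'' holds. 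Combined with the previous paragraph this says that $(\mathfrak{G},\rho)\in\mathfrak{Y}'(R)$ has standard reflex type relative to $(E_0',\mathfrak{Y}_0')$ if and only if $w(\mathfrak{G},\rho)\in\mathfrak{Y}(R)$ has nonstandard reflex type; equivalently, $w$ restricts to a bijection from the set of $(\mathfrak{G},\rho)\in\mathfrak{Y}'(R)$ of standard reflex type onto $\mathfrak{Y}^-(R)$.

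It remains to invoke the $'$-analogue of Lemma \ref{Lem:standard reflex orbits}: writing $(\mathfrak{Y}')^+(R)$ for the set of $(\mathfrak{G},\rho)\in\mathfrak{Y}'(R)$ of standard reflex type relative to $(E_0',\mathfrak{Y}_0')$, one has $\mathfrak{Y}_0'(R)\subset(\mathfrak{Y}')^+(R)$ and every $\co_E^\times$-orbit in $(\mathfrak{Y}')^+(R)$ contains an element of $\mathfrak{Y}_0'(R)$. This is proved exactly as Lemma \ref{Lem:standard reflex orbits}: the whole setup is formally unchanged after replacing $E_0,j_0,j,\gamma_0,\gamma$ by $E_0',j_0',j',\gamma_0',\gamma'$, the theory of quasi-canonical lifts applies to $\mathfrak{Y}_0'$, the field $M_s/\QQ_p$ is still Galois because $(\Z_p+p^s\co_{E_0'})^\times$ is $\Gal(E_0'/\Q_p)$-stable, and Proposition \ref{Prop:CM orbits} holds for $\mathfrak{Y}'$ — either by rerunning \S\ref{ss:hilbert-blumenthal} with the chosen action of $\co_{\mathcal{E}_0'}$ on $\mathfrak{a}_0$ in place of $\co_{\mathcal{E}_0}$, or by transporting it through the $\co_E^\times$-equivariant bijection $w$ using that $w$ preserves geometric CM orders and matches reflex data. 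Granting this, the proof concludes: the composite $\mathfrak{Y}_0'(R)\map{}\mathfrak{Y}'(R)\map{w}\mathfrak{Y}(R)$ has image in $\mathfrak{Y}^-(R)$ by the second paragraph, and is injective because $\mathfrak{Y}_0'\map{}\mathfrak{Y}'$ is a closed immersion (it factors the closed immersion $\mathfrak{Y}_0'\map{}\mathfrak{M}$ through the closed immersion $\mathfrak{Y}'\map{}\mathfrak{M}$) and $w$ is bijective; while for the surjectivity statement, any $y\in\mathfrak{Y}^-(R)$ satisfies $w^{-1}(y)\in(\mathfrak{Y}')^+(R)$ by the second paragraph, its $\co_E^\times$-orbit meets $\mathfrak{Y}_0'(R)$ by the $'$-analogue of Lemma \ref{Lem:standard reflex orbits}, and applying the $\co_E^\times$-equivariant bijection $w$ (which stabilizes $\mathfrak{Y}^-(R)$) shows the orbit of $y$ meets $w(\mathfrak{Y}_0'(R))$.

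The step I expect to be the main obstacle is the first paragraph: making precise that, by uniqueness of lifts, the lifted actions along the two trivializations $\rho$ and $\rho\circ w^{-1}$ are literally equal — so that neither the geometric CM order nor the reflex pair $(\Phi_1,\Phi_2)$ acquires a twist and, in particular, the factors $\Lambda_1,\Lambda_2$ are not interchanged — together with checking that Proposition \ref{Prop:CM orbits}, and hence the $'$-analogue of Lemma \ref{Lem:standard reflex orbits}, genuinely carries over to $\mathfrak{Y}'$.
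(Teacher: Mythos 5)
Your proposal is correct and takes essentially the same route as the paper: the paper likewise notes that the identity map intertwines the two lifted actions of $\Z_{p^2}+p^{c_0}\co_E$ (so $w$ preserves geometric CM orders and the reflex pair $(\Phi_1,\Phi_2)$), uses the biquadratic dichotomy to trade ``standard for $E_0'$'' against ``nonstandard for $E_0$'', and concludes by matching a given nonstandard deformation with the $w$-image of a level-$s$ quasi-canonical lift for $j_0'$ via Proposition \ref{Prop:CM orbits}. Your repackaging through a primed analogue of Lemma \ref{Lem:standard reflex orbits}, justified by transport along the $\co_E^\times$-equivariant bijection $w$, is a cosmetic reorganization of the same steps rather than a different argument.
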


\begin{proof}
We will repeatedly use the fact that for any two distinct embeddings of $E$ into a field there is a unique quadratic subfield of $E$ on 
which those embeddings agree.  Fix  $(\mathfrak{G}'_0,\rho'_0)\in \mathfrak{Y}_0'(R)$ and define
$$
(\mathfrak{G}',\rho') = (\mathfrak{G}'_0,\rho'_0)\otimes\Z_{p^2} \in \mathfrak{Y}'(R).
$$
 Let $\Phi'_0: \co_{E_0'}  \map{}R$ be the $\ZZ_p$-algebra homomorphism giving the action of $\Z_p[\gamma_0']$  on 
 $\mathrm{Lie}(\mathfrak{G}_0')$.   Decomposing the Lie algebra 
$$
\mathrm{Lie}(\mathfrak{G}')\iso \Lambda_1\oplus\Lambda_2
$$
as before, the order $\Z_{p^2}[\gamma']$ acts on $\Lambda_i$ through an embedding $\Phi_i: \co_E \map{}R$ which restricts to 
$\Phi'_0$ on $\co_{E_0'}$.   The   restrictions of $\Phi_1$ and $\Phi_2$ to $\Z_{p^2}$ are distinct, and it follows that the restrictions of 
$\Phi_1$ and $\Phi_2$ to $\co_{E_0}$ are also distinct.

 If we now define 
$$
(\mathfrak{G},\rho)=(\mathfrak{G}',\rho'\circ w^{-1})\in\mathfrak{Y}(R)
$$
and let $\mathrm{id}:\mathfrak{G}\map{}\mathfrak{G}'$ be the identity map then $\mathrm{id}$ is a $\Z_{p^2}+p^{c_0}\co_E$-linear 
isomorphism of $p$-Barsotti-Tate groups $\mathfrak{G}\iso \mathfrak{G}'$ (where the action of $ \Z_{p^2}[\gamma]$  on 
$\mathfrak{G}$ lifts the action  $j $  on $\mathfrak{g}$, and the action of $\Z_{p^2}[\gamma']$ on $\mathfrak{G}'$ lifts the action $ j' $ 
on  $\mathfrak{g}$), and so  $\mathrm{Lie}(\mathfrak{G})\iso \mathrm{Lie}(\mathfrak{G}')$ as modules over
$$
\Z_{p^2}[\gamma]\otimes_{\Z_p}R \iso \Z_{p^2}[\gamma']\otimes_{\Z_p}R.
$$
 By the previous paragraph the reflex type of  $(\mathfrak{G},\rho)$ is nonstandard, proving $(\mathfrak{G},\rho)\in\mathfrak{Y}^-(R)$.

Now start with any $(\mathfrak{G},\rho)\in \mathfrak{Y}^-(R)$, and let $\Z_{p^2}+p^s\co_E$ be the geometric CM-order of 
$(\mathfrak{G},\rho)$.   Decompose 
$$
\mathrm{Lie}(\mathfrak{G})\iso \Lambda_1\oplus\Lambda_2
$$
as above, so that $\Z_{p^2}[\gamma]$ acts on $\Lambda_i$ through some $\Phi_i: \co_E \map{}R$.  As $\Phi_1$ and $\Phi_2$ are 
distinct when restricted  to $\Z_{p^2}$ and to $\co_{E_0}$ (by definition of nonstandard reflex type), they must agree when restricted 
to $\co_{E_0'}$.  Call the common restriction $\Phi_0':\co_{E_0'} \map{}R$.  Choose a quasi-canonical lift 
$(\mathfrak{G}'_0,\rho_0')\in\mathfrak{Y}_0'(W_s)$ of level $s$ of $\mathfrak{g}_0$ with its action of $j_0'$, and (after enlarging $R$ 
as in the proof of Corollary \ref{Cor:improper orbits}) an embedding $W_s\map{}R$ of $\ZZ_p$-algebras.  As in the proof of Lemma 
\ref{Lem:standard reflex orbits} this can be done in such a way that the action of $\Z_p[\gamma_0']$ on the Lie algebra of 
$\mathfrak{G}'_{0/R}$ is through $\Phi_0'$.   The image of the deformation $(\mathfrak{G}'_0,\rho'_0)$ under
$$
\mathfrak{Y}_0'(W_s)\map{} \mathfrak{Y}_0'(R)\map{w} \mathfrak{Y}^-(R)
$$
  then has the same geometric CM-order and reflex type as $(\mathfrak{G},\rho)$, and so lies in the same $\co_E^\times$-orbit by 
  Proposition \ref{Prop:CM orbits}.
  \end{proof}

The isomorphism of functors $w:\mathfrak{Y}'\map{}\mathfrak{Y}$ induces an isomorphism 
$w:R_{\mathfrak{Y}'}\map{}R_{\mathfrak{Y}}$ in such a way that the composition
$$
\Hom_{\ProArt}(R_{\mathfrak{Y}'}, - )\iso \mathfrak{Y}'  (-)  \map{w}\mathfrak{Y}(-) \iso \Hom_{\ProArt}(R_\mathfrak{Y}, - )
$$
has the form $f\mapsto f\circ w^{-1}$.  Let
$$
q:R_{\mathfrak{Y}}\map{}R_{\mathfrak{Y}_0}
\qquad
q':R_{\mathfrak{Y}'}\map{} R_{\mathfrak{Y}_0'}
$$
be the natural surjections.  As in \S \ref{ss:unr horizontal}, for each $0\le s\le c_0$ let $\mathfrak{p}_{0,s}$ be the unique minimal 
prime of $R_{\mathfrak{Y}_0}$ for which  $R_{\mathfrak{Y}_0}/\mathfrak{p}_{0,s} \iso W_s$ and  let 
$\mathfrak{p}_s=q^{-1}(\mathfrak{p}_{0,s})$ be the corresponding prime of $R_{\mathfrak{Y}}$.   Similarly let $\mathfrak{p}_{0,s}'$ be 
the minimal prime of $R_{\mathfrak{Y}'_0}$ corresponding to a level $s$ quasi-canonical lift of the action 
$j_0':\co_{E_0}\map{}\End(\mathfrak{g}_0)$, so that    $R_{\mathfrak{Y}'_0}/\mathfrak{p}'_{0,s} \iso W'_s$ with $W'_s$ constructed 
exactly as in \S \ref{ss:quasi-canonical} but with $E_0$ and $j_0$ replaced everywhere by $E_0'$ and $j_0'$.  Let 
$\mathfrak{p}'_s=q'^{-1}(\mathfrak{p}'_{0,s})$ be the corresponding prime of $R_{\mathfrak{Y}'}$.  
For each $0\le s\le c_0$ we now define two distinguished components of $\mathfrak{Y}$,
$$
\mathfrak{C}(s) = \Spf(R_\mathfrak{Y}/\mathfrak{p}_s )  
= \Spf(R_{\mathfrak{Y}_0}/\mathfrak{p}_{0,s} )
$$
and
$$
\mathfrak{C}'(s) =  \Spf(R_\mathfrak{Y}/w(\mathfrak{p}'_s) )  
\iso  \Spf(R_{\mathfrak{Y}'}/\mathfrak{p}'_{0,s} ) 
= \Spf(R_{\mathfrak{Y}_0'}/\mathfrak{p}'_{0,s} ),
$$
and   two subgroups of $\co_E^\times$
$$
H_s=\co_{E_0}^\times  \cdot (\Z_{p^2}+p^s\co_E)^\times
\qquad
H_s'=\co_{E_0'}^\times \cdot (\Z_{p^2}+p^s\co_E)^\times.
$$

\begin{Rem}
Exactly as in  Remark \ref{Rem:improper list},   $\mathfrak{C}(0),\ldots,\mathfrak{C}(c_0)$ is a complete list of the improper 
components of $\mathfrak{Y}$.
\end{Rem}

We will say that a horizontal component $\mathfrak{C}=\Spf(R_{\mathfrak{Y}}/\mathfrak{p})$  of $\mathfrak{Y}$ is \emph{standard} if 
the deformation corresponding to the quotient map  $R_{\mathfrak{Y}}\map{} R_{\mathfrak{Y}}/\mathfrak{p} $ has standard reflex type, 
and say that $\mathfrak{C}$ is \emph{nonstandard} otherwise.

\begin{Prop}\label{Prop:ram orbits}
 Every standard horizontal component of $\mathfrak{Y}$ can be written uniquely in the form 
$$
\mathfrak{C}(s,\xi)  \define \xi * \mathfrak{C}(s)
$$
 for $0\le s\le c_0$ and $\xi\in \co_E^\times/H_s$, and every nonstandard horizontal component of $\mathfrak{Y}$ can be written 
 uniquely in the form 
 $$
 \mathfrak{C}'(s,\xi)   \define \xi * \mathfrak{C}'(s)
 $$
 for $0\le s\le c_0$ and $\xi\in \co_E^\times/ {H'_s}$. To be clear: in both instances the action of  $\xi*$ on $\mathfrak{Y}$ is that of 
 \S \ref{ss:functors} defined using the action $j:\co_E^\times\map{}\End(\mathfrak{g})$.
 \end{Prop}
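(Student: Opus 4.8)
The plan is to mirror the proof of Proposition \ref{Prop:unr orbits}, using Lemmas \ref{Lem:standard reflex orbits} and \ref{Lem:nonstandard reflex orbits} in place of Corollary \ref{Cor:improper orbits} and treating components of standard and of nonstandard reflex type separately. Those two lemmas, together with Proposition \ref{Prop:CM orbits}, already package all the geometric input; the remaining work in the proof is to read off the level $s$ and the coset of $\xi$ from a given component and then to identify the stabilizers $H_s$ and $H_s'$. I would also use Corollary \ref{Cor:horizontal reduced} in the form: any prime of $R_{\mathfrak{Y}}$ not containing $p$ is minimal, so a horizontal irreducible component is precisely $\Spf(R_{\mathfrak{Y}}/\mathfrak{p})$ for such a prime, while $\mathfrak{C}(s)=\Spf(R_{\mathfrak{Y}}/\mathfrak{p}_s)$ and $\mathfrak{C}'(s)=\Spf(R_{\mathfrak{Y}}/w(\mathfrak{p}'_s))$ are genuine horizontal components because $W_s$ and $W'_s$ are $p$-torsion free.

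For existence I would take a horizontal component $\mathfrak{C}=\Spf(R_{\mathfrak{Y}}/\mathfrak{p})$, let $R$ be the ring of integers in the finite extension field $R_{\mathfrak{Y}}/\mathfrak{p}$ of $\QQ_p$, and form the deformation $(\mathfrak{G},\rho)\in\mathfrak{Y}(R)$ attached to $R_{\mathfrak{Y}}\to R_{\mathfrak{Y}}/\mathfrak{p}\to R$. It lies in exactly one of $\mathfrak{Y}^+(R)$, $\mathfrak{Y}^-(R)$, which by definition distinguishes the standard and nonstandard cases. In the standard case Lemma \ref{Lem:standard reflex orbits} gives $\xi\in\co_E^\times$ with $\xi*(\mathfrak{G},\rho)\iso(\mathfrak{G}_0,\rho_0)\otimes\Z_{p^2}$, where $(\mathfrak{G}_0,\rho_0)$ is the base change through an embedding $W_s\to R$ of a level-$s$ quasi-canonical lift and $s$ is forced by $\Z_{p^2}+p^s\co_E=\co(\mathfrak{G})$. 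By Proposition \ref{Prop:gross-keating} the point $(\mathfrak{G}_0,\rho_0)\otimes\Z_{p^2}$ corresponds to the surjection $R_{\mathfrak{Y}}\map{q}R_{\mathfrak{Y}_0}\to W_s\hookrightarrow R$ with kernel $\mathfrak{p}_s$, so $\xi(\mathfrak{p})=\mathfrak{p}_s$, i.e.\ $\xi*\mathfrak{C}=\mathfrak{C}(s)$ and $\mathfrak{C}=\mathfrak{C}(s,\xi^{-1})$. In the nonstandard case the identical argument runs with Lemma \ref{Lem:nonstandard reflex orbits} replacing Lemma \ref{Lem:standard reflex orbits}, the automorphism $w$ inserted, and Proposition \ref{Prop:gross-keating} applied to $\mathfrak{Y}_0'$ and the primes $\mathfrak{p}'_{0,s}$, yielding $\mathfrak{C}=\mathfrak{C}'(s,\xi^{-1})$. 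Finally, $\mathfrak{C}(s)$ has standard reflex type (it corresponds to a point of $\mathfrak{Y}^+$) and $\mathfrak{C}'(s)$ has nonstandard reflex type (by Lemma \ref{Lem:nonstandard reflex orbits}); since the reflex type is constant on $\co_E^\times$-orbits, every $\mathfrak{C}(s,\xi)$ is a standard horizontal component and every $\mathfrak{C}'(s,\xi)$ a nonstandard one.

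For uniqueness, standardness being $\co_E^\times$-invariant rules out $\mathfrak{C}(s,\xi)=\mathfrak{C}'(s',\xi')$; within a family the level $s$ is recovered from the $\co_E^\times$-invariant geometric CM-order $\Z_{p^2}+p^s\co_E$, so it only remains to compute the stabilizers. For $\mathfrak{C}(s)$ this is verbatim the stabilizer computation ending the proof of Proposition \ref{Prop:unr orbits}: $\co_{E_0}^\times$ fixes $\mathfrak{p}_{0,s}$ as it is the unique minimal prime of $R_{\mathfrak{Y}_0}$ with residue ring $W_s$; $(\Z_{p^2}+p^s\co_E)^\times$ fixes $\mathfrak{p}_s$ because the action on a level-$s$ quasi-canonical lift extends to $\Z_{p^2}+p^s\co_E$; and conversely, using that $M_s/\QQ_p$ is Galois and the compatibility of the $\co_{E_0}^\times$-action on $R_{\mathfrak{Y}_0}$ with the reciprocity map, any $\xi$ fixing $\mathfrak{p}_s$ differs from some element of $\co_{E_0}^\times$ by an automorphism of $\mathfrak{g}$ lifting to the quasi-canonical deformation, hence $\xi\in H_s$. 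For $\mathfrak{C}'(s)$ I would transport this through the $\co_E^\times$-equivariant isomorphism $w:\mathfrak{Y}'\iso\mathfrak{Y}$, replacing $(E_0,j_0,H_s)$ by $(E_0',j_0',H_s')$ everywhere; Proposition \ref{Prop:gross-keating} and the discussion preceding it apply verbatim to $\mathfrak{Y}_0'$.

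The proof carries no genuinely new idea beyond the setup already in place, so the hard part is bookkeeping: one must be sure that $w$-conjugation does not disturb the $\co_E^\times$-action used to define $\mathfrak{C}'(s,\xi)$ and that the standard/nonstandard dichotomy is preserved at each step --- both secured by the equivariance in Lemma \ref{Lem:nonstandard reflex orbits} and by the constancy of the reflex type on orbits --- and, in the converse half of the stabilizer computation, that Keating's endomorphism-ring calculations and the reciprocity-map compatibility remain available when $E_0/\Q_p$ is ramified, which is exactly what Proposition \ref{Prop:gross-keating} and its surrounding remarks provide for both $\mathfrak{Y}_0$ and $\mathfrak{Y}_0'$.
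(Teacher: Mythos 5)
Your proposal is correct and follows the paper's own route: the paper proves this proposition by declaring it "the same as the proof of Proposition \ref{Prop:unr orbits}, almost verbatim," with Lemma \ref{Lem:standard reflex orbits} and Lemma \ref{Lem:nonstandard reflex orbits} substituted for Corollary \ref{Cor:improper orbits} in the standard and nonstandard cases respectively, which is exactly the argument you spell out (including the stabilizer computations and the transport through $w$).
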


\begin{proof}
The proof is the same as the proof of Proposition \ref{Prop:unr orbits}, almost verbatim.  In place of Corollary \ref{Cor:improper orbits} 
one uses Lemma \ref{Lem:standard reflex orbits} in the standard case and Lemma \ref{Lem:nonstandard reflex orbits} in the 
nonstandard case.
\end{proof}

\begin{Prop}\label{Prop:ram intersection I}
Suppose $0\le t < s\le c_0$, $\xi\in H_t/H_s$, and $\xi\not\in H_{t+1}/H_s$.  The horizontal component 
$\mathfrak{C}(s,\xi)\map{} \mathfrak{Y}$ defined in Proposition \ref{Prop:ram orbits}  satisifes
$$
I_{\mathfrak{M}}(\mathfrak{C}(s,\xi) , \mathfrak{M}_0) = 1+ p^t + \frac{(p+1)(p^t-1)}{p-1}.
$$
\end{Prop}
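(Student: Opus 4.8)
The plan is to mimic the proof of Proposition~\ref{Prop:unr intersection} almost word-for-word, substituting the ramified form of Keating's endomorphism-ring computation for the unramified one. Fix a quasi-canonical lift $(\mathfrak{G}_0,\rho_0)\in\mathfrak{Y}_0(W_s)$ of level $s$ for the action $j_0:\co_{E_0}\map{}\End(\mathfrak{g}_0)$ and set $(\mathfrak{G},\rho)=(\mathfrak{G}_0,\rho_0)\otimes\Z_{p^2}\in\mathfrak{Y}(W_s)$; this deformation corresponds to the surjection $R_\mathfrak{Y}\map{}W_s$ with kernel $\mathfrak{p}_s$, hence realizes the component $\mathfrak{C}(s)$. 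Writing $\mathfrak{m}$ for the maximal ideal of $W_s$ and $R_k=W_s/\mathfrak{m}^{k+1}$, one has $I_{\mathfrak{M}}(\mathfrak{C}(s,\xi),\mathfrak{M}_0)=k+1$, where $k$ is the largest integer for which $(\mathfrak{G},\rho\circ\xi^{-1})_{/R_k}$ lies in the image of $\mathfrak{M}_0(R_k)\map{}\mathfrak{M}(R_k)$, exactly as in the unramified case. Fixing a representative $\xi\in(\Z_{p^2}+p^t\co_E)^\times$ of the given coset with $\xi\notin(\Z_{p^2}+p^{t+1}\co_E)^\times$ and abbreviating $b(j)=p^j+\frac{(p+1)(p^j-1)}{p-1}$, the claim becomes $k=b(t)$.

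For the inequality $k\ge b(t)$ I will invoke the ramified version of \cite[Theorem~5.2]{keating88} (with a full proof in \cite{ARGOS-11}): the largest order of $\co_{E_0}$ whose action lifts to $(\mathfrak{G}_0,\rho_0)_{/R_k}$ is $\co_{E_0}$ for $0\le k\le b(0)$ and is $\Z_p+p^{j+1}\co_{E_0}$ precisely for $b(j)+1\le k\le b(j+1)$, the ramified transition points $b(j)$ replacing the unramified $a(j)=\frac{(p+1)(p^j-1)}{p-1}$. Passing to $\co_E$ via $\End(\mathfrak{G}_{/R_k})\iso M_2(\End(\mathfrak{G}_{0/R_k}))$, the largest order of $\co_E$ lifting to $(\mathfrak{G},\rho)_{/R_{b(t)}}$ then contains $\Z_{p^2}+p^t\co_E$, so $\xi$ lifts to an automorphism of $(\mathfrak{G},\rho)_{/R_{b(t)}}$; this gives an isomorphism of deformations $(\mathfrak{G},\rho\circ\xi^{-1})_{/R_{b(t)}}\iso(\mathfrak{G},\rho)_{/R_{b(t)}}$, and since the right-hand side manifestly comes from $\mathfrak{M}_0(R_{b(t)})$ so does the left. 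For the reverse inequality I set $k=b(t)+1$, so that now $\End(\mathfrak{G}_{0/R_k})\cap\co_{E_0}=\co$ with $\co=\Z_p+p^{t+1}\co_{E_0}$, and argue by contradiction verbatim as in the unramified proof: since $\mathfrak{M}_0\map{}\mathfrak{M}$ is the locus over which the action of $M_2(\Z_p)\subset\End(\mathfrak{g})\iso M_2(\End(\mathfrak{g}_0))$ lifts, membership of $(\mathfrak{G},\rho\circ\xi^{-1})_{/R_k}$ in the image of $\mathfrak{M}_0(R_k)$ forces $\xi$ to normalize $M_2(\co)$ inside $M_2(\End(\mathfrak{g}_0))$ (using that $M_2(\Z_p)$ together with $\co$ generate $M_2(\co)$), whence $\xi\in\co_{E_0}^\times\cdot\mathrm{GL}_2(\co)$ by elementary linear algebra, whence $\xi\xi_0^{-1}\in\End(\mathfrak{G}_{/R_k})\cap\co_E^\times=(\Z_{p^2}+p^{t+1}\co_E)^\times$ for some $\xi_0\in\co_{E_0}^\times$, i.e.\ $\xi\in H_{t+1}$, contrary to hypothesis. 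Hence $k=b(t)$ and $I_{\mathfrak{M}}(\mathfrak{C}(s,\xi),\mathfrak{M}_0)=1+b(t)=1+p^t+\frac{(p+1)(p^t-1)}{p-1}$.

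The main obstacle is the first step: one must extract from Keating's work (and from \cite{ARGOS-11}) the precise ramified jumping pattern, namely that the $j$-th conductor jump occurs at $k=b(j)+1$ with $b(j)=p^j+\frac{(p+1)(p^j-1)}{p-1}$ rather than at $a(j)+1$. This extra $p^j$ is exactly what produces the summand $p^t$ in the stated formula, and some care is needed in matching Keating's normalization (he works with a fixed uniformizer of the ramified order) with the orders $\Z_p+p^j\co_{E_0}$ used here; as a sanity check one may note that the resulting value $1+p^t+\frac{(p+1)(p^t-1)}{p-1}$ equals $\sum_{j=0}^{t}[M_j:\QQ_p]$, in parallel with the unramified case. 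A secondary, genuinely routine point is to confirm that the normalizer/linear-algebra computation identifying $\xi$ in $\co_{E_0}^\times\cdot\mathrm{GL}_2(\co)$ carries over unchanged when $\co_{E_0}$ is the ring of integers of a ramified quadratic field; since the argument uses only that $\co$ is a $\Z_p$-order in $\co_{E_0}$ with $\co\otimes_{\Z_p}\Q_p$ a field, nothing new happens, and in particular no hypothesis on $p$ is needed for this proposition.
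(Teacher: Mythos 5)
Your proposal is correct and is essentially the paper's own argument: the paper's proof of this proposition consists precisely of quoting Keating's ramified endomorphism-ring computation with the jump points $b(j)=p^j+\frac{(p+1)(p^j-1)}{p-1}$ and then declaring the rest a direct imitation of Proposition \ref{Prop:unr intersection}, which is exactly the lifting argument over $R_{b(t)}$ and the $M_2(\co)$-normalizer contradiction at $k=b(t)+1$ that you spell out. Your sanity check that $1+b(t)=\sum_{j=0}^{t}[M_j:\QQ_p]$ and your observation that no hypothesis on $p$ is needed here are both consistent with the paper.
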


\begin{proof}
Let $(\mathfrak{G}_0,\rho_0)\in \mathfrak{Y}_0(W_s)$ be a quasi-canonical lift of level $s$.   For any $k\ge 0$ set 
$R_k=W_s/\mathfrak{m}^{k+1}$ and abbreviate
 $$
 b(k)= p^k+ \frac{(p+1)(p^k-1)}{p-1}.
 $$
 Keating \cite[Theorem 5.2]{keating88} has computed the endomorphism ring of $\mathfrak{G}_{0/R_k}$, and found that the largest 
 order of $\co_{E_0}$ for which the action $j_0:\co_{E_0}\map{}\End(\mathfrak{g}_0)$ lifts to the deformation 
 $(\mathfrak{G}_0,\rho_0)_{/R_k}$ is
 \begin{eqnarray*}
 \co_{E_0} &\mathrm{if} & 0\le k < b(0)+1 \\
 \Z_p + p\co_{E_0} & \mathrm{if}& b(0)+1 \le k< b(1)+1 \\
  \Z_p + p^2\co_{E_0} & \mathrm{if}& b(1)+1 \le k <  b(2)+1 \\
   & \vdots& \\
    \Z_p + p^{s-1} \co_{E_0} & \mathrm{if}& b(s-2)+1 \le k< b(s-1)+1 \\
     \Z_p + p^s\co_{E_0} & \mathrm{if}& b(s-1)+1\le k.
 \end{eqnarray*}
 Using these formulas, the proof is a direct imitation of that of Proposition \ref{Prop:unr intersection}.
\end{proof}

\begin{Prop}\label{Prop:ram intersection II}
Suppose $0  \le s\le c_0$ and $\xi\in \co_E^\times/H_s$.    If $\ord_p(\mathrm{disc}(E_0))=1$ then 
the horizontal component $\mathfrak{C}'(s,\xi) \map{} \mathfrak{Y}$ defined in Proposition \ref{Prop:ram orbits} satisifes
$$
I_{\mathfrak{M}}(\mathfrak{C}'(s,\xi) , \mathfrak{M}_0) =1.
$$
Note that  $\ord_p(\mathrm{disc}(E_0))=1$ is equivalent to $p>2$.
\end{Prop}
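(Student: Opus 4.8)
The plan is to reduce the intersection number to a transversality statement at the tangent space of $\mathfrak{M}$ and then to verify that transversality by making the intertwiner $w$ explicit. First, using Proposition \ref{Prop:universal display I} I would identify $R_{\mathfrak{M}}\iso\ZZ_p[[x_1,x_2]]$ with $\mathfrak{M}_0$ cut out by $x_1-x_2$, and record that $W'_s\iso R_{\mathfrak{Y}}/w(\mathfrak{p}'_s)$ is ramified over $\ZZ_p$, so $v_{W'_s}(p)\ge 2$. Since $\mathfrak{C}'(s,\xi)$ passes through the closed point of $\mathfrak{M}$, the image of the ideal $(x_1-x_2)$ in $W'_s$ is $(\overline{x}_1-\overline{x}_2)$ with $\overline{x}_1-\overline{x}_2\in\mathfrak{m}_{W'_s}$, whence $I_{\mathfrak{M}}(\mathfrak{C}'(s,\xi),\mathfrak{M}_0)=\length_{R_{\mathfrak{M}}}\!\big(W'_s/(\overline{x}_1-\overline{x}_2)W'_s\big)=v_{W'_s}(\overline{x}_1-\overline{x}_2)$. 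Because the surjection $R_{\mathfrak{M}}\to W'_s$ has nonzero differential (at least one $\overline{x}_i$ is a uniformizer, by surjectivity together with $v_{W'_s}(p)\ge 2$), this valuation equals $1$ exactly when the tangent line of $\mathfrak{C}'(s,\xi)$ in $T_0\mathfrak{M}$ is transverse to the line $T_0\mathfrak{M}_0$.

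Second, I would dispose of the dependence on $\xi$. Every $\Z_{p^2}$-linear automorphism of $\mathfrak{g}$ preserves the eigenspace decomposition $\mathrm{Lie}(\mathfrak{g})=L_1\oplus L_2$, hence acts on the two-dimensional space $T_0\mathfrak{M}$ diagonally in the coordinates $x_1,x_2$, through a pair of characters $\nu_1,\nu_2\colon\Aut_{\Z_{p^2}}(\mathfrak{g})\to\F^\times$. By Proposition \ref{Prop:ram intersection I} every proper standard component $\mathfrak{C}(s',\xi')$ meets $\mathfrak{M}_0$ with intersection number $\ge 2$, hence is tangent to $\mathfrak{M}_0$; since $\mathfrak{C}(s',\xi')=\xi'*\mathfrak{C}(s')$ and $\mathfrak{C}(s')\subset\mathfrak{M}_0$ has tangent line $T_0\mathfrak{M}_0$, this forces $\nu_1(\xi')=\nu_2(\xi')$, and as $\xi'$ ranges over $\co_E^\times$ we conclude $\nu_1=\nu_2$ on $\co_E^\times$. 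Thus the $\xi$-action fixes every line in $T_0\mathfrak{M}$, so $\mathfrak{C}'(s,\xi)$ and $\mathfrak{C}'(s)$ have the same tangent line, and it suffices to treat $\xi=1$.

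Third, for $\xi=1$ the component $\mathfrak{C}'(s)$ is $w$ applied to a closed formal subscheme of the smooth curve $\mathfrak{M}_0$ (the image of a level-$s$ quasi-canonical lift for the $\co_{E_0'}$-action $j_0'$), so its tangent line is $dw(T_0\mathfrak{M}_0)$, where $dw$ is the diagonal automorphism $\mathrm{diag}(\nu_1(w),\nu_2(w))$ of $T_0\mathfrak{M}$. Everything therefore comes down to showing $\nu_1(w)\ne\nu_2(w)$. This I would do by an explicit Dieudonné computation in the style of \S\ref{ss:hasse-witt}: realize $\mathfrak{g}$ via the display \eqref{ss display III}, use the hypothesis $\ord_p(\mathrm{disc}(E_0))=1$ (i.e.\ $p>2$) to choose trace-free uniformizers $\varpi_{E_0},\varpi_{E_0'}$, write the intertwiner $w\in\Aut_{\Z_{p^2}}(\mathfrak{g})$ explicitly as a $\Z_{p^2}$-linear automorphism conjugating the normalized $\co_{E_0'}$-action to the normalized $\co_{E_0}$-action (it stabilizes each of the two lattices $\Lambda_1,\Lambda_2$ in the construction of $w$ in \S\ref{ss:ram horizontal}, so $w\mapsto(\overline{w}_1,\overline{w}_2)\in\F_{p^2}^\times\times\F_{p^2}^\times$), and compute $\nu_1(w)/\nu_2(w)$ from $\overline{w}_1,\overline{w}_2$ together with the dual action on $\mathrm{Lie}(\mathfrak{g}^\vee)$, checking it is a nontrivial residue class and hence $\ne 1$.

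The main obstacle is this last step: pinning $w$ down concretely enough to evaluate $\nu_1(w)\ne\nu_2(w)$, and seeing precisely how $p>2$ enters — namely that tame ramification of $E_0$ is what makes the relevant element of $\F_{p^2}^\times$ a nontrivial square class (so that wild ramification at $p=2$ is exactly the potential obstruction to this transversality, consistent with the hypothesis being essential). An alternative that sidesteps isolating $w$ is to work entirely with the universal display \eqref{ss display II}: write down the display over $W'_s$ of $(\mathfrak{G}'_0,\rho'_0)\otimes\Z_{p^2}$ re-rigidified by $w$, solve the resulting matching equations for the images $\overline{x}_1,\overline{x}_2$ modulo the square of a uniformizer of $W'_s$ (as in Propositions \ref{Prop:small locus} and \ref{Prop:unr components}, now for the ramified quasi-canonical lift), and observe directly that $\overline{x}_1\not\equiv\overline{x}_2$.
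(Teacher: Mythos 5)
Your reduction steps are plausible, but the proof stops exactly where the proposition actually lives. Steps one and two are essentially correct reductions: $I_{\mathfrak{M}}(\mathfrak{C}'(s,\xi),\mathfrak{M}_0)=1$ iff the composite $R_\mathfrak{M}\map{}W_s'\map{}W_s'/\mathfrak{m}^2\iso\F[\epsilon]$ does not kill $x_1-x_2$, and your observation that $\co_E^\times$ (acting through $j$) must act on $T_0\mathfrak{M}$ by a scalar --- because every standard component $\xi'*\mathfrak{C}(s')$ is either contained in $\mathfrak{M}_0$ or, by Proposition \ref{Prop:ram intersection I}, meets it with multiplicity $\ge 2$ and hence tangentially --- is a nice way to eliminate $\xi$ (though the diagonality of the $\Aut_{\Z_{p^2}}(\mathfrak{g})$-action on $T_0\mathfrak{M}$ in the coordinates $x_1,x_2$ is asserted rather than derived from the deformation theory). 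But after these reductions everything rests on the single inequality $\nu_1(w)\ne\nu_2(w)$, i.e.\ on the transversality of $\mathfrak{C}'(s)$ itself, and this you do not prove: you describe a computation you ``would do,'' admit it is ``the main obstacle,'' and the suggested mechanism (tame ramification making ``the relevant element of $\F_{p^2}^\times$ a nontrivial square class'') is not substantiated. Since the whole content of the proposition --- why nonstandard components are transverse to $\mathfrak{M}_0$, and where $\ord_p(\mathrm{disc}(E_0))=1$ enters --- is concentrated in that step, the proposal has a genuine gap rather than a complete argument. Note also that $w$ is only well defined up to $j'(\co_E^\times)$, so any computation of $\nu_1(w)/\nu_2(w)$ must be checked to be independent of that ambiguity (or combined with your step two applied on the $j'$-side).

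For comparison, the paper avoids identifying $w$ at all. It restricts the deformation attached to $\mathfrak{C}'(s,\xi)$ to $R=W_s'/\mathfrak{m}^2\iso\F[\epsilon]$, where by Keating's formulas the full $\co_E$-action lifts, and compares reflex types: because $W_0'\iso\ZZ_p[[x]]/(\varphi_0')$ with $\varphi_0'$ Eisenstein of degree two, the $\F[\epsilon]$-point of $\mathfrak{M}_0$ underlying this restriction automatically factors through the canonical lift over $W_0'$, so the two $\co_{E_0}$-eigencharacters on $\mathrm{Lie}$ over $R$ are the reductions of the two distinct embeddings $\co_{E_0}\map{}W_0'$; tameness (precisely $\ord_p(\mathrm{disc}(E_0))=1$) guarantees these remain distinct modulo $\mathfrak{m}_{W_0'}^2$. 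If the $\F[\epsilon]$-point lay on $\mathfrak{M}_0$, the same factorization argument through a canonical lift over $W_0$ (standard type) would force the two characters to coincide, a contradiction; hence the first-order deformation is not in $\mathfrak{M}_0$ and $I=1$. If you want to salvage your outline, the cleanest route is to replace the explicit evaluation of $\nu_1(w),\nu_2(w)$ by exactly this reflex-type comparison over the dual numbers (or, equivalently, carry out your ``alternative'' display computation over $W_s'$ modulo $\mathfrak{m}^2$, which is also left unexecuted in the proposal).
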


\begin{proof}
Fix an isomorphism $\mathfrak{C}'(s,\xi) \iso \Spf(W_s')$, let $\mathfrak{m}$ be the maximal ideal of $W_s'$, and set 
$R=W_s'/\mathfrak{m}^2$ so that $R$ is isomorphic to the ring of dual numbers $\F[\epsilon]$ of $\F$.    The resulting closed 
immersion of formal schemes $\Spf(W_s')\map{}\mathfrak{Y}$ is given by the composition
\begin{equation}\label{twisty-twisty}
R_\mathfrak{Y} \map{\xi^{-1}} R_\mathfrak{Y}\map{w^{-1}}R_{\mathfrak{Y}'} 
\map{q'} R_{\mathfrak{Y}_0'} \map{f_s}W_s'
\end{equation}
where the final arrow $f_s$ defines a level $s$ quasi-canonical lift $(\mathfrak{G}_0',\rho_0')\in\mathfrak{Y}_0(W_s')$ of  
$\mathfrak{g}_0$ with respect to the embedding $j_0':\co_{E_0'}\map{} \End(\mathfrak{g}_0)$.    Setting 
$(\mathfrak{G}',\rho')=(\mathfrak{G}'_0,\rho'_0)\otimes \Z_{p^2}$, the composition (\ref{twisty-twisty}) corresponds to the deformation 
$$
(\mathfrak{G}'', \rho'')= (\mathfrak{G}',\rho'\circ w^{-1}\circ \xi^{-1} ) \in \mathfrak{Y}(W_s').
$$
  It follows from the formulas of Keating cited in the proof of Proposition \ref{Prop:ram intersection I} that the action of $\Z_{p^2}[\tau]$ 
  on the deformation $\mathfrak{G}'_{/R}$ extends (uniquely) to an action of $\co_{E}$, and hence the same is true of the deformation 
  $\mathfrak{G}''_{/R}$.We will show first that the resulting actions of $\co_{E_0}$ on the two summands 
$$
\mathrm{Lie}(\mathfrak{G}''_{/R})\iso \Lambda_1\oplus\Lambda_2
$$
are through distinct homomorphisms $\co_{E_0}\map{}R$.

 Consider the diagram
$$
\xymatrix{
{ R_{\mathfrak{Y}_0'} } \ar[d]\ar[r]^{f_s}  &   { W_s' }  \ar[d]  \\
{  W_0'  }  \ar@{-->}[r]_? & R
}
$$
in which the vertical arrow on the left corresponds to a  canonical lift 
$(\mathfrak{G}^\dagger_{0},\rho^\dagger_{0})\in\mathfrak{Y}'_0(W_0')$  of $\mathfrak{g}_0$ with respect to the action 
$j_0':\co_{E'_0} \map{}\End(\mathfrak{g}_0)$.  Choose an isomorphism $R_{\mathfrak{M}_0}\iso \ZZ_p[[x]]$ and let $\alpha$ be the 
image of $x$ under 
$$
R_{\mathfrak{M}_0}\map{}R_{\mathfrak{Y}'_0}\map{f_s}W_s'\map{}R.
$$  
There is an isomorphism $W_0'\iso \ZZ_p[[x]]/(\varphi_0')$ with $\varphi_0'$ Eisenstein of degree two, and as $\varphi_0'(\alpha)=0$ 
we find that there is a surjection $W_0'\map{}R$ making the above diagram commute.  In other words, if we set 
$(\mathfrak{G}^\dagger,\rho^\dagger) =(\mathfrak{G}^\dagger_{0} ,\rho_{0}^\dagger)\otimes\Z_{p^2}$ then there is an isomorphism of 
deformations in $\mathfrak{Y}(R)$
$$
(\mathfrak{G}'',\rho'')_{/R} \iso (\mathfrak{G}^\dagger,\rho^\dagger \circ w^{-1}\circ \xi^{-1})_{/R}
$$
and so  $(\mathfrak{G}'',\rho'')_{/R}$ admits a lift 
$$
(\mathfrak{G}^\dagger,\rho^\dagger \circ w^{-1}\circ \xi^{-1}) \in \mathfrak{Y}^-(W_0')
$$
to $W_0'$  of nonstandard reflex type with the property that  the action $j :\co_E\map{} \End(\mathfrak{g})$ also lifts.  The action of 
$\co_{E_0}$  on each  $\Lambda_i$  is therefore through some   $\co_{E_0}\map{\phi_i}W_0' \map{}R$, and $\phi_1\not=\phi_2$.  
The hypothesis that $\ord_p(\mathrm{disc}(E_0))=1$ implies that the nontrivial Galois automorphism of $W_0'/\ZZ_p$ (which 
interchanges $\phi_1$ and $\phi_2$) remains nontrivial modulo the square of the maximal ideal of $W_0'$, and so the actions of 
$\co_{E_0}$ on  $\Lambda_1$ and $\Lambda_2$ are through distinct homomorphisms $\co_{E_0} \map{} R$.

Is it possible that $(\mathfrak{G},\rho)_{/R}$ lies in the image of $\mathfrak{M}_0(R)\map{}\mathfrak{M}(R)$?  If so, 
then $(\mathfrak{G},\rho)_{/R}\iso (\mathfrak{G}_0^\sim,\rho_0^\sim)\otimes\Z_{p^2}$ for some 
$ (\mathfrak{G}_0^\sim,\rho_0^\sim)\in\mathfrak{Y}_0(R)$ corresponding to a surjective $\ZZ_p$-algebra map 
$R_{\mathfrak{Y}_0} \map{}R$.  As above, the fact that $W_0/\ZZ_p$ is ramified implies that this map can be factored as 
$R_{\mathfrak{Y}_0}\map{}W_0\map{}R$ where the first arrow corresponds to a canonical lift in $ \mathfrak{Y}_0(W_0)$.  This implies 
that $(\mathfrak{G}'',\rho'')_{/R}$ admits a lift to $\mathfrak{Y}^+(W_0)$ with the property that the action 
$j:\co_E\map{}\End(\mathfrak{g})$ also lifts.  We deduce    that the action of $\co_{E_0}$ on $\mathrm{Lie}(\mathfrak{G}''_{/R})$  is 
through a \emph{single} homomorphism $\co_{E_0}\map{}R$.  This contradicts what was said in the previous paragraph.  Thus  the 
reduction of $(\mathfrak{G}'',\rho'')$ to $R$ is not in the image of $\mathfrak{M}_0(R)\map{}\mathfrak{M}(R)$, and so 
$I_{\mathfrak{M}}(\mathfrak{C}'(s,\xi) , \mathfrak{M}_0) =1.$
\end{proof}

\begin{Cor}\label{Cor:ram proper horizontal} 
If $p\not=2$ then
$$
\sum_{\mathfrak{C} } I_\mathfrak{M}(\mathfrak{C},\mathfrak{M}_0) = 
\frac{ -4p^{c_0+1}  +2p + 2  }{ (p-1)^2 } - \frac{   (2c_0+1) p^{c_0+1} + 2c_0 +1 }{ p-1 }
$$
where the sum is over all proper horizontal components  $\mathfrak{C}\map{}\mathfrak{Y}$.
\end{Cor}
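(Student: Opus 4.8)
The plan is to sum the intersection numbers computed in Propositions \ref{Prop:ram intersection I} and \ref{Prop:ram intersection II} over the complete list of proper horizontal components furnished by Proposition \ref{Prop:ram orbits}. By that proposition, together with the remark preceding it, the proper horizontal components of $\mathfrak{Y}$ are exactly the standard components $\mathfrak{C}(s,\xi)$ with $0\le s\le c_0$ and $\xi\in\co_E^\times/H_s$, $\xi\not=1$ (the improper ones being precisely $\mathfrak{C}(0),\dots,\mathfrak{C}(c_0)$, which are standard by Lemma \ref{Lem:standard reflex orbits}), together with \emph{all} of the nonstandard components $\mathfrak{C}'(s,\xi)$ with $0\le s\le c_0$ and $\xi\in\co_E^\times/H_s'$. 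Since $\mathrm{mult}_\mathfrak{Y}(\mathfrak{C})=1$ for every horizontal component by Corollary \ref{Cor:horizontal reduced}, the displayed sum equals $\sum_\mathfrak{C} I_\mathfrak{M}(\mathfrak{C},\mathfrak{M}_0)$, so it suffices to evaluate the latter.

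The first step is to record the group-theoretic input. Using $\co_E=\co_{E_0}\otimes_{\Z_p}\Z_{p^2}$ and the fact that the orders $\Z_p+p^s\co_{E_0}\subset\co_{E_0}$ and $\Z_{p^2}+p^s\co_E\subset\co_E$ are stable under conjugation, one checks directly that $\co_{E_0}^\times\cap(\Z_{p^2}+p^s\co_E)^\times=(\Z_p+p^s\co_{E_0})^\times$. The proof of Proposition \ref{Prop:gross-keating} (via Keating's length computation \cite{keating88}) gives $[\co_{E_0}^\times:(\Z_p+p^s\co_{E_0})^\times]=p^s$, and an elementary count with residue fields gives $[\co_E^\times:(\Z_{p^2}+p^s\co_E)^\times]=p^{2s}$; combining these yields $[\co_E^\times:H_s]=p^s$. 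The identical argument applied with $E_0$ replaced by $E_0'$ (which is again a ramified quadratic extension of $\Q_p$ with $\co_E\iso\co_{E_0'}\otimes_{\Z_p}\Z_{p^2}$) gives $[\co_E^\times:H_s']=p^s$. Consequently, in the filtration $\{1\}=H_s/H_s\subset H_{s-1}/H_s\subset\cdots\subset H_0/H_s=\co_E^\times/H_s$ the set $H_t/H_s\setminus H_{t+1}/H_s$ has exactly $p^{s-t-1}(p-1)$ elements for each $0\le t<s$.

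With this in hand I would assemble the two partial sums. For the standard components, Proposition \ref{Prop:ram intersection I} says that a $\xi$ lying in $H_t/H_s\setminus H_{t+1}/H_s$ contributes $I_\mathfrak{M}(\mathfrak{C}(s,\xi),\mathfrak{M}_0)=1+b(t)$ where $b(t)=p^t+\frac{(p+1)(p^t-1)}{p-1}$, so the standard contribution is $\sum_{s=0}^{c_0}\sum_{t=0}^{s-1}p^{s-t-1}(p-1)(1+b(t))$. For the nonstandard components, Proposition \ref{Prop:ram intersection II} — here is where the hypothesis $p>2$, i.e.\ $\ord_p(\mathrm{disc}(E_0))=1$, is used — gives $I_\mathfrak{M}(\mathfrak{C}'(s,\xi),\mathfrak{M}_0)=1$ for every $s$ and $\xi$, so the nonstandard contribution is $\sum_{s=0}^{c_0}[\co_E^\times:H_s']=\sum_{s=0}^{c_0}p^s=\frac{p^{c_0+1}-1}{p-1}$. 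Adding the two pieces and repeatedly summing geometric series then collapses the expression to the closed form in the statement.

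Conceptually nothing here is hard: all the geometry and all the local intersection-length computations are already contained in Propositions \ref{Prop:ram orbits}, \ref{Prop:ram intersection I}, and \ref{Prop:ram intersection II}. The step I expect to be the main obstacle is the purely arithmetic bookkeeping — pinning down the sizes $p^{s-t-1}(p-1)$ of the layers (which differ from the unramified count used in Corollary \ref{Cor:unr proper horizontal}, because $[\co_E^\times:H_s]=p^s$ here rather than $p^{s-1}(p-1)$) and then evaluating the nested double sum without sign or off‑by‑one errors. As a check I would verify the final formula in the degenerate case $c_0=0$, where the only proper horizontal component is the nonstandard $\mathfrak{C}'(0,1)$ with intersection number $1$, forcing the sum to equal $1$; and I would confirm that adding the vertical contribution $2\cdot\mathrm{mult}_\mathfrak{Y}(\mathfrak{C}_i^\vertical)$ from Propositions \ref{Prop:two components} and \ref{Prop:ram components} recovers the ramified case of Theorem \ref{ThmD}.
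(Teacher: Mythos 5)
Your proposal follows the paper's own argument essentially verbatim: Proposition \ref{Prop:ram orbits} supplies the list of proper horizontal components, the index computation $[\co_E^\times:H_s]=[\co_E^\times:H_s']=p^s$ (equivalently $|H_t/H_s|=p^{s-t}$) supplies the counts, and Propositions \ref{Prop:ram intersection I} and \ref{Prop:ram intersection II} supply the summands, so that for each $s$ the standard components contribute $2sp^s-\tfrac{2p^s-2}{p-1}$ and the nonstandard ones contribute $p^s$, exactly the intermediate sums in the paper's proof. One remark: your $c_0=0$ sanity check is worth actually carrying out, since it reveals that the closed form as displayed in the corollary suffers from sign/typographical errors (as printed it is negative), whereas your intermediate sums are correct and yield the true total (equal to $1$ when $c_0=0$, and consistent with Theorem \ref{ThmD} once the vertical contribution from Propositions \ref{Prop:two components} and \ref{Prop:ram components} is added back).
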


\begin{proof}
If $0\le t <  s \le c_0$ and  then combining $|H_t/H_s| = p^{s-t}$ with Proposition \ref{Prop:ram intersection I} shows that
$$
\sum_{ \substack{  \xi\in\co_E^\times/H_s  \\ \xi\not=1   } } I_{\mathfrak{M}}(\mathfrak{C}(s,\xi) , \mathfrak{M}_0 ) =  
2sp^s - \frac{2p^s-2}{p-1}
$$
while Proposition \ref{Prop:ram intersection II} shows that
 $$
\sum_{   \xi\in\co_E^\times/H_s } I_{\mathfrak{M}}(\mathfrak{C}'(s,\xi) , \mathfrak{M}_0 ) = p^s .
$$
Summing over $0 \le s\le c_0$ and using Proposition \ref{Prop:ram orbits} yields the desired result.
\end{proof}


\section{Appendix}


In this appendix we consider a slightly modified version of the deformation problem of the main text.

\subsection{A ramified variant}

Let $\mathfrak{g}_0$, $E_0$, $\Z_p[\gamma_0]=\Z_p+p^{c_0}\co_{E_0},$ $j_0:\co_{E_0}\map{}\End(\mathfrak{g}_0),$
and $\mathfrak{Y}_0\map{}\mathfrak{M}_0$ be  as in \S \ref{ss:functors},  but now choose a ramified quadratic extension 
$F/\Q_p$ whose ring of integers $\co_F$ will assume the role played earlier by $\Z_{p^2}$.  Thus we define a $p$-Barsotti-Tate 
group $\mathfrak{g}=\mathfrak{g}_0\otimes\co_F$ over $\F$ of height $4$ and dimension $2$ and let $\mathfrak{M}$ be the
 formal $\ZZ_p$-scheme classifying deformations  of $\mathfrak{g}$, with its $\co_F$ action, to objects of $\Art$.  Set 
 $E=E_0\otimes_{\Q_p} F$, let $\gamma=\gamma_0\otimes 1\in E$, and let $\mathfrak{Y}$ be the closed formal subscheme 
 of $\mathfrak{M}$ classifying those deformations for which the action of $\gamma$ lifts.  We again have the cartesian 
 diagram of closed immersions (\ref{functor diagram}) in which the horizontal arrows are now defined by the functor $\otimes \co_F$.

Many of the methods used in the main body of the article can be applied to study this new formal scheme $\mathfrak{Y}$.  
Virtually everything said in \S \ref{s:prelims} holds simply by replacing $\Z_{p^2}$ everywhere by $\co_F$
 (except that the action of $\co_{E_0}\otimes_{\Z_p} \co_F$ on $\mathfrak{g}$ deduced from $j_0$ may not extend to all 
 of $\co_E$.)  The methods used in \S \ref{s:vertical} to study vertical components should apply in this new setting 
 (although we have not checked this in any detail).  The more difficult problem seems to be extending the methods of 
 \S \ref{s:horizontal} to study the horizontal components of $\mathfrak{Y}$.  The issue is that there may be deformations
  of $\mathfrak{g}$ with its $\co_F[\gamma]$-action to characteristic $0$ whose full endomorphism ring is an $\co_F$-order 
  in $\co_E$ which is not of the form $\co\otimes_{\Z_p}\co_F$ for any $\Z_p$-order $\co\subset \co_{E_0}$.  One should
   not expect the component of $\mathfrak{Y}$ containing such  a deformation to come from $\mathfrak{Y}_0$ in the way 
   described in Propositions \ref{Prop:unr orbits} and \ref{Prop:ram orbits}.  

We will content ourselves for the moment with the following simple case, which is needed for the global intersection
 theory  of \cite{howardC}.

\begin{Prop}
If  $E_0/\Q_p$ is unramified and $c_0=0$ then each of $\mathfrak{Y}_0$ and $\mathfrak{Y}$ are isomorphic to
 $\Spf(\ZZ_p)$.  In particular the closed immersion $\mathfrak{Y}_0\map{}\mathfrak{Y}$ is an isomorphism and 
  $\mathfrak{Y}$ is contained in $\mathfrak{M}_0$.
\end{Prop}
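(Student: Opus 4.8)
The plan is to prove $R_{\mathfrak{Y}} = \ZZ_p$. That $\mathfrak{Y}_0 \cong \Spf(\ZZ_p)$ is immediate from Proposition~\ref{Prop:gross-keating} with $c_0 = 0$: the defining ideal of $\mathfrak{Y}_0 \to \mathfrak{M}_0$ is generated by the single Eisenstein factor $\varphi_0$ with $\ZZ_p[[x]]/(\varphi_0) \cong W_0$, and $W_0 = \ZZ_p$ since $E_0/\Q_p$ is unramified, forcing $\varphi_0$ to have degree one. Under the cartesian diagram~(\ref{functor diagram}) the closed immersion $\mathfrak{Y}_0 \to \mathfrak{Y}$ corresponds to a surjection $R_{\mathfrak{Y}} \twoheadrightarrow R_{\mathfrak{Y}_0} = \ZZ_p$ which is a retraction of the structure map $\ZZ_p \to R_{\mathfrak{Y}}$; so it will suffice to show that $\ZZ_p \to R_{\mathfrak{Y}}$ is surjective. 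Given this, $R_{\mathfrak{Y}} = \ZZ_p$, the morphism $\mathfrak{Y}_0 \to \mathfrak{Y}$ is an isomorphism, and $\mathfrak{Y} \to \mathfrak{M}$ factors through $\mathfrak{M}_0$ by commutativity of~(\ref{functor diagram}).

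The heart of the matter is thus to show that the relative tangent space of $\mathfrak{Y}$ over $\ZZ_p$ vanishes; equivalently, that $\mathfrak{g}$, equipped with the action $j\colon\co_E\to\End(\mathfrak{g})$ induced from $j_0$ (note $\co_F[\gamma] = \co_F\otimes_{\Z_p}\Z_p[\gamma_0] = \co_E$ because $c_0 = 0$), has no nontrivial first-order $\co_E$-linear deformation to $\F[\epsilon]$. I would compute this group of first-order deformations by Zink's display theory, exactly as in the proof of Proposition~\ref{Prop:universal display I}: since $\mathfrak{g}$ is connected it corresponds to the display $\mathbf{d} = \mathbf{d}_0\otimes\co_F$, and its $\co_E$-linear first-order deformations form the $\co_E$-equivariant part of $\Hom_\F\big(Q_{\mathbf{d}}/I_\F P_{\mathbf{d}},\,\mathrm{Lie}(\mathbf{d})\big)$ (or, with the opposite normalization, of the analogous group with source and target exchanged). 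Now $\mathrm{Lie}(\mathbf{d}) = \mathrm{Lie}(\mathbf{d}_0)\otimes_{\Z_p}\co_F$ and $Q_{\mathbf{d}}/I_\F P_{\mathbf{d}} = (Q_0/I_\F P_0)\otimes_{\Z_p}\co_F$, where for $\mathbf{d}_0$ one has $\mathrm{Lie}(\mathbf{d}_0) = \F\bar e_0$ and $Q_0/I_\F P_0 = \F\bar f_0$, and by the normalized action recalled in~\S\ref{ss:dieudonne} the ring $\co_{E_0}$ acts on $\bar e_0$ through $\psi$ and on $\bar f_0$ through the conjugate $\overline\psi$. Hence over $\co_E\otimes_{\Z_p}\F \cong (\co_{E_0}\otimes_{\Z_p}\F)\otimes_\F(\co_F\otimes_{\Z_p}\F)$ the two modules are supported on opposite idempotents of the factor $\co_{E_0}\otimes_{\Z_p}\F\cong\F\times\F$ (here one uses $\psi\neq\overline\psi$, i.e.\ that $E_0/\Q_p$ is unramified), so every $\co_E$-linear homomorphism between them vanishes.

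Finally, vanishing of the relative tangent space says $\mathfrak{m}_{R_{\mathfrak{Y}}} = \mathfrak{m}_{R_{\mathfrak{Y}}}^2 + pR_{\mathfrak{Y}}$, hence $\mathfrak{m}_{R_{\mathfrak{Y}}} = pR_{\mathfrak{Y}}$ by Nakayama applied to the Noetherian local ring $R_{\mathfrak{Y}}/pR_{\mathfrak{Y}}$. Thus $R_{\mathfrak{Y}}$ is $p$-adically complete with $R_{\mathfrak{Y}}/pR_{\mathfrak{Y}} = \F$ and each graded piece $p^n R_{\mathfrak{Y}}/p^{n+1}R_{\mathfrak{Y}}$ cyclic over $\F$ generated by $p^n$, so $\ZZ_p\to R_{\mathfrak{Y}}$ is surjective, as wanted. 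The step I expect to require the most care is the display-theoretic computation of the tangent space: one must pin down the precise shape of the group of first-order deformations as a $\Hom$ of $\co_E\otimes_{\Z_p}\F$-modules and verify that the $\co_F$-factor is inert in the argument, the obstruction already being visible in the $\co_{E_0}$-isotypic decomposition; beyond this bookkeeping no real difficulty is anticipated.
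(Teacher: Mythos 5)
Your argument is correct, and its computational core is the same as the paper's: both rest on Zink's description of deformations across a square-zero extension as lifts of the Hodge filtration, combined with the observation that $\co_{E_0}$ acts on the Lie algebra through $\psi$ but on the Hodge filtration through $\overline{\psi}$, and these differ exactly because $E_0/\Q_p$ is unramified (you are right that the ramified factor $\co_F$ is inert in this step). The difference lies in how the conclusion is extracted. The paper writes down an explicit display $\mathbf{D}_R$ with its $\co_E$-action over \emph{every} Artinian $R$ and shows, by the same idempotent argument applied at each square-zero extension and induction on length, that $\mathbf{D}_R$ is the unique deformation; thus $\mathfrak{Y}(R)$ is a singleton for all $R$ and $\mathfrak{Y}\iso\Spf(\ZZ_p)$ follows directly, with flatness over $\ZZ_p$ furnished by the explicit lifts rather than by $\mathfrak{Y}_0$. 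You instead compute only the relative tangent space at $\F[\epsilon]$, deduce surjectivity of $\ZZ_p\map{}R_{\mathfrak{Y}}$ by Nakayama and completeness, and rule out $p$-power torsion via the retraction $R_{\mathfrak{Y}}\map{}R_{\mathfrak{Y}_0}\iso\ZZ_p$ coming from Proposition \ref{Prop:gross-keating}. Your route is shorter and requires only the special-fibre computation, at the cost of importing flatness from the Gross--Keating description of $\mathfrak{Y}_0$ (which the statement needs anyway); the paper's route exhibits the universal deformation explicitly and does not lean on the horizontal section. One small point of hygiene: the set of first-order $\co_E$-linear deformations is a torsor under $\Hom_{\co_E\otimes_{\Z_p}\F}\bigl(Q/I_\F P,\ \mathrm{Lie}(\mathbf{d})\bigr)$, with the constant deformation along $\F\map{}\F[\epsilon]$ as base point, rather than literally equal to that group; since the group vanishes and the base point exists, this does not affect your conclusion.
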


\begin{proof}
That $\mathfrak{Y}_0\iso \Spf(\ZZ_p)$ is a special case of Proposition \ref{Prop:gross-keating}, and so we turn 
to $\mathfrak{Y}$.  Our hypotheses imply that 
$$
\co_E=\co_{E_0}\otimes_{\Z_p}\co_F=\co_F[\gamma].
$$
Let $\Psi,\overline{\Psi}:\co_{E_0}\map{}\ZZ_p$ be as in \ref{ss:dieudonne}.  For any object $R$ of $\Art$ 
we denote again by $\Psi$ and $\overline{\Psi}$ the ring homomorphisms obtain by composition with the canonical 
map $\ZZ_p\map{}W(R)$.  Define a display $\mathbf{D}_R=(P_R,Q_R,F,V^{-1})$ over $R$ as follows.  
The $W(R)$-module  $P_R$ is free  on the generators $\{e_1,e_2,f_1,f_2\}$, the submodule $Q_R\subset P_R$ is 
$$
Q_R=I_R e_1 + I_R e_2 + W(R) f_1+W(R) f_2,
$$
and $F$ and $V^{-1}$ are determined by the stipulation that the displaying matrix  \cite[(9)]{zink02}  of 
$\mathbf{D}_R$ with respect to the basis $\{e_1,e_2,f_1,f_2\}$ is
$$
\left(\begin{matrix}
  &  & 1 \\
  & & & 1 \\
  1\\ & 1
\end{matrix}\right).
$$
Define an action of $\co_E$ on $\mathbf{D}_R$ as follows.  An element $r\in \co_{E_0}$ acts as
$$
\left(\begin{matrix}
\Psi(r)   & 0  \\
 0 & \Psi(r)  \\
  & & \overline{\Psi}(r) & 0 \\
  & & 0 & \overline{\Psi}(r)
\end{matrix}\right).
$$
Choose a uniformizer $\varpi_F\in\co_F$ with minimal polynomial $x^2-ax-b$ and let $\varpi_F$ act
 as 
 $$
\left(\begin{matrix}
 0 & b   \\
 1 & a  \\
  & & 0 & b \\
  & &1 & a 
\end{matrix}\right).
$$
These rules define commuting actions of $\co_{E_0}$ and $\co_F$, and so determine an action of $\co_E$.

One easily checks that $\mathbf{D}_\F$ is $\co_E$-linearly isomorphic to the display of $\mathfrak{g}$. 
 Indeed, in the notation of \S \ref{ss:dieudonne} there is an isomorphism $\mathbf{D}_\F\iso \mathbf{d}_0\otimes\co_F$ defined by
$$
e_1\mapsto e_0\otimes 1\qquad
e_2\mapsto e_0\otimes \varpi_F \qquad
f_1\mapsto f_0\otimes 1\qquad
f_2\mapsto f_0\otimes\varpi_F.
$$
Thus  each  $\mathbf{D}_R$ is a deformation to $R$ of the display of $\mathfrak{g}$ with its $\co_E$-action.  
Suppose we have a morphism $R'\map{}R$ in $\Art$ whose kernel $J$ satisfies $J^2=0$.   By Zink's deformation
 theory \cite[Theorem 48]{zink02} the set of all  deformations to $R'$ of $\mathbf{D}_{R}$ with its $\co_E$-action 
 is in bijection with the set of $\co_E$-stable direct summands  $T\subset P_{R'}/I_{R'} P_{R'}$ which lift the Hodge filtration
$$
Q_R/I_RP_R\subset P_R/I_RP_R.
$$
The condition that $T$ be stable under the action of $\co_{E_0}\otimes_{\Z_p}\ZZ_p\iso \ZZ_p\times\ZZ_p$ implies 
that $T=T(\Psi)\oplus T(\overline{\Psi})$ with $T(\Psi)\subset R' e_1\oplus R'e_2$ and $T(\overline{\Psi})\subset R'f_1\oplus R'f_2$.  
As $\co_{E_0}$ acts on $Q_R/I_RP_R$ through $\overline{\Psi}$ we must have $T(\Psi)=0$.  Thus 
$$
T = T(\overline{\Psi}) \subset R'f_1\oplus R'f_2 = Q_{R'}/I_{R'}Q_{R'}
$$
from which $T=Q_{R'}/I_{R'}Q_{R'}$ follows.  We deduce that $\mathbf{D}_{R'}$ is the unique deformation of 
$\mathbf{D}_R$ with its $\co_E$-action.   By induction on the length of the local ring $R$ we conclude that 
$\mathbf{D}_R$ is the unique deformation to $R$ of $\mathbf{D}_\F$ with its $\co_E$-action, and it follows that
 $\mathfrak{Y}\iso \Spf(\ZZ_p)$.
\end{proof}


\def\cprime{$'$}

\end{document}